\documentclass[11pt]{amsart}

\usepackage{amssymb, amscd}
\usepackage{epsfig, mathtools, tensor}
\usepackage[vmargin=1in, hmargin=1.25in]{geometry}
\usepackage[font=small,format=plain,labelfont=bf,up,textfont=it,up]{caption}
\usepackage{microtype}
\usepackage[shortlabels]{enumitem}
\setlist[itemize]{nosep}
\usepackage[backref=page, bookmarks, bookmarksdepth=2, colorlinks=true, linkcolor=blue, citecolor=blue, urlcolor=blue]{hyperref}
\usepackage{etoolbox}
\usepackage{tikz-cd}
\usepackage{tabularray}
\apptocmd{\thebibliography}{\raggedright}{}{}
\usepackage[only,llparenthesis, rrparenthesis, llbracket,rrbracket]{stmaryrd}
\usepackage{refcount}

\mathtoolsset{showonlyrefs}

\newcommand\Figure[1]{\centerline{\psfig{file=#1,scale=1}}}

\makeatletter
\patchcmd{\@maketitle}{\global\topskip42\p@\relax}
  {\global\topskip42\p@\relax \vspace*{-38pt}}
  {}{}
\makeatother

\setenumerate[0]{label=(\alph*)}

\renewcommand*{\backref}[1]{}
\renewcommand*{\backrefalt}[4]{%
    \ifcase #1 (Not cited.)%
    \or        (Cited on page~#2.)%
    \else      (Cited on pages~#2.)%
    \fi}

\DeclareSymbolFont{bbold}{U}{bbold}{m}{n}
\DeclareSymbolFontAlphabet{\mathbbold}{bbold}

\DeclareSymbolFont{extraup}{U}{zavm}{m}{n}
\DeclareMathSymbol{\varheartsuit}{\mathalpha}{extraup}{86}

\newcommand{\arxiv}[1]{\href{http://arxiv.org/abs/#1}{{\tt arXiv:#1}}}

\numberwithin{equation}{section}

\theoremstyle{plain}
\newtheorem{theorem}{Theorem}[section]
\newtheorem{maintheorem}{Theorem}
\newtheorem{maincorollary}[maintheorem]{Corollary}
\newtheorem{proposition}[theorem]{Proposition}
\newtheorem{lemma}[theorem]{Lemma}

\newtheorem{corollary}[theorem]{Corollary}

\newtheorem*{unnumberedclaim}{Claim}

\newtheorem{stepx}{Step}
\newenvironment{step}[1]
 {\renewcommand\thestepx{#1}\stepx}
 {\endstepx}

\newtheorem{casex}{Case}
\newenvironment{case}[1]
 {\renewcommand\thecasex{#1}\casex}
 {\endcasex}

\newtheorem{claimx}{Claim}
\newenvironment{claim}[1]
 {\renewcommand\theclaimx{#1}\claimx}
 {\endclaimx}

\providecommand{\previous}{}



\providecommand{\previous}{}



\providecommand{\previous}{}



\theoremstyle{definition}
\newtheorem{asm}[theorem]{Assumption}

\newtheorem{defn}[theorem]{Definition}

\newtheorem{notn}[theorem]{Notation}

\newtheorem{covn}[theorem]{Convention}
\newenvironment{convention}[1][]{\begin{covn}[#1]\pushQED{\qed}}{\popQED \end{covn}}

\theoremstyle{remark}
\newtheorem{rmk}[theorem]{Remark}
\newenvironment{remark}[1][]{\begin{rmk}[#1] \pushQED{\qed}}{\popQED \end{rmk}}
\newtheorem{eg}[theorem]{Example}
\newenvironment{example}[1][]{\begin{eg}[#1] \pushQED{\qed}}{\popQED \end{eg}}

\theoremstyle{plain}

\DeclareMathOperator{\Hom}{Hom}

\DeclareMathOperator{\coker}{coker}

\DeclareMathOperator{\Lie}{Lie}

\DeclareMathOperator{\GL}{GL}

\DeclareMathOperator{\SL}{SL}

\DeclareMathOperator{\Sp}{Sp}

\newcommand\Z{\ensuremath{\mathbb{Z}}}
\newcommand\Q{\ensuremath{\mathbb{Q}}}

\DeclareMathOperator{\HH}{H}

\DeclareMathOperator{\Res}{Res}

\DeclareMathOperator{\Sym}{Sym}

\newcommand\Span[1]{\ensuremath{\langle #1 \rangle}}
\newcommand\SpanSet[2]{\ensuremath{\langle \text{#1 $|$ #2} \rangle}}
\newcommand\Set[2]{\ensuremath{\left\{\text{#1 $|$ #2}\right\}}}

\newcommand\cK{\ensuremath{\mathcal{K}}}
\newcommand\cL{\ensuremath{\mathcal{L}}}

\newcommand\cP{\ensuremath{\mathcal{P}}}

\newcommand\cT{\ensuremath{\mathcal{T}}}

\newcommand\cV{\ensuremath{\mathcal{V}}}

\newcommand\fC{\ensuremath{\mathfrak{C}}}

\newcommand\fc{\ensuremath{\mathfrak{c}}}

\newcommand\bT{\ensuremath{\mathbf{T}}}
\newcommand\bU{\ensuremath{\mathbf{U}}}
\newcommand\bV{\ensuremath{\mathbf{V}}}
\newcommand\bW{\ensuremath{\mathbf{W}}}

\newcommand\bk{\ensuremath{\mathbf{k}}}

\newcommand\tJ{\ensuremath{\widetilde{J}}}

\newcommand\tkappa{\ensuremath{\widetilde{\kappa}}}

\newcommand\okappa{\ensuremath{\overline{\kappa}}}
\newcommand\otheta{\ensuremath{\overline{\theta}}}

\newcommand\oalpha{\ensuremath{\overline{\alpha}}}
\newcommand\obeta{\ensuremath{\overline{\beta}}}

\newcommand\hS{\ensuremath{\widehat{S}}}

\newcommand\htau{\ensuremath{\widehat{\tau}}}

\newcommand*{\Cdot}[1][1.25]{%
  \mathpalette{\CdotAux{#1}}\cdot%
}
\newdimen\CdotAxis
\newcommand*{\CdotAux}[3]{%
  {%
    \settoheight\CdotAxis{$#2\vcenter{}$}%
    \sbox0{%
      \raisebox\CdotAxis{%
        \scalebox{#1}{%
          \raisebox{-\CdotAxis}{%
            $\mathsurround=0pt #2#3$%
          }%
        }%
      }%
    }%
    \dp0=0pt %
    \sbox2{$#2\bullet$}%
    \ifdim\ht2<\ht0 %
      \ht0=\ht2 %
    \fi
    \sbox2{$\mathsurround=0pt #2#3$}%
    \hbox to \wd2{\hss\usebox{0}\hss}%
  }%
}

\newcommand\Mod{\ensuremath{\operatorname{Mod}}}
\newcommand\Torelli{\ensuremath{\operatorname{\mathcal{I}}}}
\newcommand\diag{\ensuremath{\operatorname{diag}}}
\newcommand\FLie{\ensuremath{\operatorname{FLie}}}
\newcommand\Brack[1]{\ensuremath{\llbracket #1 \rrbracket}}
\DeclareMathOperator{\SI}{\text{\tt SI}}
\DeclareMathOperator{\cp}{cup}

\title{Calculating the second rational cohomology group of the Torelli group}

\author{Andrew Putman}
\address{Dept of Mathematics; University of Notre Dame; 255 Hurley Hall; Notre Dame, IN 46556; USA}
\email{andyp@nd.edu}

\thanks{AP was supported by NSF grant DMS-2305183.}

\begin{document}
    
\newpage
        
\begin{abstract}
Minahan and the author recently proved results that allow the calculation
of the second rational cohomology group of the Torelli group.  This builds on two
key ingredients: Hain's calculation of the image of the cup product pairing on
the first cohomology group,
and Kupers--Randal-Williams's calculation of the maximal algebraic subrepresentation
of the second cohomology group.  This paper gives an
exposition of both of these results, including prerequisite material about
the Johnson homomorphism.
\end{abstract} 

\maketitle
\thispagestyle{empty}

\tableofcontents

\section{Introduction}
\label{section:introduction}

Let $\Sigma_{g,p}^b$ be an oriented genus $g$ surface with $p$ punctures
and $b$ boundary components.  Assume that $p+b \leq 1$.  We 
omit $p$ or $b$ if they vanish.  The
mapping class group $\Mod_{g,p}^b$ is
the group
of isotopy classes of orientation-preserving diffeomorphisms of $\Sigma_{g,p}^b$ that fix each puncture and boundary
component pointwise.  
The group $\Mod_{g,p}^b$ acts on $\HH_1(\Sigma_{g,p}^b)$ 
and fixes the algebraic intersection form.  Since $p + b \leq 1$, the algebraic
intersection form is symplectic and this action gives
a surjection $\Mod_{g,p}^b \rightarrow \Sp_{2g}(\Z)$ whose kernel $\Torelli_{g,p}^b$ is the Torelli group:
\[\begin{tikzcd}
1 \arrow{r} & \Torelli_{g,p}^b \arrow{r} & \Mod_{g,p}^b \arrow{r} & \Sp_{2g}(\Z) \arrow{r} & 1.
\end{tikzcd}\]
In the early 1980's, Johnson \cite{Johnson3} calculated $\HH^1(\Torelli_{g,p}^b)$ for $g \gg 0$.
Building on Minahan's PhD thesis \cite{MinahanThesis},
Minahan and the author \cite{MinahanPutmanH2Torelli} proved results that allowed them to
calculate $\HH^2(\Torelli_{g,p}^b;\Q)$ for $g \gg 0$.  We were able to do the following 
(see Corollary \ref{maincorollary:h2torelli} below):
\begin{itemize}
\item[(i)] Completely determine $\HH^2(\Torelli_{g,p}^b;\Q)$ as a vector space over $\Q$.  In fact,
we even describe it as a representation of $\Sp_{2g}(\Z)$, which acts in a way we will describe below.
\item[(ii)] Prove that all $\HH^2(\Torelli_{g,p}^b;\Q)$ is in the image of the cup product
pairing on $\HH^1(\Torelli_{g,p}^b;\Q)$.  
\end{itemize}
This calculation depends on previous
work of Hain \cite{HainInfinitesimal} and Kupers--Randal-Williams (\cite{KupersRandalWilliams};
see also its sequel \cite{RandalWilliamsII}) that we will describe below.
The goal of this paper is to explain how to do (i) and (ii) above given the
results of \cite{MinahanPutmanH2Torelli}, and in particular to give an exposition of this work of Hain and
Kupers--Randal-Williams

\subsection{Finite-dimensionality and algebraicity}

Recall our standing assumption that $p+b \leq 1$.  The group $\Mod_{g,p}^b$ acts on
$\Torelli_{g,b}^p$ via conjugation.  Since inner automorphisms act trivially on group cohomology, we
get an induced action of
$\Mod_{g,p}^b / \Torelli_{g,p}^b = \Sp_{2g}(\Z)$
on each cohomology group $\HH^k(\Torelli_{g,p}^b;\Q)$.  In other words, $\HH^k(\Torelli_{g,p}^b;\Q)$ is
a representation of the arithmetic group $\Sp_{2g}(\Z)$.

Set $H = \HH_1(\Sigma_{g,p}^b;\Q)$.  Johnson \cite{Johnson3} proved that for $g \geq 3$ we have
\[\HH^1(\Torelli_{g,p}^b;\Q) \cong \begin{cases}
\wedge^3 H & \text{if $p=1$ or $b=1$},\\
(\wedge^3 H)/H & \text{if $p=b=0$}.
\end{cases}\]
Here $H$ is embedded in $\wedge^3 H$ via the map $h \mapsto h \wedge \omega$, where
$\omega \in \wedge^2 H$ represents the algebraic intersection pairing.\footnote{The algebraic intersection pairing
$\omega$ is an alternating bilinear form on $H$, i.e., an element of the dual space
$(\wedge^2 H)^{\ast} = \Hom(\wedge^2 H,\Q)$.  The form $\omega$ identifies $H$ with $H^{\ast}$, and thus
$(\wedge^2 H)^{\ast}$ with $\wedge^2 H$.  If $\{a_1,b_1,\ldots,a_g,b_g\}$ is a symplectic basis for $H$,
then the resulting $\omega \in \wedge^2 H$ is $\omega = a_1 \wedge b_1 + \cdots + a_g \wedge b_g$.}  The
above isomorphism is an isomorphism of representations of $\Sp_{2g}(\Z)$.  From it, we deduce
that the following two things hold for $g \geq 3$:
\begin{itemize}
\item $\HH^1(\Torelli_{g,p}^b;\Q)$ is finite-dimensional; and
\item $\HH^1(\Torelli_{g,p}^b;\Q)$ is an algebraic representation of $\Sp_{2g}(\Z)$, i.e., the action
of $\Sp_{2g}(\Z)$ on it extends to a representation of the algebraic group $\Sp_{2g}(\Q)$.
\end{itemize}
The main result of \cite{MinahanPutmanH2Torelli} extends this to the second rational cohomology group:

\begin{maintheorem}[{Minahan--Putman, \cite[Theorem B]{MinahanPutmanH2Torelli}}]
\label{maintheorem:h2finite}
Fix some $p,b \geq 0$ with $p+b \leq 1$.  Then $\HH^2(\Torelli_{g,p}^b;\Q)$ is finite-dimensional
for $g \geq 5$ and an algebraic representation of $\Sp_{2g}(\Z)$ for $g \geq 6$.
\end{maintheorem}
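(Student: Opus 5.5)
The plan follows the strategy of Minahan's thesis and the Minahan--Putman paper: study $\HH^2(\Torelli_{g,p}^b;\Q)$ through the action of $\Torelli$ on a highly connected simplicial complex whose stabilizers are Torelli groups of smaller surfaces, and induct on genus. Concretely, I would use a complex of separating curves, or of "handles" (genus-one subsurfaces cut off by separating curves), on which $\Torelli_{g,p}^b$ acts. Such a complex is $2$-connected in a range of genera, and the stabilizer of a simplex is essentially a product of a Torelli group of a smaller surface, $\Torelli_{g-k,\ast}^{1}$, with Torelli groups of the handles and some central Dehn twists. The equivariant spectral sequence $E_1^{pq} = \bigoplus_{\sigma \in X^{(p)}/\Torelli} \HH^q(\mathrm{Stab}(\sigma);\Q) \Rightarrow \HH^{p+q}(\Torelli;\Q)$ then expresses $\HH^2$ in terms of $\HH^2$, $\HH^1$ and $\HH^0$ of stabilizers.

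The main difficulty, which I expect to be the real obstacle, is that the $\Torelli$-orbits of simplices are infinite: $\Torelli$ does not act cocompactly, since orbits are indexed by homology data such as symplectic splittings. So the $E_1$ terms are infinite direct sums, and finite-dimensionality cannot be read off directly. The way around this is to make the $E_1$ page $\Sp_{2g}(\Z)$-equivariant, treating it as induced representations from the subgroups stabilizing the homological data of a simplex. Then one proves that the relevant differentials and coinvariant-type quotients are "generated" by a finite-dimensional piece. More precisely, I would:

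(1) Use Johnson's computation of $\HH^1$ of stabilizers, which is finite-dimensional and algebraic.

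(2) Inductively assume $\HH^2$ of the smaller Torelli groups is finite-dimensional.

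(3) Show that $\HH^2(\Torelli)$ injects into the kernel of a map from $\HH^2(\text{vertex stabilizer})$ summed over orbits, modulo an $E_2^{1,1}$ and $E_2^{2,0}$ contribution.

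(4) Show that the $\Sp_{2g}(\Z)$-orbit of a single vertex-term already determines everything, using the transitivity of $\Sp_{2g}(\Z)$ on symplectic splittings of each type. This reduces the problem to an "$\Sp$-finiteness" statement: $\HH^2(\Torelli;\Q)$ is a quotient of a representation built from finitely many finite-dimensional pieces.

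Getting genuine finiteness rather than finite generation as an $\Sp_{2g}(\Z)$-module requires showing that the restriction to a vertex stabilizer is injective, and this is where the genus bound $g\ge 5$ enters. Using the connectivity range of the complex and the stability range of $\HH^1$, the obstruction terms $E_2^{1,1}$ and $E_2^{2,0}$ vanish. These are controlled by $\HH^1$ of stabilizers, computed by Johnson, and by the $2$-connectivity of the complex.

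For algebraicity at $g\ge 6$, I would first obtain finite-dimensionality. Then I would show that a finite-index (congruence) subgroup acts trivially on the complement of an algebraic part, and invoke Margulis superrigidity together with the congruence subgroup property of $\Sp_{2g}(\Z)$ for $g\ge 2$. By these, a finite-dimensional rational representation agrees with an algebraic one on a finite-index subgroup. It then remains to show that no nontrivial finite-quotient part occurs. For this, restrict to a stabilizer $\Torelli_{g-1}^1$, on which the inductive hypothesis with $g-1\ge 5$ gives algebraicity of the restricted pieces. The extra genus is needed so that restriction to $\Sp_{2(g-1)}(\Z)$ detects the representation and kills finite-image summands.
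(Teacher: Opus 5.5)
Your plan breaks at the step you single out yourself. Transitivity of $\Sp_{2g}(\Z)$ in step (4) only gives finite generation over $\Q[\Sp_{2g}(\Z)]$. Your bridge to finite-dimensionality is that restriction from $\HH^2(\Torelli;\Q)$ to a single vertex stabilizer is injective, and this is false.

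Here is a counterexample in $\Torelli_g^1$. Let $G_v$ be the stabilizer of the genus-one separating curve cutting off the handle with homology $\langle a_1,b_1\rangle$, and put $H'=\langle a_2,b_2,\ldots,a_g,b_g\rangle$.
\begin{itemize}
\item Every element of $G_v$ is a power of that separating twist, on which $\tau_1$ vanishes, times a Torelli element supported on the complement.
\item For such elements, loops in the handle change only by conjugation by loops in the complement. Loops in the complement change by words in its $\pi_1$, where the handle boundary contributes $a_1\wedge b_1$. So $\tau_1(G_v)\subseteq \wedge^3 H'\oplus (H'\wedge a_1\wedge b_1)$.
\item Hence $(\tau_1)_\ast\colon \HH_2(G_v;\Q)\to\wedge^2\wedge^3 H$, followed by $\phi\colon\wedge^2\wedge^3H\to\wedge^6H$, lands in $\wedge^6H'\oplus(\wedge^4H'\wedge a_1\wedge b_1)$.
\item The abelian cycle of two disjoint simply intersecting pair maps, from Claim 3 in Step 2 of \S\ref{section:hainboundary}, maps to $\pm a_1\wedge a_2\wedge\cdots\wedge a_6$. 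This vector lies in $a_1\wedge\wedge^5H'$.
\item Choose a functional $\lambda$ on $\wedge^6H$ that kills $\wedge^6H'\oplus(\wedge^4H'\wedge a_1\wedge b_1)$ but not $a_1\wedge\cdots\wedge a_6$. Then $\tau_1^\ast(\lambda\circ\phi)\in\HH^2(\Torelli_g^1;\Q)$ is nonzero.
\item Its restriction to $G_v$ is zero, and a fortiori so is its restriction to the copy of $\Torelli_{g-1}^1$ supported off that handle.
\end{itemize}
Vanishing of $E_2^{1,1}$ and $E_2^{2,0}$ would not rescue the argument. It only embeds $\HH^2$ into $E_2^{0,2}$, which sits inside a product of $\HH^2(G_v;\Q)$ over infinitely many $\Torelli$-orbits of vertices.

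The paper does not reprove this theorem; it imports it from \cite{MinahanPutmanH2Torelli}. It does record that the main technical input is Theorem \ref{theorem:cokerneldegree}(i): the cokernel of the stabilization map $f_g\colon\HH_2(\Torelli_{g-1}^1;\Q)\to\HH_2(\Torelli_g^1;\Q)$ is a finite-dimensional algebraic $\Sp_{2(g-1)}(\Z)$-representation, built from explicit pieces of degree at most $5$. Restriction to $\Torelli_{g-1}^1$ is dual to $f_g$, so your injectivity claim would force $\coker(f_g)=0$, and the class above refutes that. Under $\Sp(H')$, the vector $a_1\wedge a_2\wedge\cdots\wedge a_6$ is a highest weight vector for a copy of $\bV_{1^5}(g-1)$, consistent with the degree bound in Theorem \ref{theorem:cokerneldegree}(ii). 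The missing idea is a finite bound on this genuinely new part of $\HH_2$ in genus $g$, and nothing in your outline supplies one.

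The algebraicity paragraph has the same defect, since it needs restriction to $\Torelli_{g-1}^1$ to detect the whole representation. Its induction also fails at $g=6$: the hypothesis in genus $5$ gives only finite-dimensionality, not algebraicity. Reducing via superrigidity and the congruence subgroup property to excluding finite-image twists is fine. But excluding those twists is the actual content, and you do not supply it.
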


As we said above, we will assume the truth of Theorem \ref{maintheorem:h2finite} and
describe how to calculate $\HH^2(\Torelli_{g,p}^b;\Q)$ as in
(i) and (ii) above.

\begin{remark}
Dualizing, Theorem \ref{maintheorem:h2finite} implies that
$\HH_2(\Torelli_{g,p}^b;\Q)$ is finite-dimensional for $g \geq 5$.
It is still not known if $\HH_2(\Torelli_{g,p}^b)$ is finitely generated
or if $\Torelli_{g,p}^b$ is finitely presented for $g \gg 0$.
\end{remark}

\subsection{Cup product pairing}

The cup product pairing is an alternating pairing
\[\fc\colon \wedge^2 \HH^1(\Torelli_{g,p}^b;\Q) \rightarrow \HH^2(\Torelli_{g,p}^b;\Q).\]
Hain \cite{HainInfinitesimal} calculated the image of this cup product pairing.  To state
his result, we must introduce some representation theoretic language.
The irreducible algebraic representations of $\Sp_{2g}(\Z)$ are indexed by partitions $\sigma$ with at most
$g$ parts (see \cite[\S 17]{FultonHarris}; we will describe this in more detail later).  
Let $\bV_{\sigma}(g)$ be the representation corresponding to $\sigma$, so $\bV_1(g) = H$ and $\bV_{1^3}(g) = (\wedge^3 H)/H$.\
The domain of the cup product pairing is
\begin{align*}
\wedge^2 \HH^1(\Torelli_g;\Q)   &\cong \wedge^2((\wedge^3 H)/H),\\
\wedge^2 \HH^1(\Torelli_g^1;\Q) &\cong \wedge^2(\wedge^3 H),\\
\wedge^2 \HH^1(\Torelli_{g,1};\Q) &\cong \wedge^2(\wedge^3 H).
\end{align*}
These decompose as\footnote{This calculation can easily be done using the program ``LiE''; see \cite{LieProgram}.}
\[\wedge^2 ((\wedge^3 H)/H) \cong \wedge^2 \bV_{1^3}(g) \cong \bV_0(g) \oplus \bV_{1^2}(g) \oplus \bV_{2^2}(g) \oplus \bV_{1^4}(g) \oplus \bV_{2^2,1^2}(g) \oplus \bV_{1^6}(g)\]
and
\[\wedge^2 (\wedge^3 H)
\cong \bV_0(g)^{\oplus 2} \oplus \bV_{1^2}(g)^{\oplus 3} \oplus \bV_{2^2}(g) \oplus \bV_{2,1^2}(g) \oplus \bV_{1^4}(g)^{\oplus 2} \oplus \bV_{2^2,1^2}(g) \oplus \bV_{1^6}(g).\]
Since the cup product pairing is equivariant, its image is isomorphic to a subrepresentation of this.
Hain calculated it as follows:\footnote{Hain also worked out the image for $g \in \{3,4,5\}$.  We restrict to $g \geq 6$ since
the work of Minahan--Putman \cite{MinahanPutmanH2Torelli} only works in this range and it allows a more uniform proof.}

\begin{maintheorem}[{Hain, \cite{HainInfinitesimal}}]
\label{maintheorem:hain}
Let $g \geq 6$.  Fix some $p,b \geq 0$ with $p+b \leq 1$.  Then the image of 
the cup product pairing $\fc\colon \wedge^2 \HH^1(\Torelli_{g,p}^b;\Q) \rightarrow \HH^2(\Torelli_{g,p}^b;\Q)$
is isomorphic to the following representation of $\Sp_{2g}(\Z)$:
\begin{itemize}
\item For $\Torelli_g$, the representation $\bV_{1^2}(g) \oplus \bV_{1^4}(g) \oplus \bV_{2^2,1^2}(g) \oplus \bV_{1^6}(g)$.
\item For $\Torelli_g^1$, the representation $\bV_{1^2}(g)^{\oplus 2} \oplus \bV_{2,1^2}(g) \oplus \bV_{1^4}(g)^{\oplus 2} \oplus \bV_{2^2,1^2}(g) \oplus \bV_{1^6}(g)$.
\item For $\Torelli_{g,1}$, the representation $\bV_0(g) \oplus \bV_{1^2}(g)^{\oplus 2} \oplus \bV_{2,1^2}(g) \oplus \bV_{1^4}(g)^{\oplus 2}\oplus \bV_{2^2,1^2}(g) \oplus \bV_{1^6}(g)$.
\end{itemize}
\end{maintheorem}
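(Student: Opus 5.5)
The image of $\fc$ is $\wedge^2 \HH^1 / \ker \fc$. Dually, it is determined by the kernel of the dual map $\HH_2(\Torelli_{g,p}^b;\Q) \to \wedge^2 \HH_1(\Torelli_{g,p}^b;\Q)$, whose image is the kernel of the bracket map $\wedge^2 \HH_1 \to \gamma_2/\gamma_3 \otimes \Q$. Here $\gamma_k$ is the lower central series of $\Torelli_{g,p}^b$. This is the standard consequence of the five-term exact sequence for the central extension $1 \to \gamma_2/\gamma_3 \to \Torelli/\gamma_3 \to \Torelli/\gamma_2 \to 1$: the image of the cup product on $\HH^1$ is dual to $\wedge^2 \HH_1 \to (\gamma_2/\gamma_3)\otimes\Q$. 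So the statement amounts to showing that, as an $\Sp_{2g}$-representation, $(\gamma_2/\gamma_3)\otimes \Q$ is isomorphic to the listed representation in each case.

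For the \textbf{upper bound} I use the second Johnson homomorphism. The Johnson filtration gives a Lie algebra map from $\bigoplus_k (\gamma_k/\gamma_{k+1})\otimes\Q$ to the positive part of the Lie algebra of symplectic derivations of the free Lie algebra on $H$. In degree $2$ the target is the kernel of the bracket $H \otimes \cL_3(H) \to \cL_4(H)$, or its analogue modulo inner derivations in the closed and punctured cases. The key inputs are:
\begin{itemize}
\item the composite $\wedge^2 \wedge^3 H \to \text{(degree-2 derivations)}$ is the bracket of derivations, which is explicitly computable;
\item Morita's/Hain's computation that this bracket map has image $\bV_{2^2}(g) \oplus \bV_0(g)$-type quotients missing.
\end{itemize}
Concretely, I would compute the image of $\tau_1 \wedge \tau_1 \to \mathfrak h_2$ as a subrepresentation, and find it to be $\bV_{2^2}\oplus\bV_0$ in the closed case. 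This shows that $\gamma_2/\gamma_3\otimes \Q$ surjects onto that image. The kernel of the bracket therefore has dimension \emph{at most} $\wedge^2\wedge^3H$ minus the image, which gives a \emph{lower} bound on $\ker$ of the homology map. Equivalently it gives the statement that $\fc$ kills at least the complementary pieces. Comparing with the decompositions listed before Theorem \ref{maintheorem:hain}, the removed pieces are exactly $\bV_0\oplus\bV_{2^2}$ for $\Torelli_g$, $\bV_0^2\oplus\bV_{1^2}\oplus\bV_{2^2}$ for $\Torelli_g^1$, and $\bV_0\oplus\bV_{1^2}\oplus\bV_{2^2}$ for $\Torelli_{g,1}$. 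These should match the degree-2 Johnson images, which differ through the extra central/inner-derivation pieces.

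The \textbf{reverse inequality} is the main obstacle. I need to know that $(\gamma_2/\gamma_3)\otimes\Q$ injects into the Johnson image, i.e.\ that the Johnson kernel $\cK$ satisfies $\cK \cap \gamma_2$ being torsion modulo $\gamma_3$. This is exactly Hain's theorem that the Torelli Lie algebra is quadratically presented, or at least that its degree-$2$ part is detected by $\tau_2$, up to the subtle $\bV_0$ term: in the closed case the Casson-type central class is killed. For this step I would use Hain's mixed Hodge theory. The Malcev Lie algebra of $\Torelli$ carries a mixed Hodge structure whose weight graded quotients agree with the lower central series. Purity of $\HH^1$ then forces $\HH_2$ to map onto the quadratic relations in weight $-2$, and comparing with the Johnson image yields the equality. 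Short of reproducing Hodge theory, an alternative is to exhibit explicit abelian cycles (commuting bounding pair maps and separating twists) in $\HH_2(\Torelli;\Q)$ whose images in $\wedge^2\HH_1$ generate, as $\Sp$-representations, the full kernel of the bracket. This requires checking highest weight vectors for each irreducible summand. I would attempt the abelian-cycle route for the nontrivial summands and treat the $\bV_0$ and $\bV_{1^2}$ summands separately via the forgetful maps $\Torelli_g^1 \to \Torelli_{g,1} \to \Torelli_g$, using the Gysin/Birman sequences to account for the extra $\bV_0$ arising from the boundary twist.
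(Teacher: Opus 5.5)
\textbf{Gap: one trivial summand is invisible to $\tau_2$.} Your reduction to $(\gamma_2/\gamma_3)\otimes\Q$ is sound and is equivalent to the paper's. Since $\tau_1$ is a rational isomorphism on $\HH_1$, the cokernel of $\HH_2(\Torelli;\Q)\to\wedge^2\HH_1(\Torelli;\Q)$ is both $(\gamma_2/\gamma_3)\otimes\Q$ and the paper's $\HH_1(\Torelli_g^1[2];\Q)_{\Torelli_g^1}$ (and its analogues). Note that $(\gamma_2/\gamma_3)\otimes\Q$ should be the \emph{complement} of the listed representation in $\wedge^2\HH_1$, not the listed representation itself.

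The real gap is your claim that the pieces to be removed ``should match the degree-$2$ Johnson images.'' They do not. In each of the three cases, exactly one copy of $\bV_0(g)$ in $(\gamma_2/\gamma_3)\otimes\Q$ is invisible to $\tau_2$:
\begin{itemize}
\item For $\Torelli_g^1$, the $\Q$-span of the image of $\tau_2$ is $\bV_0(g)\oplus\bV_{1^2}(g)\oplus\bV_{2^2}(g)$ (Morita). But $\wedge^2\wedge^3 H$ contains $\bV_0(g)^{\oplus 2}$, and both copies must be removed.
\item For $\Torelli_{g,1}$, the $\bV_0(g)$ in the boundary Johnson image is spanned by $\tau_2(T_b)$, and $T_b$ is trivial in $\Torelli_{g,1}$. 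So the Johnson image is only $\bV_{1^2}(g)\oplus\bV_{2^2}(g)$.
\item For $\Torelli_g$, the Johnson image is only $\bV_{2^2}(g)$, not $\bV_{2^2}(g)\oplus\bV_0(g)$ as you assert.
\end{itemize}

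Consequently your upper bound leaves one trivial summand in the cup-product image in every case. The statement you propose for the reverse inequality, that $(\gamma_2/\gamma_3)\otimes\Q$ injects into the Johnson image, is false. Its kernel is exactly this $\bV_0(g)$, which is detected by Morita's core of the Casson invariant (the central $\Q$ in Hain's extension of the Johnson Lie algebra). This class is not ``killed''; it is a nonzero element of $(\gamma_2/\gamma_3)\otimes\Q$ that $\tau_2$ does not see. Executed as written, your argument would output an extra $\bV_0(g)$; for $\Torelli_g^1$ you would get the $\Torelli_{g,1}$ answer. Your Hodge-theoretic alternative, as sketched (``comparing with the Johnson image''), hits the same problem.

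\textbf{The missing ingredient.} You need an independent proof that the cup-product image contains no trivial summand. The paper gets this from the Hochschild--Serre spectral sequence of $1\to\Torelli\to\Mod\to\Sp_{2g}(\Z)\to1$ with $\Q$ coefficients:
\begin{itemize}
\item Borel's theorem gives $E_2^{p1}=0$ for small $p$, because $\HH^1(\Torelli;\Q)$ has no trivial factor. It also gives $E_2^{20}=\Q$ and $E_2^{30}=0$.
\item Harer's $\HH^2(\Mod;\Q)\cong\Q$ then forces $E_2^{02}=\HH^2(\Torelli;\Q)^{\Sp_{2g}(\Z)}=0$.
\item The cup-product image is a quotient of the algebraic representation $\wedge^2\HH^1$, so vanishing invariants exclude every $\bV_0(g)$ from it.
\end{itemize}
The $\tau_2$ computation is then needed only for $\bV_{1^2}(g)\oplus\bV_{2^2}(g)$.

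\textbf{The rest matches the paper.} Your abelian-cycle route for the lower bound is exactly what the paper does. It uses $\fC(f,h)=\tau_1(f)\wedge\tau_1(h)$ for commuting bounding pair and simply intersecting pair maps, plus highest weight vectors. It then uses the explicit maps through $\wedge^6H\to\wedge^4H\to\wedge^2H$ to certify that the second copies of $\bV_{1^4}(g)$ and $\bV_{1^2}(g)$ are genuinely new. The punctured and closed cases are deduced from the boundary case. For $\Torelli_{g,1}$, the central extension by $T_b$ together with $\tau_2(T_b)\neq 0$ produces the extra $\bV_0(g)$. For $\Torelli_g$, the paper uses the Birman exact sequence together with the same no-invariants argument.
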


\begin{remark}
The discrete group $\Sp_{2g}(\Z)$ is Zariski dense in the algebraic group
$\Sp_{2g}(\Q)$.  If $W$ is an algebraic representation of $\Sp_{2g}(\Z)$, it
follows that representation-theoretic properties of $W$ as a representation
of $\Sp_{2g}(\Z)$ are the same as the corresponding properties of $W$ as
a representation of $\Sp_{2g}(\Q)$.  For instance, if $W$ is an irreducible
algebraic representation of $\Sp_{2g}(\Q)$ then it is also an irreducible
representation of $\Sp_{2g}(\Z)$, and similarly for the decomposition of
$W$ into a direct sum of irreducible representations.
\end{remark}

\subsection{Maximal algebraic subrepresentation}

For $g \geq 12$, Kupers--Randal-Williams (\cite{KupersRandalWilliams}; see also its sequel \cite{RandalWilliamsII}) 
proved that the fragment
of $\HH^2(\Torelli_{g,p}^b;\Q)$ identified by Theorem \ref{maintheorem:hain} is the maximal algebraic
subrepresentation of $\HH^2(\Torelli_{g,p}^b;\Q)$.  They proved this
without knowing that $\HH^2(\Torelli_{g,p}^b;\Q)$ is finite-dimensional, much less
an algebraic representation of $\Sp_{2g}(\Z)$.  Roughly speaking, they prove that
a larger algebraic subrepresentation would contradict known calculations of $\HH^2(\Mod_{g,p}^b;\bV)$
with $\bV$ an algebraic representation of $\Sp_{2g}(\Z)$.  

\begin{remark}
The work of Kupers--Randal-Williams does not depend on Hain's calculation
of the image of the cup product pairing.\footnote{Our proof of their
theorem will use Hain's calculation, which we will use to
give a lower bound on the size of $\HH^2(\Torelli_{g,p}^b;\Q)$.}
What they actually prove is that the maximal algebraic
subrepresentation of $\HH^2(\Torelli_{g,p}^b;\Q)$ is isomorphic to
the representation identified by Theorem \ref{maintheorem:hain}.  It then
follows that it must be isomorphic to the image of the cup product pairing.  If
there was cohomology that was not in the image of the cup product pairing, their
techniques would detect it.
\end{remark} 

Using the fact that we now know that $\HH^2(\Torelli_{g,p}^b;\Q)$ is a finite-dimensional
algebraic representation of $\Sp_{2g}(\Z)$ for $g \geq 6$ (Theorem \ref{maintheorem:h2finite}),
we will give a simplified proof of Kupers--Randal-Williams's theorem.  In fact, we will use
not only Theorem \ref{maintheorem:h2finite}, but also some of the details from its proof.
We will also use some work of Patzt \cite{PatztRepStability} on representation stability to
control certain stabilization maps.  We will prove the following, whose statement combines
Kupers--Randal-Williams's theorem with Theorem \ref{maintheorem:h2finite} of Minahan
and the author:

\begin{maintheorem}
\label{maintheorem:h2calc}
Let $g \geq 12$.  Fix some $p,b \geq 0$ with $p+b \leq 1$.  Then the image of
the cup product pairing $\fc\colon \wedge^2 \HH^1(\Torelli_{g,p}^b;\Q) \rightarrow \HH^2(\Torelli_{g,p}^b;\Q)$
spans $\HH^2(\Torelli_{g,p}^b;\Q)$.
\end{maintheorem}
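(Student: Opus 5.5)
By Theorem \ref{maintheorem:h2finite}, for $g \geq 12$ the group $\HH^2(\Torelli_{g,p}^b;\Q)$ is a finite-dimensional algebraic representation of $\Sp_{2g}(\Z)$. Algebraic representations of $\Sp_{2g}(\Q)$ are semisimple, so it decomposes as a direct sum of irreducibles $\bV_\sigma(g)$. The image of $\fc$ is a subrepresentation, and Theorem \ref{maintheorem:hain} identifies it. It therefore suffices to show that, for every partition $\sigma$, the multiplicity of $\bV_\sigma(g)$ in $\HH^2(\Torelli_{g,p}^b;\Q)$ is at most its multiplicity in the image of $\fc$. Hain's theorem gives the reverse inequality. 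Since $\bV_\sigma(g)$ is self-dual, this multiplicity equals $\dim \Hom_{\Sp_{2g}(\Z)}(\bV_\sigma(g), \HH^2(\Torelli_{g,p}^b;\Q)) = \dim (\HH^2(\Torelli_{g,p}^b;\Q)\otimes \bV_\sigma(g))^{\Sp_{2g}(\Z)}$. The plan is to bound these invariants from above with the Hochschild--Serre spectral sequence of
\[1 \to \Torelli_{g,p}^b \to \Mod_{g,p}^b \to \Sp_{2g}(\Z) \to 1\]
with coefficients in $\bV_\sigma(g)$.

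In this spectral sequence, $E_2^{pq} = \HH^p(\Sp_{2g}(\Z); \HH^q(\Torelli_{g,p}^b;\Q)\otimes \bV_\sigma(g))$, and it converges to $\HH^{p+q}(\Mod_{g,p}^b;\bV_\sigma(g))$. The entry $E_2^{0,2}$ is exactly the invariant space we want to bound.

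\begin{itemize}
\item[(1)] The coefficients $\HH^q(\Torelli_{g,p}^b;\Q)\otimes\bV_\sigma(g)$ for $q\le 2$ are algebraic, by Johnson's theorem for $q=1$ and Theorem \ref{maintheorem:h2finite} for $q=2$. Borel's stability theorem then computes $E_2^{p,q}$ in low degrees: $\HH^p(\Sp_{2g}(\Z);W) = W^{\Sp}\otimes \HH^p(\Sp_{2g}(\Z);\Q)$ for algebraic $W$ and $p$ small relative to $g$. Here $\HH^\ast(\Sp_{2g}(\Z);\Q)$ is polynomial on classes in degrees $2,6,\dots$, so $\HH^1=\HH^3=0$.
\item[(2)] The abutment $\HH^2(\Mod_{g,p}^b;\bV_\sigma(g))$ is known in the stable range by Looijenga and Kawazumi's twisted Madsen--Weiss computations. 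It is expressed through $\kappa$-classes and classes pulled back from powers of the surface bundle, and is nonzero only for finitely many $\sigma$ of small size.
\item[(3)] Every entry contributing to total degree $2$ must be accounted for. The differential $d_2\colon E_2^{0,1}\to E_2^{2,0}$ vanishes because $E_2^{2,0}=\HH^2(\Sp;\bV_\sigma)$, which is zero for $\sigma\neq 0$ by Borel. The entry $E_2^{1,1}=0$ by Borel vanishing of $\HH^1$. The differentials out of $E_2^{0,2}$ land in $E_2^{2,1}$ and $E_3^{3,0}$, and $E^{3,0}$ vanishes. This yields
\[\dim E_2^{0,2} \le \dim \HH^2(\Mod;\bV_\sigma) - \dim E_\infty^{2,0} + \dim \ker\bigl(d_2\colon E_2^{0,2}\to E_2^{2,1}\bigr)^{\perp\text{-part}},\]
so we also need control of $E_2^{2,1} = (\HH^1(\Torelli)\otimes\bV_\sigma)^{\Sp}\otimes\HH^2(\Sp)$.
\end{itemize}

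The main obstacle is controlling the differential $d_2\colon E_2^{0,2}\to E_2^{2,1}$. Knowing only the abutment bounds $E_\infty^{0,2}$, not $E_2^{0,2}$, so we must show that the kernel of this differential accounts for everything. My first attempt would be the following. Classes in $E_2^{0,2}$ coming from the cup-product image are products of $E_2^{0,1}$-classes, which survive. The multiplicative structure then gives $d_2(xy)=d_2(x)y\pm x\,d_2(y)$ with $d_2=0$ on $E_2^{0,1}$ (the extension class $\HH^2(\Sp)$ acts freely). Combined with a dimension count comparing both sides with Kawazumi's formula, this should show that the $E_2^{0,2}$ surviving to $E_\infty$ already exhausts $\HH^2(\Mod;\bV_\sigma)$ modulo contributions from $E^{2,0}$ and $E^{1,1}$. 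I would then treat $d_2$ on any hypothetical extra classes by a stabilization argument: such a class would be stable in $g$ by Patzt's representation stability for the maps $\Torelli_{g,p}^b\to\Torelli_{g+1,p}^b$, which is where $g\ge 12$ enters. Stable classes could then be compared with the stable computation, where $\HH^\ast(\Mod_\infty;\bV_\sigma)$ is free over $\HH^\ast(\Mod_\infty;\Q)\supseteq \HH^\ast(\Sp;\Q)$. That freeness forces $d_2$ on extra classes to be zero, so they would survive and give too large an abutment, a contradiction.

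Finally I would check the case analysis for $\sigma\in\{0,1^2,2^2,2,1^2,1^4,2^2,1^2,1^6\}$ and the three surfaces. The multiplicities match Theorem \ref{maintheorem:hain}, the missing $\bV_0$ and $\bV_{2^2}$ being explained by $\kappa_1$-type relations. For all other $\sigma$ the abutment is zero, which forces the multiplicity to vanish.
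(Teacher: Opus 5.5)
The step that fails is in (2) and your final sentence: ``for all other $\sigma$ the abutment is zero.''

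\textbf{The main gap.} At a fixed genus such as $g=12$ you must control $\HH^2(\Mod_{g,p}^b;\bV_\sigma(g))$ for every partition $\sigma$ with at most $g$ parts, including $\sigma$ of arbitrarily large size. The Looijenga and Kawazumi computations do not give this.
\begin{itemize}
\item They are stable-range statements whose ranges depend on the coefficients. Even the version in Theorem \ref{theorem:kawazumi}, which already uses uniform stability, requires $g\ge d$ for $H^{\otimes d}$. So at $g=12$ they say nothing about, e.g., $\bV_{20}(12)$.
\item Where they do apply, they describe $\HH^\bullet(\Mod_g^1;H^{\otimes d})$. Extracting each isotypic piece $\bV_\sigma$ from that description needs a real analysis of the twisted Morita--Mumford classes (the Kupers--Randal-Williams combinatorics), which you do not supply.
\end{itemize}
The missing ingredient is a stable range uniform in $\sigma$, namely Theorem \ref{theorem:mpprw} of Miller--Patzt--Petersen--Randal-Williams. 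That theorem, not Patzt's representation stability (whose range is ineffective), is where $g\ge 12=4\cdot 2+4$ comes from.

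\textbf{How the paper closes it.} The paper avoids the isotypic analysis by first proving that every irreducible factor of $\HH^2(\Torelli_g^1;\Q)$ has degree at most $6$ (Proposition \ref{proposition:bounddegreesuniform}).
\begin{itemize}
\item For $g\ge G$, this follows from the degree-$\le 5$ bound on $\coker(f_g)$ extracted from the proof of Theorem \ref{maintheorem:h2finite}, Patzt's theorem, and the branching rule.
\item Lemma \ref{lemma:h2torellitwisted} together with Theorem \ref{theorem:mpprw} then pushes the bound down to $g\ge 12$.
\end{itemize}
Given the bound, one only compares total dimensions: $\dim\HH^2(\Mod_g^1;H^{\otimes d})=t_d+\dim\Hom_{\Sp_{2g}}(\cV_{\cp},H^{\otimes d})$ for $1\le d\le 6$, by counting weighted partitions $P$ with $k(P)\in\{0,2\}$. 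Your ``the multiplicities match'' asserts exactly this count without doing it.

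\textbf{The differential.} Your treatment of $d_2\colon E_2^{0,2}\to E_2^{2,1}$ has the right idea but is more roundabout than needed and never quite stated.
\begin{itemize}
\item Only $\sigma\in\{1,1^3\}$ matter. There $E_2^{2,1}\cong\Q$ is spanned by $c_2\cdot b$, with $b$ spanning $E_2^{0,1}$.
\item Since $c_2$ pulls back to a nonzero multiple of $\kappa_1$, and $\kappa_1 b\neq 0$ by Kawazumi's freeness, $E_2^{2,1}$ survives. Hence $d_2=0$; this is Case 2 of Lemma \ref{lemma:h2torellitwisted}.
\item No stabilization argument and no discussion of which classes are cup products is needed. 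The inequality in (3) should simply read $\dim E_2^{0,2}\le\dim\HH^2(\Mod;\bV_\sigma)+\dim E_2^{2,1}$, with the last term dropping out once $d_2=0$.
\end{itemize}

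\textbf{Punctured and closed cases.} Running the argument directly for $\Mod_{g,1}$ and $\Mod_g$ would need the corresponding twisted computations, with uniform ranges, for those groups. You do not provide these. The paper instead reduces these cases to $\Torelli_g^1$, using the central extension $1\to\Z\to\Torelli_g^1\to\Torelli_{g,1}\to 1$ generated by the boundary twist and the Birman exact sequence.
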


\begin{remark}
The main way our proof is simpler than that of \cite{KupersRandalWilliams} is that
we are able to avoid much of their complicated combinatorics, 
as well the need to carefully develop properties of the twisted Morita--Mumford classes.  The
paper \cite{KupersRandalWilliams} proves many other results beyond the ones we discuss,
and for these their machinery seems essential.
\end{remark}

\begin{remark}
The published version of \cite{KupersRandalWilliams} claims to work for $g \geq 6$, but contains
an error; see \cite{KupersRandalWilliamsErratum}.  Their new range $g \geq 12$ is exactly what
is needed to apply recent work of Miller--Patzt--Petersen--Randal-Williams \cite{MPPRW} to deduce
uniform twisted homological stability for $\HH^2(\Mod_g^1;\bV)$.  Here ``uniform'' means
that the stable range does not depend on the algebraic representation $\bV$ of $\Sp_{2g}(\Z)$.
We will also use the results of \cite{MPPRW}.
\end{remark}

\subsection{Combining the results}

Combining Theorems \ref{maintheorem:hain} and \ref{maintheorem:h2calc}, we deduce
the following corollary:

\begin{maincorollary}[{Minahan--Putman, \cite{MinahanPutmanH2Torelli}}]
\label{maincorollary:h2torelli}
Let $g \geq 12$.  Fix some $p,b \geq 0$ with $p+b \leq 1$.  Then the
$\Sp_{2g}(\Z)$-representation $\HH^2(\Torelli_{g,p}^b;\Q)$ is as follows:
\begin{itemize}
\item For $\Torelli_g$, the representation $\bV_{1^2}(g) \oplus \bV_{1^4}(g) \oplus \bV_{2^2,1^2}(g) \oplus \bV_{1^6}(g)$.
\item For $\Torelli_g^1$, the representation $\bV_{1^2}(g)^{\oplus 2} \oplus \bV_{2,1^2}(g) \oplus \bV_{1^4}(g)^{\oplus 2} \oplus \bV_{2^2,1^2}(g) \oplus \bV_{1^6}(g)$.
\item For $\Torelli_{g,1}$, the representation $\bV_0(g) \oplus \bV_{1^2}(g)^{\oplus 2} \oplus \bV_{2,1^2}(g) \oplus \bV_{1^4}(g)^{\oplus 2}\oplus \bV_{2^2,1^2}(g) \oplus \bV_{1^6}(g)$.
\end{itemize}
In all three cases, $\HH^2(\Torelli_{g,p}^b;\Q)$ is spanned by the image of the cup
product pairing on $\HH^1(\Torelli_{g,p}^b;\Q)$.
\end{maincorollary}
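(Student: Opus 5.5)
This corollary follows formally from the two theorems it combines. Fix $g \geq 12$ and $p,b \geq 0$ with $p+b \leq 1$. Since $g \geq 12 \geq 6$, Theorem \ref{maintheorem:hain} applies. It identifies the image of the cup product pairing
\[\fc\colon \wedge^2 \HH^1(\Torelli_{g,p}^b;\Q) \rightarrow \HH^2(\Torelli_{g,p}^b;\Q)\]
as an $\Sp_{2g}(\Z)$-representation. The isomorphism type is the one listed in each of the three cases $\Torelli_g$, $\Torelli_g^1$, and $\Torelli_{g,1}$. The image is an $\Sp_{2g}(\Z)$-subrepresentation because $\fc$ is equivariant: the Mapping class group acts compatibly on both $\HH^1$ and $\HH^2$ through conjugation, and cup products are natural.

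Next, Theorem \ref{maintheorem:h2calc} applies since $g \geq 12$. It says the image of $\fc$ spans $\HH^2(\Torelli_{g,p}^b;\Q)$. The image of a linear map is already a subspace, so this means $\fc$ is surjective. Hence the inclusion of $\Sp_{2g}(\Z)$-representations
\[\operatorname{Im}(\fc) \hookrightarrow \HH^2(\Torelli_{g,p}^b;\Q)\]
is an equality. It follows that $\HH^2(\Torelli_{g,p}^b;\Q)$ is isomorphic, as a representation, to the one computed in Theorem \ref{maintheorem:hain}, case by case. This gives the three displayed decompositions, and the final sentence of the corollary is exactly Theorem \ref{maintheorem:h2calc}.

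No step here is a real obstacle; the only point needing care is that the identification is as representations and not merely as vector spaces. That requires only the equivariance of $\fc$, which I would note explicitly. All the substantive difficulty is packaged in Theorems \ref{maintheorem:hain} and \ref{maintheorem:h2calc}, and the latter in turn uses Theorem \ref{maintheorem:h2finite}.
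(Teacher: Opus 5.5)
Your proposal is correct and matches the paper's argument exactly. The paper deduces this corollary by combining Theorem~\ref{maintheorem:hain}, which identifies the image of $\fc$ as an $\Sp_{2g}(\Z)$-representation for $g \geq 6$, with Theorem~\ref{maintheorem:h2calc}, which shows that this image spans, and hence equals, $\HH^2(\Torelli_{g,p}^b;\Q)$ for $g \geq 12$. Your use of the equivariance of $\fc$ to get the identification as representations rather than merely as vector spaces is correct.
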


\subsection{Remark on representation theory}

The proofs of Theorems \ref{maintheorem:hain} and \ref{maintheorem:h2calc} rely on
fairly intricate representation-theoretic calculations.  Courses on representation
theory often focus on theoretical aspects of the subject and do not prepare
students to make explicit calculations.  A secondary goal of this paper is
to give many examples of concrete representation-theoretic calculations.

\subsection{Johnson homomorphisms}

The Johnson homomorphisms are a key tool in the proof 
of Theorem \ref{maintheorem:hain}.  They were introduced
by Johnson \cite{JohnsonHomo, JohnsonSurvey} and further
developed by Morita \cite{MoritaAbelian} and many others.
We will give a fairly complete account (with proofs) of
the part of this theory needed for Theorem \ref{maintheorem:hain}.
In addition to making this paper more self-contained, this
will also give us the opportunity to give some easier
representation-theoretic calculations before moving
on to the more difficult calculations in Theorem \ref{maintheorem:hain}.

\subsection{Outline of paper}

This paper has two parts:
Part \ref{part:1} proves Theorem \ref{maintheorem:hain}, and
Part \ref{part:2} proves Theorem \ref{maintheorem:h2calc}.
Each part begins with an outline.

\subsection{Acknowledgments}

I wish to thank Richard Hain for several inspiring conversations about his work, Annie Holden for corrections and useful discussions about
computing cup products, Oscar Randal-Williams for corrections, and Gavril Farkas for encouraging me to write this survey.

\part{Image of cup product pairing}
\label{part:1}

In this first part of the paper, we calculate the image of the cup product pairing.
Most of this part is devoting to calculating this for $\Torelli_g^1$, and then at the end
we show how to deal with $\Torelli_{g,1}$ and $\Torelli_g$.
We start by discussing the representation theory of the algebraic
group $\SL_n$ in \S \ref{section:slrep}.  Using this, in \S \ref{section:freelie} we
discuss the free Lie algebra and its connection to the lower central series of a free group.

With these preliminaries out of the way, we can now discuss our main tool: the generalized
Johnson homomorphism.  We introduce it in \S \ref{section:johnsongeneral}.  We then
make a number of calculations with it in \S \ref{section:johnsoncalc} and prove a theorem
of Morita about its image in \S \ref{section:johnsonimage}.  
We then have two final sections of preliminary results in \S \ref{section:sprep} -- \S \ref{section:spprojection},
which discuss the representation theory of the algebraic group $\Sp_{2g}$.
Using this, we calculate the images of the the first and second Johnson homomorphisms
in \S \ref{section:firstjohnson} and \S \ref{section:secondjohnson}.

We then turn to the image of the cup product pairing.  In \S \ref{section:h2notrivial}
we prove this image has no trivial subrepresentations.  Next, in \S \ref{section:cupjohnson}, we prove
that this image can be calculated in terms of the first Johnson homomorphism.
We then perform this calculation in \S \ref{section:hainboundary}.  The second Johnson homomorphism
plays an important role here, where it is used to compute an upper bound for the image of
the cup product pairing.  Everything we have done up until now is for $\Torelli_g^1$, and we close
this part in \S \ref{section:hainpuncture} and \S \ref{section:hainclosed}
by showing how to deal with $\Torelli_{g,1}$ and $\Torelli_g$.

\section{Representation theory of \texorpdfstring{$\SL_n$}{SLn}}
\label{section:slrep}

We begin with some preliminaries about the representation theory of the algebraic group
$\SL_n$.  Fix a field $\bk$ of characteristic $0$.  We will regard $\SL_n$ as being defined
over $\bk$, and all representations of $\SL_n$ will be finite-dimensional and defined over $\bk$.  Everything
we discuss can be found in \cite{BorelLinear}, and in the slightly different language
of Lie algebra representations can also be found in \cite{FultonHarris}.

\subsection{Maximal split torus}

Let $\bT$ be the subgroup of $\SL_n$ consisting of diagonal matrices, so
$\bT(\bk) \cong (\bk^{\times})^{n-1}$.  This is what is called a split
algebraic torus, and is a maximal split torus in $\SL_n$.  For $t_1,\ldots,t_n \in \bk$
with $t_1 \cdots t_n = 1$, let $\diag(t_1,\ldots,t_n) \in \bT(\bk)$ be the corresponding
diagonal matrix.  A {\em character} of $\bT$
is a homomorphism $\chi\colon \bT \rightarrow \GL_1$.  For some $d_1,\ldots,d_n \in \Z$, such a
character can be written as
\[\chi(\diag(t_1,\ldots,t_n)) = t_1^{d_1} \cdots t_n^{d_n} \quad \text{for all $\diag(t_1,\ldots,t_n) \in \bT(\bk)$}.\]
Since $t_1 \cdots t_n = 1$, we can assume that $d_n = 1$, in which case this representation is unique.
Let $\chi(\bT)$ be the set of all characters of $\bT$.  This is an abelian group,
and by the above we have $\chi(\bT) \cong \Z^{n-1}$.  For $1 \leq i \leq n$, let
$E_i \in \chi(\bT)$ be character defined by
\[E_i(\diag(t_1,\ldots,t_n)) = t_i.\]
The characters $\{E_1,\ldots,E_n\}$ generate $\chi(\bT)$ and satisfy the single relation
$E_1 + \cdots + E_n = 0$.

\subsection{Weight decomposition}

Let $W$ be a representation of $\SL_n$.  For $\chi \in \chi(\bT)$, let
\[W_{\chi} = \Set{$w \in W$}{$D \cdot w = \chi(D) w$ for all $D \in \bT(\bk)$}.\]
If $W_{\chi} \neq 0$, then 
$\chi$ is a {\em weight} of $W$ and $W_{\chi}$ is its corresponding {\em weight space}.
A nonzero vector $w \in W_{\chi}$ is a {\em weight vector} with weight $\chi$.
We have a direct sum decomposition
\[W = \bigoplus_{\chi \in \chi(\bT)} W_{\chi}\]
called the {\em weight decomposition} of $W$.

\begin{example}
Let $\{e_1,\ldots,e_n\}$ be the standard basis for the standard representation $\bk^n$
of $\SL_n$.
The weight decomposition of $\bk^n$ is then
\[\bk^n = \bigoplus_{i=1}^n (\bk^n)_{E_i} \quad \text{with $(\bk^n)_{E_i} = \Span{e_i}$}.\]
Similarly, for $n \geq 2$ the weight decomposition of $\wedge^2 \bk^n$ is
\[\wedge^2 \bk^n = \bigoplus_{1 \leq i < j \leq n} (\wedge^2 \bk^n)_{E_i + E_j} \quad \text{with $(\wedge^2 \bk^n)_{E_i+E_j} = \Span{e_i \wedge e_j}$}\]
and the weight decomposition of $\Sym^2(\bk^n)$ is
\[\Sym^2(\bk^n) = \left(\bigoplus_{i=1}^n \Sym^2(\bk^n)_{2 E_i}\right) \oplus \left(\bigoplus_{1 \leq i < j \leq n} \Sym^2(\bk^n)_{E_i + E_j}\right)\]
with
\[\Sym^2(\bk^n)_{2 E_i} = \Span{e_i \Cdot e_i} \quad \text{and} \quad \Sym^2(\bk^n)_{E_i + E_j} = \Span{e_i \Cdot e_j}.\qedhere\]
\end{example}

\subsection{Highest weight vectors}

Let $\bU$ be the subgroup of $\SL_n$ consisting of strictly upper triangular matrices.
If $W$ is a representation of $\SL_n$, then a {\em highest weight vector} in $W$ with weight $\chi \in \chi(\bT)$
is a nonzero vector $w \in W$ such that
\begin{itemize}
\item $w$ is a weight vector with weight $\chi$, so $w \neq 0$ and $w \in W_{\chi}$; and
\item $w$ is fixed by all elements of $\bU(\bk)$.
\end{itemize}
A {\em highest weight} of $W$ is the weight of a highest weight vector in $W$.

The integer points $\bU(\Z)$ are Zariski dense in $\bU(\bk)$,
so to check that a weight vector $w \in W$ is a highest weight vector it
is enough to check that it is invariant under $\bU(\Z)$.  In 
fact, it is enough to check this on generators for this group, which are as
follows.  Let $\{e_1,\ldots,e_n\}$ be the standard basis for $\bk^n$.
\begin{itemize}
\item For $1 \leq i < j \leq n$, the matrix $E_{ij} \in \bU(\bk)$ that fixes
all elements of $\{e_1,\ldots,e_n\}$ except for $e_j$,
on which it does the following:
\[E_{ij}(e_j) =  e_j + e_i.\]
\end{itemize}
We remark that these are examples of elementary matrices.

\begin{example}
Let $\{e_1,\ldots,e_n\}$ be the standard basis for the standard representation $\bk^n$
of $\SL_n$.
The vector $e_1 \in \bk^n$ is a highest weight vector with weight $E_1$.  For $1 \leq k \leq n$, the
vector $e_1 \wedge \cdots \wedge e_k \in \wedge^k \bk^n$ is a highest weight vector with weight $E_1 + \cdots+E_k$.
Also, for $k \geq 1$ the vector $e_1 \Cdot \ldots \Cdot e_1 \in \Sym^k(\bk^n)$ is a highest weight vector with weight $k E_1$.
One can check that up to scaling these are the only highest weight vectors in these representations.
\end{example}

\subsection{Theorem of the highest weight}

The representation theory of $\SL_n$ is controlled by highest weight vectors.
Indeed, the theorem of the highest weight says that:
\begin{enumerate}
\item All representations of $\SL_n$ decompose as direct sums of irreducible representations.
\item Up to scaling, each irreducible representation $W$ of $\SL_n$ contains a unique highest weight vector.
\item If $W$ is an arbitrary representation of $\SL_n$ and $w \in W$ is a highest weight vector, then
the smallest subrepresentation containing $w$ is irreducible.
\item If $W$ and $W'$ are irreducible representations of $\SL_n$ with highest weight vectors $w \in W$ and $w' \in W'$
and if the weights of $w$ and $w'$ are the same, then $W$ and $W'$ are isomorphic representations.
\end{enumerate}

\begin{example}
It follows from the above that the following are all distinct irreducible
representations of $\SL_n$:
\begin{itemize}
\item $\wedge^k \bk^n$ for $1 \leq k \leq n$; and
\item $\Sym^k(\bk^n)$ for $k \geq 1$.\qedhere
\end{itemize}
\end{example}

\subsection{Dominant weights}

To complete the picture of the representation theory of $\SL_n$, we must identify the weights
that can appear as highest weights of irreducible representations of $\SL_n$.  A {\em dominant weight}
is a character 
\[\chi = k_1 E_1 + \cdots + k_n E_n \quad \text{with $k_1,\ldots,k_n \in \Z$}\]
such that the following hold:
\begin{itemize}
\item $k_1 \geq \cdots \geq k_n$; and
\item $k_n = 0$, so in particular $k_i \geq 0$ for all $1 \leq i \leq n$.  We remark that since
the $E_i$ satisfy the single relation
$E_1 + \cdots + E_n = 0$, any character $\chi$ can be written uniquely
as $\chi = k_1 E_1 + \cdots + k_n E_n$ with $k_n = 0$.
\end{itemize}
The dominant weights are exactly the weights of irreducible representations of $\SL_n$.
We will write $\bW_{\chi}(n)$ for the irreducible representation of $\SL_n$
with highest weight a given dominant weight $\chi$.

Recall that a {\em partition} of an integer $d$ of length $m \geq 0$ is
a tuple $\sigma = (k_1,\ldots,k_m)$ with $k_1 \geq \cdots \geq k_m \geq 1$
and $k_1 + \cdots k_m = d$.  The dominant weights for $\SL_n$ are in bijection
with partitions of integers with length at most $n-1$.  Given
such a partition $\sigma = (k_1,\ldots,k_m)$, the corresponding
dominant weight is
\[\chi = k_1 E_1 + \cdots + k_m E_m.\]
We will also write $\bW_{\sigma}(n)$ for $\bW_{\chi}(n)$.

\begin{convention}
We will denote multiplicities in partitions using superscripts.
For instance, if $\sigma = (5,4,4,4,1,1)$ then we will write
$\bW_{5,4^3,1^2}(n)$ for $\bW_{\sigma}(n)$.  
\end{convention}

Here are some examples of this notation:

\begin{example}
For $\SL_n$, we have the following:
\begin{itemize}
\item $\bW_{k}(n) = \Sym^k(\bk^n)$ for $k \geq 1$; and
\item $\bW_{1^k}(n) = \wedge^k \bk^n$ for $1 \leq k < n$.  
\end{itemize}
It is not the case that $\bW_{1^n}(n) = \wedge^n \bk^n$.  Indeed, according to our
conventions $1^n$ does not correspond to a dominant weight.  Instead
$\wedge^n \bk^n$ is isomorphic to the trivial representation, so $\wedge^n \bk^n = \bW_0(n)$.
\end{example}

\subsection{Stable decompositions}

Let $\sigma = (k_1,\ldots,k_m)$ be a partition of $d$ with at most $n-1$ parts.  We will call $d$ the
{\em degree} of the partition.  We will also say that $d$ is the degree
of the irreducible representation $\bW_{\sigma}(n)$.
Schur--Weyl duality implies that the irreducible representation $\bW_{\sigma}(n)$ appears in in
$(\bk^n)^{\otimes d}$, and moreover that all irreducible representations that appear
in $(\bk^n)^{\otimes d}$ have degree at most $d$.  

A classical observation is that for $n \geq d+1$, the decomposition of $(\bk^n)^{\otimes d}$ 
into irreducible factors is independent of the parameter $n$ in the following sense:
\begin{itemize}
\item If 
\[(\bk^n)^{\otimes d} = \bW_{\sigma_1}(n) \oplus \cdots \bW_{\sigma_k}(n)\] 
for partitions $\sigma_1,\ldots,\sigma_k$, then we also have
\[(\bk^{n+1})^{\otimes d} = \bW_{\sigma_1}(n+1) \oplus \cdots \bW_{\sigma_k}(n+1).\]
\end{itemize}
Here are some examples of this:\footnote{All these decompositions are calculated using the program ``LiE''; see \cite{LieProgram}.} 

\begin{example}
Consider $(\bk^n)^{\otimes 2}$.  This decomposes into irreducible representations in
the following ways:
\begin{itemize}
\item For $n=2$, as $(\bk^2)^{\otimes 2} = \bW_0(2) \oplus \bW_2(2)$.
\item For $n \geq 3$, as $(\bk^n)^{\otimes 3} = \bW_{1^2}(n) \oplus \bW_{2}(n)$.
\end{itemize}
For $n \geq 3$, this is the decomposition into antisymmetric and symmetric tensors
\[(\bk^n)^{\otimes 3} = \bW_{1^2}(n) \oplus \bW_2(n) = \left(\wedge^2 \bk^n\right) \oplus \Sym^2(\bk^n).\]
For $n=2$, this decomposition is degenerate since $\wedge^2 \bk^2$ is the trivial representation $\bW_0$.
\end{example}

\begin{example}
Consider $(\bk^n)^{\otimes 3}$.  This decomposes into irreducible representations in
the following ways:
\begin{itemize}
\item For $n=2$, as $(\bk^2)^{\otimes 3}= \bW_{1}(2)^{\oplus 2} \oplus \bW_3(2)$.
\item For $n=3$, as $(\bk^3)^{\otimes 3}= \bW_0(3) \oplus \bW_{2,1}(3)^{\oplus 2} \oplus \bW_3(3)$.
\item For $n \geq 4$, as $(\bk^n)^{\otimes 3}= \bW_{1^3}(n) \oplus \bW_{2,1}(n)^{\oplus 2} \oplus \bW_3(n)$.
\end{itemize}
Here for $n \geq 4$ two of the factors are $\bW_{1^3}(n) = \wedge^3 \bk^n$ and $\bW_3(n) = \Sym^3(\bk^n)$, while
the other factor $\bW_{2,1}(n)$ which appears with multiplicity $2$ is harder to interpret.  
Let $\{e_1,\ldots,e_n\}$ be the standard basis for $\bk^n$.
Embedding
$\wedge^2 \bk^n$ into $(\bk^n)^{\otimes 2}$ in the usual way, the following are highest weight vectors
with weight $2 E_1 + E_2$:
\begin{align*}
w  &= e_1 \otimes (e_1 \wedge e_2) = e_1 \otimes e_1 \otimes e_2 - e_1 \otimes e_2 \otimes e_1,\\
w' &= (e_1 \wedge e_2) \otimes e_1 = e_1 \otimes e_2 \otimes e_1 - e_2 \otimes e_1 \otimes e_1.
\end{align*}
The subrepresentations of $(\bk^n)^{\otimes 3}$ spanned by $w$ and $w'$ are the two copies of $\bW_{2,1}(n)$.
These are not unique, and any nontrivial linear combination of $w$ and $w'$ is also a highest
weight vector of weight $2 E_1 + E_2$.
\end{example}

More generally, if $U$ is a representation of $\SL_n$ constructed from the standard
representation $\bk^n$ using tensor
powers, exterior powers, and symmetric powers, then $U$ naturally embeds
into $(\bk^n)^{\oplus d}$ for a $d \geq 1$ called its degree, and the decomposition
of $U$ into irreducible factors is independent of $n$ as long as $n \geq d+1$.
For example:

\begin{example}
Let $U = \Sym^2(\wedge^3 \bk^n) \otimes \bk^n$.  Then $U$ embeds into
$(\bk^n)^{\otimes 7}$ and has degree $7$, and the decomposition of $U$
into irreducible representations is independent of $n$ once $n \geq 8$.
\end{example}

\begin{convention}
In light of all of this, we can decompose representations of $\SL_n$ for $n \gg 0$
by using a computer to make this decomposition for $n$ larger than the degree of
the representation.  This will be done silently throughout the remainder of the
paper.
\end{convention}

\begin{remark}
The idea of stable decompositions of representations has since been
subsumed into Church--Farb's notion of representation stability \cite{ChurchFarbRepStability}.  See
\cite{FarbICM} for a survey.
\end{remark}

\section{Free groups and free Lie algebras}
\label{section:freelie}

Our next topic is the Lie algebra associated to the lower central series
of a group.  Everything we discuss here without reference can be found
in \cite{SerreLie}.

\subsection{Lower central series}

Let $G$ be a group.  The {\em lower central series} of $G$ is the
following inductively defined sequence of subgroups $\gamma_d(G)$:
\[\gamma_1(G) = G \quad \text{and} \quad \text{$\gamma_{d+1}(G) = [G,\gamma_d(G)]$ for $d \geq 1$}.\]
By definition, $G$ is nilpotent of degree at most $d$ if $\gamma_{d+1}(G)=1$.  Each
quotient group $\gamma_d(G)/\gamma_{d+1}(G)$ is abelian.  In fact, even more is
true: the conjugation action of $G$ on its normal subgroup $\gamma_d(G)$ descends
to a trivial action of $G$ on $\gamma_d(G)/\gamma_{d+1}(G)$.  This subsumes
being abelian since it implies in particular that the conjugation action
of $\gamma_d(G)$ on itself descends to the trivial action on $\gamma_d(G)/\gamma_{d+1}(G)$.

\subsection{Lie ring associated to the lower central series}

For a group $G$ and a field $\bk$, define
\[\cL_d(G;\bk) = \gamma_d(G)/\gamma_{d+1}(G) \otimes \bk \quad \text{for $d \geq 1$}.\]
The commutator bracket $[-,-]$ on $G$ descends to a bilinear map
$\cL_d(G;\bk) \times \cL_e(G;\bk) \rightarrow \cL_{d+e}(G;\bk)$ that we will also
denote by $[-,-]$.  Set
\[\cL(G;\bk) = \bigoplus_{d=1}^{\infty} \cL_d(G;\bk).\]
The bracket discussed above turns $\cL(G;\bk)$ into a graded Lie algebra over $\bk$ that
is generated by its degree-$1$ elements $\cL_1(G;\bk) \cong G^{\text{ab}} \otimes \bk$.

\subsection{Free groups}

Let $F_n$ be the free group on $n$ letters $\{x_1,\ldots,x_n\}$.  We thus
have 
\[\cL_1(F_n;\bk) = F_n^{\text{ab}} \otimes \bk \cong \bk^n.\]  
For $1 \leq i \leq n$, let $e_i \in \cL_1(F_n;\bk)$ be the image of $x_i$.  It follows
from work of Magnus and Witt that the
graded Lie algebra $\cL(F_n;\bk)$ is isomorphic to the free Lie algebra
$\FLie(\bk^n)$ over $\bk$ generated
by $\{e_1,\ldots,e_n\}$.  

\subsection{Free Lie algebra in tensor algebra}

Another way of viewing
the free Lie algebra is as follows.  Let
\[\cT(\bk^n) = \bigoplus_{d \geq 1} (\bk^n)^{\otimes d}\]
be the tensor algebra generated by $\bk^n$.  This can be turned into a graded Lie algebra
using the bracket
\[[t,t'] = t t' - t' t \quad \text{for $t,t' \in \cT(\bk^n)$}.\]
We then have that $\FLie(\bk^n)$ is isomorphic to the sub-Lie algebra of
$\cT(\bk^n)$ generated by its elements of degree $1$, i.e., generated by the standard basis
$\{e_1,\dots,e_n\}$ for $\bk^n$.  In fact, $\cT(\bk^n)$ is what is called the universal enveloping
algebra of the free Lie algebra.

\subsection{Lie representation}

Let $\bk$ be a field of characteristic $0$.
The action of $\SL_n(\bk)$ on $\bk^n$ induces an action of $\SL_n(\bk)$ on
$\FLie(\bk^n)$.  Each graded term $\FLie_d(\bk^n)$ is
a finite-dimensional algebraic representation of $\SL_n(\bk)$.  In fact,
using the embedding of the free Lie algebra into the tensor algebra we
see that $\FLie_d(\bk^n)$ is a subrepresentation of $(\bk^n)^{\otimes d}$.
It is often called the $d^{\text{th}}$ Lie representation of $\SL_n(\bk)$.
See \cite{Klyachko} for a complete description of the decomposition
of the Lie representation into irreducible factors.  

\subsection{Low degree Lie representations}

We will only need to understand the first few terms of the
Lie representation, which can easily be worked out by hand.
Let $E_1,\ldots,E_n$ be the characters of the maximal split
torus of $\SL_n$, so
\[E_i(\diag(t_1,\ldots,t_n)) = t_i \quad \text{for all $t_1,\ldots,t_n \in \bk^{\times}$ such that $t_1 \cdots t_n = 1$}.\]
To simplify our notation in the calculations below, we will omit the $\otimes$ symbols
when writing elements of tensor powers of $\bk^n$.  For instance, $e_1 e_2 e_1$ stands
for $e_1 \otimes e_2 \otimes e_1$.  Just like before, all the representation 
theory calculations in the examples below were performed with LiE; see \cite{LieProgram}.

\begin{example}
\label{example:lie1}
For $n \geq 2$, we have $\FLie_1(\bk^n) = \bk^n \cong \bW_1(n)$.
\end{example}

\begin{example}
\label{example:lie2}
For $n \geq 3$, we have $\FLie_2(\bk^n) = \wedge^2 \bk^n \cong \bW_{1^2}(n)$.
This reflects the fact that the Lie bracket is alternating.  Regarding
$\FLie_2(\bk^n)$ as a subspace of $(\bk^n)^{\otimes 2}$, it
has the following highest weight vector with weight $E_1+E_2$:
\[[e_1,e_2] = e_1 e_2 - e_2 e_1.\qedhere\]
\end{example}

\begin{example}
\label{example:lie3}
For $n \geq 4$, we have
\[\FLie_3(\bk^n) = \frac{\bk^n \otimes \wedge^2 \bk^n}{\wedge^3 \bk^n} \cong \bW_{2,1}(n).\]
This isomorphism arises from the surjective Lie bracket map from
\[\bk^n \otimes \FLie_2(\bk^n) = \bk^n \otimes \wedge^2 \bk^n \cong \bW_{2,1}(n) \oplus \bW_{1^3}(n)\]
to $\FLie_3(\bk^n)$.  The fact that $\wedge^3 \bk^n \cong \bW_{1^3}(n)$ is in the kernel
of this Lie bracket map follows from the Jacobi identity.  Regarding
$\FLie_3(\bk^n)$ as a subspace of $(\bk^n)^{\otimes 3}$, it
has the following highest weight vector with weight $2E_1+E_2$:
\begin{align*}
[e_1,[e_1,e_2]] &= [e_1,e_1 e_2 - e_2 e_1] = e_1 e_1 e_2 - 2e_1 e_2 e_1 + e_2 e_1 e_1.\qedhere
\end{align*}
\end{example}

\begin{example}
\label{example:lie4}
For $n \geq 5$, we have
\[\FLie_4(\bk^n) \cong \bW_{3,1}(n) \oplus \bW_{2,1^2}(n).\]
To see this, note that just like in degree $3$ the Lie bracket map gives a surjection from
\[\bk^n \otimes \FLie_3(\bk^n) \cong \bk^n \otimes \bW_{2,1}(n) \cong \bW_{3,1}(n) \oplus \bW_{2,1^2}(n) \oplus \bW_{2,2}(n)\]
to $\FLie_4(\bk^n)$.  To see that this Lie bracket map takes $\bk^n \otimes \FLie_3(\bk^n)$ to a representation
containing $\bW_{3,1}(n)$ and $\bW_{2,1^2}(n)$, note that its image contains the following highest
weight vectors:
\begin{align*}
[e_1,[e_1,[e_1,e_2]]] &= [e_1,e_1 e_1 e_2 - 2e_1 e_2 e_1 + e_2 e_1 e_1] \\
&= e_1 e_1 e_1 e_2 - 3 e_1 e_1 e_2 e_1 +3 e_1 e_2 e_1 e_1 - e_2 e_1 e_1 e_1, \\
[[e_1,e_2],[e_1,e_3]] &= [e_1 e_2 - e_2 e_1,e_1 e_3-e_3 e_1] \\
&= (e_1 e_2 e_1 e_3 - e_1 e_3 e_1 e_2) - (e_1 e_2 e_3 e_1 - e_3 e_1 e_1 e_2) \\
&\quad -(e_2 e_1 e_1 e_3 - e_1 e_3 e_2 e_1) + (e_2 e_1 e_3 e_1 - e_3 e_1 e_2 e_1).
\end{align*}
On the other hand, the kernel of the Lie bracket map contains a subrepresentation isomorphic
to $\bW_{2,2}(n)$.  Indeed, the domain of the Lie bracket map is
\[\bk^n \otimes \FLie_3(\bk^n) \cong \frac{(\bk^n)^{\otimes 2} \otimes (\wedge^2 \bk^n)}{\bk^n \otimes \wedge^3 \bk^n}.\]
The representation $(\bk^n)^{\otimes 2} \otimes (\wedge^2 \bk^n)$ contains the subrepresentation
$\Sym^2(\wedge^2 \bk^n)$, whose image in $\bk^n \otimes \FLie_3(\bk^n)$ lies in the kernel
of the Lie bracket map since the Lie bracket map is alternating.  Since $\Sym^2(\wedge^2 \bk^n)$ is not
contained in $\bk^n \otimes \wedge^3 \bk^n$, this give a nonzero subrepresentation in the
kernel of the Lie bracket map.  The only possibility for it is $\bW_{2,2}(n)$.  In fact,
\[\Sym^2(\wedge^2 \bk^n) \cong \bW_{2,2}(n) \oplus \bW_{1^4}(n) \cong \bW_{2,2}(n) \oplus \wedge^4 \bk^n,\]
so it must be the case that the image of $\Sym^2(\wedge^2 \bk^n)$ in
$\bk^n \otimes \FLie_3(\bk^n)$ is
\[\frac{\Sym^2(\wedge^2 \bk^n)}{\wedge^4 \bk^n} \cong \bW_{2,2}(n).\qedhere\]
\end{example}

\section{Johnson filtration and homomorphisms}
\label{section:johnsongeneral}

The first Johnson homomorphism was introduced by Johnson \cite{JohnsonHomo} following
earlier work by Sullivan \cite{SullivanJohnson}.  Johnson \cite{JohnsonSurvey} later
extended this to the higher Johnson homomorphisms.  This theory was then developed
further by Morita \cite{MoritaAbelian} and many others.  See \cite{HainJohnson, SatohJohnson}
for surveys.  This section discusses the basic properties of the
Johnson homomorphisms.

\subsection{Johnson filtration}

Recall that $\Sigma_g^1$ is a genus-$g$ surface with one boundary component.  Fix a basepoint
$\ast \in \partial \Sigma_g^1$ and let $\pi = \pi_1(\Sigma_g^1,\ast)$.  Since the mapping class
group $\Mod_g^1$ fixes $\partial \Sigma_g^1$ pointwise, the group $\Mod_g^1$ acts on $\pi$.
This action preserves the lower central series $\gamma_k(\pi)$, so for each $d \geq 1$ we get
an action of $\Mod_g^1$ on the $d$-step nilpotent group
\[N_d(\pi) = \pi / \gamma_{d+1}(\pi).\]
The $d^{\text{th}}$ term of the {\em Johnson filtration}, denoted $\Torelli_g^1[d]$, is the
kernel of the action of $\Mod_g^1$ on $N_d(\pi)$.  Since $N_1(\pi) = \pi^{\text{ab}} = \HH_1(\Sigma_g^1;\Z)$,
the first term of the Johnson filtration is the Torelli group $\Torelli_g^1$.  The Johnson
filtration thus forms a descending sequence of groups
\[\Torelli_g^1 = \Torelli_g^1[1] \rhd \Torelli_g^1[2] \rhd \Torelli_g^1[3] \rhd \cdots.\]
The Dehn--Nielsen--Baer theorem \cite[Chapter 8]{FarbMargalitPrimer} says that
$\Mod_g^1$ acts faithfully on $\pi$.  Since $\cap_{d=1}^{\infty} \gamma_{d+1}(\pi) = 1$ (see
\cite[Lemma 4.2]{FoxCommutator} or \cite[Theorem 1.2]{MalesteinPutmanLCS}), it follows
that $\cap_{d=1}^\infty \Torelli_g^1[d] = 1$.

\subsection{Basic properties}

We will soon prove that each $\Torelli_g^1[d] / \Torelli_g^1[d+1]$ is abelian; indeed, the
$d^{\text{th}}$ Johnson homomorphism is a homomorphism from $\Torelli_g^1[d]$ to an abelian
group whose kernel is $\Torelli_g^1[d+1]$.  Before doing this, we explain some basic
properties of the Johnson filtration.  

As we said above, $\Torelli_g^1[1]$ is the ordinary
Torelli group.  Building on work of Birman \cite{BirmanSiegel}, Powell (\cite{PowellTorelli}; see
\cite{HatcherMargalit, PutmanCutPaste} for modern proofs) proved
that $\Torelli_g^1 = \Torelli_g^1[1]$ is generated by two kinds of elements.  For a simple closed
curve $\gamma$ on $\Sigma_g^1$, let $T_{\gamma}$ denote the left Dehn twist about $\gamma$.  Powell's
generating set then includes:
\begin{itemize}
\item Separating twists, that is, Dehn twists $T_z$ such that $z$ is a simple closed separating
curve on $\Sigma_g^1$.
\item Bounding pair maps, that is products $T_x T_y^{-1}$ such that $x$ and $y$ are disjoint
nonseparating simple closed curves on $\Sigma_g^1$ such that $x \cup y$ separates $\Sigma_g^1$.
\end{itemize}
Johnson (\cite{Johnson2}; see \cite{PutmanJohnson} for a modern proof) proved that $\Torelli_g^1[2]$ is the subgroup
of Torelli group generated by Dehn twists about separating curves.  For $d \geq 3$, no
explicit generating set for $\Torelli_g^1[d]$ is known, though \cite{ChurchPutmanGenJohnson}
proves that for each $d \geq 1$ there is some $h_d$ such that $\Torelli_g^1[d]$ is generated
by mapping classes supported on subsurfaces of genus at most $h_d$ for all $g \geq h_d$.

As far as finiteness properties go, Johnson \cite{Johnson1} proved that
$\Torelli_g^1 = \Torelli_g^1[1]$ is finitely generated for $g \geq 3$.  This is in
contrast to $\Torelli_2^1$, which McCullough--Miller \cite{McCulloughMiller} proved
is not finitely generated.  More recently, Ershov--He \cite{ErshovHe} and
Church--Ershov--Putman \cite{ChurchErshovPutman} proved that $\Torelli_g^1[2]$ is
finitely generated for $g \geq 4$.  Ershov--Franz \cite{ErshovFranz} gave an enormous explicit
finite generating set for $\Torelli_g^1[2]$, though finding one of a reasonable size
is still an open question.  For $d \geq 3$, Church--Ershov--Putman \cite{ChurchErshovPutman}
proved that $\Torelli_g^1[d]$ is finitely generated for $g \geq 2d-1$.  

\subsection{Johnson homomorphism preliminaries, I}

Set $H = \HH_1(\Sigma_g^1;\Q)$.  Recall that $\FLie_d(H)$ is the $d^{\text{th}}$ graded
piece of the free Lie algebra generated by $H$.  Our next goal is to construct a homomorphism
$\tau_d\colon \Torelli_g^1[d] \rightarrow H \otimes \FLie_{d+1}(H)$ called the Johnson homomorphism
such that $\ker(\tau_d) = \Torelli_g^1[d+1]$.  The construction we give originated in
\cite{JohnsonSurvey} and was elaborated on by Morita \cite{MoritaAbelian}. 
Recall from \S \ref{section:freelie} that
\[\gamma_{d+1}(\pi) / \gamma_{d+2}(\pi) \otimes \Q \cong \FLie_{d+1}(H).\]
For $w \in \gamma_{d+1}(\pi)$, let $\Brack{w}_{d+1}$ be its image in $\FLie_{d+1}(H)$.  

Consider $f \in \Mod_g^1$.  For $x \in \pi$, set $f_{\ell}(x) = f(x) x^{-1}$.  We thus
have
\[f(x) = f_{\ell}(x) x \quad \text{for all $x \in \pi$}.\]
The $\ell$ stands for ``left''.  If $f \in \Torelli_g^1[d]$, then by definition
we have $f_{\ell}(x) \in \gamma_{d+1}(\pi)$ for all $x \in \pi$.  It
thus makes sense to talk about $\Brack{f_{\ell}(x)}_{d+1} \in \FLie_{d+1}(H)$.
These elements satisfy the following key lemma:

\begin{lemma}
\label{lemma:johnsonhomo1}
Let the notation be as above, and consider $f \in \Torelli_g^1[d]$.  Then:
\begin{itemize}
\item[(i)] For $x,y \in \pi$, we have $\Brack{f_{\ell}(xy)}_{d+1} = \Brack{f_{\ell}(x)}_{d+1} + \Brack{f_{\ell}(y)}_{d+1}$.
\item[(ii)] For $z \in [\pi,\pi]$, we have $\Brack{f_{\ell}(z)}_{d+1} = 0$.
\end{itemize}
\end{lemma}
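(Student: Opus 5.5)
For (i), I will compute $f_\ell(xy)$ directly and reduce it modulo $\gamma_{d+2}(\pi)$. By definition,
\[f_{\ell}(xy) = f(xy)(xy)^{-1} = f(x) f(y) y^{-1} x^{-1} = f_{\ell}(x)\, x f_{\ell}(y) x^{-1}.\]
Since $f \in \Torelli_g^1[d]$, the element $f_{\ell}(y)$ lies in $\gamma_{d+1}(\pi)$. As recalled in \S\ref{section:freelie}, the conjugation action of $\pi$ on $\gamma_{d+1}(\pi)$ descends to the trivial action on $\gamma_{d+1}(\pi)/\gamma_{d+2}(\pi)$. Hence $x f_{\ell}(y) x^{-1}$ and $f_{\ell}(y)$ have the same image in $\FLie_{d+1}(H)$.

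The quotient $\gamma_{d+1}(\pi)/\gamma_{d+2}(\pi)$ is abelian, and the map $w \mapsto \Brack{w}_{d+1}$ is a homomorphism on $\gamma_{d+1}(\pi)$. So the image of the product $f_{\ell}(x)\cdot(x f_\ell(y)x^{-1})$ is the sum $\Brack{f_{\ell}(x)}_{d+1} + \Brack{f_{\ell}(y)}_{d+1}$. This proves (i). The key step is the rewriting of $f(x)f(y)y^{-1}x^{-1}$ together with the triviality of the conjugation action on the graded quotient, which the paper has already stated.

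For (ii), part (i) says that $x \mapsto \Brack{f_{\ell}(x)}_{d+1}$ is a homomorphism from $\pi$ to the abelian group $\FLie_{d+1}(H)$. Any homomorphism to an abelian group kills $[\pi,\pi]$. Concretely, for a commutator $z = [a,b]$, applying (i) repeatedly gives $\Brack{f_\ell(z)}_{d+1} = \Brack{f_\ell(a)} + \Brack{f_\ell(b)} - \Brack{f_\ell(a)} - \Brack{f_\ell(b)} = 0$. Here I use that (i) with $y = x^{-1}$ gives $\Brack{f_\ell(x^{-1})}_{d+1} = -\Brack{f_\ell(x)}_{d+1}$, because $f_\ell(1) = 1$. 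General elements of $[\pi,\pi]$ are products of commutators, and additivity finishes the argument.

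There is no real obstacle here. The only point needing care is making sure the conjugation-triviality statement applies to $\gamma_{d+1}(\pi)$ modulo $\gamma_{d+2}(\pi)$, which is exactly the fact quoted in \S\ref{section:freelie}.
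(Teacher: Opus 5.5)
Your proposal is correct and follows the paper's proof essentially verbatim: for (i) you use the same rewriting $f_{\ell}(xy) = f_{\ell}(x)\, x f_{\ell}(y) x^{-1}$ together with the triviality of the conjugation action on $\gamma_{d+1}(\pi)/\gamma_{d+2}(\pi)$. For (ii) you deduce the claim from (i), since (i) makes $x \mapsto \Brack{f_{\ell}(x)}_{d+1}$ a homomorphism to an abelian group. Your explicit check for commutators, including the step $\Brack{f_{\ell}(x^{-1})}_{d+1} = -\Brack{f_{\ell}(x)}_{d+1}$, just spells out what the paper leaves implicit.
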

\begin{proof}
Conclusion (ii) follows from conclusion (i) since conclusion (i) implies that the map
\[z \mapsto \Brack{f_{\ell}(z)}_{d+1} \quad \text{for all $z \in \pi$}\]
is a homomorphism from $\pi$ to the abelian group $\FLie_{d+1}(H)$.  We must therefore only
prove (i).  Consider $x,y \in \pi$.  We have $f(xy) = f_{\ell}(xy) xy$ and
\[f(xy) = f(x) f(y) = f_{\ell}(x) x f_{\ell}(y) y = f_{\ell}(x) x f_{\ell}(y) x^{-1} x y,\]
so $f_{\ell}(xy) = f_{\ell}(x) x f_{\ell}(y) x^{-1}$.  It follows that
\[\Brack{f_{\ell}(xy)}_{d+1} = \Brack{f_{\ell}(x) x f_{\ell}(y) x^{-1}}_{d+1}
= \Brack{f_{\ell}(x)}_{d+1} + \Brack{x f_{\ell}(y) x^{-1}}_{d+1}
= \Brack{f_{\ell}(x)}_{d+1} + \Brack{f_{\ell}(y)}_{d+1}.\]
Here the final equality follows from the fact that the conjugation action of $\pi$ on
$\gamma_{d+1}(\pi)$ descends to the trivial action on $\gamma_{d+1}(\pi) / \gamma_{d+2}(\pi)$.
\end{proof}

\subsection{Johnson homomorphism preliminaries, II}

For $f \in \Torelli_g^1[d]$, define $\htau_d(f)\colon \pi \rightarrow \FLie_{d+1}(H)$ via the formula
\[\htau_d(f)(x) = \Brack{f_{\ell}(x)}_{d+1} \quad \text{for all $x \in \pi$}.\]
Lemma \ref{lemma:johnsonhomo1} implies that $\htau_d(f)$ is a homomorphism.  
The collection of homomorphisms $\pi \rightarrow \FLie_{d+1}(H)$ forms a $\Q$-vector space,
so it makes sense to add two of them.  We then have:

\begin{lemma}
\label{lemma:johnsonhomo2}
Let the notation be as above.  For $f,h \in \Torelli_g^1[d]$, we have
$\htau_d(fh) = \htau_d(f) + \htau_d(h)$.
\end{lemma}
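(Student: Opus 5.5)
The plan is to compute $(fh)_{\ell}(x)$ in terms of $f_{\ell}$ and $h_{\ell}$, exactly as in the proof of Lemma \ref{lemma:johnsonhomo1}. Fix $x \in \pi$. By definition $h(x) = h_{\ell}(x) x$, and applying $f$ gives
\[(fh)(x) = f(h(x)) = f(h_{\ell}(x)) f(x) = f_{\ell}(h_{\ell}(x))\, h_{\ell}(x)\, f_{\ell}(x)\, x.\]
Hence
\[(fh)_{\ell}(x) = f_{\ell}(h_{\ell}(x))\, h_{\ell}(x)\, f_{\ell}(x).\]
The goal is to show that the image of this element in $\FLie_{d+1}(H)$ is $\Brack{h_{\ell}(x)}_{d+1} + \Brack{f_{\ell}(x)}_{d+1}$.

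The three factors are handled separately. Since $f,h \in \Torelli_g^1[d]$, both $h_{\ell}(x)$ and $f_{\ell}(x)$ lie in $\gamma_{d+1}(\pi)$. The extra factor $f_{\ell}(h_{\ell}(x))$ must lie in $\gamma_{d+2}(\pi)$, so that it vanishes in $\FLie_{d+1}(H)$. Applying Lemma \ref{lemma:johnsonhomo1}(ii) to $z = h_{\ell}(x)$ gives $\Brack{f_{\ell}(z)}_{d+1} = 0$, because $z \in \gamma_{d+1}(\pi) \subseteq [\pi,\pi]$ when $d \geq 1$. This also requires $f_{\ell}(z) \in \gamma_{d+1}(\pi)$ so that the bracket makes sense, which holds since $f \in \Torelli_g^1[d]$. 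So $f_{\ell}(h_{\ell}(x))$ lies in $\gamma_{d+2}(\pi)$, as needed.

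Next, since $\gamma_{d+1}(\pi)/\gamma_{d+2}(\pi)$ is abelian and the map to $\FLie_{d+1}(H)$ is a homomorphism on $\gamma_{d+1}(\pi)$, the bracket of a product of three elements of $\gamma_{d+1}(\pi)$ is the sum of their brackets. This gives
\[\htau_d(fh)(x) = 0 + \Brack{h_{\ell}(x)}_{d+1} + \Brack{f_{\ell}(x)}_{d+1} = \htau_d(h)(x) + \htau_d(f)(x).\]
Since this holds for every $x \in \pi$, we get $\htau_d(fh) = \htau_d(f) + \htau_d(h)$.

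The only point needing care is the vanishing of $\Brack{f_{\ell}(h_{\ell}(x))}_{d+1}$, and Lemma \ref{lemma:johnsonhomo1}(ii) supplies it directly. There is no deeper degree estimate required, though one could also argue directly that $f$ acts trivially on $\gamma_{d+1}/\gamma_{d+2}$.
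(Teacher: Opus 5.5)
Your proposal is correct and matches the paper's proof essentially verbatim. Both factor $(fh)_{\ell}(x) = f_{\ell}(h_{\ell}(x))\, h_{\ell}(x)\, f_{\ell}(x)$, then kill the first factor by applying Lemma \ref{lemma:johnsonhomo1}(ii) to $z = h_{\ell}(x) \in \gamma_{d+1}(\pi) \subset [\pi,\pi]$. One small point: your detour concluding $f_{\ell}(h_{\ell}(x)) \in \gamma_{d+2}(\pi)$ is unnecessary. That step would also need $\gamma_{d+1}(\pi)/\gamma_{d+2}(\pi)$ to be torsion-free, whereas the argument only uses that its image in $\FLie_{d+1}(H)$ vanishes.
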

\begin{proof}
Consider $x \in \pi$.  By definition,
\[f h(x) = f(h_{\ell}(x) x) = = f(h_{\ell}(x)) f(x) = f_{\ell}(h_{\ell}(x)) h_{\ell}(x) f_{\ell}(x) x.\]
It follows that
\begin{align*}
\htau_d(f h)(x) &= \Brack{f_{\ell}(h_{\ell}(x)) h_{\ell}(x) f_{\ell}(x)}_{d+1} \\
                &= \Brack{f_{\ell}(h_{\ell}(x))}_{d+1} + \Brack{h_{\ell}(x)}_{d+1} + \Brack{f_{\ell}(x)}_{d+1} \\
                &= 0 + \htau_d(f)(x) + \htau_d(h)(x).
\end{align*}
Here the final equality uses the fact that
\[h_{\ell}(x) \in \gamma_{d+1}(\pi) \subset [\pi,\pi],\]
so by conclusion (ii) of Lemma \ref{lemma:johnsonhomo1} we have
$\Brack{f_{\ell}(h_{\ell}(x))}_{d+1} = 0$.
\end{proof}

\subsection{The Johnson homomorphism}
\label{section:johnsondef}

Lemma \ref{lemma:johnsonhomo2} implies that we can define a homomorphism
$\htau_d\colon \Torelli_g^1[d] \rightarrow \Hom(\pi,\FLie_{d+1}(H))$ via the formula
\[\htau_d(f) = \Brack{f_{\ell}(x)}_{d+1} \quad \text{for all $x \in \pi$}.\]
Since $\FLie_{d+1}(H)$ is a $\Q$-vector space and $H = \pi^{\text{ab}} \otimes \Q$, we have
a canonical identification
\[\Hom(\pi,\FLie_{d+1}(H)) = \Hom(H,\FLie_{d+1}(H)).\]
Let $\omega$ be the algebraic intersection form on $H = \HH_1(\Sigma_g^1;\Q)$.  This is a symplectic
form, and thus establishes an isomorphism between $H$ and its dual $H^{\ast} = \Hom(H,\Q)$.  Using
$\omega$, we can identify
\[\Hom(H,\FLie_{d+1}(H)) = H^{\ast} \otimes \FLie_{d+1}(H) = H \otimes \FLie_{d+1}(H).\]
This identification identifies $x \otimes \kappa \in H \otimes \FLie_{d+1}(H)$ with the following
map $H \rightarrow \FLie_{d+1}(H)$:
\[h \mapsto \omega(x,h) \kappa \quad \text{for $h \in H$}.\]
Combining all of our identifications, we define 
$\tau_d\colon \Torelli_g^1[d] \rightarrow \FLie_{d+1}(H) \otimes H$ to be the composition
\[\begin{tikzcd}
\Torelli_g^1[d] \arrow{r}{\htau_d} & \Hom(\pi,\FLie_{d+1}(H)) = \Hom(H,\FLie_{d+1}(H)) = H \otimes \FLie_{d+1}(H).
\end{tikzcd}\]
This is the $d^{\text{th}}$ Johnson homomorphism.  

\subsection{Basic properties of Johnson homomorphism}

We close this section by discussing two basic properties of the Johnson homomorphism.  The
first is as follows:

\begin{lemma}
\label{lemma:johnsonker}
For all $d \geq 1$, the kernel of $\tau_d\colon \Torelli_g^1[d] \rightarrow H \otimes \FLie_{d+1}(H)$ is
$\Torelli_g^1[d+1]$.  Consequently, $\Torelli_g^1[d]/\Torelli_g^1[d+1]$ is free abelian.
\end{lemma}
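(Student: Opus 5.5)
The plan is to unwind the definition of $\tau_d$ and use the fact that the graded pieces of the lower central series of a free group are torsion-free. Recall that $\pi = \pi_1(\Sigma_g^1,\ast)$ is a free group of rank $2g$. By the Magnus--Witt theorem (already used in \S\ref{section:freelie}), the integral version $\gamma_{d+1}(\pi)/\gamma_{d+2}(\pi)$ is a free abelian group of finite rank; it is the degree-$(d+1)$ part of the free Lie ring. Consequently the map
\[\gamma_{d+1}(\pi)/\gamma_{d+2}(\pi) \rightarrow \gamma_{d+1}(\pi)/\gamma_{d+2}(\pi) \otimes \Q = \FLie_{d+1}(H)\]
is injective. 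It follows that for $w \in \gamma_{d+1}(\pi)$ we have $\Brack{w}_{d+1} = 0$ if and only if $w \in \gamma_{d+2}(\pi)$. This is the one point that needs care, since $\tau_d$ was defined after tensoring with $\Q$.

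With this in hand, I will compute the kernel directly. The identifications
\[\Hom(\pi,\FLie_{d+1}(H)) = \Hom(H,\FLie_{d+1}(H)) = H \otimes \FLie_{d+1}(H)\]
used to define $\tau_d$ are isomorphisms, so $\tau_d(f) = 0$ if and only if $\htau_d(f) = 0$. Now take $f \in \Torelli_g^1[d]$. By the previous paragraph, $\htau_d(f) = 0$ means exactly that $f_{\ell}(x) = f(x)x^{-1}$ lies in $\gamma_{d+2}(\pi)$ for all $x \in \pi$. This holds if and only if $f(x) \equiv x$ modulo $\gamma_{d+2}(\pi)$ for all $x$. That in turn says $f$ acts trivially on $N_{d+1}(\pi) = \pi/\gamma_{d+2}(\pi)$, i.e.\ $f \in \Torelli_g^1[d+1]$. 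Conversely, any $f \in \Torelli_g^1[d+1]$ lies in $\Torelli_g^1[d]$ and satisfies $f_{\ell}(x) \in \gamma_{d+2}(\pi)$, so $\tau_d(f) = 0$. This proves $\ker(\tau_d) = \Torelli_g^1[d+1]$.

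For the second claim, $\tau_d$ is a homomorphism by Lemma \ref{lemma:johnsonhomo2}, so $\Torelli_g^1[d]/\Torelli_g^1[d+1]$ injects into $H \otimes \FLie_{d+1}(H)$. The target is a $\Q$-vector space, so this alone does not give freeness. The fix is to observe that $\htau_d$ factors through the integral lattice $\Hom(\pi, \gamma_{d+1}(\pi)/\gamma_{d+2}(\pi))$. This works because $f_{\ell}(x) \in \gamma_{d+1}(\pi)$, and because the additivity proofs of Lemmas \ref{lemma:johnsonhomo1} and \ref{lemma:johnsonhomo2} work verbatim in the integral quotient. This lattice is isomorphic to $\Hom(\Z^{2g}, \Z^N)$ for some $N$, which is free abelian of finite rank. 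Hence $\Torelli_g^1[d]/\Torelli_g^1[d+1]$ embeds in a finitely generated free abelian group, and is therefore free abelian. The main (mild) obstacle is just this integrality and torsion-freeness bookkeeping; everything else is definitional.
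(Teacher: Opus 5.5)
Your proposal is correct and follows essentially the same route as the paper. In both arguments, torsion-freeness of $\gamma_{d+1}(\pi)/\gamma_{d+2}(\pi)$ for the free group $\pi$ reduces $\tau_d(f)=0$ to $f_{\ell}(x)\in\gamma_{d+2}(\pi)$ for all $x\in\pi$, and freeness of $\Torelli_g^1[d]/\Torelli_g^1[d+1]$ comes from the image landing in an integral lattice; your explicit factorization through $\Hom(\pi,\gamma_{d+1}(\pi)/\gamma_{d+2}(\pi))$ just spells out the paper's one-line remark that the image of $\tau_d$ lies in the integer points.
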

\begin{proof}
By construction, the image of $\tau_d$ lies in the integer points of $H \otimes \FLie_{d+1}(H)$, so the second
conclusion follows from the first.  To prove the first conclusion, consider $f \in \Torelli_g^1[d]$.  We
have $\tau_d(f) = 0$ if and only if $\Brack{f_{\ell}(x)}_{d+1} = 0$ for all $x \in \pi$.  By definition,
$\Brack{f_{\ell}(x)}_{d+1}$ is the image in 
\[\FLie_{d+1}(H) = \gamma_{d+1}(\pi) / \gamma_{d+2}(\pi) \otimes \Q\]
of $f_{\ell}(x) \in \gamma_{d+1}(\pi)$.  Since $\pi$ is a free group, $\gamma_{d+1}(\pi) / \gamma_{d+2}(\pi)$ is free abelian (see \cite{SerreLie}).
It follows that $\Brack{f_{\ell}(x)}_{d+1} = 0$ for all $x \in \pi$ if and only if $f_{\ell}(x) \in \gamma_{d+2}(\pi)$ for all $x \in \pi$.
Since $f_{\ell}(x) = f(x) x^{-1}$, this holds if and only if $f$ acts trivially on $\pi/\gamma_{d+2}(\pi)$, i.e., if and only if
$f \in \Torelli_g^1[d+1]$.
\end{proof}

For the second, the action of $\Mod_g^1$ on $H = \HH_1(\Sigma_g^1;\Q)$ induces an action of $\Mod_g^1$
on $H \otimes \FLie_{d+1}(H)$.  For $\kappa \in H \otimes \FLie_{d+1}(H)$ and $\phi \in \Mod_g^1$, write
$\phi_{\ast}(\kappa)$ for the image of $\kappa$ under $\phi$.  We then have:

\begin{lemma}
\label{lemma:johnsonequivariant}
Let $d \geq 1$.  For all $f \in \Torelli_g^1[d]$ and $\phi \in \Mod_g^1$, we have
$\tau_d(\phi f \phi^{-1}) = \phi_{\ast}(\tau_d(f))$.
\end{lemma}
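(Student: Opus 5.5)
The plan is to compare the two homomorphisms $\pi \rightarrow \FLie_{d+1}(H)$ obtained before the identifications of \S\ref{section:johnsondef}, and only afterwards pass to $H \otimes \FLie_{d+1}(H)$. Write $\phi_{\ast}$ also for the induced action of $\phi$ on $\FLie_{d+1}(H) = \gamma_{d+1}(\pi)/\gamma_{d+2}(\pi) \otimes \Q$ and on $H$. This action is induced from the action of $\phi$ on $\pi$, so $\Brack{\phi(w)}_{d+1} = \phi_{\ast}\Brack{w}_{d+1}$ for $w \in \gamma_{d+1}(\pi)$. The first step is to show
\[\htau_d(\phi f \phi^{-1})(x) = \phi_{\ast}\bigl(\htau_d(f)(\phi^{-1}(x))\bigr) \quad \text{for all $x \in \pi$}.\]
To prove this, set $y = \phi^{-1}(x)$. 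Then $f(y) = f_{\ell}(y) y$, so
\[\phi f \phi^{-1}(x) = \phi(f_{\ell}(y))\, \phi(y) = \phi(f_{\ell}(y))\, x.\]
Hence $(\phi f \phi^{-1})_{\ell}(x) = \phi(f_{\ell}(y))$, which lies in $\gamma_{d+1}(\pi)$ because $\phi$ preserves the lower central series. Applying $\Brack{-}_{d+1}$ gives the displayed identity. As a byproduct, this also shows that $\Torelli_g^1[d]$ is normal in $\Mod_g^1$, so the left-hand side makes sense.

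Under $\Hom(\pi,\FLie_{d+1}(H)) = \Hom(H,\FLie_{d+1}(H))$, the identity says that $\htau_d(\phi f \phi^{-1}) = \phi_{\ast} \circ \htau_d(f) \circ \phi_{\ast}^{-1}$ as linear maps $H \rightarrow \FLie_{d+1}(H)$. This is exactly the natural action of $\phi$ on the $\Hom$-space.

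The remaining step is to check that the identification $\Hom(H,\FLie_{d+1}(H)) \cong H \otimes \FLie_{d+1}(H)$ via $\omega$ is $\Mod_g^1$-equivariant. This uses only that $\phi_{\ast}$ preserves the intersection form. Take $x \otimes \kappa$, which corresponds to $h \mapsto \omega(x,h)\kappa$. Its conjugate $\phi_{\ast} \circ (\cdot) \circ \phi_{\ast}^{-1}$ is
\[h \mapsto \omega(x,\phi_{\ast}^{-1}h)\,\phi_{\ast}\kappa = \omega(\phi_{\ast}x,h)\,\phi_{\ast}\kappa,\]
which is the map corresponding to $\phi_{\ast}x \otimes \phi_{\ast}\kappa$. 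By linearity this gives $\tau_d(\phi f \phi^{-1}) = \phi_{\ast}(\tau_d(f))$.

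The only real subtlety is keeping the direction of the actions straight: the precomposition by $\phi^{-1}$ is what becomes $\phi_{\ast}$ on the $H$ tensor factor, and it is invariance of $\omega$ that makes this work.
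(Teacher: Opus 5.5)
Your proof is correct and follows the same route as the paper: the key computation $(\phi f \phi^{-1})_{\ell}(x) = \phi(f_{\ell}(\phi^{-1}(x)))$ is exactly the paper's argument. The paper stops once it has the natural action on $\Hom(\pi,\FLie_{d+1}(H))$. You additionally check that the $\omega$-identification with $H \otimes \FLie_{d+1}(H)$ is equivariant, a detail the paper leaves implicit.
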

\begin{proof}
Regard $\tau_d(\phi f \phi^{-1})$ as a homomorphism $\pi \rightarrow \FLie_{d+1}(H)$.  By definition,
for $x \in \pi$ we have
\[\tau_d(\phi f \phi^{-1})(x) = \Brack{(\phi f \phi^{-1})_{\ell}(x)}_{d+1}.\]
We have
\begin{align*}
(\phi f \phi^{-1})_{\ell}(x) &= \phi(f(\phi^{-1}(x))) x^{-1} = \phi\left(f(\phi^{-1}(x)) \phi^{-1}(x)\right) 
                             = \phi\left(f_{\ell}(\phi^{-1}(x))\right),
\end{align*}
so
\begin{align*}
\tau_d(\phi f \phi^{-1})(x) &= \Brack{\phi\left(f_{\ell}(\phi^{-1}(x))\right)}_{d+1} 
                            = \phi \Brack{f_{\ell}(\phi^{-1}(x))}_{d+1} 
                            = \phi\left(\tau_d(f)(\phi^{-1}(x))\right).
\end{align*}
This is exactly how $\phi$ is supposed to act on homomorphisms $\pi \rightarrow \FLie_{d+1}(H)$.
\end{proof}

This has the following corollary.  The action of $\Mod_g^1$ on $H = \HH_1(\Sigma_g^1)$ preserves the
algebraic intersection form $\omega$, which is a symplectic form.  As we discussed in the introduction,
the resulting homomorphism $\Mod_g^1 \rightarrow \Sp_{2g}(\Z)$ is surjective, and by definition
$\Torelli_g^1$ is its kernel:
\[1 \longrightarrow \Torelli_g^1 \longrightarrow \Mod_g^1 \longrightarrow \Sp_{2g}(\Z) \longrightarrow 1.\]
The action of $\Sp_{2g}(\Z)$ on $H$ extends to the algebraic group $\Sp_{2g}(\Q)$.  It follows that
the action of $\Mod_g^1$ on $H \otimes \FLie_{d+1}(H)$ also comes from an action of $\Sp_{2g}(\Z)$
that extends to the algebraic group $\Sp_{2g}(\Q)$.  We then have:

\begin{corollary}
\label{corollary:imagejohnsonalgebraic}
For all $g,d \geq 1$, the $\Q$-span of the image of $\tau_d\colon \Torelli_g^1 \rightarrow H \otimes \FLie_{d+1}(H)$
is an $\Sp_{2g}(\Q)$-subrepresentation of $H \otimes \FLie_{d+1}(H)$.
\end{corollary}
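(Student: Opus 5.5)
The plan is to show that the $\Q$-span $V$ of $\tau_d(\Torelli_g^1[d])$ is preserved by $\Sp_{2g}(\Z)$, and then to upgrade this to invariance under $\Sp_{2g}(\Q)$ using Zariski density. (The statement writes $\tau_d$ with domain $\Torelli_g^1$, but $\tau_d$ is only defined on $\Torelli_g^1[d]$, so that is the domain used throughout.)

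First, note that $\Torelli_g^1[d]$ is normal in $\Mod_g^1$. Indeed, it is the kernel of the action of $\Mod_g^1$ on $N_d(\pi)$, and this action is defined because $\Mod_g^1$ preserves the characteristic subgroups $\gamma_k(\pi)$. So for $f \in \Torelli_g^1[d]$ and $\phi \in \Mod_g^1$, the element $\phi f \phi^{-1}$ again lies in $\Torelli_g^1[d]$. Lemma \ref{lemma:johnsonequivariant} then gives
\[\phi_{\ast}(\tau_d(f)) = \tau_d(\phi f \phi^{-1}) \in \tau_d(\Torelli_g^1[d]).\]
The image is therefore $\Mod_g^1$-stable. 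Since $\phi_{\ast}$ is $\Q$-linear, the $\Q$-span $V$ is $\Mod_g^1$-stable as well.

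Next, the action of $\Mod_g^1$ on $H \otimes \FLie_{d+1}(H)$ factors through $\Mod_g^1 \rightarrow \Sp_{2g}(\Z)$, because it is induced functorially from the action on $H$. This map is surjective, so $V$ is stable under all of $\Sp_{2g}(\Z)$.

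The main step is to pass from $\Sp_{2g}(\Z)$ to $\Sp_{2g}(\Q)$. Let $\rho\colon \Sp_{2g}(\Q) \rightarrow \GL(H \otimes \FLie_{d+1}(H))$ be the algebraic representation. The stabilizer of the subspace $V$,
\[\Set{$A \in \Sp_{2g}(\Q)$}{$\rho(A)(V) \subseteq V$},\]
is the set of $\Q$-points of a Zariski closed subgroup. To see this, choose a basis adapted to $V$; the condition is then the vanishing of certain matrix entries of $\rho(A)$, and these entries are polynomial functions of $A$. This stabilizer contains $\Sp_{2g}(\Z)$, which is Zariski dense in $\Sp_{2g}(\Q)$ (Borel density, as used in the remark after Theorem \ref{maintheorem:hain}). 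Hence the stabilizer is all of $\Sp_{2g}(\Q)$, and $V$ is an $\Sp_{2g}(\Q)$-subrepresentation.

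I expect the only genuinely nontrivial input to be this density fact, which the paper already uses freely. Everything else follows formally from Lemma \ref{lemma:johnsonequivariant}.
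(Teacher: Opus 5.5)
Your proposal is correct and follows the same route as the paper: Lemma \ref{lemma:johnsonequivariant} makes the $\Q$-span of the image $\Sp_{2g}(\Z)$-stable, and Zariski density of $\Sp_{2g}(\Z)$ in $\Sp_{2g}(\Q)$ upgrades this to $\Sp_{2g}(\Q)$-stability. You also spell out details the paper leaves implicit: normality of $\Torelli_g^1[d]$, that the stabilizer of the span is Zariski closed, and that the domain of $\tau_d$ should be $\Torelli_g^1[d]$ rather than $\Torelli_g^1$.
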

\begin{proof}
Lemma \ref{lemma:johnsonequivariant} implies that the $\Q$-span of the image of $\tau_d$ is an $\Sp_{2g}(\Z)$-subrepresentation.
Since $\Sp_{2g}(\Z)$ is Zariski dense in $\Sp_{2g}(\Q)$, all $\Sp_{2g}(\Z)$-subrepresentations are
actually $\Sp_{2g}(\Q)$-subrepresentations.  The corollary follows.
\end{proof}

This corollary implies that to understand the image of $\tau_d$, we need to understand the
representation theory of the algebraic group $\Sp_{2g}(\Q)$.  We will discuss this in \S \ref{section:sprep} below
after first performing a number of calculations.

\section{Johnson homomorphism calculations}
\label{section:johnsoncalc}

We next calculate the images under the first and second Johnson homomorphisms of
some basic elements.  Set $H = \HH_1(\Sigma_g^1;\Q)$ and
$\pi = \pi_1(\Sigma_g^1,\ast)$ with $\ast \in \partial \Sigma_g^1$.

\subsection{Separating twists, first Johnson homomorphism}

We start with the following.  Recall that the first Johnson homomorphism
takes the form $\tau_1\colon \Torelli_g^1[1] \rightarrow H \otimes \Lie_2(H)$.  Since
$\Lie_2(H) \cong \wedge^2 H$ (see Example \ref{example:lie2}), the target of this
is $H \otimes \wedge^2 H$.

\begin{lemma}
\label{lemma:tau1septwist}
For some $g \geq 1$, let $T_z$ be a separating twist in $\Torelli_g^1 = \Torelli_g^1[1]$.  Then
$\tau_1(T_z) = 0$.
\end{lemma}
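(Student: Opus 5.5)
Since $\tau_1$ is computed from the action of $T_z$ on $\pi$, I will show directly that $T_z$ acts trivially on $N_2(\pi) = \pi/\gamma_3(\pi)$. Equivalently, I will show that $\Brack{(T_z)_{\ell}(x)}_2 = 0$ for $x$ ranging over a generating set of $\pi$. By Lemma \ref{lemma:johnsonhomo1}, $\htau_1(T_z)$ is a homomorphism $\pi \rightarrow \FLie_2(H)$, so checking it on generators suffices. By Lemma \ref{lemma:johnsonker}, the conclusion is the same as $T_z \in \Torelli_g^1[2]$.

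The curve $z$ separates $\Sigma_g^1$ into two pieces. Let $S_1$ be the piece not containing $\partial \Sigma_g^1$, so $S_1 \cong \Sigma_h^1$ with $\partial S_1 = z$. The other piece $S_2$ contains $\partial \Sigma_g^1$ and the basepoint $\ast$. Join $\ast$ to a point of $z$ by an arc $\delta$ in $S_2$. The twist $T_z$ can be supported in a collar of $z$ inside $S_1$.

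I then choose generators of $\pi$ of two kinds. Loops based at $\ast$ contained in $S_2$ avoid the support of $T_z$, so they are fixed and $(T_z)_{\ell}(x) = 1$. The remaining loops have the form $\delta \gamma \delta^{-1}$ with $\gamma$ a loop in $S_1$ based on $z$. For these, $T_z$ acts as conjugation by the element $\zeta = \delta z \delta^{-1}$ (with some orientation convention): $T_z(x) = \zeta x \zeta^{-1}$. Together these two kinds of loops generate $\pi$ by van Kampen.

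The key point is that $\zeta$ is the boundary word of a genus-$h$ subsurface. In suitable standard generators $a_i,b_i$ of $\pi_1(S_1)$, it is a product of commutators $\prod_{i=1}^{h} [a_i,b_i]$, so $\zeta \in [\pi,\pi] = \gamma_2(\pi)$. Hence for $x$ of the second kind,
\[(T_z)_{\ell}(x) = \zeta x \zeta^{-1} x^{-1} = [\zeta, x] \in [\gamma_2(\pi),\pi] = \gamma_3(\pi),\]
and therefore $\Brack{(T_z)_{\ell}(x)}_2 = 0$. Since $\htau_1(T_z)$ vanishes on both kinds of generators, it vanishes identically, and so $\tau_1(T_z) = 0$.

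The main obstacle is the topological bookkeeping: verifying that $T_z$ acts by conjugation by $\zeta^{\pm 1}$ on loops passing through $S_1$, and that loops of the two types generate $\pi$. I would handle this with the van Kampen decomposition $\pi = \pi_1(S_2) \ast_{\Span{\zeta}} \pi_1(S_1)$, both based via $\delta$. On the $\pi_1(S_2)$ factor $T_z$ is the identity, and on the $\pi_1(S_1)$ factor it is conjugation by the boundary element. The orientation sign does not matter, since $\zeta^{-1}$ also lies in $\gamma_2(\pi)$.
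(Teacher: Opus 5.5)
Your proposal is correct and follows essentially the same argument as the paper: loops on the side of $z$ containing $\partial \Sigma_g^1$ are fixed by $T_z$, while loops passing into the genus-$h$ side are conjugated by the separating loop $\zeta \in [\pi,\pi]$, so $(T_z)_{\ell}$ of such a loop is $[\zeta,\sigma] \in \gamma_3(\pi)$. The only cosmetic difference is that the paper checks vanishing on the images of $\HH_1(S)$ and $\HH_1(S')$, which together span $H$, whereas you check it on a van Kampen generating set of $\pi$.
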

\begin{proof}
Let $S$ and $S'$ be the subsurfaces of $\Sigma_g^1$ on either side of $z$.  Order
them such that $\partial \Sigma_g^1 \subset S'$, so $S \cong \Sigma_h^1$ for some
$1 \leq h \leq g$.  Let $\zeta \in \pi$ be the following curve:\\
\Figure{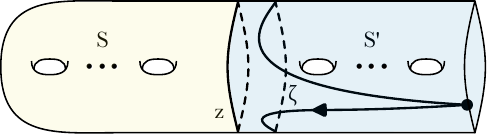}
Regard $\tau_1(T_z)$ as a map $H \rightarrow \FLie_2(H)$.  To prove that $\tau_1(T_z) = 0$,
it is enough to prove that $\tau_1(T_z)$ vanishes on the images of $\HH_1(S)$ and $\HH_1(S')$
in $H$.  This is trivial for the image of $\HH_1(S')$, so we must only prove it
for the image of $\HH_1(S)$.  Identify $\HH_1(S)$ with its image in $H$.
As the following shows, an element $u \in \HH_1(S)$ can be written as $u = [\sigma]$, where
$\sigma \in \pi$ satisfies $T_z(\sigma) = \zeta \sigma \zeta^{-1}$:\\
\Figure{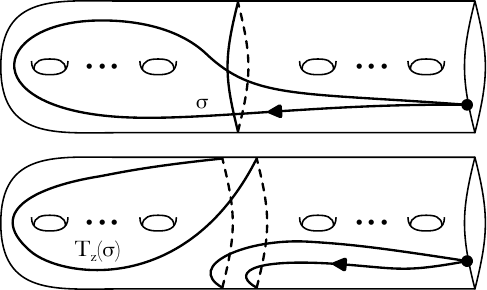}
We have $(T_z)_{\ell}(\sigma) = \zeta \sigma \zeta^{-1} \sigma^{-1} = [\zeta,\sigma]$, so
$\tau_1(T_z)(u) = \Brack{[\zeta,\sigma]}_2$.  Since $\zeta \in [\pi,\pi]$ we have
$[\zeta,\sigma] \in \gamma_3(\pi)$, so $\Brack{[\zeta,\sigma]}_2 = 0$, as desired.
\end{proof}

\begin{remark}
Lemma \ref{lemma:tau1septwist} implies that separating twists lie in $\Torelli_g^1[2]$.  See \S \ref{section:tau2septwist}
for how to calculate their image under 
$\tau_2\colon \Torelli_g^1[2] \rightarrow H \otimes \FLie_3(H)$.
\end{remark}

\subsection{Separating simple closed curves}

Before we perform our next Johnson homomorphism calculation, we need a preliminary result.  As
in the following figure, let
$\zeta \in \pi$ be a simple closed curve that bounds on its right a subsurface
$S$ with $S \cong \Sigma_h^1$ for some $h \geq 1$:\\
\Figure{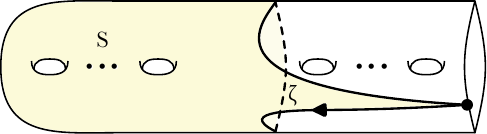}
We have $\zeta \in [\pi,\pi]$, so it makes sense to discuss $\Brack{\zeta}_{2} \in \FLie_2(H) \cong \wedge^2 H$.
Our goal is to calculate $\Brack{\zeta}_2$.

Identify $\HH_1(S;\Q)$ with its image in $H$.  The subspace $\HH_1(S;\Q)$ is a {\em symplectic subspace}
of $H$, i.e., a subspace on which the algebraic intersection pairing restricts to a nondegenerate form.  Let
$V$ be a symplectic subspace of $H$ and let $\omega_V$ be the restriction of the algebraic intersection
form to $V$.  Since $\omega_V$ is nondegenerate, it identifies $V$ with its dual $V^{\ast}$.  Using
this identification, we can identify $\omega_V$ with an element
\[\omega_V \in (\wedge^2 V)^{\ast} = \wedge^2 V \subset \wedge^2 H.\]
If $\{u_1,v_1,\ldots,u_h,v_h\}$ is a symplectic basis for $V$, then $\omega_V = u_1 \wedge v_1 + \cdots + u_h \wedge v_h$.
We then have the following:

\begin{lemma}
\label{lemma:separatingcurvebrack}
Let $\zeta \in \pi$ be a simple closed curve that bounds on its right a subsurface
$S$ with $S \cong \Sigma_h^1$ for some $h \geq 1$.  Set $V = \HH_1(S;\Q)$.  We
then have $\Brack{\zeta}_2 = \omega_V$.
\end{lemma}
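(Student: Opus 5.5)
The plan is to write $\zeta$ explicitly as a product of commutators in a standard generating set and then compute its image in $\FLie_2(H) = \gamma_2(\pi)/\gamma_3(\pi) \otimes \Q$ term by term.

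\emph{Step 1: a standard generating system.} Choose simple closed curves $\alpha_1,\beta_1,\ldots,\alpha_h,\beta_h$ in $S$, based at $\ast$ via arcs crossing $\zeta$ once, so that they form a standard generating system for $\pi_1(S)$. The orientation of $\zeta$ is the one for which $S$ lies on its right. Choose the orientations and order of the $\alpha_i,\beta_i$ so that
\[\zeta = [\alpha_1,\beta_1][\alpha_2,\beta_2]\cdots[\alpha_h,\beta_h] \quad \text{in } \pi.\]
This is the standard presentation of the boundary of $\Sigma_h^1$ as a product of commutators. Set $u_i = [\alpha_i]$ and $v_i = [\beta_i]$ in $V = \HH_1(S;\Q)$. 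These form a basis of $V$, and after fixing orientations compatibly with the boundary relation they form a symplectic basis, i.e.\ $\omega(u_i,v_j)=\delta_{ij}$ and $\omega(u_i,u_j)=\omega(v_i,v_j)=0$. So the main point is to check that the orientation convention ``$S$ on the right of $\zeta$'' and the convention $[x,y]=xyx^{-1}y^{-1}$ match the sign in $\omega_V = \sum u_i\wedge v_i$ rather than its negative.

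\emph{Step 2: the computation.} The map $\gamma_2(\pi)\to\gamma_2(\pi)/\gamma_3(\pi)$ is a homomorphism, and the bracket on the associated graded Lie algebra is induced by the group commutator. Hence
\[\Brack{\zeta}_2 = \sum_{i=1}^h \Brack{[\alpha_i,\beta_i]}_2 = \sum_{i=1}^h [u_i,v_i].\]
Under the identification $\FLie_2(H)\cong\wedge^2 H$ of Example \ref{example:lie2}, the element $[u_i,v_i]$ corresponds to $u_i\wedge v_i$. The sum is therefore $\sum_i u_i\wedge v_i = \omega_V$.

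\emph{Main obstacle.} The only real difficulty is the sign and orientation bookkeeping in Step 1. One must verify, perhaps from an explicit picture of $\Sigma_h^1$ as a $4h$-gon with one boundary circle, that with $\zeta$ oriented so that $S$ is on its right, the resulting $(u_i,v_i)$ have $\omega(u_i,v_i)=+1$. If it comes out as $-1$, swap $\alpha_i$ and $\beta_i$ (or invert one of them). This reorders the factors to $\prod[\beta_i,\alpha_i]$, or conjugate variants of it, and conjugation does not affect $\Brack{-}_2$. I would first settle the case $h=1$ with a careful picture and then extend to general $h$ by concatenating genus-one pieces. The independence from the choice of basis is automatic, since $\omega_V$ is basis-free.
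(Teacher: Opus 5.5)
Your proposal is the same as the paper's proof: write $\zeta=[\mu_1,\nu_1]\cdots[\mu_h,\nu_h]$ with $\{[\mu_i],[\nu_i]\}$ a symplectic basis of $V$ (the paper reads this off a figure), then use that $\Brack{-}_2$ is additive on $\gamma_2(\pi)$ and sends $[x,y]$ to $[x]\wedge[y]$. One caution on your fallback: swapping or inverting $\alpha_i,\beta_i$ when the sign comes out as $-1$ cannot help, because $\Brack{\zeta}_2$ and $\omega_V$ are both independent of the chosen generators. Relabeling only rewrites $\zeta$ as $\prod[\beta'_i,\alpha'_i]$ (or as conjugates of $[\alpha'_i,\beta_i]^{-1}$), which again gives $-\omega_V$. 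So the sign is a geometric fact that must be checked against the orientation and commutator conventions; it cannot be arranged by a choice of basis.
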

\begin{proof}
As the figure below shows, we can write $\zeta = [\mu_1,\nu_1] \cdots [\mu_h,\nu_h]$ for simple closed curves
$\mu_1,\nu_1,\ldots,\mu_h,\nu_h \in \pi$ such that the following holds:
\begin{itemize}
\item Letting $u_i = [\mu_i] \in H$ and $v_i = [\nu_i] \in H$ be the homology
classes of $\mu_i,\nu_i \in \pi$, the vectors $\{u_1,v_1,\ldots,u_h,v_h\}$ form
a symplectic basis for $V = \HH_1(S;\Q)$.
\end{itemize}
\Figure{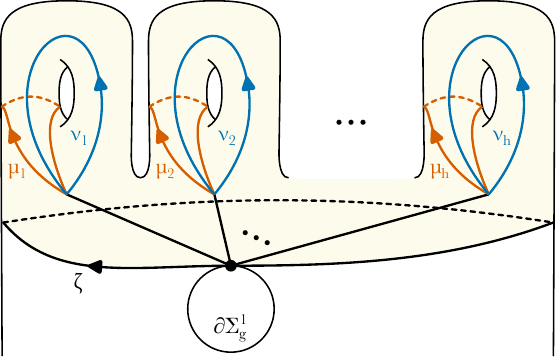}
We then have
\begin{align*}
\Brack{\zeta}_2 &= \Brack{[\mu_1,\nu_1] \cdots [\mu_h,\nu_h]}_2 = \Brack{[\mu_1,\nu_1]}_2 + \cdots + \Brack{[\mu_h,\nu_h]}_2 \\
                &= u_1 \wedge v_1 + \cdots + u_h \wedge v_h = \omega_V.\qedhere
\end{align*}
\end{proof}

\subsection{Bounding pair maps, I}

Recall that a bounding pair map is a product $T_x T_y^{-1} \in \Torelli_g^1$, where $x$ and $y$ are disjoint
nonseparating curves on $\Sigma_g^1$ such that $x \cup y$ separates $\Sigma_g^1$.  Let $S$ and $S'$ be the
subsurfaces on either side of $x \cup y$, ordered such that $\partial \Sigma_g^1 \subset S'$.  We
therefore have $S \cong \Sigma_h^2$ and $S' \cong \Sigma_{g-h-1}^3$ for some $1 \leq h \leq g-1$:\\
\Figure{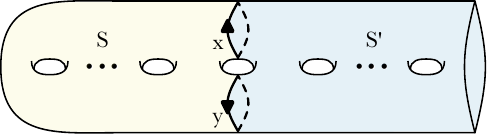}
As in this figure, orient $x$ and $y$ such that $S$ is to the left of $x$ and the right of $y$.  We
call this the {\em canonical orientation} of $x$ and $y$.  Let $U \subset H$ and $U' \subset H$ be
the images of $\HH_1(S;\Q)$ and $\HH_1(S';\Q)$.  We call the ordered pair $(U,U')$ the
{\em bounded subspaces} of $x \cup y$.  Let $\omega$ be the algebraic intersection pairing on $H$.  For
a subspace $W \subset H$, let
\[W^{\perp} = \Set{$h \in H$}{$\omega(h,w) = 0$ for all $w \in W$}.\]
We have $U + U' = \Span{[x]}^{\perp}$ and $U \cap U' = \Span{[x]}$.  Finally, say that an {\em integral dual} to
$[x]$ is an element $h \in H$ with the following two properties:
\begin{itemize}
\item $\omega(h,[x]) = 1$; and
\item $h \in \HH_1(\Sigma_g^1;\Z) \subset H$, i.e., $h$ is an integral point of $H$.
\end{itemize}
This implies that $\Span{[x],h}^{\perp}$ is a symplectic subspace of $H$.  Letting
$V = U \cap \Span{[x],h}^{\perp}$ and $V' = U' \cap \Span{[x],h}^{\perp}$, it also implies that $V$ and $V'$
are symplectic subspaces of $H$ such that $\Span{[x],h}^{\perp} = V \oplus V'$.  This direct sum
decomposition is orthogonal with respect to $\omega$.  We call $(V,V')$ the {\em bounded splitting} of
the pair $(x \cup y,h)$.  With these preliminaries, we have the following: 

\begin{lemma}
\label{lemma:taubpmap}
Let $T_x T_y^{-1} \in \Torelli_g^1$ be a bounding pair map.  Give $x$ and $y$ their canonical orientations, and
let $(U,U')$ be the bounded subspaces of $x \cup y$.
Regard $\tau_1(T_x T_y^{-1})$ as a map $H \rightarrow \wedge^2 H$.  The following hold:
\begin{itemize}
\item[(i)] For $u \in U$, we have $\tau_1(T_x T_y^{-1})(u) = -[x] \wedge u$.
\item[(ii)] For $u' \in U'$, we have $\tau_1(T_x T_y^{-1})(u') = 0$.
\item[(iii)] For an integral dual $h \in H$ to $[x]$, let $(V,V')$ be the bounded splitting of the pair
$(x \cup y,h)$.  We then have $\tau_1(T_x T_y^{-1})(h) = -\omega_V$.
\end{itemize}
\end{lemma}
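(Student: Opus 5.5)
The plan is to compute $(T_xT_y^{-1})_{\ell}$ directly on well-chosen based loops, in the same way as the proof of Lemma \ref{lemma:tau1septwist}, and then pass to $\FLie_2(H)=\wedge^2 H$ using Lemma \ref{lemma:johnsonhomo1}. Regarded as a map $H \to \wedge^2 H$, $\tau_1(f)$ is linear, so it is enough to evaluate it on the images of loops that generate each relevant subspace.

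\emph{Part (ii).} Choose the basepoint path so that every class $u'\in U'$ is represented by a loop $\sigma'\in\pi$ contained in $S'$, based at $\ast\in\partial\Sigma_g^1\subset S'$, and disjoint from $x$ and $y$. Such loops generate $\pi_1(S',\ast)$ up to the boundary curves, and those boundary curves map to $[x]$ or $0$. The class $[x]$ is itself represented by a based loop that goes out along an arc in $S'$ and then runs around a parallel copy of $x$ inside $S'$. Since $T_xT_y^{-1}$ is supported in annuli around $x$ and $y$, all of these loops are fixed. Hence $(T_xT_y^{-1})_{\ell}(\sigma')=1$, which gives (ii).

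\emph{Part (i).} Represent $u\in U$ by a loop $\sigma$ that travels along an arc $\alpha$ from $\ast$ crossing $y$ once into $S$, goes around a loop $\beta$ in $S$, and returns along the same arc. Then $T_y^{-1}$ inserts a copy of $y^{\pm1}$ at each crossing with $y$, and the two insertions conjugate each other. The resulting computation is
\[T_xT_y^{-1}(\sigma)=\eta\,\sigma\,\eta^{-1},\]
where $\eta$ is a based loop freely homotopic to $y$, i.e.\ with $[\eta]=\pm[x]$; recall $[y]=[x]$ under the canonical orientations. This gives $(T_xT_y^{-1})_{\ell}(\sigma)=[\eta,\sigma]$, and by Example \ref{example:lie2} its image in $\FLie_2(H)$ is $[\eta]\wedge u=\pm[x]\wedge u$. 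This is the analogue of the figure argument in Lemma \ref{lemma:tau1septwist}, except that now $\eta$ is nonseparating, so $[\eta,\sigma]$ is not in $\gamma_3(\pi)$.

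\emph{Part (iii).} Represent $h$ by a loop $\delta$ that starts at $\ast$, crosses $x$ once into $S$, and exits across $y$ back into $S'$. The crossing with $x$ inserts $x^{\pm1}$ and the crossing with $y$ inserts $y^{\mp1}$ at a different spot. Rearranging gives
\[(T_xT_y^{-1})_{\ell}(\delta)=\tilde x\,\tilde y^{-1}\]
for based representatives $\tilde x,\tilde y$ of $x$ and $y$, possibly up to an element of $\gamma_3(\pi)$. Together $\tilde x$ and $\tilde y^{-1}$ bound the genus-$h$ surface $S$ with two boundary components. So $\tilde x\tilde y^{-1}$ is a product of $h$ commutators of a symplectic basis of $V=U\cap\Span{[x],h}^{\perp}$, possibly times a commutator involving $x$ that comes from the arc. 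By Lemma \ref{lemma:separatingcurvebrack}, applied by capping $S$ along an arc to a one-boundary subsurface, its class in $\wedge^2 H$ is $\pm\omega_V$, plus possibly a term $[x]\wedge(\cdot)$. I will fix the arc so that the extra term vanishes.

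\emph{Main obstacle.} I expect the hardest part to be the sign bookkeeping. The claimed signs $-[x]\wedge u$ and $-\omega_V$ depend on the left-twist convention, the orientation of $x$ and $y$, the identification $\Hom(H,\FLie_2)=H\otimes\FLie_2$, and the choice of arcs, including showing that the correction term in (iii) drops out. I would pin all of these down with one explicit picture in genus $h=1$, check that the signs agree there, and then extend to general $h$ by linearity and naturality.
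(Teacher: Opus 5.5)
Your proposal is essentially the paper's proof. Part (ii) uses loops in $S'$ fixed by $T_xT_y^{-1}$. Part (i) uses $T_xT_y^{-1}(\sigma)=\eta\sigma\eta^{-1}$ with $\eta$ a based copy of $y^{\pm 1}$. Part (iii) recognizes $(T_xT_y^{-1})_{\ell}$ of a dual loop crossing $x$ and $y$ once as a separating curve cutting off a genus-$h$ one-boundary subsurface with homology $V$, and then applies Lemma \ref{lemma:separatingcurvebrack}. As in the paper, the signs are read off from an explicit picture. In that check, note that the direction in which $\delta$ crosses $x$ is forced by $\omega(h,[x])=1$, so it cannot be chosen freely.

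Your hedges in (iii) can be dropped. The expression of $(T_xT_y^{-1})_{\ell}(\delta)$ as a product of based copies of $x^{\pm 1}$ and $y^{\mp 1}$ holds exactly, not just modulo $\gamma_3(\pi)$. Take $\delta\cap S$ to be a simple arc and cut $S$ along that same arc. The resulting subsurface is disjoint from $\delta$, so its homology is exactly $U\cap\Span{[x],h}^{\perp}=V$, and no $[x]\wedge(\cdot)$ correction arises. Such a correction could not be chosen away in any case: the value depends only on $h$, while $\omega_V$ shifts by $[x]\wedge u$ when $h$ is replaced by $h+u$ with $u\in U$.
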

\begin{proof}
We prove the three parts separately:

\begin{claim}{1}
For $u \in U$, we have $\tau_1(T_x T_y^{-1})(u) = -[x] \wedge u$.
\end{claim}

Let $\xi \in \pi$ be the following curve, so $[\xi] = -[y] = -[x]$; see here:\\
\Figure{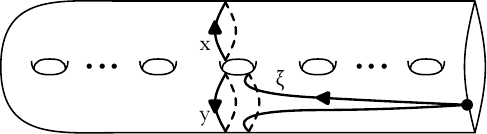}
It is enough to prove the claim for $u$ an integral point of $U$.  This implies that
$u = [\mu]$ for a curve $\mu \in \pi$ with $T_x T_y^{-1}(\mu) = \xi \mu \xi^{-1}$; see here:\\
\Figure{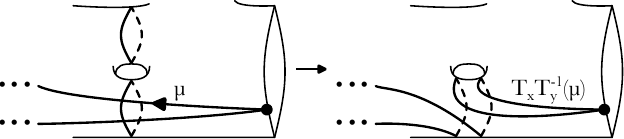}
This implies that $(T_x T_y^{-1})_{\ell}(\mu) = \xi \mu \xi^{-1} \mu^{-1} = [\xi,\mu]$, so
\[\tau_1(T_x T_y^{-1})(u) = \Brack{[\xi,\mu]}_2 = [\xi] \wedge [\mu] = -[x] \wedge u.\]

\begin{claim}{2}
For $u' \in U'$, we have $\tau_1(T_x T_y^{-1})(u') = 0$.
\end{claim}

It is enough to prove the claim for $u'$ an integral point of $U'$.  This implies that
$u' = [\mu']$ for a curve $\mu' \in \pi$ that is fixed by $T_x T_{y}^{-1}$.  For such a
$u'$, the claim is obvious.

\begin{claim}{3}
For an integral dual $h \in H$ to $[x]$, let $(V,V')$ be the bounded splitting of the pair
$(x \cup y,h)$.  We then have $\tau_1(T_x T_y^{-1})(h) = -\omega_V$.
\end{claim}

To make our pictures easier to follow, we will invert $T_x T_y^{-1}$ and prove
that $\tau_1(T_y T_x^{-1}) = \omega_V$.  Since $h$ is an integral dual to $[x]$, we
can find a simple closed curve $\nu \in \pi$ with $h = [\nu]$ that intersects
both $x$ and $y$ once; see here:\\
\Figure{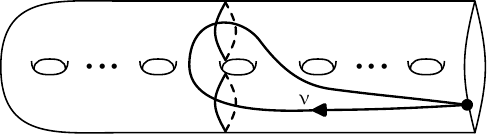}
Set $\zeta = (T_y T_x^{-1})_{\ell}(\nu) = \left(T_y T_x^{-1}(\nu)\right) \nu^{-1}$; see here:\\
\Figure{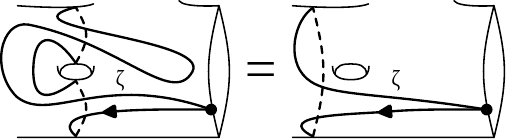}
As this figure shows, $\zeta$ is a simple closed separating curve that bounds on its
right a surface $T$ with $\HH_1(T;\Q) = V$.  It therefore follows from Lemma \ref{lemma:separatingcurvebrack}
that
\[\tau_1(T_y T_x^{-1})(h) = \Brack{(T_y T_x^{-1})_{\ell}(\nu)}_2 = \Brack{\zeta}_2 = \omega_V.\qedhere\]
\end{proof}

\subsection{Bounding pair maps, II}

We next translate Lemma \ref{lemma:taubpmap} into a more useful form.  The first
Johnson homomorphism is of the form $\tau_1\colon \Torelli_g^1 \rightarrow H \otimes \wedge^2 H$.
Its target $H \otimes \wedge^2$ contains a copy of $\wedge^3 H$, namely the image of the
map
\[h_1 \wedge h_2 \wedge h_3 \in \wedge^3 H \mapsto h_1 \otimes (h_2 \otimes h_3) - h_2 \otimes (h_1 \wedge h_3) + h_3 \otimes (h_1 \wedge h_2) \in H \otimes \wedge^3 H.\]
We then have the following.

\begin{lemma}
\label{lemma:taubp}
Let $T_x T_y^{-1} \in \Torelli_g^1$ be a bounding pair map.  Give $x$ and $y$ their canonical orientations, and
let $\{u_1,v_1,\ldots,u_h,v_h\}$ be the homology classes of loops as in the following figure:\\
\Figure{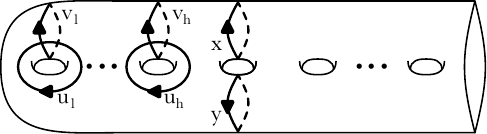}
Then 
$\tau_1(T_x T_y^{-1}) = [x] \wedge (u_1 \wedge v_1 + \cdots + u_h \wedge v_h) \in \wedge^3 H \subset H \otimes \wedge^2 H$.
\end{lemma}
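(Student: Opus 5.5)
The plan is to compare the two sides as maps $H \rightarrow \wedge^2 H$, using the identification from \S\ref{section:johnsondef}, under which $a \otimes \kappa$ corresponds to $h \mapsto \omega(a,h)\kappa$. It suffices to check agreement on a spanning set of $H$. Lemma \ref{lemma:taubpmap} gives $\tau_1(T_x T_y^{-1})$ on three kinds of vectors: those in $U$, those in $U'$, and on an integral dual $h$ to $[x]$. Since $U + U' = \Span{[x]}^{\perp}$ has codimension one and $h \notin \Span{[x]}^{\perp}$, the space $U + U' + \Span{h}$ is all of $H$. So I only need to evaluate the candidate element on these three kinds of vectors.

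Set $\omega_V = u_1 \wedge v_1 + \cdots + u_h \wedge v_h$. In the figure, the $u_i,v_i$ should form a symplectic basis of $V = U \cap \Span{[x],h}^{\perp}$ for a suitable integral dual $h$, chosen as the class of a curve crossing $x$ and $y$ once, as in Claim 3. Then $U = V \oplus \Span{[x]}$.

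Write $c = [x]$. The element $c \wedge u_i \wedge v_i$ embeds in $H \otimes \wedge^2 H$ as $c \otimes (u_i \wedge v_i) - u_i \otimes (c \wedge v_i) + v_i \otimes (c \wedge u_i)$. As a map, it sends a vector $w$ to
\[
\omega(c,w)\, u_i \wedge v_i - \omega(u_i,w)\, c \wedge v_i + \omega(v_i,w)\, c \wedge u_i .
\]
I check the three cases in turn.

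\begin{itemize}
\item[(1)] Take $w \in U'$. Then $w$ is orthogonal to $c$ and to each $u_i, v_i$, since these classes lie in $U$ and curves in $S$ and $S'$ are disjoint. So the image is $0$, matching part (ii) of Lemma \ref{lemma:taubpmap}.
\item[(2)] Take $w = c$. All terms vanish, since $\omega(c,c) = 0$ and $c$ is orthogonal to $V$. On the other side, $-[x] \wedge [x] = 0$, so they match.
\item[(3)] Take $w \in V$. The $c$-term vanishes. Summing over $i$, the remaining terms give $c \wedge (\text{something linear in } w)$. Using $\omega(u_i,v_i) = 1$, this works out to $-c \wedge w$ up to a sign convention, which matches part (i).
\item[(4)] Take $w = h$. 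We have $\omega(c,h) = -1$, since $\omega(h,c) = 1$, and $h$ is orthogonal to $V$. So the image is $-\omega_V$, matching part (iii).
\end{itemize}

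The main obstacle is sign bookkeeping. I need the orientation conventions for $\omega$, the identification $H^\ast \cong H$, and the embedding of $\wedge^3 H$ to all be consistent, so that the $V$-case gives exactly $-c \wedge w$ rather than $+c \wedge w$. For instance, at $w = u_j$ the expression gives $-\omega(v_j,u_j)\, c \wedge u_j$ from the last term, which equals $c \wedge u_j$ under the convention $\omega(u,v) = 1$. That has the wrong sign unless the identification convention absorbs the sign. I will fix the conventions once by reading the $h$-case, which matches cleanly. If a sign mismatch then persists in the $V$-case, I will trace it to the footnote convention relating $\omega$ as a form to $\omega$ as an element of $\wedge^2 H$. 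Finally, I will note that the statement implicitly depends on the choice of $h$ only through $V$, which is consistent with the linear-algebra check above.
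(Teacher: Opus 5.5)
Your approach is correct and is essentially the paper's proof. Writing $c=[x]$ as you do, the paper completes $\{u_1,v_1,\ldots,u_h,v_h\}$ to a symplectic basis with $v_{h+1}=c$ and $u_{h+1}$ an integral dual. It reads off the values of $\tau_1(T_xT_y^{-1})$ on this basis from Lemma \ref{lemma:taubpmap} and converts them to the tensor $\sum_i \bigl(v_i\otimes(c\wedge u_i)-u_i\otimes(c\wedge v_i)\bigr)+c\otimes\omega_V$. It then regroups this as $\sum_i c\wedge u_i\wedge v_i$. You run the same comparison in the opposite direction, by evaluating the candidate tensor as a map.

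Taking $h=u_{h+1}$ as the paper does also pins down your ``suitable'' integral dual. It must be orthogonal to every $u_i,v_i$, so that the bounded splitting of $(x\cup y,h)$ has $V=\Span{u_1,v_1,\ldots,u_h,v_h}$.

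The sign worry in your last paragraph comes from an arithmetic slip. Your own formula has last term $+\omega(v_i,w)\,c\wedge u_i$. So at $w=u_j$ it gives $\omega(v_j,u_j)\,c\wedge u_j=-c\wedge u_j$. At $w=v_j$ the middle term gives $-\omega(u_j,v_j)\,c\wedge v_j=-c\wedge v_j$. Both agree exactly with part (i) of Lemma \ref{lemma:taubpmap}. This uses $\omega(u_i,v_i)=1$ and the identification $a\otimes\kappa\mapsto(w\mapsto\omega(a,w)\kappa)$ of \S\ref{section:johnsondef}, which are the same conventions that make your $h$-case work.

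The fallback you describe would not have been legitimate anyway. Changing the identification $H^{\ast}\cong H$ flips the $V$-case and the $h$-case together. So a genuine relative sign mismatch between them would indicate an actual error, not something a convention could absorb.
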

\begin{proof}
Complete $\{u_1,v_1,\ldots,u_h,v_h\}$ to a symplectic basis 
$\{u_1,v_1,\ldots,u_g,v_g\}$
for $H$ with $v_{h+1} = [x]$ and $u_{h+1}$ an
integral dual to $[x]$.  Lemma \ref{lemma:taubpmap} says that regarded as a map $H \rightarrow \wedge^2 H$,
we have that $\tau_1(T_x T_y^{-1})$ is given by the following formulas:
\begin{itemize}
\item $\tau_1(T_x T_y^{-1})(u_i) = -[x] \wedge u_i$ and
$\tau_1(T_x T_y^{-1})(v_i) = -[x] \wedge v_i$ for $1 \leq i \leq h$.
\item $\tau_1(T_x T_y^{-1})(u_{h+1}) = -(u_1 \wedge v_1 + \cdots + u_h \wedge v_h)$.
\item $\tau_1(T_x T_y^{-1})$ vanishes on all other elements of $\{u_1,v_1,\ldots,u_g,v_g\}$.
\end{itemize}
Translating this into an element of $H \otimes \wedge^2 H$, we get
\begin{align*}
&\left(\sum_{i=1}^h v_i \otimes ([x] \wedge u_i) - u_i \otimes ([x] \wedge v_i)\right) + [x] \otimes (u_1 \wedge v_1 + \cdots + u_h \wedge v_h) \\
=&\sum_{i=1}^h \left([x] \otimes (u_i \wedge v_i) - u_i \otimes ([x] \wedge v_i) + v_i \otimes ([x] \wedge u_i)\right) 
=\sum_{i=1}^h [x] \wedge u_i \wedge v_i.\qedhere
\end{align*}
\end{proof}

\subsection{Simply intersecting pair maps}

Let $\partial_1,\partial_2,\partial_3$ be three disjoint oriented simple closed curves on
$\Sigma_g^1$.  Our next goal is to construct $f \in \Torelli_g^1$ with 
$\tau_1(f) = \pm \partial_1 \wedge \partial_2 \wedge \partial_3$ for some choice of sign.  
Assume that $Z$ is a subsurface
of $\Sigma_g^1$ such that $Z \cong \Sigma_0^4$ and such that the $\partial_i$ are each boundary
components of $Z$:\\
\Figure{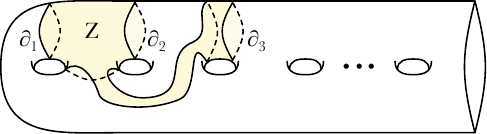}
As in the following figure, let $x$ and $y$ be simple closed curves on $Z$ that intersect twice
with opposite signs, so their algebraic intersection number is $0$:\\
\Figure{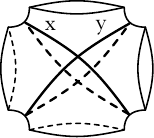}
Since the algebraic intersection number of $x$ and $y$ is $0$, their actions on $H$ commute
and $[T_x,T_y] \in \Torelli_g^1$.  We will call $[T_x,T_y] \in \Torelli_g^1$ a {\em simply intersecting
pair map} supported on $Z$.  These have the desired image under $\tau_1$:

\begin{lemma}
\label{lemma:simplyintersectingpair}
Let $Z$ be a subsurface of $\Sigma_g^1$ such that $Z \cong \Sigma_0^4$ and let $[T_x,T_y] \in \Torelli_g^1$ be 
a simply intersecting pair map supported on $Z$.  Let $\partial_1,\partial_2,\partial_3$ be three
boundary components of $Z$.  Orienting the $\partial_i$ arbitrarily, we then have
\[\tau_1([T_x, T_y]) = \pm [\partial_1] \wedge [\partial_2] \wedge [\partial_3] \in \wedge^3 H \subset H \otimes \wedge^2 H.\]
\end{lemma}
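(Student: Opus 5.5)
The plan is to express the commutator $[T_x,T_y]$ as a product of bounding pair maps and then apply Lemma \ref{lemma:taubp}. Since $x$ and $y$ meet twice with opposite signs inside $Z \cong \Sigma_0^4$, the curve $y' = T_x(y)$ is disjoint from $y$ (after isotopy) and homologous to it. Indeed, $[T_x(y)] = [y] + \hat{\imath}(x,y)[x] = [y]$ because the algebraic intersection number vanishes. Hence
\[[T_x,T_y] = T_x T_y T_x^{-1} T_y^{-1} = T_{T_x(y)} T_y^{-1}.\]
The pair $y' \cup y$ cobounds a subsurface of $Z$, so this is a bounding pair map supported on $Z$. I will check disjointness by drawing $T_x(y)$ in the four-holed sphere: the two intersections of opposite sign allow a direct surgery picture.

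Next I will identify the subsurface between $y$ and $y'$. In $Z$, the curve $y$ separates the four boundary components into two pairs, say $\{\partial_1,\partial_2\}$ versus $\{\partial_3,\partial_4\}$, and $x$ separates them as $\{\partial_2,\partial_3\}$ versus $\{\partial_1,\partial_4\}$ (or some such relabeling). After twisting, $y$ and $y'$ together cut off a pair of pants in $Z$ whose third boundary is a single $\partial_i$. Taking $\partial_4$ to lie on the same side as the outer boundary, I expect this third boundary to be $\partial_2$ or $\partial_3$. There is a real difficulty here, because a bounding pair map supported on $Z$ whose cobounded piece is a pants has $\tau_1 = [y]\wedge(\text{stuff})$ only via Lemma \ref{lemma:taubp} applied in all of $\Sigma_g^1$. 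The subsurface $S$ between $y$ and $y'$ on the side away from $\partial\Sigma_g^1$ consists of the pants together with whatever lies beyond the boundary component $\partial_j$.

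The cleaner route is therefore to use Lemma \ref{lemma:taubpmap} directly and compute $\tau_1$ as a map $H\to\wedge^2H$, or equivalently to use Lemma \ref{lemma:taubp} with $[y] = \pm([\partial_1]+[\partial_2])$. Suppose the piece beyond $\partial_3$ has genus part with symplectic form $\omega_3$; write $\omega_3$ for $\omega_V$ where $V = \HH_1$ of the component of $\Sigma_g^1 \setminus Z$ attached along $\partial_3$, with $\omega_3 = 0$ if that component is a disk, and similarly $\omega_1$, $\omega_2$. Lemma \ref{lemma:taubp} gives $\tau_1(T_{y'}T_y^{-1}) = [y]\wedge \omega_S$. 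Here $\omega_S$ is the symplectic form of $S$, which includes $\omega_3$ together with a handle formed from $y$ and a dual curve. The dual to $y$ inside $S$ passes through $Z$, and its class differs from a naive choice by $[\partial_3]$. Expanding, the $\omega_3$ terms cancel with a companion contribution, and the terms that survive are $\pm[\partial_1]\wedge[\partial_2]\wedge[\partial_3]$.

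The main obstacle is this bookkeeping: choosing a symplectic basis adapted to $Z$ and tracking signs. A simpler fallback is to avoid bases altogether. I would evaluate $\tau_1([T_x,T_y])$ via Lemma \ref{lemma:taubpmap} on classes supported on each complementary component of $Z$, showing that $\tau_1([T_x,T_y])$ vanishes on $\Span{[\partial_1],[\partial_2],[\partial_3]}^{\perp}$. Since $\tau_1$ lies in $\wedge^3H$ and is nonzero only through the $\partial_i$, I would then pin down the answer using duals to the $\partial_i$. In that step, part (i) of Lemma \ref{lemma:taubpmap} gives values of the form $[y]\wedge[\partial_k]$, and antisymmetrizing yields $\pm[\partial_1]\wedge[\partial_2]\wedge[\partial_3]$.
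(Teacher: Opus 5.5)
Your proof breaks at its first geometric step: $y'=T_x(y)$ is \emph{not} disjoint from $y$. Since $x$ and $y$ meet twice, the standard formula $i(T_a(b),b)=i(a,b)^2$ gives $i(T_x(y),y)=4$, and no isotopy removes these intersections. There is also a quick sanity check. In $Z\cong\Sigma_0^4$ any two disjoint essential curves are isotopic, so if $y'$ were disjoint from $y$ you would have $T_x(y)=y$. Then $T_{T_x(y)}T_y^{-1}=1$, and $\tau_1([T_x,T_y])$ would vanish.

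So the identity $[T_x,T_y]=T_{T_x(y)}T_y^{-1}$ is correct, but it only writes $[T_x,T_y]$ as a product of twists about two homologous curves meeting four times. That is not a bounding pair map. There is no pair of pants cobounded by $y$ and $y'$, and Lemmas \ref{lemma:taubp} and \ref{lemma:taubpmap} cannot be applied to it. Your third paragraph rests on the same false premise, and the cancellation of the $\omega_3$ terms there is asserted rather than computed.

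The fallback has a reasonable first half. One can justify by hand that $\tau_1([T_x,T_y])$ kills every class carried by $\Sigma_g^1\setminus Z$, so it lies in the line $\wedge^3\Span{[\partial_1],[\partial_2],[\partial_3]}$. The decisive step is showing the coefficient is $\pm1$. There you again invoke Lemma \ref{lemma:taubpmap} for a map that is not a bounding pair map. Evaluating $[T_x,T_y]_{\ell}$ on a loop crossing $Z$ is exactly the content of the lemma, and it is not done.

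The missing idea is an auxiliary curve outside $Z$. Let $\partial_a,\partial_b$ be the components of $\partial Z$ on one side of $y$, and suppose they can be joined by an arc $\alpha$ in $\Sigma_g^1\setminus Z$. This holds, for example, when $\Sigma_g^1\setminus Z$ is connected, as in the paper's applications.

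Let $c$ be the boundary of a regular neighborhood of $\partial_a\cup\alpha\cup\partial_b$, pushed off $Z$. Then $c$ is disjoint from $Z$ and homologous to $y$, and $y\cup c$ bounds a copy $S$ of $\Sigma_1^2$. So $B=T_yT_c^{-1}$ is an honest bounding pair map. Since $T_x$ commutes with $T_c$, we get $[T_x,T_y]=(T_xBT_x^{-1})B^{-1}$, and Lemma \ref{lemma:johnsonequivariant} gives $\tau_1([T_x,T_y])=(T_x)_*\tau_1(B)-\tau_1(B)$.

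By Lemma \ref{lemma:taubp}, $\tau_1(B)=\pm[y]\wedge[\partial_a]\wedge v$, where $v$ is a loop in $S$ dual to $[\partial_a]$. Inside $Z$, $v$ runs from $\partial_a$ to $\partial_b$ through the pants bounded by $y,\partial_a,\partial_b$. Because $x$ meets $y$ only twice, $x$ cuts that pants in a single arc separating $\partial_a$ from $\partial_b$, so $v$ crosses $x$ once algebraically.

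Now $(T_x)_*$ fixes $[y]$ and $[\partial_a]$ and sends $v\mapsto v\pm[x]$. Hence $\tau_1([T_x,T_y])=\pm[y]\wedge[\partial_a]\wedge[x]$. Writing $[y]$ and $[x]$ as signed sums of two boundary classes and using the relation among the four boundary classes, this equals $\pm[\partial_1]\wedge[\partial_2]\wedge[\partial_3]$. This is one concrete way to carry out the factorization into bounding pair maps that the paper suggests.
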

\begin{proof}
Since we have already made a number of calculations, we leave this one to the reader.  It can be done by carefully
studying the action of $[T_x,T_y]$ on $\pi$, or alternatively by factoring $[T_x,T_y]$ as a product of bounding
pair maps.  See \cite{ChildersSIP} for the details of this calculation.
\end{proof}

\begin{remark}
Let $Z$ and $\partial_1,\partial_2,\partial_3$ be as in Lemma \ref{lemma:simplyintersectingpair} and
let $\partial_4$ be the fourth boundary component of $Z$.  It might seem confusing that
$\partial_4$ plays no role in Lemma \ref{lemma:simplyintersectingpair}.  The reason for this
is as follows.  Orienting $\partial_4$ arbitrarily, since
$\partial_1 \cup \cdots \cup \partial_4$ bounds $Z$ there exists $\epsilon_1,\ldots,\epsilon_4 \in \{\pm 1\}$ such that
\[\epsilon_1 [\partial_1] + \epsilon_2 [\partial_2] + \epsilon_3 [\partial_3] + \epsilon_4 [\partial_4] = 0.\]
This implies that replacing one of $\partial_1,\partial_2,\partial_3$ with $\partial_4$ would not
not change $\pm [\partial_1] \wedge [\partial_2] \wedge [\partial_3]$.
\end{remark}

\subsection{Separating twists, second Johnson homomorphism}
\label{section:tau2septwist}

Lemma \ref{lemma:tau1septwist} implies that separating twists $T_z$ lie in
$\Torelli_g^1[2]$.  In fact, Johnson \cite{Johnson2} proved that separating twists
generate $\Torelli_g^1[2]$, which is the domain of the second Johnson homomorphism
$\tau_2\colon \Torelli_g^1[2] \rightarrow H \otimes \FLie_3(H)$.  Our final
goal in this section is to calculate 
\[\tau_2(T_z) \in H \otimes \FLie_3(H) \cong H \otimes \frac{H \otimes \wedge^2 H}{\wedge^3 H} \cong \frac{H^{\otimes 2} \otimes \wedge^2 H}{H \otimes \wedge^3 H}.\]
Here the second equality was explained in Example \ref{example:lie3}.  
We have the following:

\begin{lemma}
\label{lemma:tau2septwist}
Let $T_z$ be a separating twist in $\Torelli_g^1[2]$.
Let $S$ and $S'$ be the subsurfaces of $\Sigma_g^1$ on either side of $z$.  Order
them such that $\partial \Sigma_g^1 \subset S'$, so $S \cong \Sigma_h^1$ for some
$1 \leq h \leq g$.  Let $V$ be the image of $\HH_1(S;\Q)$ in $H$.  Then $\tau_2(T_z)$ is the image of
$-\omega_V \otimes \omega_V \in (\wedge^2 H) \otimes (\wedge^2 H)$ in
$(H^{\otimes 2} \otimes \wedge^2 H)/(H \otimes \wedge^3 H)$.
\end{lemma}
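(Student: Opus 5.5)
Regard $\tau_2(T_z)$ as a homomorphism $H \rightarrow \FLie_3(H)$ and compute it on a basis adapted to the splitting $H = V \oplus V^{\perp}$, exactly as in the proof of Lemma \ref{lemma:tau1septwist}. Let $\zeta \in \pi$ be the curve from that proof, so $[\zeta]$ is the separating curve around $S$ and every integral $u \in V$ is represented by some $\sigma \in \pi$ with $T_z(\sigma) = \zeta \sigma \zeta^{-1}$. Every integral $u' \in V^{\perp}$ (the image of $\HH_1(S')$) is represented by a loop in $S'$ fixed by $T_z$. The first step is to compute $\tau_2(T_z)$ on these vectors:
\begin{itemize}
\item For $u' \in V^{\perp}$ we get $\tau_2(T_z)(u') = 0$, since $(T_z)_{\ell}$ of the fixed loop is trivial.
\item For $u = [\sigma] \in V$ we get $(T_z)_{\ell}(\sigma) = [\zeta,\sigma] \in \gamma_3(\pi)$. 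Its image in $\FLie_3(H)$ is the bracket $[\Brack{\zeta}_2, u]$, and Lemma \ref{lemma:separatingcurvebrack} identifies $\Brack{\zeta}_2 = \omega_V$. Hence $\tau_2(T_z)(u) = [\omega_V, u]$.
\end{itemize}
Here $[\omega_V, u]$ means the Lie bracket of $\omega_V \in \FLie_2(H)$ with $u \in \FLie_1(H)$. We need the standard fact that $\Brack{[a,b]}_{d+e} = [\Brack{a}_d,\Brack{b}_e]$, which is how the bracket on $\cL(\pi;\Q)$ is defined.

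The second step is to translate this map into an element of $H \otimes \FLie_3(H)$ via the identification of \S\ref{section:johnsondef}. There $x \otimes \kappa$ corresponds to the map $h \mapsto \omega(x,h)\kappa$. Choose a symplectic basis $\{u_1,v_1,\ldots,u_h,v_h\}$ of $V$, so that $\omega_V = \sum_i u_i \wedge v_i$. The map $u_i \mapsto [\omega_V,u_i]$, $v_i \mapsto [\omega_V,v_i]$, vanishing on $V^{\perp}$, corresponds to
\[\sum_{i=1}^h \left(v_i \otimes [\omega_V,u_i] - u_i \otimes [\omega_V,v_i]\right),\]
up to the sign convention forced by $\omega(u_i,v_i)=1$. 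Checking this sign is the same routine bookkeeping as in the proof of Lemma \ref{lemma:taubp}.

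The third step is to compare this with the image of $-\omega_V \otimes \omega_V$ in $(H^{\otimes 2}\otimes \wedge^2 H)/(H \otimes \wedge^3 H)$. Under the isomorphism of Example \ref{example:lie3}, $\FLie_3(H) \cong (H \otimes \wedge^2 H)/\wedge^3 H$, the class of $a \otimes (b \wedge c)$ goes to the bracket $[a,[b,c]]$. So $[\omega_V,u] = -[u,\omega_V]$ corresponds to $-u \otimes \omega_V$. The element above therefore becomes
\[-\sum_i \left(v_i \otimes u_i - u_i \otimes v_i\right) \otimes \omega_V = \sum_i (u_i \wedge v_i) \otimes \omega_V,\]
where $u_i \wedge v_i$ is embedded in $H^{\otimes 2}$ as $u_i \otimes v_i - v_i \otimes u_i$. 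Up to the global sign, this is $\omega_V \otimes \omega_V$, which is the claimed formula.

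The main obstacle is consistency of sign conventions. Three conventions interact: the orientation of $\zeta$ relative to $S$ (which fixes $\Brack{\zeta}_2 = +\omega_V$), the identification $H^{\ast} \cong H$ via $\omega$, and the identification of $\FLie_3$ with a quotient of $H \otimes \wedge^2 H$. I would fix all three exactly as in the proofs of Lemma \ref{lemma:taubpmap} and Lemma \ref{lemma:taubp}, and track the sign through a single term ($h=1$) to confirm the overall factor of $-1$.
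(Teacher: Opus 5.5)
Your strategy is the paper's: show $\tau_2(T_z)$ vanishes on $V^{\perp}$, compute it on $V$ from $(T_z)_{\ell}(\sigma)=[\zeta,\sigma]$ and $\Brack{\zeta}_2=\omega_V$, then convert to an element of $H\otimes\FLie_3(H)$. Your Step 1 is correct. The paper writes $(T_z)_{\ell}(\mu_i)=[\mu_i,\zeta]^{-1}$ and gets the same value $\tau_2(T_z)(u_i)=-[u_i,\omega_V]=[\omega_V,u_i]$.

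As written, however, the proposal does not prove the lemma. Beyond routine structure, the statement asserts a specific sign, and your Step 2 gets that sign wrong. Under the identification of \S\ref{section:johnsondef}, $x\otimes\kappa$ is the map $h\mapsto\omega(x,h)\kappa$. Since $\omega(u_i,v_i)=1$, the tensor $v_i\otimes\kappa$ sends $u_i\mapsto-\kappa$, and $u_i\otimes\kappa$ sends $v_i\mapsto+\kappa$. So a map with $u_i\mapsto A_i$ and $v_i\mapsto B_i$, vanishing on $V^{\perp}$, corresponds to $\sum_i(u_i\otimes B_i-v_i\otimes A_i)$. This is the formula $\tau_d(f)=\sum_i(a_i\otimes\Brack{\obeta_i}_{d+1}-b_i\otimes\Brack{\oalpha_i}_{d+1})$ from the proof of Theorem \ref{theorem:moritajohnsonbracket}, and it is the convention used in Lemma \ref{lemma:taubp}.

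Your tensor $\sum_i(v_i\otimes[\omega_V,u_i]-u_i\otimes[\omega_V,v_i])$ is the negative of this. Carrying out your Step 3 on it produces $+\omega_V\otimes\omega_V$, as your own displayed line shows. Saying ``up to the global sign, this is the claimed formula'' therefore concedes exactly the point the lemma pins down.

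The fix is to use the correct tensor $\sum_i(u_i\otimes[\omega_V,v_i]-v_i\otimes[\omega_V,u_i])$. With your correct identification of $[\omega_V,u]$ with $-u\otimes\omega_V$, this becomes $\sum_i(-u_i\otimes v_i+v_i\otimes u_i)\otimes\omega_V=-\omega_V\otimes\omega_V$, as claimed.

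The other sign input is that the loop $\zeta$ satisfying $T_z(\sigma)=\zeta\sigma\zeta^{-1}$ bounds $S$ on its right, so that $\Brack{\zeta}_2=+\omega_V$. Both you and the paper take this from the figures. With Step 2 corrected, your argument is the same as the paper's proof.
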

\begin{proof}
Let $\zeta \in \pi$ be the following curve:\\
\Figure{Tau1SepTwist.1}
By the proof of Lemma \ref{lemma:separatingcurvebrack}, we can 
write $\zeta = [\mu_1,\nu_1] \cdots [\mu_h,\nu_h]$ for simple closed curves
$\mu_1,\nu_1,\ldots,\mu_h,\nu_h \in \pi$ such that the following holds:
\begin{itemize}
\item Letting $u_i = [\mu_i] \in H$ and $v_i = [\nu_i] \in H$ be the homology
classes of $\mu_i,\nu_i \in \pi$, the vectors $\{u_1,v_1,\ldots,u_h,v_h\}$ form
a symplectic basis for $V = \HH_1(S;\Q)$.
\end{itemize}
As in Lemma \ref{lemma:separatingcurvebrack}, the image of $\zeta$ in $\FLie_2(H) \cong \wedge^2 H$ is
$\omega_V = u_1 \wedge v_1 + \cdots + u_h \wedge v_h$.  

Just like in the proof of Lemma \ref{lemma:tau2septwist}, we have $T_z(\mu_i) = \zeta \mu_i \zeta^{-1}$
and $T_z(\nu_i) = \zeta \nu_i \zeta^{-1}$ for $1 \leq i \leq h$.  This implies that
$(T_z)_{\ell}(\mu_i) = \zeta \mu_i \zeta^{-1} \mu_i^{-1} = [\mu_i,\zeta]^{-1}$ 
and $(T_z)_{\ell}(\nu_i) = \zeta \nu_i \zeta^{-1} \nu_i^{-1} = [\nu_i,\zeta]^{-1}$.
Considered as a map
\[H \rightarrow \FLie_3(H) = \frac{H \otimes \wedge^2 H}{\wedge^3 H},\]
we deduce that $\tau_2(T_z)$ vanishes on the image of $\HH_1(S')$ and
has the following behavior on $\HH_1(S)$: for $1 \leq i \leq h$, we have
\begin{align*}
\tau_2(T_z)(u_i) &= \Brack{[\mu_i,\zeta]^{-1}}_3 = -[u_i,\omega_V],\\
\tau_2(T_z)(v_i) &= \Brack{[\nu_i,\zeta]^{-1}}_3 = -[v_i,\omega_V].
\end{align*}
Translating this into an element of $H \otimes \Lie_3(H) = (H^{\otimes 2} \otimes \wedge^2 H)/(H \otimes \wedge^3 H)$,
this is exactly the image of
\[-\sum_{i=1}^h \left((u_i \otimes v_i) \otimes \omega_V - (v_i \otimes u_i) \otimes \omega_V\right) = -\sum_{i=1}^h (u_i \wedge v_i) \otimes \omega_V = -\omega_V \otimes \omega_V\]
in $(H^{\otimes 2} \otimes \wedge^2 H)/(H \otimes \wedge^3 H)$.
\end{proof}

\section{Image of the Johnson homomorphism}
\label{section:johnsonimage}

For some $g \geq 1$, let $H = \HH_1(\Sigma_g^1;\Q)$.
From the calculations in the previous section, it is immediate that
the Johnson homomorphism $\tau_d\colon \Torelli_g^1[d] \rightarrow H \otimes \FLie_{d+1}(H)$ is not surjective.
Our goal in this section is to prove the following theorem of Morita which explains this:

\begin{theorem}[{Morita, \cite[Corollary 3.2]{MoritaAbelian}}]
\label{theorem:moritajohnsonbracket}
For all $g,d \geq 1$ the image of $\tau_d\colon \Torelli_g^1[d] \rightarrow H \otimes \FLie_{d+1}(H)$
is contained in the kernel of the Lie algebra bracket map $H \otimes \FLie_{d+1}(H) \rightarrow \FLie_{d+2}(H)$.
\end{theorem}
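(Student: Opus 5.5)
The plan is to use the one relation that the mapping class group cannot move: the boundary loop. Choose the basepoint $\ast \in \partial \Sigma_g^1$ and simple closed curves $\alpha_1,\beta_1,\ldots,\alpha_g,\beta_g \in \pi$ such that $\pi$ is free on them, their homology classes $u_i = [\alpha_i]$ and $v_i = [\beta_i]$ form a symplectic basis of $H$, and the boundary loop is
\[\zeta_{\partial} = [\alpha_1,\beta_1] \cdots [\alpha_g,\beta_g].\]
Every $f \in \Mod_g^1$ fixes $\partial \Sigma_g^1$ pointwise, so $f(\zeta_\partial) = \zeta_\partial$. For $f \in \Torelli_g^1[d]$, write $a_i = f_{\ell}(\alpha_i)$ and $b_i = f_{\ell}(\beta_i)$. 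These lie in $\gamma_{d+1}(\pi)$, and their classes $\Brack{a_i}_{d+1}$ and $\Brack{b_i}_{d+1}$ are exactly the values $\htau_d(f)(u_i)$ and $\htau_d(f)(v_i)$. The whole proof will extract the identity
\[\sum_{i=1}^g \left([\Brack{a_i}_{d+1}, v_i] + [u_i,\Brack{b_i}_{d+1}]\right) = 0 \in \FLie_{d+2}(H)\]
from the equation $f(\zeta_\partial)=\zeta_\partial$, and then identify the left side with (minus) the bracket of $\tau_d(f)$.

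\textbf{Step 1 (commutator expansion).} Expand
\[f(\zeta_\partial) = \prod_i [a_i\alpha_i, b_i\beta_i]\]
modulo $\gamma_{d+3}(\pi)$. Using the standard identities $[xy,z] = x[y,z]x^{-1}[x,z]$ and $[x,yz] = [x,y]\,y[x,z]y^{-1}$, together with three facts, one gets
\[[a_i\alpha_i, b_i\beta_i] \equiv [\alpha_i,\beta_i]\,[a_i,\beta_i]\,[\alpha_i,b_i] \pmod{\gamma_{d+3}(\pi)}.\]
The three facts are:
\begin{itemize}
\item $[a_i,\beta_i]$ and $[\alpha_i,b_i]$ lie in $\gamma_{d+2}(\pi)$;
\item $[a_i,b_i] \in \gamma_{2d+2}(\pi) \subset \gamma_{d+3}(\pi)$ since $d \geq 1$;
\item conjugation acts trivially on $\gamma_{d+2}(\pi)/\gamma_{d+3}(\pi)$, and elements of $\gamma_{d+2}(\pi)$ are central modulo $\gamma_{d+3}(\pi)$.
\end{itemize}
Because the correction terms are central mod $\gamma_{d+3}(\pi)$, they can be moved out of the product. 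This gives
\[\zeta_\partial = f(\zeta_\partial) \equiv \zeta_\partial \cdot \prod_i [a_i,\beta_i][\alpha_i,b_i] \pmod{\gamma_{d+3}(\pi)}.\]
Hence $\prod_i [a_i,\beta_i][\alpha_i,b_i] \in \gamma_{d+3}(\pi)$. Taking its image in $\FLie_{d+2}(H)$, where group commutators become Lie brackets, yields the displayed identity.

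\textbf{Step 2 (matching with the bracket map).} Translate $\htau_d(f)$ into $\tau_d(f) \in H \otimes \FLie_{d+1}(H)$ via the identification of \S\ref{section:johnsondef}, under which $x \otimes \kappa$ corresponds to $h \mapsto \omega(x,h)\kappa$. With $\omega(u_i,v_i) = 1$, the element representing the map $u_i \mapsto \Brack{a_i}_{d+1}$, $v_i \mapsto \Brack{b_i}_{d+1}$ is
\[\tau_d(f) = \sum_i \left(u_i \otimes \Brack{b_i}_{d+1} - v_i \otimes \Brack{a_i}_{d+1}\right).\]
This is the same bookkeeping as in the proof of Lemma \ref{lemma:taubp}, and the overall sign is irrelevant for the conclusion. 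Applying the bracket $x \otimes \kappa \mapsto [x,\kappa]$ gives
\[\sum_i \left([u_i,\Brack{b_i}_{d+1}] - [v_i,\Brack{a_i}_{d+1}]\right) = \sum_i \left([u_i,\Brack{b_i}_{d+1}] + [\Brack{a_i}_{d+1},v_i]\right),\]
which vanishes by Step 1. So $\tau_d(f)$ lies in the kernel of the bracket map, as claimed in Theorem \ref{theorem:moritajohnsonbracket}.

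The main obstacle is Step 1: one has to expand the commutators of products carefully modulo $\gamma_{d+3}(\pi)$ and confirm that every discarded term (the conjugations and the $[a_i,b_i]$ factors) really lies in $\gamma_{d+3}(\pi)$. Step 2 is pure sign bookkeeping. Since we only need the image to be zero, the sign conventions there cannot spoil the result.
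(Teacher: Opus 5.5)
Your proposal is correct and follows the paper's proof essentially step for step. Like the paper, you expand $f(\zeta_\partial)=\prod_i[a_i\alpha_i,b_i\beta_i]$ modulo $\gamma_{d+3}(\pi)$ to obtain $\zeta_\partial\equiv\zeta_\partial\cdot\prod_i[a_i,\beta_i][\alpha_i,b_i]$, and then identify the image of this product in $\FLie_{d+2}(H)$ with the bracket of $\tau_d(f)=\sum_i\bigl(u_i\otimes\Brack{b_i}_{d+1}-v_i\otimes\Brack{a_i}_{d+1}\bigr)$. One fact is used but missing from your list of three: the expansion contains the conjugate $a_ib_i[\alpha_i,\beta_i]b_i^{-1}a_i^{-1}$, and reducing it to $[\alpha_i,\beta_i]$ modulo $\gamma_{d+3}(\pi)$ needs $[\gamma_{d+1}(\pi),\gamma_2(\pi)]\subseteq\gamma_{d+3}(\pi)$, which the paper states explicitly.
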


Before proving this, we discuss the extent to which it characterizes the image of $\tau_d$:
\begin{itemize}
\item For $d = 1$, by Example \ref{example:lie2} we have
$\FLie_2(H) \cong \wedge^2 H$.  We observed in Example \ref{example:lie3} that the kernel
of the Lie bracket map from
$H \otimes \FLie_2(H) = H \otimes \wedge^2 H$
to $\Lie_3(H)$ is $\wedge^3 H$, so by Theorem \ref{theorem:moritajohnsonbracket} the $\Q$-span of the image
of $\tau_1$ is contained in $\wedge^3 H$.  Johnson \cite{JohnsonHomo} proved that
the $\Q$-span of the image of $\tau_1$ equals $\wedge^3 H$.  We
describe this calculation in \S \ref{section:firstjohnson} below.
\item For $d = 2$, by Example \ref{example:lie3} we have
\[\FLie_3(H) = \frac{H \otimes \wedge^2 H}{\wedge^3 H}.\]
We observed in Example \ref{example:lie4} that the kernel of the Lie bracket map from
\[H \otimes \FLie_3(H) = \frac{H^{\otimes 2} \otimes \wedge^2 H}{H \otimes \wedge^3 H}\]
to $\Lie_4(H)$ is isomorphic to $\Sym^2(\wedge^2 H) / \wedge^4 H$, so by Theorem \ref{theorem:moritajohnsonbracket} the 
$\Q$-span of the image  
of $\tau_2$ is contained in $\Sym^2(\wedge^2 H) / \wedge^4 H$.  Morita \cite{MoritaCasson} proved
that the $\Q$-span of the image of $\tau_2$ equals $\Sym^2(\wedge^2 H)/\wedge^4 H$.
We describe this calculation in \S \ref{section:secondjohnson} below.
\end{itemize}
Morita \cite{MoritaAbelian} proved that for $d \geq 3$ there are restrictions on the image
of $\tau_d$ beyond Theorem \ref{theorem:moritajohnsonbracket}.  There is now an enormous
literature on the image of the higher Johnson homomorphisms.  See \cite{HainJohnson, MoritaSurvey, MoritaSakasaiSuzuki, SatohJohnson}
for discussions of this literature.

\begin{proof}[Proof of Theorem \ref{theorem:moritajohnsonbracket}]
Fix a basepoint $\ast \in \partial \Sigma_g^1$ and set 
$\pi = \pi_1(\Sigma_g^1,\ast)$.  Let $\partial \in \pi$ be the loop around the boundary component,
oriented such that the surface is to its right.
Since $\Mod_g^1$ fixes $\partial \Sigma_g^1$ pointwise, the action of $\Mod_g^1$ on $\pi$ fixes the curve $\partial$.
As we will see, this will ultimately be responsible for this theorem.

As in the proof of Lemma \ref{lemma:separatingcurvebrack},
we can write $\partial = [\alpha_1,\beta_1] \cdots [\alpha_g,\beta_g]$ for simple closed curves
$\alpha_1,\beta_1,\ldots,\alpha_g,\beta_g \in \pi$.
Letting $a_i = [\alpha_i] \in H$ and $b_i = [\beta_i] \in H$
be the homology classes of these curves, the elements $\{a_1,b_1,\ldots,a_g,b_g\}$ 
form a symplectic basis for $H$.

Consider $f \in \Torelli_g^1[d]$.  For $1 \leq i \leq g$, write 
$f_{\ell}(\alpha_i) = \oalpha_i \in \gamma_{d+1}(\pi)$ and $f_{\ell}(\beta_i) = \obeta_i \in \gamma_{d+1}(\pi)$.  
Regarding $\tau_d(f)$ as an element of $\Hom(H,\FLie_{d+1}(H))$, for $1 \leq i \leq g$ we have
\begin{align*}
\tau_d(f)(a_i) &= \Brack{\oalpha_i}_{d+1}, \\
\tau_d(f)(b_i) &= \Brack{\obeta_i}_{d+1}.
\end{align*}
It follows that as an element of $H \otimes \FLie_{d+1}(H)$, we have
\[\tau_d(f) = \sum_{i=1}^g \left(a_i \otimes \Brack{\obeta_i}_{d+1}-b_i \otimes \Brack{\oalpha_i}_{d+1}\right).\]
From this, we see that the following element of $\gamma_{d+2}(\pi)$ maps
to the image of $\tau_d(f)$ under the bracket map $H \otimes \FLie_{d+1}(H) \rightarrow \FLie_{d+2}(H)$:
\[\zeta = \prod_{i=1}^g [\alpha_i,\obeta_i] [\oalpha_i,\beta_i]\]
To prove the theorem, we must prove that $\Brack{\zeta}_{d+2} = 0$.  Writing
$\equiv$ for equality modulo $\gamma_{d+3}(\pi)$, we must equivalently prove
that $\zeta \equiv 1$.

To see this, observe that
\begin{equation}
\label{eqn:faibi}
f([\alpha_i,\beta_i]) = [\oalpha_i \alpha_i, \obeta_i \beta_i].
\end{equation}
For a group $G$ and $x,y \in G$, our commutator convention is $[x,y] = x y x^{-1} y^{-1}$.
For $x,y,z \in G$,  we thus have
\[[xy,z] = x [y,z] x^{-1} [x,z] \quad \text{and} \quad [x,yz] = [x,y] y [x,z] y^{-1}.\]
Applying this to \eqref{eqn:faibi}, we see that
\begin{align*}
f([\alpha_i,\beta_i]) &= \oalpha_i [\alpha_i,\obeta_i \beta_i] \oalpha_i^{-1} [\oalpha_i,\obeta_i \beta_i] \\
&= \left(\oalpha_i [\alpha_i,\obeta_i] \oalpha_i^{-1}\right) \left(\oalpha_i \obeta_i [\alpha_i,\beta_i] \obeta_i^{-1} \oalpha_i^{-1}\right)
[\oalpha_i,\obeta_i] \left(\obeta_i [\oalpha_i,\beta_i] \obeta_i^{-1}\right). 
\end{align*}
Since $\oalpha_i,\obeta_i \in \gamma_{d+1}(\pi)$, the following hold:
\begin{itemize}
\item $\oalpha_i \obeta_i [\alpha_i,\beta_i] \obeta_i^{-1} \oalpha_i^{-1} \equiv [\alpha_i,\beta_i]$; and
\item $[\oalpha_i,\obeta_i] \equiv 1$; and
\item $[\alpha_i,\obeta_i]$ and $[\oalpha_i,\beta_i]$ commute with everything modulo $\gamma_{d+3}(\pi)$.
\end{itemize}
Applying these to the above identity, we deduce that
\[f([\alpha_i,\beta_i]) \equiv [\alpha_i,\beta_i] [\alpha_i,\obeta_i] [\oalpha_i,\beta_i].\]
Keeping in mind that $f(\partial) = \partial$ and
$\partial = [\alpha_1,\beta_1] \cdots [\alpha_g,\beta_g]$, we can multiply
all of these terms together and commute the terms $[\alpha_i,\obeta_i]$ and $[\oalpha_i,\beta_i]$
around and see that
\[\partial = f(\partial) \equiv \prod_{i=1}^g [\alpha_i,\beta_i] [\alpha_i,\obeta_i] [\oalpha_i,\beta_i] 
         \equiv \left(\prod_{i=1}^g [\alpha_i,\beta_i]\right) \left(\prod_{i=1}^g [\alpha_i,\obeta_i] [\oalpha_i,\beta_i]\right) 
         = \partial \Cdot \zeta.\]
Rearranging this, we see that $\zeta \equiv 1$, as desired.
\end{proof}

\section{Representation theory of \texorpdfstring{$\Sp_{2g}$}{Sp2g}}
\label{section:sprep}

Before describing the images of $\tau_1$ and $\tau_2$, we pause to
describe the representation theory of the algebraic group $\Sp_{2g}$
over a field $\bk$ of characteristic $0$.  As in our discussion of
$\SL_n$ in \S \ref{section:slrep}, 
everything we discuss can be found in \cite{BorelLinear}, and in the slightly different language
of Lie algebra representations can also be found in \cite{FultonHarris}.

\subsection{Symplectic basis}

Let $\{a_1,b_1,\ldots,a_g,b_g\}$ be the standard symplectic basis for $\bk^{2g}$.  As
is standard, we will describe elements of $\Sp_{2g}(\bk)$ using matrices whose
first $g$ columns give the images of $a_1,\ldots,a_g$ and whose last $g$ columns
gives the images of $b_1,\ldots,b_g$.  Letting
\[J = \left(\begin{matrix} 0 & I_n \\ -I_n & 0 \end{matrix}\right),\]
the group $\Sp_{2g}(\bk)$ thus consists of $2g \times 2g$ matrices $M$ such that
$M^t J M = J$.

\subsection{Self-duality and the symplectic form}
\label{section:selfduality}

Let $H = \bk^{2g}$ be the standard representation of $\Sp_{2g}(\bk)$ and let
$\omega$ be the symplectic form on $H$ that is preserved by $\Sp_{2g}(\bk)$.
The form $\omega$ is an alternating bilinear form $\omega\colon H \times H \rightarrow \bk$
that is nondegenerate in the sense that it identifies $H$ with its
dual $H^{\ast} = \Hom(H,\bk)$.  Since $\Sp_{2g}(\bk)$ preserves $\omega$, it
follows that $H$ and $H^{\ast}$ are isomorphic representations of $\Sp_{2g}(\bk)$.
It follows that
\[\omega \in (\wedge^2 H)^{\ast} \cong \wedge^2 H^{\ast} \cong \wedge^2 H.\]
We will henceforth identify $\omega$ with its image in $\wedge^2 H$.  The
element $\omega \in \wedge^2 H$ is invariant under $\Sp_{2g}(\bk)$, so it
spans a trivial subrepresentation of $\wedge^2 H$.  With
respect to the symplectic basis $\{a_1,b_1,\ldots,a_g,b_g\}$, we have
\[\omega = a_1 \wedge b_1 + \cdots + a_g \wedge b_g.\]

\subsection{Maximal split torus}

Let $\bT$ be the subgroup of $\Sp_{2g}$ consisting of diagonal matrices.  This
notation conflicts with our notation for the corresponding subgroup of $\SL_n$,
but context should make it clear which group we are talking about.  For
$t_1,\ldots,t_g \in \bk^{\times}$, set
\begin{equation}
\label{eqn:sptorus}
\diag_{\Sp}(t_1,\ldots,t_g) = \diag(t_1,\ldots,t_g,t_1^{-1},\ldots,t_g^{-1}) \in \bT(\bk).
\end{equation}
The group $\bT(\bk)$ is exactly the group of matrices
of the form $\diag_{\Sp}(t_1,\ldots,t_g)$, so
\[\bT(\bk) \cong (\bk^{\times})^g.\]
For $1 \leq i \leq g$, let $E_i \in \chi(\bT)$ be the character 
\[E_i(\diag_{\Sp}(t_1,\ldots,t_g)) = t_i.\]
The group of characters $\chi(\bT)$ is then isomorphic to $\Z^g$ and is generated
by $\{E_1,\ldots,E_g\}$.

\subsection{Weight decomposition}

Using $\bT$, we can talk about weight vectors for representations of $\Sp_{2g}$.
The inverses in \eqref{eqn:sptorus} make this slightly more complicated than for
$\SL_n$.  Here are some examples of weight vectors:

\begin{example}
In $H$, the vector $a_i$ is a weight vector with weight $E_i$ and the vector
$b_i$ is a weight vector with weight $-E_i$.
\end{example}

\begin{example}
In $\wedge^2 H$, for $1 \leq i < j \leq g$ the vector $a_i \wedge b_j$ is a weight
vector with weight $E_i - E_j$.  When $i=j$, things degenerate and
$a_i \wedge b_i$ is a weight vector with weight $0$, i.e., it is fixed by $\bT(\bk)$.
The sum of all of these is $\omega = a_1 \wedge b_1 + \cdots + a_g \wedge b_g$, which
is fixed not only by $\bT(\bk)$ but by all of $\Sp_{2g}(\bk)$.
\end{example}

\subsection{Standard unipotent subgroups}

For representations of $\SL_n$, we defined highest weight vectors to be weight vectors that
are fixed by the unipotent subgroup of upper triangular matrices.  The corresponding notion
of $\Sp_{2g}$ is more complicated since the subgroup corresponding
to ``upper triangular matrices'' is slightly more complicated.

The {\em standard embedding} of $\GL_g(\bk)$ into $\Sp_{2g}(\bk)$ is as follows:
\[M \in \GL_g(\bk) \mapsto \left(\begin{matrix} M & 0 \\ 0 & (M^t)^{-1} \end{matrix}\right) \in \Sp_{2g}(\bk).\]
The image of the standard embedding is exactly the subgroup of $\Sp_{2g}(\bk)$ consisting
of symplectic matrices preserving the subspaces $\Span{a_1,\ldots,a_g}$ and
$\Span{b_1,\ldots,b_g}$.  Define $\bU_1 < \Sp_{2g}$ to be the image of the group of strictly
upper triangular matrices in $\GL_g$ under the standard embedding.

Next, for a $g \times g$ matrix $M$ over $\bk$ with $M^t = M$, we have a corresponding element
of $\Sp_{2g}(\bk)$:
\[\left(\begin{matrix} I_g & M \\ 0 & I_g \end{matrix}\right) \in \Sp_{2g}(\Z).\]
Define $\bU_2$ to be the subgroup of $\Sp_{2g}$ consisting of such matrices.

\begin{remark}
The subgroup $\bU$ of $\Sp_{2g}$ generated by $\bU_1$ and $\bU_2$ is what plays
the role of strictly upper triangular matrices, but it is convenient to separate
out the roles played by $\bU_1$ and $\bU_2$.
\end{remark}

\subsection{Highest weight vectors}

If $V$ is a representation of $\Sp_{2g}$, then a {\em highest weight vector} in $V$ 
with weight $\chi \in \chi(\bT)$ is a nonzero vector $v \in V$ such that:
\begin{itemize}
\item $v$ is a weight vector with weight $\chi$, so $v \neq 0$ and $v \in V_{\chi}$; and
\item $v$ is fixed by all elements of $\bU_1(\bk)$ and $\bU_2(\bk)$.
\end{itemize}
A {\em highest weight} of $V$ is the weight of a highest weight vector in $V$.

The integer points $\bU_1(\Z)$ and $\bU_2(\Z)$ are Zariski dense in $\bU_1(\bk)$ and
$\bU_2(\bk)$, so to check that a weight vector $v \in V$ is a highest weight vector it
is enough to check that it is invariant under $\bU_1(\Z)$ and $\bU_2(\Z)$.  In
fact, it is enough to check this on generators for these groups, which are as
follows:
\begin{itemize}
\item For $1 \leq i < j \leq g$, the matrix $X_{ij} \in \bU_1(\bk)$ that fixes
all elements of $\{a_1,b_1,\ldots,a_g,b_g\}$ except for $a_j$ and $b_i$,
on which it does the following:
\begin{align*}
X_{ij}(a_j) &= a_j + a_i,\\
X_{ij}(b_i) &= b_i - b_j.
\end{align*}
\item For $1 \leq i \leq g$, the matrix $Y_i \in \bU_2(\bk)$ that fixes
all elements of $\{a_1,b_1,\ldots,a_g,b_g\}$ except for $b_i$,
on which it does the following:
\[Y_i(b_i) = b_i + a_i.\]
\item For $1 \leq i < j \leq g$, the matrix $Z_{ij} \in \bU_2(\bk)$ that fixes
all elements of $\{a_1,b_1,\ldots,a_g,b_g\}$ except for $b_i$ and $b_j$,
on which it does the following:
\begin{align*}
Z_{ij}(b_i) &= b_i + a_j,\\
Z_{ij}(b_j) &= b_j + a_i.
\end{align*}
\end{itemize}
We remark that these are examples of elementary symplectic matrices. 

\begin{example}
For $H$, the vector $a_1$ is a highest weight vector with weight $E_1$.
One can check that up to scaling this is the unique highest weight vector.
\end{example}

\begin{example}
\label{example:wedgeomega}
In $\wedge^2 H$, the vector $a_1 \wedge a_2$ is a highest weight vector with
weight $E_1 + E_2$.  However, it is not the only highest weight vector: the
vector
\[\omega = a_1 \wedge b_1 + \cdots + a_g \wedge b_g\]
is also a highest weight vector, this time with weight $0$.  
In fact, as we we noted above $\omega$ is fixed by
$\Sp_{2g}(\bk)$, not just by its subgroups $\bU_1(\bk)$ and $\bU_2(\bk)$.  It
is enlightening to prove this directly by examining the action of the elements
$X_{ij}$ and $Y_i$ and $Z_{ij}$ on $\omega$.
\end{example}

\subsection{Theorem of the highest weight}

Just like for $\SL_n$, the representation theory of $\Sp_{2g}$ is controlled by highest weight vectors.
Indeed, the theorem of the highest weight says that the following hold:
\begin{enumerate}
\item All representations of $\Sp_{2g}$ decompose as direct sums of irreducible representations.
\item Up to scaling, each irreducible representation $V$ of $\Sp_{2g}$ contains a unique highest weight vector.
\item If $V$ is an arbitrary representation of $\Sp_{2g}$ and $v \in V$ is a highest weight vector, then
the smallest subrepresentation containing $v$ is irreducible.
\item If $V$ and $V'$ are irreducible representations of $\Sp_{2g}$ with highest weight vectors $v \in V$ and $v' \in V'$
and if the weights of $v$ and $v'$ are the same, then $V$ and $V'$ are isomorphic representations.
\end{enumerate}

\subsection{Dominant weights}

To complete the picture of the representation theory of $\Sp_{2g}$, we must identify the weights
that can appear as highest weights of irreducible representations of $\Sp_{2g}$.  A {\em dominant weight}
is a character
\[\chi = k_1 E_1 + \cdots + k_g E_g \quad \text{with $k_1,\ldots,k_g \in \Z$}\]
such that $k_1 \geq \cdots \geq k_g \geq 0$.  Note that unlike for $\SL_n$, there are no
relations between the $E_i$, so we cannot ensure that the last coordinate is $0$.
The dominant weights are exactly the weights of irreducible representations of $\Sp_{2g}$.
If the $\chi$ above is a dominant weight, we will denote by
$\bV_{\chi}(g)$ the irreducible representation of $\Sp_{2g}$
with highest weight $\chi$.  

Recall that a {\em partition} of an integer $d$ of length $m \geq 0$ is
a tuple $\sigma = (k_1,\ldots,k_m)$ with $k_1 \geq \cdots \geq k_m \geq 1$
and $k_1 + \cdots k_m = d$.  The dominant weights for $\Sp_{2g}$ are in bijection
with partitions of integers with length at most $g$.  Given
such a partition $\sigma = (k_1,\ldots,k_m)$, the corresponding
dominant weight is
\[\chi = k_1 E_1 + \cdots + k_m E_m.\]
We will also write $\bV_{\sigma}(g)$ for $\bV_{\chi}(g)$.

\begin{convention}
We will denote multiplicities in partitions using superscripts.
For instance, if $\sigma = (5,4,4,4,1,1)$ then we will write
$\bV_{5,4^3,1^2}(g)$ for $\bV_{\sigma}(g)$.   
\end{convention}

Here are some examples of this notation.  Recall that $H \cong \bk^{2g}$ is the standard
representation of $\Sp_{2g}(\bk)$.

\begin{example}
We have $\bV_k(g) \cong \Sym^k(H)$.  The highest weight vector
in $\Sym^k(H)$ with weight $k E_1$ is $a_1 \Cdot \ldots \Cdot a_1$.
\end{example}

\begin{example}
\label{example:decomposegpwedge}
Unlike for $\SL_n$, in most cases $\wedge^k H$ is not irreducible.  Indeed,
for $1 \leq k \leq g$ we have
\[\wedge^k H = \begin{cases}
\bV_{1^k}(g) \oplus \bV_{1^{k-2}}(g) \oplus \cdots \oplus \bV_0(g) & \text{if $k$ is even},\\
\bV_{1^k}(g) \oplus \bV_{1^{k-2}}(g) \oplus \cdots \oplus \bV_1(g) & \text{if $k$ is odd}.
\end{cases}\]
The highest weight vectors corresponding to these decompositions are as follows.  As
in \S \ref{section:selfduality} let $\omega = a_1 \wedge b_1 + \cdots + a_g \wedge b_g$.
Then for all $d \geq 0$ such that $k-2d \geq 0$ the vector
\[a_1 \wedge a_2 \wedge \cdots \wedge a_{k-2d} \wedge \omega \wedge \cdots \wedge \omega \in \wedge^k H\]
with $d$ factors of $\omega$ is a highest weight vector with weight $E_1 + \cdots + E_{k-2d}$.  We will explain
how to justify these decompositions with a computer below.
\end{example}

\subsection{Stable decompositions}
\label{section:stablesp}

Just like for $\SL_n$, there is a notion of stability for representations of
$\Sp_{2g}(\bk)$.  
Let $\sigma = (k_1,\ldots,k_m)$ be a partition of $d$ with at most $g$ parts.  We will call $d$ the
{\em degree} of the partition.  We will also say that $d$ is the degree
of the irreducible representation $\bV_{\sigma}(g)$.
Schur--Weyl duality implies that the irreducible representation $\bV_{\sigma}(g)$ appears in in
$H^{\otimes d}$, and moreover that all irreducible representations that appear
in $H^{\otimes d}$ have degree at most $d$.

A classical observation is that for $g \geq d$, the decomposition of $H^{\otimes d}$
into irreducible factors is independent of the parameter $g$ in the following sense.
Since we will need to distinguish between $H = \bk^{2g}$ for different values of
$g$, we will write $H(g)$ for $H$.
\begin{itemize}
\item If
\[H(g)^{\otimes d} = \bV_{\sigma_1}(g) \oplus \cdots \bV_{\sigma_k}(g)\]
for partitions $\sigma_1,\ldots,\sigma_k$, then we also have
\[H(g+1)^{\otimes d} = \bV_{\sigma_1}(g+1) \oplus \cdots \bV_{\sigma_k}(g+1).\]
\end{itemize}

More generally, if $U$ is a representation of $\Sp_{2g}$ constructed from the standard
representation $H$ using tensor
powers, exterior powers, and symmetric powers, then $U$ naturally embeds
into $H(g)^{\oplus d}$ for a $d \geq 1$ called its degree, and the decomposition
of $U$ into irreducible factors is independent of $g$ as long as $g \geq d$.

\begin{convention}
In light of all of this, we can decompose representations of $\Sp_{2g}$ for $g \gg 0$
by using a computer to make this decomposition for $g$ at least the degree of
the representation.  This will be done silently throughout the remainder of the
paper.
\end{convention}

\section{Projection maps and representation theory}
\label{section:spprojection}

We now explore some more features of the representation theory of $\Sp_{2g}$ over a field $\bk$ of
characteristic $0$.  Let $H = \bk^{2g}$ be the standard representation, let
$\omega \in \wedge^2 H$ be the symplectic form, and let
$\{a_1,b_1,\ldots,a_g,b_g\}$ be the standard symplectic basis for $H$.

\subsection{Projection maps}

For $k \geq 0$, define a projection map $q_k\colon \wedge^{k+2} H \rightarrow \wedge^k H$ via the following
formula:
\[q_k(h_1 \wedge \cdots \wedge h_{k+2}) = \sum_{1 \leq i < j \leq k+2} (-1)^{i+j+1} \omega(h_i,h_k) h_1 \wedge \cdots \widehat{h_i} \wedge \cdots \wedge \widehat{h_j} \wedge \cdots \wedge h_{k+2}.\]
This formula makes sense since the right hand side changes sign when two of the $h_i$ are swapped.
We claim that $q_k$ is surjective.  To prove this, it is enough to prove that
its image contains all highest weight vectors in $\wedge^{k} H$.  As we saw in
Example \ref{example:decomposegpwedge}, these are exactly scalar multiples of the vectors
of the following form for $0 \leq d \leq k/2$:
\[w_{k,d} = a_1 \wedge \cdots \wedge a_{k-2d} \wedge \omega \wedge \cdots \wedge \omega \in \wedge^{k} H\]
In $w_{k,d}$, there are $d$ factors of $\omega$.  It is then an enlightening calculation
to show that $q_k(w_{k+2,d})$ is a nonzero multiple\footnote{We will write out the details
of this for $k=1$ in \S \ref{section:splith} below.  This will be the only case we need
in the remainder of the paper.} of $w_{k,d}$, so $w_{k,d}$ is
in the image of $q_k$.  The kernel of $q_k$ contains the highest weight vector
$a_1 \wedge \cdots \wedge a_{k+2}$ generating the irreducible factor $\bV_{1^{k+2}}(g)$ of
$\wedge^{k+2} H$.  Since this is the only irreducible factor of $\wedge^{k+2} H$ not accounted
for by the surjective map $q_k\colon \wedge^{k+2} H \rightarrow \wedge^{k} H$, this
must be the entire kernel.  In other words, we have a short exact sequence
of representations
\[0 \longrightarrow \bV_{1^{k+2}}(g) \longrightarrow \wedge^{k+2} H \stackrel{q_k}{\longrightarrow} \wedge^{k} H \longrightarrow 0.\]

\subsection{Splitting \texorpdfstring{$H$}{H}}
\label{section:splith}

We will be particularly interested in $q_1\colon \wedge^3 H \rightarrow H$.
Let $\iota\colon H \hookrightarrow \wedge^3 H$ be the embedding $\iota(h) = h \wedge \omega$.  
It is almost the case that $q_1$ splits $\iota$.  Indeed, consider a nonzero $u \in H$.  We can find
a symplectic basis $\{u_1,v_1,\ldots,u_g,v_g\}$ for $H$ such that $u = u_1$.  We then have
\begin{align*}
q_1(\iota(u)) &= q_1(u_1 \wedge (u_1 \wedge v_1 + \cdots + u_g \wedge v_g)) \\
&= q_1(u_1 \wedge u_2 \wedge v_2 + \cdots + u_1 \wedge u_g \wedge v_g) = (g-1) u.
\end{align*}
In other words, $q_1 \circ \iota\colon H \rightarrow H$ is multiplication by $(g-1)$.

\subsection{Splitting the quotient by \texorpdfstring{$H$}{H}}

Regard $H$ as a subspace of $\wedge^3 H$ via the inclusion $\iota$.  Let $p\colon \wedge^3 H \rightarrow (\wedge^3 H)/H$
be the projection, so we have a short exact sequence of $\Sp_{2g}$-representations
\[0 \longrightarrow H \stackrel{\iota}{\longrightarrow} \wedge^3 H \stackrel{p}{\longrightarrow} (\wedge^3 H)/H \longrightarrow 0.\]
This splits via the map $\sigma\colon (\wedge^3 H)/H \rightarrow \wedge^3 H$ taking
$\kappa \in (\wedge^3 H)/H$ to the unique $\sigma(\kappa) \in \wedge^3 H$ with
$p(\sigma(\kappa)) = \kappa$ and $q_1(\sigma(\kappa)) = 0$.  Since
$q_1 \circ \iota$ is multiplication by $(g-1)$, we can make this more explicit
as follows:
\begin{itemize}
\item Consider $\kappa \in (\wedge^3 H)/H$.  Let $\tkappa \in \wedge^3 H$ be any
element with $p(\tkappa) = \kappa$.  Set $h = q_1(\tkappa)$.  Then
\[\sigma(\kappa) = \tkappa - \frac{1}{g-1} h \wedge \omega.\]
Since $q_1(h \wedge \omega) = (g-1) h$, this is the unique element of
$\ker(q_1)$ projecting to $\kappa$.
\end{itemize}
For $\kappa \in \wedge^3 H$, let $\okappa = p(\kappa)$.  Below are two examples of $\sigma(\okappa)$
for different $\kappa \in \wedge^3 H$.  

\begin{example}
For $1 \leq i < j < k \leq g$, we have $\sigma(\overline{a_i \wedge a_j \wedge a_k}) = a_i \wedge a_j \wedge a_k$
since $q_1(a_i \wedge a_j \wedge a_k) = 0$.
\end{example}

\begin{example}
For distinct $1 \leq i,j \leq g$, we have
\[\sigma(\overline{a_i \wedge a_j \wedge b_j}) = a_i \wedge (a_j \wedge b_j - \frac{1}{g-1} \omega).\]
Indeed, since $q_1(a_i \wedge a_j \wedge b_j) = a_i \in H$, the above recipe shows that
\[\sigma(\overline{a_i \wedge a_j \wedge b_j}) = a_i \wedge a_j \wedge b_j - \frac{1}{g-1} a_i \wedge \omega = a_i \wedge (a_j \wedge b_j - \frac{1}{g-1} \omega).\qedhere\]
\end{example}

\subsection{Generating subrepresentations}

As we will soon see, for our proofs the most important representation of $\Sp_{2g}$ is $\wedge^2 \wedge^3 H$.
We will need to certify that subrepresentations of its quotient $\wedge^2 ((\wedge^3 H)/H)$ contain
specific irreducible factors.  Rather than try to prove a general result, we will prove two lemmas
that contain exactly what we need.\footnote{These results might seem unmotivated on a first reading,
but we promise the reader that they will prove to be crucial.}

\begin{lemma}
\label{lemma:certify1}
Let $g \geq 6$ and let $\{a_1,b_1,\ldots,a_g,b_g\}$ be the standard symplectic basis for
$H$.  Let $V$ be the subrepresentation of $\wedge^2 ((\wedge^3 H)/H)$ generated by
$\otheta=(\overline{a_1 \wedge a_2 \wedge a_3}) \wedge (\overline{a_4 \wedge a_5 \wedge b_5})$.  Then
$V$ contains a copy of $\bV_{1^4}(g)$.
\end{lemma}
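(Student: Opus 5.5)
The plan is to produce, inside $V$, a highest weight vector of weight $E_1+E_2+E_3+E_4$. By the theorem of the highest weight, this vector generates a copy of $\bV_{1^4}(g)$. Since $V$ is a subrepresentation, it contains every vector obtained from $\otheta$ by the group action and by linear combinations. So I will apply suitable elements of $\Sp_{2g}(\Z)$ to $\otheta$, take differences, and aim for a weight vector of weight $E_1+\cdots+E_4$ that is killed by (fixed under) $\bU_1$ and $\bU_2$.

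The natural first move is to apply $Y_5$, which sends $b_5 \mapsto b_5+a_5$. It fixes $a_1,\ldots,a_5$, so it acts trivially on $\overline{a_1\wedge a_2\wedge a_3}$. Moreover $a_4\wedge a_5\wedge (b_5+a_5) = a_4\wedge a_5\wedge b_5$, so $\otheta$ is fixed; this move is useless. Instead I use the elements $Z_{45}$ (or an element of $\bU_1$ mixing $a_4$ into $a_5$ in the $b$'s), or better a lower-triangular elementary matrix that moves $b_5$ toward $a_4$. I take the symplectic transvection $\phi$ with $\phi(b_5)=b_5+a_4$, $\phi(b_4)=b_4+a_5$, fixing the rest; this is $Z_{45}$. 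Then
\[
Z_{45}(\otheta)=\overline{a_1\wedge a_2\wedge a_3}\wedge \overline{a_4\wedge a_5\wedge(b_5+a_4)}=\otheta,
\]
since $a_4\wedge a_5\wedge a_4=0$. That again fails. So I instead use an element sending $b_5 \mapsto b_5 + a_6$ together with $b_6\mapsto b_6+a_5$ (namely $Z_{56}$). This gives
\[
Z_{56}(\otheta)-\otheta=\overline{a_1\wedge a_2\wedge a_3}\wedge\overline{a_4\wedge a_5\wedge a_6}.
\]
This lies in $V$, and reindexing via a permutation matrix in the standard $\GL_g$ (requiring $g\ge 6$) lets me write it as $\eta=\overline{a_1\wedge a_2\wedge a_3}\wedge\overline{a_4\wedge a_5\wedge a_6}$. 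However, $\eta$ has weight $E_1+\cdots+E_6$, so it generates $\bV_{1^6}(g)$, not $\bV_{1^4}(g)$. The $1^4$ piece must therefore come from the $\omega$-type part of $\otheta$ itself. The key step is to find a combination of translates that isolates $\overline{a_1\wedge a_2\wedge a_3}\wedge\overline{a_4\wedge(\text{something with }\omega)}$.

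The real strategy is therefore to work through the splitting $\sigma$ of \S\ref{section:splith}. Using $\sigma$, I identify $\wedge^2((\wedge^3H)/H)$ with $\wedge^2\ker(q_1)=\wedge^2\bV_{1^3}(g)$. The vector $\otheta$ corresponds to $(a_1\wedge a_2\wedge a_3)\wedge\bigl(a_4\wedge(a_5\wedge b_5-\tfrac{1}{g-1}\omega)\bigr)$. Averaging over the diagonal $\GL$ permutations fixing $1,\ldots,4$, summed over indices $5,\ldots,g$, lies in $V$ and gives $(a_1\wedge a_2\wedge a_3)\wedge\bigl(a_4\wedge(\omega' - \tfrac{g-4}{g-1}\omega)\bigr)$, where $\omega'=\sum_{j\ge5}a_j\wedge b_j$. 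The result has weight $E_1+\cdots+E_4$. I then decompose it into $\bV_{1^4}$ and $\bV_{1^6}$ components, the only irreducible constituents of $\wedge^2 \bV_{1^3}$ that can have this vector as a weight vector appearing from $\bV_{1^4}$ or higher. Alternatively, I write $\omega'=\omega-\sum_{j\le4}a_j\wedge b_j$; the $j\le4$ terms die against $a_1\wedge\cdots\wedge a_4$ except via the $\wedge^2$ structure, which needs care. The main obstacle is verifying that the resulting vector is nonzero and is actually invariant under the $X_{ij}$, $Y_i$, $Z_{ij}$. The concern is that the antisymmetrization in $\wedge^2$ means the terms are not literally wedge products in $\wedge^6H$. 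If direct invariance fails, I fall back on a cleaner certificate: exhibit an equivariant map $\Phi\colon\wedge^2\bV_{1^3}(g)\to\wedge^4H$ built from contracting one $\omega$ between the two factors. Its image lies in $\bV_{1^4}\oplus\bV_{1^2}\oplus\bV_0$, and I compute that $\Phi(\otheta)$ has nonzero projection to $\bV_{1^4}$, e.g. that $q_2\Phi(\otheta)$ differs from $\Phi(\otheta)$ so that $\Phi(\otheta)\notin\omega\wedge\wedge^2H$. Then $\Phi(V)$ contains $\bV_{1^4}(g)$, and by complete reducibility, so does $V$.
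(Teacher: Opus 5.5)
The part of your proposal that works is the fallback, and it is the paper's proof. For $\alpha,\beta\in\ker(q_1)$ the contractions inside a single factor of $q_4(\phi(\alpha\wedge\beta))$ assemble to $q_1(\alpha)\wedge\beta+\alpha\wedge q_1(\beta)=0$, so your cross-contraction map $\Phi\circ\wedge^2\sigma$ is exactly the paper's $q_4\circ\phi\circ\wedge^2\sigma$. This identification also shows $\Phi$ is well defined on $\wedge^2$.

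What is missing is the one computation the lemma consists of: you write ``I compute that $\Phi(\otheta)$ has nonzero projection'' without computing anything. Your proposed test, that $q_2\Phi(\otheta)$ differs from $\Phi(\otheta)$, compares an element of $\wedge^2H$ with one of $\wedge^4H$ and means nothing. The correct test would be $\Phi(\otheta)\notin\omega\wedge\wedge^2H$, since $\wedge^4H=\ker(q_2)\oplus(\omega\wedge\wedge^2H)$ with $\ker(q_2)=\bV_{1^4}(g)$. In fact no projection is needed. The terms $a_j\wedge b_j$ with $j\leq 4$ die, so
\[\phi(\wedge^2\sigma(\otheta))=a_1\wedge a_2\wedge a_3\wedge a_4\wedge\left(\frac{g-2}{g-1}\,a_5\wedge b_5-\frac{1}{g-1}\sum_{j=6}^{g}a_j\wedge b_j\right).\]
Applying $q_4$ gives $\frac{(g-2)-(g-5)}{g-1}\,a_1\wedge a_2\wedge a_3\wedge a_4=\frac{3}{g-1}\,a_1\wedge a_2\wedge a_3\wedge a_4$. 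This is a nonzero highest weight vector of $\bV_{1^4}(g)$; the paper gets $-3\,a_1\wedge\cdots\wedge a_4$ from $(1-g)\otheta$. Equivariance and complete reducibility then finish the argument as you say.

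The rest of the proposal should be discarded, because several of its claims are false.
\begin{itemize}
\item $\eta=\overline{a_1\wedge a_2\wedge a_3}\wedge\overline{a_4\wedge a_5\wedge a_6}$ is not a highest weight vector: $X_{14}$ adds $\overline{a_1\wedge a_2\wedge a_3}\wedge\overline{a_1\wedge a_5\wedge a_6}\neq 0$. So $\eta$ does not generate just $\bV_{1^6}(g)$.
\item The weight $E_1+E_2+E_3+E_4$ also occurs in $\bV_{2^2}(g)$ and $\bV_{2^2,1^2}(g)$, not only in $\bV_{1^4}(g)$ and $\bV_{1^6}(g)$.
\item Your averaged vector $(a_1\wedge a_2\wedge a_3)\wedge\bigl(a_4\wedge(\omega'-\frac{g-4}{g-1}\omega)\bigr)$ is not fixed by $\bU_1$: $X_{14}$ adds $(a_1\wedge a_2\wedge a_3)\wedge\bigl(a_1\wedge(\omega'-\frac{g-4}{g-1}\omega)\bigr)\neq 0$.
\end{itemize}
So the ``real strategy'' cannot certify $\bV_{1^4}(g)$ without exactly the kind of equivariant map you fall back on. The fallback, with the computation above written out, is the whole proof.
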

\begin{proof}
Let $\sigma\colon (\wedge^3 H)/H \rightarrow \wedge^3 H$ be the section of
the projection $p\colon \wedge^3 H \rightarrow (\wedge^3 H)/H$ discussed above and
let $\phi\colon \wedge^2 \wedge^3 H \rightarrow \wedge^6 H$ be the following map:
\[\phi((u_1 \wedge v_1 \wedge w_1) \wedge (u_2 \wedge v_2 \wedge w_2)) = u_1 \wedge v_1 \wedge w_1 \wedge u_2 \wedge v_2 \wedge w_2.\]
Consider the composition
\[\begin{tikzcd}
\wedge^2 (\wedge^3 H)/H \arrow{r}{\wedge^2 \sigma} &
\wedge^2 \wedge^3 H \arrow{r}{\phi} &
\wedge^6 H \arrow{r}{q_4} &
\wedge^4 H.
\end{tikzcd}\]
Since $a_1 \wedge \cdots \wedge a_4 \in \wedge^4 H$ is a highest weight vector for
a copy of $\bV_{1^4}(g)$, it is enough to prove that this composition takes $\otheta$ to a nonzero multiple
of $a_1 \wedge \cdots \wedge a_4$.  In fact, to make our formulas clearer we will prove this
not for $\otheta$ but for $(1-g) \otheta$.  We calculate the image of $(1-g) \otheta$ under
the above composition as follows.  First, we calculate the image under $\wedge^2 \sigma$:
\begin{align*}
\wedge^2 \sigma((1-g)\otheta) &= (1-g) (a_1 \wedge \wedge a_2 \wedge a_3) \wedge (a_4 \wedge (a_5 \wedge b_5 - \frac{1}{g-1} \omega)) \\
                              &= (a_1 \wedge \wedge a_2 \wedge a_3) \wedge (a_4 \wedge ((1-g) a_5 \wedge b_5 + \omega)).
\end{align*}
Next, we apply $\phi$ and get
\begin{align*}
 & a_1 \wedge a_2 \wedge a_3 \wedge a_4 \wedge ((1-g) a_5 \wedge b_5 + \omega) \\
=& a_1 \wedge a_2 \wedge a_3 \wedge a_4 \wedge ((2-g) a_5 \wedge b_5 + a_6 \wedge b_6 + \cdots + a_g \wedge b_g).
\end{align*}
Finally, we apply $q_4$ and get
\[\left((2-g) + (g-5)\right) a_1 \wedge a_2 \wedge a_3 \wedge a_4
= -3 a_1 \wedge a_2 \wedge a_3 \wedge a_4.\qedhere\]
\end{proof}

\begin{lemma}
\label{lemma:certify2}
Let $g \geq 6$ and let $\{a_1,b_1,\ldots,a_g,b_g\}$ be the standard symplectic basis for
$H$.  Let $V$ be the subrepresentation of $\wedge^2 ((\wedge^3 H)/H)$ generated by
$\otheta=(\overline{a_1 \wedge a_4 \wedge b_4}) \wedge (\overline{a_2 \wedge a_3 \wedge b_3})$.  Then
$V$ contains a copy of $\bV_{1^2}(g)$.
\end{lemma}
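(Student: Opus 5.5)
The plan is to imitate the proof of Lemma \ref{lemma:certify1}, mapping one step further down the tower of projections. Consider the composition
\[\begin{tikzcd}
\wedge^2 (\wedge^3 H)/H \arrow{r}{\wedge^2 \sigma} &
\wedge^2 \wedge^3 H \arrow{r}{\phi} &
\wedge^6 H \arrow{r}{q_4} &
\wedge^4 H \arrow{r}{q_2} &
\wedge^2 H,
\end{tikzcd}\]
with $\sigma$, $\phi$, $q_k$ as before. Each map is $\Sp_{2g}$-equivariant. The vector $\otheta$ is a weight vector of weight $E_1+E_2$, since the factors $a_4 \wedge b_4$ and $a_3 \wedge b_3$ have weight $0$. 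The composition therefore sends $\otheta$ into the $(E_1+E_2)$-weight space of $\wedge^2 H$, which is spanned by $a_1 \wedge a_2$. By Example \ref{example:decomposegpwedge}, $a_1 \wedge a_2$ is the highest weight vector of the factor $\bV_{1^2}(g)$ of $\wedge^2 H$. So if the image of $\otheta$ is nonzero, it is a nonzero multiple of $a_1 \wedge a_2$. The image of $V$ is then a subrepresentation of $\wedge^2 H$ containing a highest weight vector of weight $E_1+E_2$, hence contains $\bV_{1^2}(g)$. Since $V$ is semisimple, this forces $V$ itself to contain a copy of $\bV_{1^2}(g)$.

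The key steps are as follows. First, use the second example of \S\ref{section:splith} to write
\[(g-1)^2\,\wedge^2\sigma(\otheta) = \bigl(a_1 \wedge ((g-1)a_4 \wedge b_4 - \omega)\bigr) \wedge \bigl(a_2 \wedge ((g-1)a_3 \wedge b_3 - \omega)\bigr).\]
Second, apply $\phi$. This gives $\pm a_1 \wedge a_2 \wedge X \wedge Y$, where $X = (g-2)a_4\wedge b_4 - \sum_{k\neq 4} a_k \wedge b_k$ and $Y$ is defined analogously with index $3$. Expanding $X \wedge Y$ produces a sum of terms $c_{kl}\, a_1\wedge a_2\wedge a_k\wedge b_k\wedge a_l\wedge b_l$ with $k \neq l$ and $k,l \geq 3$. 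The coefficients $c_{kl}$ take only a few values, depending on whether $\{k,l\}$ equals $\{3,4\}$, meets it in one index, or is disjoint from it. Third, apply $q_4$ and then $q_2$. On a term $a_1 \wedge a_2 \wedge a_k \wedge b_k \wedge a_l \wedge b_l$, the contraction $q_4$ only pairs $a_k$ with $b_k$ or $a_l$ with $b_l$, since $\omega$ vanishes on all other pairs. The result is $\pm(a_1\wedge a_2\wedge a_l \wedge b_l + a_1 \wedge a_2 \wedge a_k \wedge b_k)$, and applying $q_2$ then yields a fixed nonzero multiple of $a_1 \wedge a_2$. Adding these up gives a total coefficient that is an explicit polynomial in $g$, obtained by counting the pairs $\{k,l\}$ of each type.

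The main obstacle is the bookkeeping in this last step. The total coefficient is a sum of terms of size roughly $g^2$, and these could in principle cancel; note that in Lemma \ref{lemma:certify1} the answer collapsed to the constant $-3$. I would organize the count by the three types of pairs $\{k,l\}$, namely $\{3,4\}$, one index in $\{3,4\}$, and both indices in $\{5,\ldots,g\}$. I would then verify that the resulting polynomial in $g$ is nonzero for every $g \geq 6$, and sanity-check the computation at a small value such as $g=6$. If the polynomial unexpectedly vanishes, I would replace the final map $q_2$ by the other natural equivariant map $\wedge^4 H \to \wedge^2 H$, or go through $\wedge^6 H \to \wedge^2 H$ by a single double contraction. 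Any nonvanishing equivariant map to $\wedge^2 H$ that sends $\otheta$ to a nonzero multiple of $a_1 \wedge a_2$ suffices for the argument.
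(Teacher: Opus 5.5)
Your route is the paper's: the same composition $q_2\circ q_4\circ\phi\circ\wedge^2\sigma$ and the same term-by-term contraction. Your weight argument and closing semisimplicity remark make explicit what the paper leaves implicit.

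The gap is that you stop exactly where the content of the lemma lies. You never show the coefficient is nonzero, as your fallback plan acknowledges. The count is short.

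After $\phi$, the image of $(g-1)^2\otheta$ is $+\,a_1\wedge a_2\wedge X\wedge Y$. There is no sign ambiguity, since $X$ has even degree. Discard the $k=1,2$ terms, which die against $a_1\wedge a_2$. You may then take $X=\sum_{k\geq 3}x_k\,a_k\wedge b_k$ and $Y=\sum_{k\geq 3}y_k\,a_k\wedge b_k$, with $x_4=y_3=g-2$ and all other $x_k,y_k$ equal to $-1$. For $k\neq l$, $q_2\circ q_4$ sends each $a_1\wedge a_2\wedge a_k\wedge b_k\wedge a_l\wedge b_l$ to $2\,a_1\wedge a_2$. So the total coefficient is
\[
2\sum_{k\neq l}x_k y_l=2\Bigl(\bigl(\sum_k x_k\bigr)\bigl(\sum_l y_l\bigr)-\sum_k x_k y_k\Bigr)=2\bigl(1\cdot 1-(-g)\bigr)=2(g+1),
\]
because $\sum_k x_k=\sum_l y_l=(g-2)-(g-3)=1$ and $\sum_k x_k y_k=-2(g-2)+(g-4)=-g$.

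In your three-type bookkeeping this reads as follows:
\begin{itemize}
\item $c_{34}=1+(g-2)^2$;
\item $c_{kl}=3-g$ for the $2(g-4)$ pairs meeting $\{3,4\}$ once;
\item $c_{kl}=2$ for the $\binom{g-4}{2}$ pairs inside $\{5,\ldots,g\}$.
\end{itemize}
These sum to $g+1$. The $g^2$ terms do cancel, as you feared, but the linear term survives. So $(g-1)^2\otheta\mapsto(2g+2)\,a_1\wedge a_2\neq 0$, and no alternative map to $\wedge^2 H$ is needed.

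One caution if you check this against the paper: its displayed total is $6g-2$. Its two cross terms pair $(1-g)\,a_4\wedge b_4$ and $(1-g)\,a_3\wedge b_3$ only with $a_5\wedge b_5+\cdots+a_g\wedge b_g$. Its own splitting of the two factors requires pairing them with all of $a_3\wedge b_3+\cdots+a_g\wedge b_g$. This drops two contributions of $2(1-g)$ from $a_3\wedge b_3\wedge a_4\wedge b_4$. Restoring them gives $6g-2+4(1-g)=2g+2$, matching the count above. The lemma is unaffected, since either value is nonzero for $g\geq 6$.
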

\begin{proof}
Let $\sigma\colon (\wedge^3 H)/H \rightarrow \wedge^3 H$ be the section of
the projection $p\colon \wedge^3 H \rightarrow (\wedge^3 H)/H$ discussed above and
let $\phi\colon \wedge^2 \wedge^3 H \rightarrow \wedge^6 H$ be the following map:
\[\phi((u_1 \wedge v_1 \wedge w_1) \wedge (u_2 \wedge v_2 \wedge w_2)) = u_1 \wedge v_1 \wedge w_1 \wedge u_2 \wedge v_2 \wedge w_2.\]
Consider the composition
\[\begin{tikzcd}
\wedge^2 (\wedge^3 H)/H \arrow{r}{\wedge^2 \sigma} &
\wedge^2 \wedge^3 H \arrow{r}{\phi} &
\wedge^6 H \arrow{r}{q_4} &
\wedge^4 H \arrow{r}{q_2} &
\wedge^2 H.
\end{tikzcd}\]
Since $a_1 \wedge a_2 \in \wedge^2 H$ is a highest weight vector for
a copy of $\bV_{1^2}(g)$, it is enough to prove that this composition takes $\otheta$ to a nonzero multiple
of $a_1 \wedge a_2$.  In fact, to make our formulas clearer we will prove this
not for $\otheta$ but for $(1-g)^2 \otheta$.  We calculate the image of $(1-g)^2 \otheta$ under
the above composition as follows.  First, we calculate the image under $\wedge^2 \sigma$:
\begin{align*}
\wedge^2 \sigma((1-g)^2\otheta) &= (1-g)^2 (a_1 \wedge (a_4 \wedge b_4 - \frac{1}{g-1} \omega))
                                    \wedge (a_2 \wedge (a_3 \wedge b_3 - \frac{1}{g-1} \omega)) \\
                              &= (a_1 \wedge ((1-g) a_4 \wedge b_4 + \omega)) \wedge (a_2 \wedge ((1-g) a_3 \wedge b_3 + \omega)).
\end{align*}
Next, we apply $\phi$ and get
\begin{align*}
 & a_1 \wedge ((1-g) a_4 \wedge b_4 + \omega) \wedge a_2 \wedge ((1-g) a_3 \wedge b_3 + \omega) \\
=& a_1 \wedge a_2 \wedge (a_3 \wedge b_3 + (2-g) a_4 \wedge b_4 + a_5 \wedge b_5 + \cdots + a_g \wedge b_g) \\
 &\quad \quad \wedge ((2-g) a_3 \wedge b_3 + a_4 \wedge b_4 + a_5 \wedge b_5 + \cdots + a_g \wedge b_g).
\end{align*}
Finally, we apply $q_2 \circ q_4 \colon \wedge^6 H \rightarrow \wedge^2 H$.  It is convenient
to break this up into the sum of four terms:
\begin{align*}
&q_2 \circ q_4(a_1 \wedge a_2 \wedge (a_3 \wedge b_3 + \cdots + a_g \wedge b_g) \wedge (a_3 \wedge b_3 + \cdots + a_g \wedge b_g)) \\
&\quad = 2(g-2)(g-3) a_1 \wedge a_2,
\end{align*}
and
\begin{align*}
&q_2 \circ q_4(a_1 \wedge a_2 \wedge ((1-g)a_4 \wedge b_4) \wedge (a_5 \wedge b_5 + \cdots + a_g \wedge b_g)) \\
&\quad = 2(1-g)(g-4) a_1 \wedge a_2,
\end{align*}
and
\begin{align*}
&q_2 \circ q_4(a_1 \wedge a_2 \wedge (a_5 \wedge b_5 + \cdots + a_g \wedge b_g) \wedge ((1-g) a_3 \wedge b_3)) \\
&\quad = 2(1-g)(g-4) a_1 \wedge a_2,
\end{align*}
and
\begin{align*}
&q_2 \circ q_4(a_1 \wedge a_2 \wedge ((1-g)(a_4 \wedge b_4)) \wedge ((1-g)(a_3 \wedge b_3))) \\
&\quad = 2(1-g)(1-g) a_1 \wedge a_2.
\end{align*}
Adding these all up, we get
\[\left(2(g-2)(g-3) + 4(1-g)(g-4) + 2(1-g)(1-g)\right) a_1 \wedge a_2 = (6g-2) a_1 \wedge a_2.\qedhere\]
\end{proof}

\section{First Johnson homomorphism} 
\label{section:firstjohnson}

Fix some $g \geq 3$ and let $H = \HH_1(\Sigma_g^1;\Q)$.
Recall from Example \ref{example:lie2} that $\FLie_2(H) \cong \wedge^2 H \cong \bV_0(g) \oplus \bV_{1^2}(g)$.
We observed in Example \ref{example:lie3} that the kernel
of the Lie bracket map from
\[H \otimes \FLie_2(H) = H \otimes \wedge^2 H \cong \bV_{1^3}(g) \oplus \bV_1(g)^{\oplus 2} \oplus \bV_{2,1}(g)\]
to $\Lie_3(H)$ is 
\[\wedge^3 H \cong \bV_{1^3}(g) \oplus \bV_1(g).\]
Theorem \ref{theorem:moritajohnsonbracket} 
therefore implies that the $\Q$-span of the image
of $\tau_1\colon \Torelli_g^1 \rightarrow H \otimes \FLie_2(H)$ 
is contained in $\wedge^3 H$.  Johnson \cite{JohnsonHomo} proved that
the $\Q$-span of the image of $\tau_1$ equals $\wedge^3 H$.
This section proves this and uses it to describe $\HH_1(\Torelli_g^1;\Q)$.

In fact, Johnson proved something stronger:

\begin{theorem}[{Johnson, \cite{JohnsonHomo}}]
\label{theorem:imagetau1}
For some $g \geq 3$, let $H_{\Z} = \HH_1(\Sigma_g^1;\Z) \subset H$.  Then
the image of $\tau_1\colon \Torelli_g^1 \rightarrow H \otimes \wedge^2 H$
is $\wedge^3 H_{\Z}$.  In particular, the $\Q$-span of the image of
$\tau_1$ equals $\wedge^3 H$.
\end{theorem}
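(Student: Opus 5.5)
Two inclusions are needed: the image of $\tau_1$ lies in $\wedge^3 H_{\Z}$, and every element of $\wedge^3 H_{\Z}$ is hit. Fix a symplectic basis $\{a_1,b_1,\ldots,a_g,b_g\}$ of $H_{\Z}$. The second statement, that the $\Q$-span is $\wedge^3 H$, then follows at once, since $\wedge^3 H_{\Z}$ spans $\wedge^3 H$.

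\emph{Upper bound.} By Powell's theorem, $\Torelli_g^1$ is generated by separating twists and bounding pair maps, and $\tau_1$ is a homomorphism. Separating twists map to $0$ by Lemma \ref{lemma:tau1septwist}. For a bounding pair map, Lemma \ref{lemma:taubp} gives $\tau_1(T_xT_y^{-1}) = [x]\wedge(u_1\wedge v_1+\cdots+u_h\wedge v_h)$ with all classes integral, so this lies in $\wedge^3 H_{\Z}$. Hence the image is contained in $\wedge^3 H_{\Z}$. Note that Theorem \ref{theorem:moritajohnsonbracket} alone only gives containment in $\wedge^3 H$; the integral statement really uses the generators.

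\emph{Lower bound.} The group $\wedge^3 H_{\Z}$ is free abelian on the wedges $c_1\wedge c_2\wedge c_3$ of distinct elements of the basis. Since the image of $\tau_1$ is a subgroup, it suffices to hit each such basis wedge, up to sign. I split into two cases.
\begin{itemize}
\item[(1)] \emph{No symplectic pair among the $c_i$.} After renaming within each handle, the $c_i$ come from three distinct handles $i,j,k$ (possible since $g\ge 3$). Choose disjoint simple closed curves $\partial_1,\partial_2,\partial_3$ representing $c_1,c_2,c_3$ that cobound, together with a fourth curve, a subsurface $Z\cong\Sigma_0^4$. For instance, take the three curves in separate handles and connect them by arcs in the planar part of the surface. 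Lemma \ref{lemma:simplyintersectingpair} then gives a simply intersecting pair map with $\tau_1=\pm c_1\wedge c_2\wedge c_3$.
\item[(2)] \emph{A pair is present}, say $c_i\wedge a_j\wedge b_j$ with $c_i\in\{a_i,b_i\}$ and $i\ne j$. Use Lemma \ref{lemma:taubp}: choose a bounding pair $x,y$ with $[x]=c_i$ cutting off a genus-one subsurface containing handle $j$. This gives $\tau_1(T_xT_y^{-1}) = c_i\wedge a_j\wedge b_j$.
\end{itemize}
In both cases Lemma \ref{lemma:johnsonequivariant} can be used to move configurations around. Conjugating by a mapping class acts on $\tau_1$ through $\Sp_{2g}(\Z)$, so it suffices to realize one standard configuration of each type and transport it by an element of $\Sp_{2g}(\Z)$ permuting the basis appropriately.

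The main obstacle is the topological step: drawing the explicit curves and subsurfaces, and checking that their homology classes and orientations are exactly the claimed basis vectors. This is especially delicate for the genus-one piece in the bounding pair case and for the $\Sigma_0^4$ in the simply intersecting pair case. I would handle it by exhibiting one standard picture for each type. I would then invoke the surjectivity of $\Mod_g^1\to\Sp_{2g}(\Z)$ together with equivariance, rather than drawing every configuration.
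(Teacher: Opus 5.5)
Your proposal is correct. It is the route the paper only gestures at. The paper's written proof establishes just the rational statement and leaves integrality as an exercise, with a footnote hint that each basis wedge is $\pm\tau_1$ of a bounding pair map or a simply intersecting pair map; that hint is your lower bound.

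The proof the paper actually writes out is representation-theoretic:
\begin{enumerate}
\item Theorem \ref{theorem:moritajohnsonbracket} puts the $\Q$-span $J$ of the image inside $\wedge^3 H$.
\item Corollary \ref{corollary:imagejohnsonalgebraic} makes $J$ an $\Sp_{2g}(\Q)$-subrepresentation.
\item Since $\wedge^3 H \cong \bV_1(g)\oplus\bV_{1^3}(g)$, it suffices to realize the two highest weight vectors. These are $a_1\wedge\omega$, from a bounding pair map cutting off genus $g-1$, and $a_1\wedge a_2\wedge a_3$, from a simply intersecting pair map.
\end{enumerate}
That argument needs two pictures and no case analysis. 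However, it passes through Zariski density and highest weights, so it cannot see the lattice and only gives the $\Q$-span.

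Your version replaces highest weight theory with $\Sp_{2g}(\Z)$-equivariance and the surjectivity of $\Mod_g^1\to\Sp_{2g}(\Z)$. It also needs only two standard configurations, and it yields the sharper integral equality. The price is that you need an integral upper bound, which you correctly obtain from Powell's generators together with Lemmas \ref{lemma:tau1septwist} and \ref{lemma:taubp}.

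One small correction: your remark that Theorem \ref{theorem:moritajohnsonbracket} alone cannot give the integral upper bound is too strong. The values of $\tau_1$ are integral tensors in $H_{\Z}\otimes\wedge^2 H_{\Z}$, as noted in the proof of Lemma \ref{lemma:johnsonker}. Moreover, $\wedge^3 H_{\Z}$ is saturated there. For an ordered basis $e_1,\ldots,e_{2g}$ of $H_{\Z}$ and $i<j<k$, the coefficient of $e_i\otimes(e_j\wedge e_k)$ in the image of $\sum c_{ijk}\, e_i\wedge e_j\wedge e_k$ is exactly $c_{ijk}$. So Morita's theorem plus integrality would also suffice. Your generator argument is fine as it stands.
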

\begin{proof}
Since it will illustrate our representation-theoretic tools,
we will prove that the $\Q$-span of the image of $\tau_1$ equals $\wedge^3 H$
and leave the proof that the image of $\tau_1$ is $\wedge^3 H_{\Z}$ as an
exercise to the reader.\footnote{Here is a hint.  Letting $S = \{a_1,b_1,\ldots,a_g,b_g\}$ be the standard
symplectic basis for $H_{\Z}$, we have that $\wedge^3 H_{\Z}$ is spanned by
$\hS = \Set{$x \wedge y \wedge z$}{$x,y,z \in S$ distinct}$.  Using the formulas
for $\tau_1$ from Lemmas \ref{lemma:taubp} and \ref{lemma:simplyintersectingpair}, one can
show that each $x \wedge y \wedge z \in \hS$ is up to signs equal to $\tau_1(\phi)$
for $\phi$ either a bounding pair map or a simply intersecting pair map.}  
As in the following figure, let $\{a_1,b_1,\ldots,a_g,b_g\}$ be the standard symplectic basis for $H$:\\
\Figure{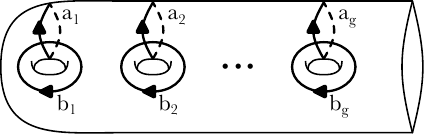}
As we noted above, Theorem \ref{theorem:moritajohnsonbracket}
implies that the $\Q$-span $J$ of the image of $\tau_1$ is contained in $\wedge^3 H$.  We must prove that 
$J=\wedge^3 H$.  The subspace $J$ is an $\Sp_{2g}(\Q)$-subrepresentation of $\wedge^3 H$ (Corollary \ref{corollary:imagejohnsonalgebraic}).
We have $\wedge^3 H = \bV_1(g) \oplus \bV_{1^3}(g)$.  We must prove that $J$ contains both of these
irreducible factors.  Let $\omega \in \wedge^2 H$ be the intersection pairing.

\begin{claim}{1}
The subspace $J$ contains the subrepresentation $\bV_1(g) \cong H$ of $\wedge^3 H$.
\end{claim}

This subrepresentation has the highest weight vector $a_1 \wedge \omega$, so we must prove that
$a_1 \wedge \omega \in J$.  For this, let $T_x T_y^{-1}$ be the following bounding pair map:\\
\Figure{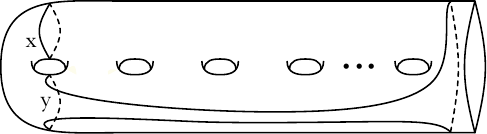}
By Lemma \ref{lemma:taubp}, we have
\begin{align*}
\tau_1(T_x T_y^{-1}) &= a_1 \wedge (a_2 \wedge b_2 + \cdots + a_g \wedge b_g) \\
                     &= a_1 \wedge (a_1 \wedge b_1 + a_2 \wedge b_2 + \cdots + a_g \wedge b_g) = a_1 \wedge \omega.
\end{align*}

\begin{claim}{2}
The subspace $J$ contains the subrepresentation $\bV_{1^3}(g)$ of $\wedge^3 H$.
\end{claim}

This subrepresentation has the highest weight vector $a_1 \wedge a_2 \wedge a_3$, so we must prove that
$a_1 \wedge a_2 \wedge a_3 \in J$.  For this, let $f$ be a simply intersecting
pair map supported on the following subsurface $Z \cong \Sigma_0^4$:\\
\Figure{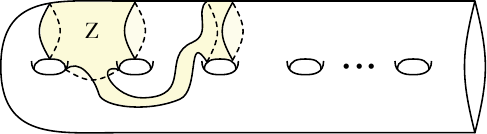}
By Lemma \ref{lemma:simplyintersectingpair}, we have $\tau_1(f) = \pm a_1 \wedge a_2 \wedge a_3$.
\end{proof}

Theorem \ref{theorem:imagetau1} implies that $\tau_1$ induces a surjection
$(\tau_1)_{\ast}\colon \HH_1(\Torelli_g^1;\Q) \rightarrow \wedge^3 H$.  Johnson \cite{Johnson2, Johnson3}
proved that this is an isomorphism:

\begin{theorem}[{Johnson, \cite{Johnson2, Johnson3}}]
\label{theorem:torelliabel}
For $g \geq 3$, the first Johnson homomorphism induces an isomorphism
$\HH_1(\Torelli_g^1;\Q) \cong \wedge^3 H$.
\end{theorem}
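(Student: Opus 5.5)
Surjectivity of $(\tau_1)_{\ast}\colon \HH_1(\Torelli_g^1;\Q) \rightarrow \wedge^3 H$ is already given by Theorem \ref{theorem:imagetau1}, so the plan is to prove injectivity. By Lemma \ref{lemma:johnsonker}, $\ker(\tau_1) = \Torelli_g^1[2]$. By Johnson's theorem quoted in the discussion of the Johnson filtration, $\Torelli_g^1[2]$ is generated by separating twists. Consider the exact sequence
\[\Torelli_g^1[2] \longrightarrow \HH_1(\Torelli_g^1;\Q) \longrightarrow \wedge^3 H \longrightarrow 0,\]
which comes from the five-term sequence applied to $1 \rightarrow \Torelli_g^1[2] \rightarrow \Torelli_g^1 \rightarrow \wedge^3 H_{\Z} \rightarrow 1$; at this level it just says that the kernel of the map $\HH_1(\Torelli_g^1) \rightarrow \wedge^3 H_{\Z}$ is the image of $\Torelli_g^1[2]$. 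From it, injectivity reduces to a single assertion: for every separating twist $T_z$, the class $[T_z] \in \HH_1(\Torelli_g^1;\Q)$ vanishes. Equivalently, some nonzero power $T_z^k$ is a product of commutators of elements of $\Torelli_g^1$.

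First I would reduce to few cases. Conjugation by $\Mod_g^1$ preserves $\Torelli_g^1$, and any two separating twists of the same genus $h$ (genus of the side not containing $\partial \Sigma_g^1$) are conjugate in $\Mod_g^1$. Conjugation by $\phi$ acts on $\HH_1(\Torelli_g^1;\Q)$ by a linear automorphism, so it suffices to treat one $T_z$ for each genus $1 \leq h \leq g$. I would then reduce further to $h \in \{1,2\}$ by a chain-type argument. A genus-$h$ separating curve $z$ cobounds, together with a genus-$(h-1)$ separating curve and a genus-$1$ separating curve, a pair of pants, possibly after first passing through a lantern. A lantern relation in a $\Sigma_0^4$ whose four boundary curves are separating curves of genera $1$, $1$, $h-2$ and the complementary piece then writes $T_z$ as a product of twists of lower genus times twists about three nonseparating-bounding curves. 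Those last twists pair up into bounding pair maps, and their classes cancel once $\tau_1$-values are compared. This part is somewhat fiddly, but it is a purely topological bookkeeping step.

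The main obstacle is the base case $h=1$ (and similarly $h=2$): showing that $[T_z]$ is torsion. Integrally it is \emph{not} zero, since Birman--Craggs homomorphisms detect it mod $2$. My first attempt will therefore aim for $T_z^2$. Since $g \geq 3$, I would embed a genus-$1$ surface with two boundary components $x$, $y$ inside $\Sigma_g^1$ so that $x \cup y$ bounds on the other side. Then $T_x T_y^{-1}$ is a bounding pair map $f$, and a second bounding pair map $f'$ is obtained by sliding $x$ across the handle. The two-holed torus relation (or the lantern relation) then gives $T_z = f f'^{-1} \cdot (\text{product})$, where the product is a commutator of Torelli elements. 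Alternatively, I could find $\psi \in \Mod_g^1$ acting as $-1$ on the relevant homology such that $\psi f \psi^{-1}$ and $f^{-1}$ differ by a commutator in $\Torelli_g^1$; this would force $2[T_z] = 0$ in $\HH_1(\Torelli_g^1;\Q)$. If this direct approach becomes unwieldy, my fallback is Johnson's original route. Johnson proved that $\HH_1(\Torelli_g^1;\Z)$ injects into $\wedge^3 H_{\Z}$ plus a $2$-torsion group built from the Birman--Craggs homomorphisms, and tensoring that statement with $\Q$ kills the torsion and gives the isomorphism directly.
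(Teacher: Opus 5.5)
Your outline matches the one the paper records for Johnson's proof. Surjectivity comes from Theorem~\ref{theorem:imagetau1}, and $\ker(\tau_1)=\Torelli_g^1[2]$ is generated by separating twists. So it suffices that each $[T_z]$ is torsion in $\HH_1(\Torelli_g^1;\Z)$, i.e.\ that $T_z^2$ is a product of commutators in $\Torelli_g^1$ for $g \geq 3$. The paper does not prove this last fact; it cites Johnson. You try to prove it, and that is where the gap is.

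First, in your reduction to $h \in \{1,2\}$, the step ``their classes cancel once $\tau_1$-values are compared'' is circular. Equal $\tau_1$-values give equal classes in $\HH_1(\Torelli_g^1;\Q)$ only if $(\tau_1)_{\ast}$ is injective, which is the statement being proved. The step is also unnecessary. In the lantern you describe, the boundary curves $d_1,d_2,d_3$ cut off genera $1,1,h-2$ and $d_4=z$. All three interior curves $c_{12},c_{13},c_{23}$ are then separating as well, of genera $2,h-1,h-1$. So the lantern relation gives $[T_z]=[T_{c_{12}}]+[T_{c_{13}}]+[T_{c_{23}}]-[T_{d_1}]-[T_{d_2}]-[T_{d_3}]$ directly, and induction on $h \geq 3$ goes through.

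Second, the base case, which is the real content, is not established. Your first route cannot settle it. A relation $T_z=ff'^{-1}\cdot(\text{commutators})$ with bounding pair maps $f,f'$ only yields $[T_z]=[f]-[f']$ with $\tau_1(f)=\tau_1(f')$, which is the same problem again. Indeed, since Birman--Craggs homomorphisms detect $T_z$, such $f,f'$ cannot be conjugate in $\Torelli_g^1$.

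What is missing is an input that turns equality of homological data into equality in $\HH_1(\Torelli_g^1)$. One such input: two bounding pair maps whose canonically oriented first curves have the same class, and whose bounded subsurfaces have the same image in $H$, are conjugate by an element of $\Torelli_g^1$. One proves this by checking that the stabilizer of the bounding pair in $\Mod_g^1$ surjects onto the stabilizer of this data in $\Sp_{2g}(\Z)$.

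With this input, your $\psi$-idea works for $h=1$:
\begin{enumerate}
\item Since $g\geq 3$, there is a lantern with $z$ as one boundary curve and three nonseparating other boundary curves. Its relation writes $T_z=f_1f_2f_3$ with the $f_i$ bounding pair maps.
\item Pick $\phi\in\Mod_g^1$ with $\phi(z)=z$ acting as $-1$ on $H$.
\item Then $T_z=\prod_i \phi f_i\phi^{-1}$, and each $\phi f_i\phi^{-1}$ has the same data as $f_i^{-1}$. Hence $[T_z]=-[T_z]$.
\end{enumerate}
The case $h=2$ is similar.

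Without such an argument, your fallback of quoting Johnson's computation of $\HH_1(\Torelli_g^1;\Z)$ is just quoting the theorem, which is what the paper does. The sketch before it does not add a proof.
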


\begin{remark}
Johnson proves this by first proving that $\ker(\tau_1) = \Torelli_g^1[2]$ is generated by separating twists.
This actually holds for all $g$.  For $g \geq 3$, he then proves that squares of separating twists lie
in the commutator subgroup of $\Torelli_g^1$, which implies Theorem \ref{theorem:torelliabel}.  See
\cite{PutmanJohnson} for another exposition of this calculation.
\end{remark}

\section{Second Johnson homomorphism}
\label{section:secondjohnson}

Fix some $g \geq 4$ and let $H = \HH_1(\Sigma_g^1;\Q)$.
Recall from Example \ref{example:lie3} that 
\[\FLie_3(H) \cong \frac{H \otimes \wedge^2 H}{\wedge^3 H} \cong \bV_1(g) \oplus \bV_{2,1}(g).\]
We observed in Example \ref{example:lie4} that the kernel
of the Lie bracket map from
\[H \otimes \FLie_3(H) = \frac{H^{\otimes 2} \otimes \wedge^2 H}{H \otimes \wedge^3 H} \cong \bV_0(g) \oplus \bV_{1^2}(g)^{\oplus 2} \oplus \bV_{2^2}(g) \oplus \bV_{2,1^2}(g) \oplus \bV_2(g)^{\oplus 2} \oplus \bV_{3,1}(g)\]
to $\Lie_4(H)$ is
\[\frac{\Sym^2(\wedge^2 H)}{\wedge^4 H}  \cong \bV_{0}(g) \oplus \bV_{1^2}(g) \oplus \bV_{2^2}(g).\]
Theorem \ref{theorem:moritajohnsonbracket}
therefore implies that the $\Q$-span of the image
of $\tau_2\colon \Torelli_g^1[2] \rightarrow H \otimes \FLie_3(H)$
is contained in $\Sym^2(\wedge^2 H)/\wedge^4 H$.  Morita \cite{MoritaCasson} proved that
the $\Q$-span of the image of $\tau_2$ equals $\Sym^2(\wedge^2 H)/\wedge^4 H$.
This section proves this.

\begin{remark}
Unlike for the first Johnson homomorphism, it is not true that the image of $\tau_2$ equals the integer
points of $\Sym^2(\wedge^2 H)/\wedge^4 H$.  Morita actually proved something more precise than
what we will prove, and Yokomizo \cite{Yokomizo} completed this to give a complete description of
the image of $\tau_2$. 
\end{remark}

Morita's theorem is as follows:\footnote{The restriction to $g \geq 4$ is to ensure that the decomposition
of $\Sym^2(\wedge^2 H)$ is stable.}

\begin{theorem}[{Morita, \cite[Proposition 1.2]{MoritaCasson}}]
\label{theorem:imagetau2}
For $g \geq 4$, the $\Q$-span of the image of
$\tau_2\colon \Torelli_g^1[2] \rightarrow H \otimes \frac{H^{\otimes 2} \otimes \wedge^2 H}{H \otimes \wedge^3 H}$
equals $\Sym^2(\wedge^2 H)/\wedge^4 H \cong \bV_{0}(g) \oplus \bV_{1^2}(g) \oplus \bV_{2^2}(g)$.
\end{theorem}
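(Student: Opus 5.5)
We know from Theorem~\ref{theorem:moritajohnsonbracket}, via the discussion preceding the statement, that the $\Q$-span $J$ of the image of $\tau_2$ is contained in $\Sym^2(\wedge^2 H)/\wedge^4 H \cong \bV_0(g) \oplus \bV_{1^2}(g) \oplus \bV_{2^2}(g)$. By Corollary~\ref{corollary:imagejohnsonalgebraic}, $J$ is an $\Sp_{2g}(\Q)$-subrepresentation. Each irreducible factor appears with multiplicity one, so $J$ is the direct sum of some subset of the three factors. It therefore suffices to show that the projection of $J$ to each factor is nonzero. The only source of elements will be Lemma~\ref{lemma:tau2septwist}: for a separating twist $T_z$ cutting off a genus $h$ subsurface with homology $V$, we have $\tau_2(T_z) = -\overline{\omega_V \Cdot \omega_V}$. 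Choosing the separating curves to cut off the handles spanned by $\{a_1,b_1\}$, by $\{a_2,b_2\}$, and by both, and using the fact that $J$ is a vector space, we get
\[2\,\overline{(a_1\wedge b_1)\Cdot(a_2 \wedge b_2)} = \overline{\omega_{V_{12}}^2} - \overline{\omega_{V_1}^2} - \overline{\omega_{V_2}^2} \in J,\]
and also $\overline{(a_i \wedge b_i)^2} \in J$ for each $i$. By equivariance, the same holds for any symplectic basis obtained by applying $\Sp_{2g}(\Z)$.

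For $\bV_{2^2}(g)$, I will apply the elementary matrix $Z_{12}$, which sends $b_1 \mapsto b_1 + a_2$ and $b_2 \mapsto b_2 + a_1$. The difference $Z_{12}\cdot \overline{(a_1\wedge b_1)\Cdot(a_2\wedge b_2)} - \overline{(a_1\wedge b_1)\Cdot(a_2\wedge b_2)}$ lies in $J$. Since $J$ is $\bT$-stable, it is the direct sum of its weight spaces, so $J$ contains every weight component of this difference. The $2E_1+2E_2$ component is $-\overline{(a_1\wedge a_2)\Cdot(a_1\wedge a_2)}$. This is nonzero in the quotient, because $\wedge^4 H$ has no vectors of weight $2E_1+2E_2$. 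It is moreover a highest weight vector of weight $2E_1+2E_2$, since that weight occurs only in the $\bV_{2^2}(g)$ factor. Hence $\bV_{2^2}(g) \subset J$.

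For $\bV_0(g)$ and $\bV_{1^2}(g)$, I will use explicit equivariant contractions. For the trivial factor, take the map $\Sym^2(\wedge^2 H) \to \Q$ sending $\alpha \Cdot \beta \mapsto q_0(\alpha)q_0(\beta)$. This vanishes on $\wedge^4 H$, because $\wedge^4 H$ has no trivial summand paired with this map; concretely, I would check it on the image of $a_1\wedge a_2\wedge b_1\wedge b_2$. It takes the value $1 \neq 0$ on $\overline{(a_1\wedge b_1)^2}$, up to a sign coming from the normalization of $q_0$. For $\bV_{1^2}(g)$, take the map to $\wedge^2 H$ sending $\alpha\Cdot\beta \mapsto q_0(\alpha)\beta + q_0(\beta)\alpha$, and compose with projection to the $\bV_{1^2}(g)$ summand, i.e. remove the $\omega$-component. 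On $2\,\overline{(a_1\wedge b_1)\Cdot(a_2\wedge b_2)}$ this gives, up to sign, $a_1\wedge b_1 + a_2\wedge b_2$ minus a multiple of $\omega$, which is nonzero for $g \geq 3$.

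The main obstacle is making sure these contractions are well defined on the quotient by $\wedge^4 H$, i.e. that they kill $\wedge^4 H$. If they do not, I would first correct them by subtracting the appropriate multiple of $q_2 \circ q_4$-type maps restricted to $\wedge^4 H$. Failing that, I would instead argue with weight spaces. For the zero-weight space I would compare the dimensions of $\bT$-fixed vectors spanned by the $\overline{(a_i\wedge b_i)\Cdot(a_j\wedge b_j)}$ modulo $\wedge^4 H$ against what $\bV_{2^2}(g)$ alone can supply.
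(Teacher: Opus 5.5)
Your reduction is sound: $J$ is an $\Sp_{2g}(\Q)$-subrepresentation of a multiplicity-free representation, so it suffices to detect each factor. Your $\bV_{2^2}(g)$ argument is also correct. The $2E_1+2E_2$ weight component of $(Z_{12}-1)\,\overline{(a_1\wedge b_1)\Cdot(a_2\wedge b_2)}$ is $-\overline{(a_1\wedge a_2)\Cdot(a_1\wedge a_2)}$, and that weight occurs neither in $\wedge^4 H$ nor in $\bV_0(g)\oplus\bV_{1^2}(g)$. The paper instead writes $(a_1\wedge a_2)\Cdot(a_1\wedge a_2)$ directly as a combination of squares $\omega_V\Cdot\omega_V$; your weight projection works equally well.

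The gap is in detecting $\bV_0(g)$ and $\bV_{1^2}(g)$. Neither of your contractions is defined on the quotient, and the stated reason is false: $\wedge^4 H\cong\bV_{1^4}(g)\oplus\bV_{1^2}(g)\oplus\bV_0(g)$ contains the invariant $\omega\wedge\omega$. Write $\iota(h_1\wedge h_2\wedge h_3\wedge h_4)=(h_1\wedge h_2)\Cdot(h_3\wedge h_4)-(h_1\wedge h_3)\Cdot(h_2\wedge h_4)+(h_1\wedge h_4)\Cdot(h_2\wedge h_3)$ and $\omega_{ij}=\omega(h_i,h_j)$. Your functional $\phi(\alpha\Cdot\beta)=q_0(\alpha)q_0(\beta)$ then satisfies $\phi\circ\iota=\mathrm{Pf}:=\omega_{12}\omega_{34}-\omega_{13}\omega_{24}+\omega_{14}\omega_{23}$. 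The very check you propose gives $\phi(\iota(a_1\wedge a_2\wedge b_1\wedge b_2))=-1$. Likewise $\Phi(\alpha\Cdot\beta)=q_0(\alpha)\beta+q_0(\beta)\alpha$ satisfies $\Phi\circ\iota=q_2$, which is onto $\wedge^2 H$. Concretely, $(a_1\wedge b_1)\Cdot(a_2\wedge b_2)+\iota(a_1\wedge a_2\wedge b_1\wedge b_2)$ is another lift of the class you test, and $\Phi$ sends it to $0$. So the nonzero values you compute say nothing about $J$. Your fallback is only gestured at, and it omits the essential check that the corrected map is still nonzero on some element of $J$.

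One way to close the gap is to let $m\colon\Sym^2(\wedge^2 H)\to\wedge^4 H$ be $\alpha\Cdot\beta\mapsto\alpha\wedge\beta$. Then $m\circ\iota=3$, and $q_0q_2=2\,\mathrm{Pf}$ on $\wedge^4 H$. Hence $\phi-\tfrac16\,q_0q_2\circ m$ and $\Phi-\tfrac13\,q_2\circ m$ vanish on $\iota(\wedge^4 H)$ and descend to the quotient. Evaluate them on $\overline{(a_1\wedge b_1)\Cdot(a_1\wedge b_1)}\in J$, where $m$ vanishes. You get $1$ and $2a_1\wedge b_1$ respectively, and the $\bV_{1^2}(g)$-component $2(a_1\wedge b_1-\tfrac1g\omega)$ of the latter is nonzero. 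By Schur, $J$ is therefore not contained in the sum of the other factors, so it contains $\bV_0(g)$ and $\bV_{1^2}(g)$.

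Alternatively, follow the paper. The class $\overline{\omega\Cdot\omega}\in J$ is invariant. Your own $Z_{12}$ trick gives $\overline{(a_1\wedge a_2)\Cdot\omega}=(Z_{12}-1)\,\overline{(a_1\wedge b_1)\Cdot\omega}\in J$, using $\overline{(a_1\wedge b_1)\Cdot\omega}=\sum_j\overline{(a_1\wedge b_1)\Cdot(a_j\wedge b_j)}$. These are highest weight vectors of weights $0$ and $E_1+E_2$. They are nonzero in the quotient because $\omega\Cdot\omega$ and $(a_1\wedge a_2)\Cdot\omega$ are not multiples of $\iota(\omega\wedge\omega)$ and $\iota(a_1\wedge a_2\wedge\omega)$, which span the highest weight vectors of those weights in $\iota(\wedge^4 H)$.
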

\begin{proof}
As in the following figure, let $\{a_1,b_1,\ldots,a_g,b_g\}$ be the standard symplectic basis for $H$:\\
\Figure{StandardSpBasis}
As we noted above, Theorem \ref{theorem:moritajohnsonbracket}
implies that the $\Q$-span $J$ of the image of $\tau_2$ is contained in $\Sym^2(\wedge^2 H)/\wedge^4 H$.  
We must prove that
$J=\Sym^2(\wedge^2 H)/\wedge^4 H$.  
The subspace $J$ is an $\Sp_{2g}(\Q)$-subrepresentation of $\Sym^2(\wedge^2 H)/\wedge^4 H$ 
(Corollary \ref{corollary:imagejohnsonalgebraic}), so we must study this representation.
Before doing so, we identify some elements of $J$.

\begin{claim}{1}
\label{claim:imagetau2.1}
Let $\{u_1,v_1,\ldots,u_h,v_h\}$ be a symplectic basis for a symplectic subspace
of $H$.  Then the element
\[(u_1 \wedge v_1 + \cdots + u_h \wedge v_h) \Cdot (u_1 \wedge v_1 + \cdots + u_h \wedge v_h) \in \Sym^2(\wedge^2 H)\]
maps to an element of $J$.
\end{claim}
\begin{proof}[Proof of claim]
The group $\Sp_{2g}(\Q)$ acts on $J$.  We can find $\phi \in \Sp_{2g}(\Q)$ such that
$\phi(u_i) = a_i$ and $\phi(v_i) = b_i$ for $1 \leq i \leq h$.  Applying $\phi$, we reduce
ourselves to showing that
\[(a_1 \wedge b_1 + \cdots + a_h \wedge b_h) \Cdot (a_1 \wedge b_1 + \cdots + a_h \wedge b_h) \in \Sym^2(\wedge^2 H)\]
maps to an element of $J$.  For this, let $z$ be the following separating curve:\\
\Figure{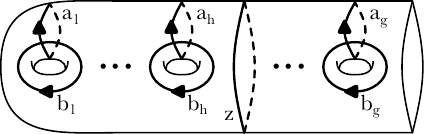}
By Lemma \ref{lemma:tau2septwist}, the element $\tau_2(T_z) \in J$ is the image
of the desired element of $\Sym^2(\wedge^2 H)$.
\end{proof}

Let $\tJ$ be the $\Sp_{2g}(\Q)$-subrepresentation of $\Sym^2(\wedge^2 H)$ spanned by the elements
identified in Claim \ref{claim:imagetau2.1}.  By that claim, it is enough to prove
that $\tJ$ maps surjectively to $\Sym^2(\wedge^2 H)/\wedge^4 H$.
We have
\begin{align*}
\Sym^2(\wedge^2 H) &= \bV_{1^4}(g) \oplus \bV_{1^2}(g)^{\oplus 2} \oplus \bV_0(g)^{\oplus 2} \oplus \bV_{2^2}(g),\\
\wedge^4 H         &= \bV_{1^4}(g) \oplus \bV_{1^2}(g) \oplus \bV_0(g).
\end{align*}
The next step is to tease apart the copies of $\bV_0(g)$ and $\bV_{1^4}(g)$ in $\Sym^2(\wedge^2 H)$ coming
from $\wedge^4 H$ and the copies that we will find in $\tJ$.
The embedding $\iota\colon \wedge^4 H \rightarrow \Sym^2(\wedge^2 H)$ takes the form
\[h_1 \wedge h_2 \wedge h_3 \wedge h_4 \in \wedge^4 H \mapsto (h_1 \wedge h_2) \Cdot (h_3 \wedge h_4) - (h_1 \wedge h_3) \Cdot (h_2 \wedge h_4) + (h_1 \wedge h_4) \Cdot (h_2 \wedge h_3).\]
Letting $\omega = a_1 \wedge b_1 + \cdots + a_g \wedge b_g$ be the symplectic form, the 
following are highest weight vectors for the three irreducible factors of $\wedge^4 H$:
\[v_0 = a_1 \wedge a_2 \wedge a_3 \wedge a_4 \quad \text{and} \quad
v_1 = a_1 \wedge a_2 \wedge \omega \quad \text{and} \quad
v_2 = \omega \wedge \omega.\] 
The vector $\iota(v_0)$ must be a highest weight vector for the unique copy of $\bV_{1^4}(g)$ in
$\Sym^2(\wedge^2 H)$.  As for the other terms, in $\Sym^2(\wedge^2 H)$ we have highest weight vectors
\[w_1 = (a_1 \wedge a_2) \Cdot \omega \quad \text{and} \quad
w_2 = \omega \Cdot \omega \quad \text{and} \quad
w_3 = (a_1 \wedge a_2) \Cdot (a_1 \wedge a_2).\]
Since $\iota(v_1)$ is not a multiple of $w_1$ and $\iota(v_2)$ is not a multiple of $w_2$, we see that:
\begin{itemize}
\item $\iota(v_0)$ is a highest weight vector for the copy of $\bV_{1^4}(g)$ in $\Sym^2(\wedge^2 H)$.
\item $\iota(v_1)$ and $w_1$ are highest weight vectors for the two copies of $\bV_{1^2}(g)$ in $\Sym^2(\wedge^2 H)$.
\item $\iota(v_2)$ and $w_2$ are highest weight vectors for the two copies of $\bV_0(g)$ in $\Sym^2(\wedge^2 H)$.
\item $w_3$ is a highest weight vector for the copy of $\bV_{2^2}(g)$ in $\Sym^2(\wedge^2 H)$.
\end{itemize}
To show that $\tJ$ maps surjectively to $\Sym^2(\wedge^2 H)/\wedge^4 H$, it 
is therefore enough to prove the following.  Note that we prove the three
claims in order of difficulty: first $w_2$, then $w_3$, and then finally $w_1$.

\begin{claim}{2}
The highest weight vector $w_2 = \omega \Cdot \omega \in \Sym^2(\wedge^2 H)$ lies in $\tJ$, so
the image of $\tJ$ in $\Sym^2(\wedge^2 H)/\wedge^4 H$ contains a copy of $\bV_{0}(g)$.
\end{claim}

Since $\omega = a_1 \wedge b_1 + \cdots + a_g \wedge b_g$, we have $\omega \Cdot \omega \in \tJ$
by definition (cf.\ Claim \ref{claim:imagetau2.1}).

\begin{claim}{3}
The highest weight vector $w_3 = (a_1 \wedge a_2) \Cdot (a_1 \wedge a_2) \in \Sym^2(\wedge^2 H)$ lies in $\tJ$, so
the image of $\tJ$ in $\Sym^2(\wedge^2 H)/\wedge^4 H$ contains a copy of $\bV_{2^2}(g)$.
\end{claim}

By definition, the following elements all lie in $\tJ$:
\begin{itemize}
\item $(a_1 \wedge b_1) \Cdot (a_1 \wedge b_1)$; and
\item $(a_1 \wedge (b_1+a_2)) \Cdot (a_1 \wedge (b_1+a_2))$; and
\item $(a_1 \wedge (b_1-a_2)) \Cdot (a_1 \wedge (b_2-a_2))$.
\end{itemize}
Since
\begin{align*}
 &(a_1 \wedge (b_1+a_2)) \Cdot (a_1 \wedge (b_1+a_2)) + (a_1 \wedge (b_1-a_2)) \Cdot (a_1 \wedge (b_2-a_2)) \\
=&2(a_1 \wedge b_1) \Cdot (a_1 \wedge b_1) + 2(a_1 \wedge a_2) \Cdot (a_1 \wedge a_2),
\end{align*}
it follows that $(a_1 \wedge a_2) \Cdot (a_1 \wedge a_2) \in \tJ$.

\begin{claim}{4}
The highest weight vector $w_1 = (a_1 \wedge a_2) \Cdot \omega \in \Sym^2(\wedge^2 H)$ lies in $\tJ$, so
the image of $\tJ$ in $\Sym^2(\wedge^2 H)/\wedge^4 H$ contains a copy of $\bV_{1^2}(g)$.
\end{claim}

Observe first that
\[(a_1 \wedge (b_1+a_2)) \Cdot \omega = (a_1 \wedge b_1) \Cdot \omega + (a_1 \wedge a_2) \Cdot \omega.\]
From this, we see that it is enough to prove that both $(a_1 \wedge (b_1+a_2)) \Cdot \omega$
and $(a_1 \wedge b_1) \Cdot \omega$ lie in $\tJ$.  Since $\Sp_{2g}(\Q)$ fixes $\omega$, these two elements differ by an element of 
$\Sp_{2g}(\Q)$.  It is therefore enough to prove that one of them lies in $\tJ$.
We will prove that $(a_1 \wedge b_1) \Cdot \omega$ lies in $\tJ$.

We expand this out:
\[(a_1 \wedge b_1) \Cdot \omega = (a_1 \wedge b_1) \Cdot (a_1 \wedge b_1) + (a_1 \wedge b_1) \Cdot (a_2 \wedge b_2) + \cdots + (a_1 \wedge b_1) \Cdot (a_g \wedge b_g).\]
Since $(a_1 \wedge b_1) \Cdot (a_1 \wedge b_1)$ lies in $\tJ$ by definition, we see that it is enough to prove
that $(a_1 \wedge b_1) \Cdot (a_i \wedge b_i)$ lies in $\tJ$ for all $2 \leq i \leq g$.  Since these
elements all differ by an element of $\Sp_{2g}(\Q)$, it is enough to prove that one of them lies in $\tJ$.
We will prove that $(a_1 \wedge b_1) \Cdot (a_2 \wedge b_2)$ lies in $\tJ$.

By definition, the following three elements all lie in $\tJ$:
\begin{itemize}
\item $(a_1 \wedge b_1) \Cdot (a_1 \wedge b_1)$; and
\item $(a_2 \wedge b_2) \Cdot (a_2 \wedge b_2)$; and
\item $(a_1 \wedge b_1 + a_2 \wedge b_2) \Cdot (a_1 \wedge b_1 + a_2 \wedge b_2)$.
\end{itemize}
Since
\begin{align*}
&(a_1 \wedge b_1 + a_2 \wedge b_2) \Cdot (a_1 \wedge b_1 + a_2 \wedge b_2) \\
=&(a_1 \wedge b_1) \Cdot (a_1 \wedge b_1) + 2 (a_1 \wedge b_1) \Cdot (a_2 \wedge b_2) + (a_2 \wedge b_2) \Cdot (a_2 \wedge b_2),
\end{align*}
we conclude that $(a_1 \wedge b_1) \Cdot (a_2 \wedge b_2)$ lies in $\tJ$, as desired.
\end{proof}

\section{Ruling out trivial factors}
\label{section:h2notrivial}

Recall that our goal in this part of the paper is to prove Theorem \ref{maintheorem:hain}.
For $\Torelli_g^1$, this theorem calculates the image of the cup
product pairing $\fc\colon \wedge^2 \HH^1(\Torelli_g^1;\Q) \rightarrow \HH^2(\Torelli_g^1;\Q)$.
As a prelude to this calculation, this section proves that
$\HH^2(\Torelli_g^1;\Q)$ has no trivial subrepresentations.

\subsection{Second cohomology group of Torelli group and mapping class group}

Though it is not absolutely necessary for our proof, to avoid technicalities
we start by recalling the following theorem which was discussed in the
introduction and which we are assuming throughout this paper:

\newtheorem*{maintheorem:h2finite}{Theorem \ref{maintheorem:h2finite} (Minahan--Putman, \cite[Theorem B]{MinahanPutmanH2Torelli})}
\begin{maintheorem:h2finite}
The homology group $\HH^2(\Torelli_g^1;\Q)$ is finite-dimensional
for $g \geq 5$ and an algebraic representation of $\Sp_{2g}(\Z)$ for $g \geq 6$.
\end{maintheorem:h2finite}

\begin{remark}
As we noted in the introduction, \cite{MinahanPutmanH2Torelli} also proves similar results for $\Torelli_g$
and $\Torelli_{g,1}$.  In fact, it proves a result for $\Torelli_{g,p}^b$ in general, though it requires
care to properly define the Torelli group on a surface with multiple punctures and boundary components.
\end{remark}

For the whole mapping class group, we have the following:

\begin{theorem}[{Harer, \cite{HarerH2}}]
\label{theorem:harerh2}
For $g \geq 3$, we have $\HH^2(\Mod_g^1;\Q) = \Q$.
\end{theorem}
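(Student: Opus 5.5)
The plan is to prove the two inequalities $\dim_{\Q} \HH^2(\Mod_g^1;\Q) \geq 1$ and $\dim_{\Q} \HH^2(\Mod_g^1;\Q) \leq 1$ separately. By universal coefficients, and since $\HH_1(\Mod_g^1;\Z)=0$ for $g \geq 3$, we have $\HH^2(\Mod_g^1;\Q) = \Hom(\HH_2(\Mod_g^1;\Z),\Q)$. So everything reduces to understanding $\HH_2(\Mod_g^1;\Z)\otimes \Q$.

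\textbf{Lower bound.} I would pull back a class along the surjection $\Mod_g^1 \to \Sp_{2g}(\Z)$. The natural candidate is the first Chern class of the Hodge bundle, which is represented by Meyer's signature cocycle on $\Sp_{2g}(\Z)$. To show that its pullback is nonzero, it suffices to find an integral $2$-cycle on which it pairs nontrivially. Concretely, a surface bundle over a surface with nonzero signature gives a map $\pi_1(\Sigma_h) \to \Mod_g$, and hence a class in $\HH_2(\Mod_g;\Z)$. Examples of such bundles are the Atiyah--Kodaira constructions, or a Lefschetz fibration with a section made into an honest bundle after monodromy bookkeeping. Evaluating the signature class on this $2$-cycle gives the nonzero signature. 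To pass to $\Mod_g^1$, I would use that the capping map $\Mod_g^1 \to \Mod_g$ is compatible with the pulled-back class, or simply choose a bundle with a section of self-intersection zero so that its monodromy lifts to $\Mod_g^1$.

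\textbf{Upper bound.} I would use Hopf's formula $\HH_2(G) = (R \cap [F,F])/[F,R]$ applied to Wajnryb's finite presentation of $\Mod_g^1$, whose generators are Dehn twists about a standard chain of curves. This is Pitsch's approach. The steps are as follows.
\begin{enumerate}
\item Each relator of the presentation (braid relations, the $3$-chain relation, the lantern relation) is a word $r$. One finds the unique combination of relators lying in $[F,F]$ by abelianizing. Since all generators are conjugate, every relator lies in $[F,F]$ after multiplying by suitable powers of the generators, so the problem reduces to linear algebra over the exponent sums.
\item Using that $[F,R]$ kills conjugation, $R/[F,R]$ is abelian and is generated by the images of the finitely many defining relators. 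Modulo $[F,R]$, and after tensoring with $\Q$, braid relations between conjugate generators become trivial in $(R\cap[F,F])/[F,R]$. The remaining combinations of the chain and lantern relators span a space of dimension at most $1$.
\end{enumerate}
This gives $\dim_{\Q} \HH_2(\Mod_g^1;\Q) \leq 1$ for $g \geq 3$, which combined with the lower bound proves the claim.

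The main obstacle is the upper bound. The presentation has many relators, and showing that all but one combination die modulo $[F,R]$ requires careful manipulation. The key tool is the identity $[x,r] \in [F,R]$, which lets one replace a relator by any of its conjugates; combined with conjugacy of generators, this reduces everything to a small number of relator types. As a fallback, I would invoke Harer's stability theorem to reduce to large $g$ and then use the Madsen--Weiss computation $\HH^\ast(\Mod_\infty;\Q) = \Q[\kappa_1,\kappa_2,\ldots]$, whose degree-$2$ part is $\Q\kappa_1$. This is anachronistic but clean for large genus, with the small cases $g \geq 3$ still requiring the presentation argument.
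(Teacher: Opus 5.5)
\paragraph{Assessment.} The paper gives no proof here; it simply cites Harer, so your argument has to stand on its own. The Hopf-formula approach via Wajnryb's presentation (Pitsch's route) is viable. However, your upper bound has a genuine gap: it misidentifies which relators lie in $R\cap[F,F]$ and never handles the ones that actually do.

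\paragraph{The gap in the upper bound.} Wajnryb's presentation contains a commutation relator $[x_i,x_j]$ for \emph{every} pair of disjoint generators $a_i,a_j$. These already have exponent sum zero, so each one defines a class in $\HH_2(\Mod_g^1;\Z)$, namely, up to sign, the abelian cycle of $T_{a_i}$ and $T_{a_j}$. There are on the order of $g^2$ of them. Your exponent-sum linear algebra in step (1) therefore does not single out a unique combination. On its own it only gives
\[\dim_{\Q}\HH_2(\Mod_g^1;\Q)\le 1+\#\{\text{disjoint pairs of generators}\}.\]
Step (2) does not close this, for two reasons.
\begin{enumerate}
\item The braid relators $x_ix_jx_i(x_jx_ix_j)^{-1}$ have exponent-sum vector $e_i-e_j\neq 0$. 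So they are not in $R\cap[F,F]$ at all, and they cannot vanish in $R/[F,R]$.
\item For the commutation relators, the facts you invoke are that $[F,R]$ absorbs conjugation and that the generators are conjugate to each other. These reduce everything to a few classes but do not kill them. Compare the braid group $B_n$ with $n\ge 4$: both facts hold there too, yet the class of $[\sigma_1,\sigma_3]$ generates $\HH_2(B_n;\Z)\cong\Z/2$.
\end{enumerate}

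\paragraph{How to close it.} The missing idea is an argument that these abelian cycles vanish rationally. Here is one.
\begin{itemize}
\item Any two disjoint Wajnryb generators have linearly independent homology classes, so $a_i\cup a_j$ is nonseparating. Hence some $\phi\in\Mod_g^1$ swaps $a_i$ and $a_j$.
\item Lift $\phi$ to $w\in F$. Since elements of $R$ are central modulo $[F,R]$, this gives $\overline{[x_i,x_j]}=\overline{[x_j,x_i]}=-\overline{[x_i,x_j]}$. So these classes are $2$-torsion. Pitsch's integral computation, valid for $g\ge 4$, needs a finer argument.
\item Once these classes are gone, $R/[F,R]\otimes\Q$ is spanned by the $2g$ braid relators, the chain relator and the lantern relator. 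There are $2g$ braid relators because the intersection graph of $a_0,\dots,a_{2g}$ is a tree.
\item These $2g+2$ classes have exponent-sum vectors spanning $\Q^{2g+1}$, because $\HH_1(\Mod_g^1;\Q)=0$. So the kernel, which is $\HH_2(\Mod_g^1;\Q)$, has dimension at most $1$ for every $g\ge 3$.
\end{itemize}

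\paragraph{The lower bound also needs tightening.} Saying the capping map is ``compatible'' with the class does not show the class is nonzero on $\Mod_g^1$. Pullback along a surjection can kill classes; for example, the Euler class dies under $\Mod_g^1\to\Mod_{g,1}$. Also, the Atiyah--Kodaira examples exist only for special fiber genera, not for every $g\ge 3$.

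A cleaner argument uses the $E_2$-page from the proof of Lemma~\ref{lemma:h2torellitrivial}. The two entries needed there rely only on Theorems~\ref{theorem:borelstability} and~\ref{theorem:torelliabel}, not on Harer, and they hold for all $g\ge 3$. Specifically, $E_2^{20}=\HH^2(\Sp_{2g}(\Z);\Q)\cong\Q$. The only differential into this entry comes from $E_2^{01}=(\wedge^3 H)^{\Sp_{2g}(\Z)}=0$. So this $\Q$ injects into $\HH^2(\Mod_g^1;\Q)$.
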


\begin{remark}
After many further developments, the ultimate result in this direction was
proved by Madsen--Weiss \cite{MadsenWeiss}, who proved that the cohomology
ring $\HH^{\bullet}(\Mod_g^1;\Q)$
is isomorphic to a polynomial ring $\Q[\kappa_1,\kappa_2,\ldots]$ with
$|\kappa_i| = 2i$ in a range of degrees that tends to infinity
as $g \mapsto \infty$.
\end{remark}

\subsection{Borel stability theorem}

We will also need the following special case of the classical Borel stability theorem \cite{BorelStability1, BorelStability2} on
the cohomology of arithmetic groups.  The explicit stable range in the following
is due to Tshishiku \cite{TshishikuBorel}:

\begin{theorem}[{Borel, \cite{BorelStability1, BorelStability2}}]
\label{theorem:borelstability}
For $g \geq 2$, the following hold:
\begin{itemize}
\item[(i)] If $\bV_{\sigma}(g)$ is a nontrivial irreducible representation of $\Sp_{2g}$,
then $\HH^k(\Sp_{2g}(\Z);\bV_{\sigma}(g)) = 0$ for $k \leq g-1$.
\item[(ii)] In degrees $k \leq g-1$, the cohomology ring $\HH^{\bullet}(\Sp_{2g}(\Z);\Q)$ is
isomorphic to a polynomial ring $\Q[c_2,c_6,c_{10},\ldots]$ with $\deg(c_{4i-2}) = 4i-2$ for
$i \geq 1$.
\end{itemize}
\end{theorem}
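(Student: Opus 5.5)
This theorem is a classical input that we will cite rather than reprove, but here is the route I would follow to establish it, following Borel's original strategy. Let $G = \Sp_{2g}(\mathbb{R})$, $K = U(g)$ a maximal compact subgroup, $X = G/K$ the Siegel upper half space, and $\Gamma = \Sp_{2g}(\Z)$. For a finite-dimensional algebraic representation $\bV$ of $G$, the group $\HH^{\bullet}(\Gamma;\bV)$ is computed (after passing to a torsion-free finite-index subgroup, harmless with $\Q$-coefficients) by the de Rham complex of $\Gamma$-equivariant $\bV$-valued forms on $X$. Inside this complex sits the subcomplex of $G$-invariant forms. By Cartan's theory, the $G$-invariant forms have zero differential and compute the relative Lie algebra cohomology $\HH^{\bullet}(\mathfrak{g},K;\bV)$. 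This gives a natural map
\[j_{\bV}\colon \HH^{\bullet}(\mathfrak{g},K;\bV) \longrightarrow \HH^{\bullet}(\Gamma;\bV).\]

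The first step is to compute the source of $j_{\bV}$. For $\bV$ nontrivial irreducible, Kuga's lemma/Casimir argument shows $\HH^{\bullet}(\mathfrak{g},K;\bV) = 0$, since the Casimir acts on $\bV$ by a nonzero scalar but must act by zero on $(\mathfrak{g},K)$-cohomology. For $\bV = \Q$, $\HH^{\bullet}(\mathfrak{g},K;\mathbb{R})$ is the cohomology of the compact dual $\Sp(g)/U(g)$ (a Lagrangian Grassmannian). In a stable range this is the polynomial ring on generators $c_2,c_6,c_{10},\ldots$ in the degrees stated in (ii). These classes can be identified with odd Chern classes of the tautological bundle; even Chern classes vanish by the relation $c(E)c(\overline{E})=1$.

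The second and main step is to prove that $j_{\bV}$ is an isomorphism in degrees $k \leq g-1$; this then yields both (i) and (ii). Injectivity follows from a Poincar\'e duality / Matsushima-type argument, using invariant forms paired against the fundamental class in the stable range. Surjectivity is the real obstacle, since $\Gamma \backslash X$ is noncompact. I would first try Borel's approach. One works with forms of controlled (logarithmic/polynomial) growth on Siegel sets and shows that in low degree every class has a square-integrable representative, i.e.\ $\HH^k$ agrees with $L^2$-cohomology. The latter decomposes spectrally, and only the trivial representation of $G$ contributes in degrees below the first nontrivial contribution of a unitary $(\mathfrak{g},K)$-module, which is controlled by Vogan--Zuckerman/Enright-type bounds. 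Making the range explicit as $k \leq g-1$ requires estimating the growth conditions along every rational parabolic, and this is exactly Tshishiku's refinement of Borel's constant. This explicit-range bookkeeping is where I expect the difficulty to concentrate, so I would cite it from \cite{TshishikuBorel} rather than redo it.
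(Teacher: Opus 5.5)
Citing this result is exactly what the paper does. It gives no argument of its own: it attributes the theorem to Borel and the explicit range $k \leq g-1$ to Tshishiku. So your proposal agrees with the paper.

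Your motivational sketch, however, contains a few incorrect statements that you should fix if you keep it.

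First, for nontrivial $\bV$ the $G$-invariant $\bV$-valued forms $(\wedge^{\bullet}\mathfrak{p}^{\ast}\otimes\bV)^{K}$ do \emph{not} have zero differential. For example, if $0 \neq v \in \bV^{K}$, the invariant $0$-form $gK \mapsto g \cdot v$ is not closed. What is true is that this complex is acyclic, by the Casimir argument.

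Second, the even Chern classes of the tautological bundle on $\Sp(g)/U(g)$ do not vanish. The relation $c(E)c(\overline{E})=1$ only expresses each $c_{2k}(E)$ as a polynomial in lower classes (e.g.\ $c_2(E)=c_1(E)^2/2$). That is why the odd classes generate. What vanishes are the even components of the Chern character.

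Third, injectivity of $j_{\bV}$ cannot come from pairing with a fundamental class, because $\Gamma\backslash X$ is noncompact. Injectivity genuinely fails outside a range: for $\SL_2(\Z)$ the compact dual is $S^2$, yet $\HH^2(\SL_2(\Z);\Q)=0$. So injectivity needs the same growth-condition and square-integrability analysis as surjectivity.
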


\subsection{No trivial factors}

We can now rule out trivial factors of $\HH^2(\Torelli_g^1;\Q)$:

\begin{lemma}
\label{lemma:h2torellitrivial}
For $g \geq 6$, the $\Sp_{2g}(\Z)$-representation $\HH^2(\Torelli_g^1;\Q)$ contains no trivial factors.
\end{lemma}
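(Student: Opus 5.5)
We will use the Hochschild--Serre spectral sequence of the extension
\[1 \longrightarrow \Torelli_g^1 \longrightarrow \Mod_g^1 \longrightarrow \Sp_{2g}(\Z) \longrightarrow 1\]
with $\Q$-coefficients: $E_2^{pq} = \HH^p(\Sp_{2g}(\Z);\HH^q(\Torelli_g^1;\Q)) \Rightarrow \HH^{p+q}(\Mod_g^1;\Q)$. By Theorem \ref{maintheorem:h2finite}, for $g \geq 6$ the coefficient module $\HH^2(\Torelli_g^1;\Q)$ is a finite-dimensional algebraic representation. It therefore splits as a direct sum of irreducibles, and the number of trivial factors equals $\dim E_2^{0,2} = \dim \HH^2(\Torelli_g^1;\Q)^{\Sp_{2g}(\Z)}$. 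So it suffices to show $E_2^{0,2}=0$.

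First we compute the low-degree terms. By Theorem \ref{theorem:torelliabel}, $\HH^1(\Torelli_g^1;\Q) \cong \wedge^3 H \cong \bV_1(g) \oplus \bV_{1^3}(g)$, which has no trivial summand. Theorem \ref{theorem:borelstability}(i) then gives $E_2^{p,1} = 0$ for $p \leq g-1$, so in particular $E_2^{0,1}=E_2^{1,1}=E_2^{2,1}=E_2^{3,1}=0$. By Theorem \ref{theorem:borelstability}(ii), $E_2^{p,0} = \HH^p(\Sp_{2g}(\Z);\Q)$ equals $\Q$ for $p=0$ and $p=2$ (spanned by $c_2$), and vanishes for $p=1,3$.

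Next we run the spectral sequence in total degree $2$. The differentials out of $E^{0,2}$ are $d_2\colon E_2^{0,2} \to E_2^{2,1}=0$ and $d_3\colon E_3^{0,2}\to E_3^{3,0}=0$. Hence $E_\infty^{0,2}=E_2^{0,2}$. The term $E^{2,0}=\Q$ can only be hit by $d_2$ from $E_2^{0,1}=0$, so it survives to $E_\infty$. Also $E_\infty^{1,1}=0$. Thus $\HH^2(\Mod_g^1;\Q)$ has a filtration with graded pieces $\Q$ (from $E^{2,0}$), $0$, and $E_2^{0,2}$. So
\[\dim \HH^2(\Mod_g^1;\Q) = 1 + \dim \HH^2(\Torelli_g^1;\Q)^{\Sp_{2g}(\Z)}.\]
Harer's Theorem \ref{theorem:harerh2} gives $\dim \HH^2(\Mod_g^1;\Q)=1$, which forces the invariants, and hence the trivial factors, to vanish.

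The main obstacle is justifying that a vanishing $E_2^{0,2}$ really rules out trivial \emph{factors} and not merely invariants. This is where finite-dimensionality and algebraicity from Theorem \ref{maintheorem:h2finite} enter: complete reducibility converts invariants into trivial summands. One should also check that the Borel range $k \leq g-1$ covers all degrees used, namely $p \leq 3$, which holds since $g \geq 6$.
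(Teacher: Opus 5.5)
Your proposal is correct and is essentially the paper's proof: the Hochschild--Serre spectral sequence for $1 \to \Torelli_g^1 \to \Mod_g^1 \to \Sp_{2g}(\Z) \to 1$, with Borel stability killing the $q=1$ row and computing the $q=0$ row, and Harer's theorem forcing $E_2^{0,2}=0$. You are slightly more careful than the paper in two places: you identify $E_2^{0,2}$ directly as the invariants $\HH^2(\Torelli_g^1;\Q)^{\Sp_{2g}(\Z)}$, and you check explicitly that the relevant $d_2$ and $d_3$ differentials vanish.
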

\begin{proof}
The restriction $g \geq 6$ implies that $\HH^2(\Torelli_g^1;\Q)$ is a finite-dimensional
algebraic representation of $\Sp_{2g}(\Z)$ (Theorem \ref{maintheorem:h2finite}).  It
therefore decomposes as a direct sum of irreducible factors, so
the statement of the lemma makes sense.  Consider the Hochschild--Serre
spectral sequence with coefficients in $\bV_{0}(g) = \Q$ associated
to the short exact sequence
\[1 \longrightarrow \Torelli_g^1 \longrightarrow \Mod_g^1 \longrightarrow \Sp_{2g}(\Z) \longrightarrow 1.\]
This spectral sequence takes the form
\[E_2^{pq} = \HH^p(\Sp_{2g}(\Z);\HH^q(\Torelli_g^1;\Q)) \Rightarrow \HH^{p+q}(\Mod_g^1;\Q).\]
To understand $\HH^2$, we must understand $E_2^{pq}$ for $p + q \leq 2$ as well as
$E_2^{21}$ and $E_2^{30}$.  For $q=0$, by Theorem \ref{theorem:borelstability} we have
\[E_2^{p0} = \HH^p(\Sp_{2g}(\Z);\Q) = \begin{cases}
\Q & \text{if $p=0,2$},\\
0  & \text{if $p=1,3$}.
\end{cases}\]
Next, let $H = \HH_1(\Sigma_g^1;\Q)$.  Theorem \ref{theorem:torelliabel} says
that $\HH^1(\Torelli_g^1;\Q) \cong \wedge^3 H$, which has no trivial factors.
Theorem \ref{theorem:borelstability} therefore implies that
\[E_2^{p1} = \HH^p(\Sp_{2g}(\Z);\wedge^3 H) = 0 \quad \text{for $p \leq g-1$}.\]
Finally,
\[E_2^{02} = \HH^0(\Sp_{2g}(\Z);\Hom(\HH^2(\Torelli_g^1;\Q),\Q)) \cong \Hom_{\Sp_{2g}}(\HH^2(\Torelli_g^1;\Q),\bV_{0}(g)).\]
It follows that the dimension of $E_2^{02}$ equals the number of copies of
$\bV_{0}(g)$ in $\HH^2(\Torelli_g^1;\Q)$.
Summarizing, since $g \geq 6$ the $E_2$-page of our spectral sequence takes the form
\begin{center}
\begin{tblr}{|cccc}
$E_2^{02}$  &     &      & \\
$0$         & $0$ & $0$  & \\
$\Q$        & $0$ & $\Q$ & $0$\\
\hline
\end{tblr}
\end{center}
It follows that $\HH^{2}(\Mod_g^1;\Q) \cong E_2^{02} \oplus \Q$.
Theorem \ref{theorem:harerh2} says that this equals $\Q$, so we conclude
that $E_2^{02} = 0$, i.e., that there are no copies of
$\bV_{0}(g)$ in $\HH^2(\Torelli_g^1;\Q)$.
\end{proof}

\section{Cup products and the Johnson homomorphism}
\label{section:cupjohnson}

In this section, we relate the image of the cup product pairing to the Johnson homomorphism.
For $H_{\Z} = \HH_1(\Sigma_g^1;\Z)$, Theorem \ref{theorem:imagetau1} implies that the
first Johnson homomorphism can be regarded as a homomorphism
$\tau_1\colon \Torelli_g^1 \rightarrow \wedge^3 H_{\Z}$.  We then have:

\begin{lemma}
\label{lemma:cupjohnson}
Let $g \geq 3$.  Set $H_{\Z} = \HH_1(\Sigma_g^1;\Z)$, and let
$\tau_1\colon \Torelli_g^1 \rightarrow \wedge^3 H_{\Z}$ be the first
Johnson homomorphism.  Then as representations of $\Sp_{2g}(\Z)$, the
images of the following two maps are the same:
\begin{itemize}
\item The cup product pairing $\fc\colon \wedge^2 \HH^1(\Torelli_g^1;\Q) \rightarrow \HH^2(\Torelli_g^1;\Q)$.
\item The map $(\tau_1)_{\ast}\colon \HH_2(\Torelli_g^1;\Q) \rightarrow \HH_2(\wedge^3 H_{\Z};\Q)$
induced by the first Johnson homomorphism.
\end{itemize}
\end{lemma}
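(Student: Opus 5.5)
The plan is to use the fact that $\tau_1$ induces an isomorphism on rational first (co)homology, together with naturality of cup products and the fact that the cohomology ring of a free abelian group is exterior.

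Set $A = \wedge^3 H_{\Z}$, a free abelian group of finite rank, so $\HH^{\bullet}(A;\Q) \cong \wedge^{\bullet} \HH^1(A;\Q)$. In particular, the cup product $\wedge^2 \HH^1(A;\Q) \rightarrow \HH^2(A;\Q)$ is an isomorphism.

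By Theorem \ref{theorem:torelliabel}, the map $\tau_1^{\ast}\colon \HH^1(A;\Q) \rightarrow \HH^1(\Torelli_g^1;\Q)$ is an isomorphism, and by Lemma \ref{lemma:johnsonequivariant} it is $\Sp_{2g}(\Z)$-equivariant. Naturality of cup products gives a commutative square
\[\begin{tikzcd}
\wedge^2 \HH^1(A;\Q) \arrow{r}{\cong} \arrow{d}{\wedge^2 \tau_1^{\ast}}[swap]{\cong} & \HH^2(A;\Q) \arrow{d}{\tau_1^{\ast}} \\
\wedge^2 \HH^1(\Torelli_g^1;\Q) \arrow{r}{\fc} & \HH^2(\Torelli_g^1;\Q).
\end{tikzcd}\]
Both horizontal-then-vertical compositions therefore agree. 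Since the left vertical map and the top map are isomorphisms, the image of $\fc$ equals the image of $\tau_1^{\ast}\colon \HH^2(A;\Q) \rightarrow \HH^2(\Torelli_g^1;\Q)$, and all the maps involved are equivariant.

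It remains to compare the image of $\tau_1^{\ast}$ on $\HH^2$ with the image of $(\tau_1)_{\ast}$ on $\HH_2$. By universal coefficients over $\Q$, we have $\HH^2(-;\Q) = \Hom(\HH_2(-;\Q),\Q)$, and $\tau_1^{\ast}$ on $\HH^2$ is the transpose of $(\tau_1)_{\ast}$ on $\HH_2$. Over a field, the image of the transpose $f^{\ast}$ of a linear map $f$ is the annihilator of $\ker f$, which is dual to $\operatorname{im} f$; that is, $\operatorname{im}(f^{\ast}) \cong (\operatorname{im} f)^{\ast}$ as representations.

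This is the main subtlety: we need $\operatorname{im}(f^{\ast}) \cong \operatorname{im}(f)$ rather than its dual. Since $\HH_2(A;\Q) = \wedge^2 \wedge^3 H$ is a finite-dimensional algebraic representation of $\Sp_{2g}(\Q)$, so is $\operatorname{im}(f)$. Such representations are self-dual, because every irreducible representation of $\Sp_{2g}$ is self-dual (its dual has the same highest weight, since $-1$ lies in the Weyl group) and they are semisimple. Hence $(\operatorname{im} f)^{\ast} \cong \operatorname{im} f$.

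Note that finite-dimensionality of $\HH_2(\Torelli_g^1;\Q)$ is not needed here, because only the image of $f$, which lies inside the finite-dimensional space $\HH_2(A;\Q)$, has to be finite-dimensional.
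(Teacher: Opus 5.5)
Your proof is correct and follows the paper's argument. You use the same naturality square for cup products to identify the image of $\fc$ with the image of $\tau_1^{\ast}$ on $\HH^2$, and then pass to the transpose map on $\HH_2$ by duality. Your justification of that last step is slightly more precise than the paper's: you apply self-duality of every finite-dimensional algebraic $\Sp_{2g}$-representation to the image itself, whereas the paper only appeals to self-duality of the ambient space $\wedge^2 \wedge^3 H$.
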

\begin{proof}
The first observation in the proof is as follows:

\begin{unnumberedclaim}
The image of the cup product pairing $\fc\colon \wedge^2 \HH^1(\Torelli_g^1;\Q) \rightarrow \HH^2(\Torelli_g^1;\Q)$
is the same as the image of the induced map
$(\tau_1)^{\ast}\colon \HH^2(\wedge^3 H_{\Z};\Q) \rightarrow \HH^2(\Torelli_g^1;\Q)$.
\end{unnumberedclaim}
\begin{proof}[Proof of claim]
Let $c\colon \wedge^2 \HH^1(\wedge^3 H_{\Z};\Q) \rightarrow \HH^2(\wedge^3 H_{\Z};\Q)$
be the cup product pairing.  Since $H_{\Z}$ is a free abelian group, the map
$c$ is an isomorphism.  By the naturality of the cup product pairing, we have
a commutative diagram
\[\begin{tikzcd}
\wedge^2 \HH^1(\wedge^3 H_{\Z};\Q) \arrow{d}{c}[swap]{\cong} \arrow{r}{\wedge^2 (\tau_1)^{\ast}} & \wedge^2 \HH^1(\Torelli_g^1;\Q) \arrow{d}{\fc} \\
\HH^2(\wedge^3 H_{\Z};\Q) \arrow{r}{(\tau_1)^{\ast}} & \HH^2(\Torelli_g^1;\Q).
\end{tikzcd}\]
Theorem \ref{theorem:torelliabel} implies that the top row is an isomorphism.  The claim follows.
\end{proof}

Working now with homology rather than cohomology, we have $\HH_1(\wedge^3 H_{\Z};\Q) \cong \wedge^3 H$.
This is an isomorphism of representations of $\Sp_{2g}(\Z)$.  As a representation of $\Sp_{2g}(\Z)$,
the representation $H$ is isomorphic to its dual.  Dualizing, we therefore get that
$\HH^1(\wedge^3 H_{\Z};\Q) \cong \wedge^3 H$ and thus that $\HH^2(\wedge^3 H_{\Z};\Q) \cong \wedge^2 \wedge^3 H$.
We deduce that $\HH^2(\wedge^3 H_{\Z};\Q)$ is also self-dual as a representation of $\Sp_{2g}(\Z)$.
From this, we see that as representations of $\Sp_{2g}(\Z)$ the images of the dual maps
\begin{align*}
&(\phi_1)^{\ast}\colon \HH^2(\wedge^3 H_{\Z};\Q) \rightarrow \HH^2(\Torelli_g^1;\Q), \\
&(\phi_1)_{\ast}\colon \HH_2(\Torelli_g^1;\Q)    \rightarrow \HH_2(\wedge^3 H_{\Z};\Q)
\end{align*}
are isomorphic.  The lemma follows.
\end{proof}

\section{Calculation of image of cup product pairing on surfaces with boundary}
\label{section:hainboundary}

We now prove Theorem \ref{maintheorem:hain} for surfaces with boundary.
The statement of this result is as follows.  It is due to Hain \cite{HainInfinitesimal}, but
our exposition is also influenced by an unpublished paper of Habegger--Sorger \cite{HabeggerSorger}.

\newtheorem*{maintheorem:hainboundary}{Theorem \ref{maintheorem:hain} (surface with boundary case)}
\begin{maintheorem:hainboundary}
Let $g \geq 6$.  The image of
the cup product pairing $\fc\colon \wedge^2 \HH^1(\Torelli_g^1;\Q) \rightarrow \HH^2(\Torelli_g^1;\Q)$
is isomorphic to the following representation of $\Sp_{2g}(\Z)$:
\[\bV_{1^2}(g)^{\oplus 2} \oplus \bV_{2,1^2}(g) \oplus \bV_{1^4}(g)^{\oplus 2} \oplus \bV_{2^2,1^2}(g) \oplus \bV_{1^6}(g).\]
\end{maintheorem:hainboundary}
\begin{proof}
Let $H_{\Z} = \HH_1(\Sigma_g^1;\Z)$ and $H = \HH_1(\Sigma_g^1;\Q)$.  Theorem \ref{theorem:imagetau1} implies that the
first Johnson homomorphism can be regarded as a homomorphism $\tau_1\colon \Torelli_g^1 \rightarrow \wedge^3 H_{\Z}$.
By Lemma \ref{lemma:cupjohnson}, it is enough to prove that the image of the induced map
\begin{equation}
\label{eqn:tau1h2}
(\tau_1)_{\ast}\colon \HH_2(\Torelli_g^1;\Q) \rightarrow \HH_2(\wedge^3 H_{\Z};\Q)
\end{equation}
is the indicated representation.  Since $\wedge^3 H_{\Z}$ is a free abelian group, we have that
the representation $\HH_2(\wedge^3 H_{\Z};\Q)$ is isomorphic to\footnote{Here we are using the fact
that $g \geq 6$ to ensure that the decomposition of $\wedge^2 \wedge^3 H$ is stable (see \S \ref{section:stablesp}), and thus
can be computed with LiE \cite{LieProgram}.} 
\begin{equation}
\label{eqn:decomposew2w3h}
\wedge^2 \wedge^3 H \cong \bV_0(g)^{\oplus 2} \oplus \bV_{1^2}(g)^{\oplus 3} \oplus \bV_{2^2}(g) \oplus \bV_{2,1^2}(g) \oplus \bV_{1^4}(g)^{\oplus 2} \oplus \bV_{2^2,1^2}(g) \oplus \bV_{1^6}(g).
\end{equation}
We will first prove that the image of the map \eqref{eqn:tau1h2} is contained in the desired
subrepresentation of this, and after that prove that in fact it is equal to it.

\begin{step}{1}
\label{step:hain.1}
The image of the map $(\tau_1)_{\ast}\colon \HH_2(\Torelli_g^1;\Q) \rightarrow \HH_2(\wedge^3 H_{\Z};\Q)$ is contained
in a subrepresentation of $\HH_2(\wedge^3 H_{\Z};\Q) \cong \wedge^2 \wedge^3 H$ that is isomorphic to the
following:
\begin{equation}
\label{eqn:step1.toprove}
\bV_{1^2}(g)^{\oplus 2} \oplus \bV_{2,1^2}(g) \oplus \bV_{1^4}(g)^{\oplus 2} \oplus \bV_{2^2,1^2}(g) \oplus \bV_{1^6}(g).
\end{equation}
\end{step}

Let $C$ be the cokernel of the map 
$(\tau_1)_{\ast}\colon \HH_2(\Torelli_g^1;\Q) \rightarrow \HH_2(\wedge^3 H_{\Z};\Q)$.  We
must prove that $C$ contains the subrepresentation
$V = \bV_0(g)^{\oplus 2} \oplus \bV_{1^2}(g) \oplus \bV_{2^2}(g)$,
which is obtained from \eqref{eqn:decomposew2w3h} by
deleting the subrepresentation \eqref{eqn:step1.toprove}.
Since $\HH_2(\Torelli_g^1;\Q)$ contains no trivial subrepresentations (Lemma \ref{lemma:h2torellitrivial}),
it is certainly the case that $C$ contains the subrepresentation
$\bV_0(g)^{\oplus 2}$.  It remains to prove that $C$ contains
$W = \bV_{1^2}(g) \oplus \bV_{2^2}(g)$.

The kernel of $\tau_1$ is the second term $\Torelli_g^1[2]$ of the Johnson filtration (Lemma \ref{lemma:johnsonker}).  We
therefore have a short exact sequence of groups
\[\begin{tikzcd}
0 \arrow{r} & \Torelli_g^1[2] \arrow{r} & \Torelli_g^1 \arrow{r}{\tau_1} & \wedge^3 H_{\Z} \arrow{r} & 1.
\end{tikzcd}\]
Associated to this is a five-term exact sequence in group homology (see \cite[Corollary VII.6.4]{BrownCohomology}).  For a group
$G$ acting on a vector space $M$, let $M_G$ be the $G$-coinvariants, i.e., the quotient $M/\SpanSet{$m-gm$}{$g \in G$, $m \in M$}$.
The five-term exact sequence then takes the form
\[\begin{tikzcd}[column sep=small]
\HH_2(\Torelli_g^1;\Q) \arrow{r}{(\tau_1)_{\ast}} & \HH_2(\wedge^3 H_{\Z};\Q) \arrow{r} & \HH_1(\Torelli_g^1[2];\Q)_{\Torelli_g^1} \arrow{r} &
\HH_1(\Torelli_g^1;\Q) \arrow{r}{(\phi_1)_{\ast}} & \HH_1(\wedge^3 H_{\Z};\Q) \arrow{r} & 0.
\end{tikzcd}\]
Here the coinvariants $\HH_1(\Torelli_g^1[2];\Q)_{\Torelli_g^1}$ are with respect to the action of $\Torelli_g^1$ induced by
the conjugation action of $\Torelli_g^1$ on its normal subgroup $\Torelli_g^1[2]$.  By Theorem \ref{theorem:torelliabel}, the
map 
\[(\phi_1)_{\ast}\colon \HH_1(\Torelli_g^1;\Q) \rightarrow \HH_1(\wedge^3 H_{\Z};\Q) = \wedge^3 H\]
is an isomorphism.  The above five-term exact sequence thus gives an exact sequence
\begin{equation}
\label{eqn:hainstep1seq}
\begin{tikzcd}[column sep=small]
\HH_2(\Torelli_g^1;\Q) \arrow{r}{(\tau_1)_{\ast}} & \HH_2(\wedge^3 H_{\Z};\Q) \arrow{r} & \HH_1(\Torelli_g^1[2];\Q)_{\Torelli_g^1} \arrow{r} & 0.
\end{tikzcd}
\end{equation}
From this, we see that $C \cong \HH_1(\Torelli_g^1[2];\Q)_{\Torelli_g^1}$.  This isomorphism is
$\Sp_{2g}(\Z)$-equivariant for the action of $\Sp_{2g}(\Z) = \Mod_g^1/\Torelli_g^1$ on 
$\HH_1(\Torelli_g^1[2];\Q)_{\Torelli_g^1}$ induced by the conjugation action of $\Mod_g^1$ on
$\Torelli_g^1[2]$.

Our goal, therefore, is to
prove that $\HH_1(\Torelli_g^1[2];\Q)_{\Torelli_g^1}$ contains the subrepresentation 
$W = \bV_{1^2}(g) \oplus \bV_{2^2}(g)$.
We will detect this with
the second Johnson homomorphism $\tau_2\colon \Torelli_g^1[2] \rightarrow H \otimes \Lie_3(H)$.  This is $\Torelli_g^1$-invariant,
so it factors through $\HH_1(\Torelli_g^1[2];\Q)_{\Torelli_g^1}$.  Theorem \ref{theorem:imagetau2}
implies that the $\Q$-span of the image of $\tau_2$ is
\[V' = \Sym^2(\wedge^2 H)/\wedge^4 H \cong \bV_{0}(g) \oplus \bV_{1^2}(g) \oplus \bV_{2^2}(g).\]
Each of these is an irreducible factor of $\HH_1(\Torelli_g^1[2];\Q)_{\Torelli_g^1}$, so certainly
$\HH_1(\Torelli_g^1[2];\Q)_{\Torelli_g^1}$ contains $W$.

Before going on, we make a remark.  At the beginning of this step, our goal was
to to detect $V = \bV_0(g)^{\oplus 2} \oplus \bV_{1^2}(g) \oplus \bV_{2^2}(g)$.  We
handled the trivial representations, and then used the second Johnson homomorphism
to detect $V' = \bV_{0}(g) \oplus \bV_{1^2}(g) \oplus \bV_{2^2}(g)$.   

The only difference between $V$ and $V'$ is that $V$ contains
one extra copy of $\bV_0(g)$.  Morita \cite[\S 4]{MoritaCasson} gave a beautiful geometric
interpretation of this extra factor of $\bV_0(g)$.  He used the Casson invariant
of homology $3$-spheres to construct a $\Mod_g^1$-invariant map $m\colon \Torelli_g^1[2] \rightarrow \Q$ that he
calls the ``core'' of the Casson invariant.  It follows from his results that $m$ does not factor through
the second Johnson homomorphism, so it provides the missing $\bV_0(g)$ factor.

Morita's homomorphism $m\colon \Torelli_g^1[2] \rightarrow \Q$ has the following beautiful formula.
Recall that Johnson \cite{Johnson2} proved that $\Torelli_g^1[2]$ is generated by separating twists.  Consider
a separating twist $T_z$.  The curve $z$ separates $\Sigma_g^1$ into two subsurfaces $S$ and $S'$.  Order them
such that $\partial \Sigma_g^1 \subset S'$, so $S \cong \Sigma_h^1$.  We then have
\[m(T_z) = 4h (h-1).\]
See \cite[Theorem 5.3]{MoritaCasson}.  As far as this author is aware, there is no way to prove that this
formula gives a homomorphism that does not use either the Casson invariant or other similarly deep facts
about the cohomology of the mapping class group. 

\begin{step}{2}
\label{step:hain.2}
The image of the map $(\tau_1)_{\ast}\colon \HH_2(\Torelli_g^1;\Q) \rightarrow \HH_2(\wedge^3 H_{\Z};\Q)$ equals
the following subrepresentation of $\HH_2(\wedge^3 H_{\Z};\Q) \cong \wedge^2 \wedge^3 H$:
\[\bV_{1^2}(g)^{\oplus 2} \oplus \bV_{2,1^2}(g) \oplus \bV_{1^4}(g)^{\oplus 2} \oplus \bV_{2^2,1^2}(g) \oplus \bV_{1^6}(g).\]
\end{step}

By Step \ref{step:hain.1}, it is enough to prove that the image of the map
\begin{equation}
\label{eqn:step2.hit}
(\tau_1)_{\ast}\colon \HH_2(\Torelli_g^1;\Q) \rightarrow \HH_2(\wedge^3 H_{\Z};\Q) \cong \wedge^2 \wedge^3 H.
\end{equation}
contains each of the indicated irreducible factors.  Before doing this, we expand on
the isomorphism $\HH_2(\wedge^3 H_{\Z};\Q) \cong \wedge^2 \wedge^3 H$.
Since $H_{\Z}$ is an abelian group, its multiplication map $H_{\Z} \times H_{\Z} \rightarrow H_{\Z}$ induces a product
structure on $\HH_{\bullet}(\wedge^3 H_{\Z};\Q)$ called the Pontryagin product (see \cite[\S V.5]{BrownCohomology}).  This
makes $\HH_{\bullet}(\wedge^3 H_{\Z};\Q)$ into a graded ring, and 
\[\HH_{\bullet}(\wedge^3 H_{\Z};\Q) \cong \wedge^{\bullet} \wedge^3 H.\]
The isomorphism $\HH_2(\wedge^3 H_{\Z};\Q) \cong \wedge^2 \wedge^3 H$ discussed above is a special case of this.

To prove our result, we will need
to detect elements of $\wedge^2 \wedge^3 H$ that lie in the image of the map \eqref{eqn:step2.hit}. 
We will do with using so-called ``abelian cycles''.  
Consider commuting elements $f,h \in \Torelli_g^1$.
Since $f$ and $h$ commute, we can define a homomorphism $\Psi_{f,h}\colon \Z^2 \rightarrow \Torelli_g^1$
taking the standard basis vectors of $\Z^2$ to $f$ and $h$.  We therefore get an induced map
$(\Psi_{f,h})_{\ast}\colon \HH_2(\Z^2;\Q) \rightarrow \HH_2(\Torelli_g^1;\Q)$.  We have
$\HH_2(\Z^2;\Q) \cong \wedge^2 \Q^2 \cong \Q$ generated by the fundamental class $[\Z^2] \in \HH_2(\Z^2;\Q)$,
so we have an element $(\Psi_{f,h})_{\ast}([\Z^2]) \in \HH_2(\Torelli_g^1;\Q)$.

Define $\fC(f,h) \in \HH_2(\wedge^3 H_{\Z};\Q) = \wedge^2 \wedge^3 H$ to be the image of $(\Psi_{f,h})_{\ast}([\Z^2])$ under
the map $(\tau_1)_{\ast}$.  This equals the image of the fundamental class $[\Z^2]$ under the map induced
by the map $\tau_1 \circ \Psi_{f,h}\colon \Z^2 \rightarrow \wedge^3 H_{\Z}$.  The Pontryagin product makes
the homology groups of both $\Z^2$ and $\wedge^3 H_{\Z}$ into a ring, and the map on homology induced
by $\tau_1 \circ \Psi_{f,h}$ is a ring homomorphism.  Since the fundamental class $[\Z^2] \in \HH_2(\Z^2;\Q)$
is the Pontryagin product of the first homology classes of the two standard generators of $\Z^2$, we deduce the following key
formula:
\[\fC(f,h) = \tau_1(f) \wedge \tau_1(h) \in \wedge^2 \wedge^3 H.\]
In other words, given any two commuting elements $f,h \in \Torelli_g^1$ the element $\fC(f,h) = \tau_1(f) \wedge \tau_1(h) \in \wedge^2 \wedge^3 H$
lies in the image of the map \eqref{eqn:step2.hit}.  Using this, we will prove that each of the desired irreducible
factors lies in the image of \eqref{eqn:step2.hit}.  We will do the calculations roughly in increasing
order of difficulty.  As in the following figure, let $\{a_1,b_1,\ldots,a_g,b_g\}$ be the standard symplectic basis for $H$:\\
\Figure{StandardSpBasis}
Also, let $\omega \in \wedge^2 H$ be the symplectic form.

\begin{claim}{1}
\label{claim:hainboundary.1}
The image of the map $(\tau_1)_{\ast}\colon \HH_2(\Torelli_g^1;\Q) \rightarrow \HH_2(\wedge^3 H_{\Z};\Q)$ contains
a copy of $\bV_{2^2,1^2}(g)$.
\end{claim}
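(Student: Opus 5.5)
The plan is to exhibit an explicit highest weight vector of weight $2E_1+2E_2+E_3+E_4$ in $\wedge^2 \wedge^3 H$ of the form $\fC(f,h) = \tau_1(f) \wedge \tau_1(h)$ for commuting $f,h \in \Torelli_g^1$. By the theorem of the highest weight, the subrepresentation generated by such a vector is a copy of $\bV_{2^2,1^2}(g)$. It lies in the image of \eqref{eqn:step2.hit}, since that image is a subrepresentation containing every $\fC(f,h)$.

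The candidate vector is
\[\theta = (a_1 \wedge a_2 \wedge a_3) \wedge (a_1 \wedge a_2 \wedge a_4) \in \wedge^2 \wedge^3 H.\]
It is visibly a weight vector with weight $2E_1+2E_2+E_3+E_4$, and it is nonzero since the two factors are linearly independent in $\wedge^3 H$. To see that it is a highest weight vector, I would check invariance under the generators $X_{ij}$, $Y_i$, $Z_{ij}$.
\begin{itemize}
\item The $Y_i$ and $Z_{ij}$ only move the $b_k$, so they fix $\theta$.
\item An $X_{ij}$ replaces $a_j$ by $a_j + a_i$ with $i<j$. For $j \leq 2$ this changes nothing, since $a_i$ already appears in each factor. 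For $X_{13}, X_{23}, X_{14}, X_{24}$ the extra term contains a repeated $a_k$ inside one factor, and so it vanishes.
\item For $X_{34}$ the second factor becomes $a_1 \wedge a_2 \wedge a_4 + a_1 \wedge a_2 \wedge a_3$. The extra term is then $(a_1\wedge a_2 \wedge a_3) \wedge (a_1 \wedge a_2 \wedge a_3) = 0$ in $\wedge^2$.
\item The $X_{ij}$ with $j \geq 5$ fix all of $a_1,\ldots,a_4$.
\end{itemize}
Hence $\theta$ is a highest weight vector.

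It remains to realize $\theta$ geometrically. I would choose two disjoint subsurfaces $Z, Z' \subset \Sigma_g^1$, each homeomorphic to $\Sigma_0^4$. Three boundary components of $Z$ should have homology classes $a_1, a_2, a_3$, and three boundary components of $Z'$ should have classes $a_1, a_2, a_4$. This is possible because homologous disjoint curves exist: take two disjoint parallel copies of the curves $\alpha_1, \alpha_2$ representing $a_1, a_2$ in the standard picture. Then band one set of copies together with a curve representing $a_3$ to cut out $Z$, and the other set together with a curve representing $a_4$ to cut out $Z'$. The fourth boundary components are whatever results. We need $g \geq 6$ only for room, and really $g \geq 4$ suffices here.

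Let $f$ and $h$ be simply intersecting pair maps supported on $Z$ and $Z'$. They commute since their supports are disjoint. By Lemma \ref{lemma:simplyintersectingpair}, $\tau_1(f) = \pm a_1 \wedge a_2 \wedge a_3$ and $\tau_1(h) = \pm a_1 \wedge a_2 \wedge a_4$. Therefore $\fC(f,h) = \pm \theta$, which gives the claim.

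The main obstacle is the topological step of drawing $Z$ and $Z'$ disjointly with the prescribed boundary homology classes; everything else is formal. If this drawing proves awkward, the fallback is to replace one factor using a bounding pair map and Lemma \ref{lemma:taubp}, and then project to the $\bV_{2^2,1^2}(g)$-isotypic component, which has multiplicity one by \eqref{eqn:decomposew2w3h}.
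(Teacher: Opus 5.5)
Your proposal is correct and follows the paper's proof. The paper takes the same highest weight vector $(a_1 \wedge a_2 \wedge a_3) \wedge (a_1 \wedge a_2 \wedge a_4)$ and realizes it as $\fC(f,h)$, where $f,h$ are simply intersecting pair maps supported on two copies of $\Sigma_0^4$ with disjoint interiors, via Lemma \ref{lemma:simplyintersectingpair}. Your explicit check of invariance under the $X_{ij}$, $Y_i$, $Z_{ij}$, and your use of parallel copies of $\alpha_1,\alpha_2$ to make the supports fully disjoint, only fill in details that the paper leaves to its figure.
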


The vector 
\[(a_1 \wedge a_2 \wedge a_3) \wedge (a_1 \wedge a_2 \wedge a_4) \in \wedge^2 \wedge^3 H = \HH_2(\wedge^3 H_{\Z};\Q)\]
is a highest weight vector for a copy of $\bV_{2^2,1^2}(g)$, so it is enough to prove that
this vector lies in the image of $(\tau_1)_{\ast}$.
Let $Z \cong \Sigma_0^4$ and $W \cong \Sigma_0^4$ be the following subsurfaces:\\
\Figure{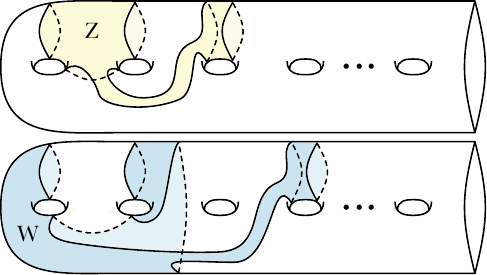}
Let $f \in \Torelli_g^1$ be a simply intersecting pair map supported on $Z$ and let $h \in \Torelli_g^1$ be a simply
intersecting pair map supported on $W$.  By Lemma \ref{lemma:simplyintersectingpair}, we
have $\tau_1(f) = \pm a_1 \wedge a_2 \wedge a_3$ and $\tau_1(h) = \pm a_1 \wedge a_2 \wedge a_4$.
Since the interiors of $Z$ and $W$ are disjoint,
the mapping classes $f$ and $h$ commute.  The image of $(\tau_1)_{\ast}$ thus
contains
\[\fC(f,h) = \pm (a_1 \wedge a_2 \wedge a_3) \wedge (a_1 \wedge a_2 \wedge a_4).\]

\begin{claim}{2}
\label{claim:hainboundary.2}
The image of the map $(\tau_1)_{\ast}\colon \HH_2(\Torelli_g^1;\Q) \rightarrow \HH_2(\wedge^3 H_{\Z};\Q)$ contains
a copy of $\bV_{2,1^2}(g)$.
\end{claim}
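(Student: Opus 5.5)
Following the pattern of Claim \ref{claim:hainboundary.1}, we exhibit a highest weight vector of weight $2E_1+E_2+E_3$ in $\wedge^2 \wedge^3 H$ and realize it as $\fC(f,h)=\tau_1(f)\wedge\tau_1(h)$ for commuting $f,h \in \Torelli_g^1$. The natural candidates have the shape $(a_1 \wedge a_2 \wedge a_3) \wedge (a_1 \wedge \theta)$ with $\theta$ of weight $0$ in $\wedge^2 H$, for instance $\theta = a_j \wedge b_j$ or $\theta = \omega$. We take
\[\kappa = (a_1 \wedge a_2 \wedge a_3) \wedge (a_1 \wedge \omega),\]
which has weight $2E_1+E_2+E_3$. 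It is fixed by $\bU_1$ and $\bU_2$ because each factor is: $a_1\wedge a_2\wedge a_3$ and $a_1\wedge\omega$ are highest weight vectors of $\wedge^3 H$ (Example \ref{example:decomposegpwedge}).

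The multiplicity of $\bV_{2,1^2}(g)$ in $\wedge^2\wedge^3 H$ is one by \eqref{eqn:decomposew2w3h}. Hence the span of highest weight vectors of this weight in $\wedge^2 \wedge^3 H$ is one-dimensional, so $\kappa$ generates the unique copy of $\bV_{2,1^2}(g)$. Since the image of $(\tau_1)_{\ast}$ is a subrepresentation, it suffices to show that $\kappa$ lies in that image.

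To realize $\kappa$, we choose a simply intersecting pair map $f$ supported on a $Z \cong \Sigma_0^4$ whose boundary curves represent $a_1,a_2,a_3$. By Lemma \ref{lemma:simplyintersectingpair}, $\tau_1(f)=\pm a_1\wedge a_2\wedge a_3$. We also choose a bounding pair map $h = T_xT_y^{-1}$ with $[x]=a_1$, cobounding with $y$ a subsurface containing handles $2,\ldots,g$. By Lemma \ref{lemma:taubp}, $\tau_1(h) = a_1 \wedge (a_2\wedge b_2+\cdots+a_g\wedge b_g) = a_1\wedge\omega$, exactly as in Claim 1 of Theorem \ref{theorem:imagetau1}.

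The main obstacle is making $f$ and $h$ commute. The supports of $f$ and $h$ must be disjoint, yet $Z$ needs boundary curves homologous to $a_2,a_3$, which live in the handles that $h$'s subsurface covers. We would instead choose $x,y$ so that the subsurface $S$ between them is disjoint from $Z$. In that case Lemma \ref{lemma:taubp} gives $\tau_1(h)=a_1\wedge\omega_S$ with $\omega_S$ a partial symplectic form, say $\omega_S = a_4\wedge b_4+\cdots+a_g\wedge b_g$. This is achievable because the curves $x,y$ homologous to $a_1$ can be arranged disjoint from $Z$, placing $Z$ on the boundary side. Then $\fC(f,h)=\pm(a_1\wedge a_2\wedge a_3)\wedge (a_1 \wedge \omega_S)$. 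This element still has weight $2E_1+E_2+E_3$, but it need not be a highest weight vector.

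To finish, we would project onto the $\bV_{2,1^2}(g)$ isotypic component, or show directly that this vector has a nonzero component there. The vector equals $\kappa$ minus $\sum_{j\le 3}(a_1\wedge a_2\wedge a_3)\wedge(a_1\wedge a_j\wedge b_j)$. The subtracted terms can be killed by additional abelian cycles. For $j=1$ the term vanishes because $a_1\wedge a_1=0$. For $j=2,3$ we use bounding pair maps with $[x]=a_1$ and $S$ containing only handle $j$, each paired with a simply intersecting pair map on a disjoint $Z$; alternatively we use linear combinations whose $g$-dependence separates the components. If this proves messy, the fallback is to exhibit an explicit equivariant map from $\wedge^2\wedge^3 H$, built from contractions $q_k$ as in Lemmas \ref{lemma:certify1}--\ref{lemma:certify2}, to a representation containing $\bV_{2,1^2}(g)$ with multiplicity one. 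We would then check that $\fC(f,h)$ maps to a nonzero multiple of the highest weight vector $a_1\otimes(a_1\wedge a_2\wedge a_3)$ there. A coefficient that is linear in $g$, hence nonzero for $g\ge 6$, would conclude the claim.
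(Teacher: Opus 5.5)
You correctly identify $\kappa=(a_1\wedge a_2\wedge a_3)\wedge(a_1\wedge\omega)$ as a highest weight vector for the unique copy of $\bV_{2,1^2}(g)$, and that reduction already gives the whole proof. The gap is that you then abandon the direct construction because of an obstacle that does not exist. The bounding pair map $h=T_xT_y^{-1}$ is supported on annular neighborhoods of $x$ and $y$, not on the subsurface $S$ they cobound. So $f$ and $h$ commute as soon as the interior of $Z$ misses $x\cup y$. Since $S$ contains handles $2,\ldots,g$ and has $x$ as a boundary curve, you can take $Z\subset S$ to be a regular neighborhood of three curves joined by two arcs: a push-off of $x$ into $S$ (homologous to $\pm a_1$), and curves representing $a_2$ and $a_3$ in handles $2$ and $3$. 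Then $\fC(f,h)=\pm(a_1\wedge a_2\wedge a_3)\wedge(a_1\wedge\omega)=\pm\kappa$ on the nose. This is exactly the paper's argument: $x\cup y$ is disjoint from the interior of $Z$.

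The repair you propose does not work as described. Suppose $[x]=a_1$ and $S$ contains only handle $j\in\{2,3\}$. Then $U=\langle a_1,a_j,b_j\rangle$ and $U'=U^{\perp}=\langle a_1\rangle\oplus\langle a_i,b_i : i\neq 1,j\rangle$. Any $Z$ whose interior misses $x\cup y$ lies in $S$ or in $S'$, which forces $\tau_1(f)\in\wedge^3U$ or $\tau_1(f)\in\wedge^3U'$. Neither contains $a_1\wedge a_2\wedge a_3$. So the abelian cycles you want for the correction terms $(a_1\wedge a_2\wedge a_3)\wedge(a_1\wedge a_j\wedge b_j)$ do not exist.

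Your last fallback is sound, but you leave it as a hope rather than carrying it out, and in your route it is the entire proof. Let $q$ be the contraction from Claim 5 and consider the equivariant map $\alpha\wedge\beta\mapsto q(\alpha)\otimes\beta-q(\beta)\otimes\alpha$ from $\wedge^2\wedge^3H$ to $H\otimes\wedge^3H$. With $\omega_S=\sum_{i\geq 4}a_i\wedge b_i$, it sends $(a_1\wedge a_2\wedge a_3)\wedge(a_1\wedge\omega_S)$ to $-(g-3)\,a_1\otimes(a_1\wedge a_2\wedge a_3)$. That is a highest weight vector of weight $2E_1+E_2+E_3$. Hence this element of the image has nonzero $\bV_{2,1^2}(g)$-component, which suffices. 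Without this computation, or the direct construction above, the proposal does not yet prove the claim.
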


Recall that $\omega \in \wedge^2 H$ is the symplectic form.  The vector
\[(a_1 \wedge a_2 \wedge a_3) \wedge (a_1 \wedge \omega) \in \wedge^2 \wedge^3 H = \HH_2(\wedge^3 H_{\Z};\Q)\]
is a highest weight vector for a copy of $\bV_{2,1^2}(g)$, so it is enough to prove that
this vector lies in the image of $(\tau_1)_{\ast}$.  Let $Z \cong \Sigma_0^4$ be the following subsurface
and let $T_x T_y^{-1} \in \Torelli_g^1$ be the following bounding pair map:\\
\Figure{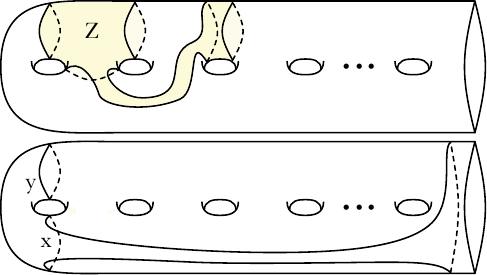}
Let $f \in \Torelli_g^1$ be a simply intersecting pair map supported on $Z$.  By 
Lemma \ref{lemma:simplyintersectingpair} we have $\tau_1(f) = \pm a_1 \wedge a_2 \wedge a_3$, and by
Lemma \ref{lemma:taubpmap} we have
\[\tau_1(T_x T_y^{-1}) = a_1 \wedge (a_2 \wedge b_2 + \cdots + a_g \wedge b_g) = a_1 \wedge (a_1 \wedge b_1 + \cdots + a_g \wedge b_g) = a_1 \wedge \omega.\]
Since $x \cup y$ is disjoint from the interior of $Z$, the mapping classes $f$ and $T_x T_y^{-1}$
commute.  The image of $(\tau_1)_{\ast}$ thus
contains
\[\fC(f,T_x T_y^{-1}) = \pm (a_1 \wedge a_2 \wedge a_3) \wedge (a_1 \wedge \omega).\]

\begin{claim}{3}
\label{claim:hainboundary.3}
The image of the map $(\tau_1)_{\ast}\colon \HH_2(\Torelli_g^1;\Q) \rightarrow \HH_2(\wedge^3 H_{\Z};\Q)$ contains
a copy of $\bV_{1^6}(g)$.
\end{claim}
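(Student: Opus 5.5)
The plan is to copy Claims 1 and 2: exhibit a highest weight vector for $\bV_{1^6}(g)$ inside $\wedge^2 \wedge^3 H$ and realize it as $\fC(f,h) = \tau_1(f) \wedge \tau_1(h)$ for a pair of commuting mapping classes $f,h \in \Torelli_g^1$. The natural candidate is
\[(a_1 \wedge a_2 \wedge a_3) \wedge (a_4 \wedge a_5 \wedge a_6) \in \wedge^2 \wedge^3 H.\]
First I check that it is a highest weight vector. Its weight is $E_1 + \cdots + E_6$, which makes sense since $g \geq 6$. The vector involves only the $a_i$, and the generators $Y_i$ and $Z_{ij}$ of $\bU_2$ only move the $b$'s, so they fix it. The generators $X_{ij}$ of $\bU_1$ send $a_j \mapsto a_j + a_i$ with $i < j$. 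Adding $a_i$ to $a_j$ inside a wedge that already contains $a_i$ changes nothing. When $a_i$ lies in the other factor, one should instead look at the image of the vector under the wedge-multiplication map $\phi\colon \wedge^2 \wedge^3 H \rightarrow \wedge^6 H$ from Lemma \ref{lemma:certify1}. That image is $a_1 \wedge \cdots \wedge a_6$, which is a highest weight vector for $\bV_{1^6}(g) \subset \wedge^6 H$. So rather than fuss over $\bU$-invariance, the cleaner route is to argue that the subrepresentation generated by our vector maps onto $\bV_{1^6}(g)$ under the equivariant map $\phi$ followed by projection. Since $\bV_{1^6}(g)$ occurs only once in \eqref{eqn:decomposew2w3h}, the image of $(\tau_1)_{\ast}$ must then contain that copy.

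Next I realize the vector. Choose two disjoint subsurfaces $Z, W \cong \Sigma_0^4$ of $\Sigma_g^1$. The surface $Z$ should have three boundary components homologous to $a_1, a_2, a_3$, for instance curves parallel to the $\alpha$-curves of the first three handles, as in the figure of Claim 1 but with disjoint index sets. The surface $W$ should similarly have three boundary components homologous to $a_4, a_5, a_6$. Let $f$ be a simply intersecting pair map supported on $Z$ and $h$ one supported on $W$. Lemma \ref{lemma:simplyintersectingpair} gives $\tau_1(f) = \pm a_1 \wedge a_2 \wedge a_3$ and $\tau_1(h) = \pm a_4 \wedge a_5 \wedge a_6$. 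Since the supports are disjoint, $f$ and $h$ commute, so
\[\fC(f,h) = \pm (a_1 \wedge a_2 \wedge a_3) \wedge (a_4 \wedge a_5 \wedge a_6)\]
lies in the image of $(\tau_1)_{\ast}$.

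The main obstacle is the topological step of fitting two disjoint copies of $\Sigma_0^4$ with the required boundary homology classes into $\Sigma_g^1$. Here $g \geq 6$ gives enough handles. Take $Z$ to be a regular neighborhood of the arcs joining $\alpha_1, \alpha_2, \alpha_3$ inside the genus-$3$ part of the surface spanned by handles $1$ through $3$. Take $W$ to be the analogous neighborhood for handles $4$ through $6$. These are disjoint because the handle regions are disjoint.
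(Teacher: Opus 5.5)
Your proposal is correct and is essentially the paper's proof: the paper also realizes $\pm(a_1\wedge a_2\wedge a_3)\wedge(a_4\wedge a_5\wedge a_6)$ as $\fC(f,h)$ for simply intersecting pair maps supported on disjoint copies of $\Sigma_0^4$, and it certifies the $\bV_{1^6}(g)$ factor through the wedge map $\phi\colon \wedge^2\wedge^3 H \rightarrow \wedge^6 H$, which sends this element to $\pm a_1\wedge\cdots\wedge a_6$. One small remark: switching to $\phi$ is necessary, not merely cleaner, because the vector is not $\bU_1$-invariant and so is not a highest weight vector (e.g.\ $X_{14}$ adds the nonzero term $(a_1\wedge a_2\wedge a_3)\wedge(a_1\wedge a_5\wedge a_6)$).
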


It is awkward to find a highest weight vector in $\wedge^2 \wedge^3 H$ for $\bV_{1^6}(g)$, so we
do something slightly different.  Let $\phi\colon \wedge^2 \wedge^3 H \rightarrow \wedge^6 H$ be the following
map:
\[\phi((u_1 \wedge v_1 \wedge w_1) \wedge (u_2 \wedge v_2 \wedge w_2)) = u_1 \wedge v_1 \wedge w_1 \wedge u_2 \wedge v_2 \wedge w_2.\]
The vector $a_1 \wedge \cdots \wedge a_6 \in \wedge^6 H$ is a highest weight vector for a copy
of $\bV_{1^6}(g)$ in $\wedge^6 H$.  It is therefore enough to construct an element $\theta$ in the
image of $(\tau_1)_{\ast}$ such that $\phi(\theta) = a_1 \wedge \cdots \wedge a_6$.
Let $Z \cong \Sigma_0^4$ and $W \cong \Sigma_0^4$ be the following subsurfaces:\\
\Figure{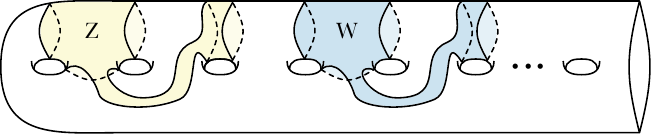}
Let $f \in \Torelli_g^1$ be a simply intersecting pair map supported on $Z$ and let $h \in \Torelli_g^1$ be a simply
intersecting pair map supported on $W$.  By Lemma \ref{lemma:simplyintersectingpair}, we
have $\tau_1(f) = \pm a_1 \wedge a_2 \wedge a_3$ and $\tau_1(h) = \pm a_4 \wedge a_5 \wedge a_6$.
Since the interiors of $Z$ and $W$ are disjoint,
the mapping classes $f$ and $h$ commute.  The image of $(\tau_1)_{\ast}$ thus
contains
\[\theta = \fC(f,h) = \pm (a_1 \wedge a_2 \wedge a_3) \wedge (a_4 \wedge a_5 \wedge a_6),\]
and $\phi(\theta) = \pm a_1 \wedge \cdots \wedge a_6 \in \wedge^6 H$.

\begin{claim}{4}
\label{claim:hainboundary.4}
The image of the map $(\tau_1)_{\ast}\colon \HH_2(\Torelli_g^1;\Q) \rightarrow \HH_2(\wedge^3 H_{\Z};\Q)$ contains
a copy of $\bV_{1^4}(g)^{\oplus 2}$.
\end{claim}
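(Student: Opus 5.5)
The plan is to exhibit two elements of the image of $(\tau_1)_{\ast}$, each of the form $\fC(f,h)=\tau_1(f)\wedge\tau_1(h)$ for commuting $f,h$, both of weight $E_1+E_2+E_3+E_4$. I will then show that, after projecting to the $\bV_{1^4}(g)$-isotypic component of $\wedge^2\wedge^3 H$, they span a $2$-dimensional space of highest weight vectors. Since $\bV_{1^4}(g)$ occurs with multiplicity exactly $2$ in \eqref{eqn:decomposew2w3h}, this gives both copies.

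\emph{First element.} Let $f$ be a simply intersecting pair map supported on a $Z\cong\Sigma_0^4$ whose boundary classes are $a_1,a_2,a_3$, exactly as in Claim \ref{claim:hainboundary.1}, so $\tau_1(f)=\pm a_1\wedge a_2\wedge a_3$. Let $T_xT_y^{-1}$ be a genus-one bounding pair map with $[x]=a_4$ that cobounds with $y$ a subsurface containing the fifth handle. I will draw it so that its support is disjoint from $Z$. By Lemma \ref{lemma:taubp}, $\tau_1(T_xT_y^{-1})=a_4\wedge a_5\wedge b_5$. This gives
\[\theta_A=\pm(a_1\wedge a_2\wedge a_3)\wedge(a_4\wedge a_5\wedge b_5).\]

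\emph{Second element.} Keep the same $f$. Take a bounding pair map with $[x]=a_4$ that cuts off all of the handles $5,\ldots,g$, disjoint from $Z$, as in Claim \ref{claim:hainboundary.2}. By Lemma \ref{lemma:taubp} its image is $a_4\wedge\omega'$, where $\omega'=a_5\wedge b_5+\cdots+a_g\wedge b_g$. This gives
\[\theta_B=\pm(a_1\wedge a_2\wedge a_3)\wedge(a_4\wedge\omega').\]

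To separate the two copies, I will use two equivariant maps $\wedge^2\wedge^3H\to\wedge^4H$ and look at the coefficient of the highest weight vector $a_1\wedge a_2\wedge a_3\wedge a_4$.
\begin{itemize}
\item The first map is $\psi_1=q_4\circ\phi$, with $\phi$ as in Lemma \ref{lemma:certify1}. This is just the composition used there, without the section $\sigma$.
\item The second map is $\psi_2$, which contracts one vector of the first factor against one vector of the second factor using $\omega$ and wedges together the remaining four vectors.
\end{itemize}
Any $\Sp_{2g}$-map to $\wedge^4 H$ kills every isotypic component except $\bV_{1^4}(g)\oplus\bV_{1^2}(g)\oplus\bV_0(g)$. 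A weight vector of weight $E_1+\cdots+E_4$ maps to a multiple of $a_1\wedge\cdots\wedge a_4$ plus lower terms, and $\bV_{1^2}(g)$ and $\bV_0(g)$ do not contain this weight at all. Hence the map $\theta\mapsto(\psi_1(\theta),\psi_2(\theta))$, restricted to $(\wedge^2\wedge^3 H)_{E_1+\cdots+E_4}$, factors through the projection to the $\bV_{1^4}(g)$-isotypic part.

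It therefore suffices to show that the $2\times 2$ matrix of coefficients of $a_1\wedge\cdots\wedge a_4$ in $\psi_i(\theta_A)$ and $\psi_i(\theta_B)$ is nonsingular. This forces the images of $\theta_A,\theta_B$ in the isotypic component to be independent, and hence to generate both copies of $\bV_{1^4}(g)$.

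The computations go as follows.
\begin{itemize}
\item $\psi_1(\theta_A)=\pm a_1\wedge\cdots\wedge a_4$.
\item $\psi_1(\theta_B)=\pm(g-4)\,a_1\wedge\cdots\wedge a_4$.
\item $\psi_2$ vanishes on both elements, because every contraction pairs some $a_i$ ($i\le 3$) with a vector from $\{a_4,a_j,b_j:j\ge5\}$, and all such pairings are zero.
\end{itemize}
So this naive pair of test maps fails, and the main obstacle is exactly this independence step.

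My first fix is to replace the test maps with the full picture. Consider the quotient $\wedge^2\wedge^3H\to\wedge^2((\wedge^3H)/H)$. In the target, $\bV_{1^4}(g)$ occurs once, so the kernel, which is spanned by $\iota(H)\wedge\wedge^3H$, contains the other copy. Lemma \ref{lemma:certify1} says the image of $\theta_A$ there generates the quotient copy.

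For the kernel copy I will use the difference
\[\theta_B-(g-4)\theta_A\equiv \pm(a_1\wedge a_2\wedge a_3)\wedge\bigl(a_4\wedge\omega-a_4\wedge(a_1\wedge b_1+a_2\wedge b_2+a_3\wedge b_3)\bigr),\]
which is obtained by averaging $\theta_A$ over the handles $5,\ldots,g$ using $\Sp_{2g}(\Z)$-symmetry. I will show that the image of this difference in the quotient is a nonzero multiple of that of $\theta_A$ only up to a correction. Then I will subtract that multiple so as to land in $\iota(H)\wedge\wedge^3H$. Finally I will check, via the map $(a_4\wedge\omega)\wedge\kappa\mapsto a_4\wedge q_1$-type contractions into $\wedge^4 H$, that the result has a nonzero component on $a_1\wedge a_2\wedge a_3\wedge a_4$.

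If this bookkeeping becomes unwieldy, the fallback is as follows. Add a third commuting pair whose second factor is $a_4\wedge a_j\wedge b_j$ with $j\le 3$, realized by a bounding pair map around a handle meeting $Z$'s complement. Then directly solve the small linear system in the weight space.
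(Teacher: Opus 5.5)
There is a genuine gap, and it is not a matter of choosing better test maps: your two elements can only ever produce one copy of $\bV_{1^4}(g)$.

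\textbf{Why $\theta_A,\theta_B$ give only one copy.} Let $s_j\in\Sp_{2g}(\Z)$ be the symplectic permutation swapping the $5$th and $j$th handles, with $s_5=1$. Then $\theta_B=\sum_{j=5}^{g}s_j\theta_A$, so $\theta_B$ lies in the subrepresentation generated by $\theta_A$. But a single vector of weight $\lambda=E_1+E_2+E_3+E_4$ generates at most one copy of $\bV_{1^4}(g)$. Indeed, its projection to the isotypic component $\bV_{1^4}(g)\otimes M$ has the form $v_\lambda\otimes m$ for a single $m\in M$, because $\lambda$ has multiplicity one in $\bV_{1^4}(g)$. So $\theta_A$ and $\theta_B$ together see only the copy that Lemma~\ref{lemma:certify1} detects in $\wedge^2((\wedge^3H)/H)$. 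This is exactly why your coefficient matrix was singular.

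\textbf{The fixes do not help.} For the same reason your ``first fix'' cannot work: every element of the subrepresentation generated by $\theta_A$ that lies in $\cK$ has zero $\bV_{1^4}(g)$-component. (The displayed identity there is also off, since its right-hand side is just $\theta_B$.) The fallback is not viable as stated either. Take a bounding pair map $h$ with $\tau_1(h)=a_4\wedge a_j\wedge b_j$, $j\le 3$. Its bounded subspace is $\langle a_j,b_j,a_4\rangle$, which does not contain the other two of $a_1,a_2,a_3$. The complementary bounded subspace is $\omega$-orthogonal to $b_j$, so it does not contain $a_j$. Hence $Z$ can lie on neither side of $x\cup y$, so $h$ cannot have support disjoint from $Z$. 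You give no other reason for such $f$ and $h$ to commute.

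\textbf{The missing idea.} Place $Z$ \emph{inside} the subsurface cut off by the bounding pair, rather than outside it. The paper takes $x\cup y$ with $[x]=a_4$ cutting off handles $1,2,3,5,\ldots,g$, with $Z$ on that side. Lemma~\ref{lemma:taubp} then gives $\tau_1(T_xT_y^{-1})=a_4\wedge\omega\in\iota(H)$. So $\theta=\pm(a_1\wedge a_2\wedge a_3)\wedge(a_4\wedge\omega)$ lies in the image and in $\cK$. Moreover $\phi(\theta)=\pm a_1\wedge a_2\wedge a_3\wedge a_4\wedge\omega$ is a highest weight vector of the copy of $\bV_{1^4}(g)$ in $\wedge^6H$. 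Thus the image contains a copy of $\bV_{1^4}(g)$ inside $\cK$. Your $\theta_A$ together with Lemma~\ref{lemma:certify1} (this part of your proposal is correct) gives a copy mapping nontrivially to $\wedge^2((\wedge^3H)/H)$. Semisimplicity then yields $\bV_{1^4}(g)^{\oplus 2}$. In your notation, this supplies the extra term $(a_1\wedge a_2\wedge a_3)\wedge\bigl(a_4\wedge(a_1\wedge b_1+a_2\wedge b_2+a_3\wedge b_3)\bigr)$ in one stroke, which your handle-by-handle approach could not reach.
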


We will have to be careful to ensure that the two copies of $\bV_{1^4}(g)$ we find are genuinely different.  Recall
that $H$ is embedded in $\wedge^3 H$ via the map $H \hookrightarrow \wedge^3 H$ taking $h \in H$ to $h \wedge \omega$.
Let $\cK$ be the kernel of the map $p\colon \wedge^2 \wedge^3 H \rightarrow \wedge^2 ((\wedge^3 H)/H)$, so we have a short exact sequence
of representations
\[0 \longrightarrow \cK \longrightarrow \wedge^2 \wedge^3 H \stackrel{p}{\longrightarrow} \wedge^2 \left(\left(\wedge^3 H\right)/H\right) \longrightarrow 0.\]
We will first find a copy of $\bV_{1^4}(g)$ in the image of $(\tau_1)_{\ast}$ that lies in $\cK$.
Let $\phi\colon \wedge^2 \wedge^3 H \rightarrow \wedge^6 H$ be the map from Claim \ref{claim:hainboundary.3}.  The vector
$a_1 \wedge \cdots \wedge a_4 \wedge \omega \in \wedge^6 H$ is a highest weight vector for a copy
of $\bV_{1^4}(g)$ in $\wedge^6 H$.  It is therefore enough to construct an element $\theta$ in the
image of $(\tau_1)_{\ast}$ such that $\theta \in \cK$ and $\phi(\theta) = a_1 \wedge \cdots \wedge a_4 \wedge \omega$.
Let $Z \cong \Sigma_0^4$ be the following subsurface
and let $T_x T_y^{-1} \in \Torelli_g^1$ be the following bounding pair map:\\
\Figure{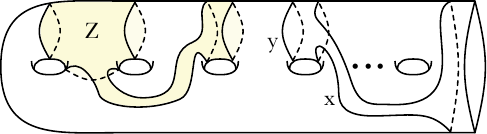} 
Let $f \in \Torelli_g^1$ be a simply intersecting pair map supported on $Z$.  By
Lemma \ref{lemma:simplyintersectingpair} we have $\tau_1(f) = \pm a_1 \wedge a_2 \wedge a_3$, and by
Lemma \ref{lemma:taubpmap} we have
\begin{align*}
\tau_1(T_x T_y^{-1}) &= a_4 \wedge (a_1 \wedge b_1 + \cdots + \widehat{a_4 \wedge b_4} + \cdots + a_g \wedge b_g) \\
                     &= a_4 \wedge (a_1 \wedge b_1 + \cdots + a_g \wedge b_g) = a_4 \wedge \omega.
\end{align*}
Since $x \cup y$ is disjoint from the interior of $Z$, the mapping classes $f$ and $T_x T_y^{-1}$
commute.  The image of $(\tau_1)_{\ast}$ thus
contains
\[\theta = \fC(f,T_x T_y^{-1}) = \pm (a_1 \wedge a_2 \wedge a_3) \wedge (a_4 \wedge \omega).\]
We have $\theta \in \cK$ since $a_4 \wedge \omega \in H \subset \wedge^3 H$, and
$\phi(\theta) = a_1 \wedge \cdots \wedge a_4 \wedge \omega$, as desired.

Since the copy of $\bV_{1^4}(g)$ we found lies in the kernel $\cK$ of the projection
$p\colon \wedge^2 \wedge^3 H \rightarrow \wedge^2 ((\wedge^3 H)/H)$, to find a second
copy of $\bV_{1^4}(g)$ it is enough to find a copy of $\bV_{1^4}(g)$ that maps nontrivially
to $\wedge^2 ((\wedge^3 H)/H)$.  For $\kappa \in \wedge^2 \wedge^3 H$, let
$\okappa = p(\kappa) \in \wedge^2 ((\wedge^3 H)/H)$.  We must find some $\theta' \in \wedge^2 \wedge^3 H$
in the image of $(\tau_1)_{\ast}$ such that the subrepresentation of $\wedge^2 ((\wedge^3 H)/H)$ generated
by $\otheta'$ contains a copy of $\bV_{1^4}(g)$.

Let $Z \cong \Sigma_0^4$ be the following subsurface
and let $T_{x'} T_{y}^{-1} \in \Torelli_g^1$ be the following bounding pair map:\\
\Figure{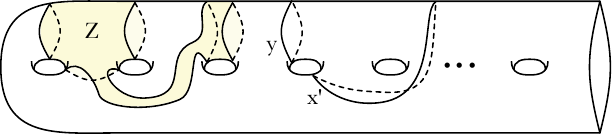}
Note that $Z$ and $y$ are the same as in the previous figure.  Let $f \in \Torelli_g^1$ be a simply intersecting pair map supported on $Z$.  By
Lemma \ref{lemma:simplyintersectingpair} we have $\tau_1(f) = \pm a_1 \wedge a_2 \wedge a_3$, and by
Lemma \ref{lemma:taubpmap} we have $\tau_1(T_{x'} T_{y}^{-1}) = a_4 \wedge a_5 \wedge b_5$.
Since $x' \cup y$ is disjoint from the interior of $Z$, the mapping classes $f$ and $T_{x'} T_y^{-1}$
commute.  The image of $(\tau_1)_{\ast}$ thus
contains
\[\theta' = \fC(f,T_{x'} T_y^{-1}) = \pm (a_1 \wedge a_2 \wedge a_3) \wedge (a_4 \wedge a_5 \wedge b_5).\]
Lemma \ref{lemma:certify1} says that
$\otheta' \in \wedge^2 (\wedge^3 H)/H$ generates a subrepresentation containing a copy
of $\bV_{1^4}(g)$, as desired.

\begin{claim}{5}
\label{claim:hainboundary.5}
The image of the map $(\tau_1)_{\ast}\colon \HH_2(\Torelli_g^1;\Q) \rightarrow \HH_2(\wedge^3 H_{\Z};\Q)$ contains
a copy of $\bV_{1^2}(g)^{\oplus 2}$.
\end{claim}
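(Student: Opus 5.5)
We follow the strategy of Claim \ref{claim:hainboundary.4}: find one copy of $\bV_{1^2}(g)$ inside the kernel $\cK$ of $p\colon \wedge^2 \wedge^3 H \rightarrow \wedge^2((\wedge^3 H)/H)$, and a second copy whose image under $p$ is nonzero. Two such copies are automatically distinct.

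\textbf{Copy in $\cK$.} Take two disjoint bounding pair maps $T_{x}T_{y}^{-1}$ and $T_{x'}T_{y'}^{-1}$, arranged as in the proof of Claim \ref{claim:hainboundary.4} so that Lemma \ref{lemma:taubp} gives $\tau_1 = a_1 \wedge \omega$ and $\tau_1 = a_2 \wedge \omega$. For instance, $x$ cuts off the genus of handle $1$ and $x'$ cuts off that of handle $2$, with all curves chosen disjoint. These maps commute, so the image of $(\tau_1)_{\ast}$ contains
\[\theta = \fC = (a_1 \wedge \omega) \wedge (a_2 \wedge \omega) \in \wedge^2 H \subset \cK.\]
The subspace $\wedge^2 H \subset \cK$ is embedded via $\iota$, and this element is the image of the highest weight vector $a_1 \wedge a_2$ of the copy of $\bV_{1^2}(g)$ in $\wedge^2 H$. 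It is therefore a highest weight vector of a copy of $\bV_{1^2}(g)$ lying in $\cK$.

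The main geometric obstacle is making the two bounding pairs disjoint while keeping both homology classes equal to $a_i \wedge \omega$. If this cannot be arranged directly, we fall back on the general formula $[x] \wedge \omega_U$ from Lemma \ref{lemma:taubp}. We would correct the resulting difference using combinations of other bounding pair maps, and check nonvanishing by applying $\phi$ followed by $q_4$ and $q_2$ to land on a nonzero multiple of $a_1 \wedge a_2$.

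\textbf{Copy not in $\cK$.} Use two disjoint bounding pair maps with $\tau_1$ values $a_1 \wedge a_4 \wedge b_4$ and $a_2 \wedge a_3 \wedge b_3$. These are obtained exactly as $\tau_1(T_{x'}T_y^{-1}) = a_4 \wedge a_5 \wedge b_5$ was in Claim \ref{claim:hainboundary.4}: each pair cobounds a genus-one subsurface containing handle $4$ (respectively handle $3$), and $[x] = a_1$ (respectively $a_2$). We can arrange these supports to be disjoint, for example by nesting them suitably around the distinct handles, so the two maps commute. Then
\[\theta' = \pm (a_1 \wedge a_4 \wedge b_4) \wedge (a_2 \wedge a_3 \wedge b_3)\]
lies in the image of $(\tau_1)_{\ast}$. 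Lemma \ref{lemma:certify2} says precisely that $\otheta'$ generates a subrepresentation of $\wedge^2((\wedge^3 H)/H)$ containing a copy of $\bV_{1^2}(g)$.

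Since the image of $(\tau_1)_{\ast}$ is a subrepresentation, it contains a copy of $\bV_{1^2}(g)$ mapping isomorphically under $p$, together with a copy inside $\cK$. Hence it contains $\bV_{1^2}(g)^{\oplus 2}$.

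The step I expect to need the most care is the disjointness (commutativity) of the two bounding pair maps in each construction, in particular the configuration of the first pair, where both maps must have $\omega$-type images.
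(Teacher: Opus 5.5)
Your second copy (the abelian cycle of two disjoint genus-one bounding pair maps with $\tau_1$-values $\pm a_1\wedge a_4\wedge b_4$ and $a_2\wedge a_3\wedge b_3$, certified by Lemma~\ref{lemma:certify2}) is exactly the paper's argument. Your multiplicity count (one copy in $\cK$ plus one copy surviving under $p$) is also sound.

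\textbf{The gap is the first copy.} It cannot be fixed by drawing the curves more carefully. By Lemma~\ref{lemma:taubp}, $\tau_1(T_xT_y^{-1})=[x]\wedge\omega_V$ with $\dim V=2h$. Comparing supports with $a_1\wedge\omega=\sum_{i\geq 2}a_1\wedge a_i\wedge b_i$ forces $h=g-1$, i.e.\ $x$, $y$ and $\partial\Sigma_g^1$ cobound a pair of pants.

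Two such pairs, with $[x]=a_1$ and $[x']=a_2$, cannot be disjoint. The pants side of $x\cup y$ contains no other essential curves, so $x',y'$ lie on its genus-$(g-1)$ side. By the same reasoning, the pants cobounded by $x',y',\partial\Sigma_g^1$ is disjoint from $x\cup y$. It therefore sits inside the pants cobounded by $x,y,\partial\Sigma_g^1$, which forces $x'$ to be parallel to $x$, $y$ or $\partial\Sigma_g^1$. That is impossible on homology.

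Worse, your target vector is not in the image at all, so the vague fallback cannot rescue it. By Step~\ref{step:hain.1}, the image of $(\tau_1)_{\ast}$ is the kernel of $\wedge^2\wedge^3H\rightarrow \HH_1(\Torelli_g^1[2];\Q)_{\Torelli_g^1}$. This map sends $\tau_1(f)\wedge\tau_1(h)$ to the class of $[f,h]$ up to sign, and composing it with $\tau_2$ gives, up to sign, the bracket of derivations. The class $a_k\wedge\omega$ acts as the derivation $D_k(x)=\omega(a_k,x)\,\omega-[a_k,x]$. A direct computation gives $[D_1,D_2](b_3)=[[a_1,a_2],b_3]\neq 0$ in $\FLie_3(H)$. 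So $(a_1\wedge\omega)\wedge(a_2\wedge\omega)$, the highest weight vector of the copy of $\bV_{1^2}(g)$ in $(\wedge^2\iota)(\wedge^2H)\subset\cK$, maps nontrivially to the $\bV_{1^2}(g)$ in the cokernel detected by $\tau_2$. No combination of abelian cycles produces it.

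\textbf{How the paper gets a copy in $\cK$.} The copy of $\bV_{1^2}(g)$ in $\cK$ that the image does contain is a different one. The paper keeps the bounding pair map with $\tau_1(T_xT_y^{-1})=a_1\wedge\omega$. It pairs it with a genus-one bounding pair map $T_zT_w^{-1}$ supported on the genus-$(g-1)$ side, with $\tau_1(T_zT_w^{-1})=a_2\wedge a_3\wedge b_3$; the two are disjoint, so they commute.

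Then $\theta=(a_1\wedge\omega)\wedge(a_2\wedge a_3\wedge b_3)$ lies in $\cK$, because its first factor lies in $\iota(H)$. Let $q(h_1\wedge h_2\wedge h_3)=\omega(h_1,h_2)h_3-\omega(h_1,h_3)h_2+\omega(h_2,h_3)h_1$. Applying $\wedge^2q$ gives $q(a_1\wedge\omega)\wedge q(a_2\wedge a_3\wedge b_3)=(g-1)\,a_1\wedge a_2\neq 0$. Hence the subrepresentation generated by $\theta$ contains $\bV_{1^2}(g)$.

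This is consistent with the obstruction above. The derivation of $a_2\wedge a_3\wedge b_3$ kills $a_1$ and only involves $a_2,a_3,b_3$, so its bracket with $D_1$ vanishes.
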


Just like in Claim \ref{claim:hainboundary.4}, 
let $\cK$ be the kernel of the map $p\colon \wedge^2 \wedge^3 H \rightarrow \wedge^2 ((\wedge^3 H)/H)$, so we have a short exact sequence
of representations
\[0 \longrightarrow \cK \longrightarrow \wedge^2 \wedge^3 H \stackrel{p}{\longrightarrow} \wedge^2 \left(\left(\wedge^3 H\right)/H\right) \longrightarrow 0.\]
We will first find a copy of $\bV_{1^2}(g)$ in the image of $(\tau_1)_{\ast}$ that lies in $\cK$.
Recall that $\omega(-,-)$ is the symplectic form on $H$.
Let $q\colon \wedge^3 H \rightarrow H$ be the map defined by the formula
\[q(h_1 \wedge h_2 \wedge h_3) = \omega(h_1,h_2) h_3 - \omega(h_1,h_3) h_2 + \omega(h_2,h_3) h_1 \quad \text{for $h_1,h_2,h_3 \in H$}\]
and let $\psi\colon \wedge^2 \wedge^3 H \rightarrow \wedge^2 H$ be the map $\psi = \wedge^2 q$.  The vector
$a_1 \wedge a_2 \in \wedge^2 H$ is a highest weight vector for a copy of $\bV_{1^2}(g)$ in $\wedge^2 H$.  It is
therefore enough to construct an element $\theta$ in the
image of $(\tau_1)_{\ast}$ such that $\theta \in \cK$ and $\phi(\theta)$ is a nonzero multiple of $a_1 \wedge a_2$.
Let $T_x T_y^{-1} \in \Torelli_g^1$ and
$T_z T_w^{-1} \in \Torelli_g^1$ be the following bounding pair maps:\\
\Figure{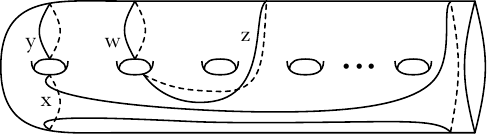}
By Lemma \ref{lemma:taubpmap}, we have
\begin{align*}
\tau_1(T_x T_y^{-1}) &= a_1 \wedge (a_2 \wedge b_2 + \cdots + a_g \wedge b_g) \\
                     &= a_1 \wedge (a_1 \wedge b_1 + \cdots + a_g \wedge b_g) = a_1 \wedge \omega,\\
\tau_1(T_z T_w^{-1}) &= a_2 \wedge a_3 \wedge b_3.
\end{align*}
Since $x \cup y$ and $z \cup w$ are disjoint, the mapping classes $T_x T_y^{-1}$ and $T_z T_w^{-1}$
commute.  The image of $(\tau_1)_{\ast}$ thus contains
\[\theta = \fC(T_x T_y^{-1},T_z T_w^{-1}) = (a_1 \wedge \omega) \wedge (a_2 \wedge a_3 \wedge b_3).\]
We have $\theta \in \cK$ since $a_1 \wedge \omega \in H \subset \wedge^3 H$, and
\begin{align*}
\psi(\theta) &= q(a_1 \wedge \omega) \wedge q(a_2 \wedge a_3 \wedge b_3)) \\
             &= q(a_1 \wedge a_2 \wedge b_2 + a_1 \wedge a_3 \wedge b_3 + \cdots + a_1 \wedge a_g \wedge b_g) \wedge q(a_2 \wedge a_3 \wedge b_3) \\
             &= (g-1) a_1 \wedge a_2,
\end{align*}
as desired.

Since the copy of $\bV_{1^2}(g)$ we found lies in the kernel $\cK$ of the projection
$p\colon \wedge^2 \wedge^3 H \rightarrow \wedge^2 ((\wedge^3 H)/H)$, to find a second
copy of $\bV_{1^4}(g)$ it is enough to find a copy of $\bV_{1^2}(g)$ that maps nontrivially
to $\wedge^2 ((\wedge^3 H)/H)$.  For $\kappa \in \wedge^2 \wedge^3 H$, let
$\okappa = p(\kappa) \in \wedge^2 ((\wedge^3 H)/H)$.  We must find some $\theta' \in \wedge^2 \wedge^3 H$
in the image of $(\tau_1)_{\ast}$ such that the subrepresentation of $\wedge^2 ((\wedge^3 H)/H)$ generated
by $\otheta'$ contains a copy of $\bV_{1^2}(g)$.

Let $T_{x'} T_y^{-1} \in \Torelli_g^1$ and
and $T_z T_w^{-1} \in \Torelli_g^1$ be the following bounding pair maps:\\
\Figure{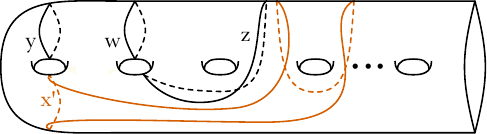}
Note that $y$ and $z$ and $w$ 
are the same as in the previous figure.  
By Lemma \ref{lemma:taubpmap} we have\footnote{The minus sign is there in the first formula for orientation reasons.  It would
be cleaner to use $T_y T_{x'}^{-1}$, but we decided that this would be harder to follow.}
\begin{align*}
\tau_1(T_{x'} T_y^{-1}) &= -a_1 \wedge a_4 \wedge b_4,\\
\tau_1(T_z T_w^{-1})    &= a_2 \wedge a_3 \wedge b_3.
\end{align*}
Since $x' \cup y$ and $z \cup w$ are disjoint, the mapping classes $T_{x'} T_y^{-1}$ and $T_z T_w^{-1}$ commute
The image of $(\tau_1)_{\ast}$ thus
contains
\[\theta' = \fC(T_{x'} T_y^{-1},T_z T_w^{-1}) = - (a_1 \wedge a_4 \wedge b_4) \wedge (a_2 \wedge a_3 \wedge b_3).\]
Lemma \ref{lemma:certify2} says that $\otheta' \in \wedge^2 (\wedge^3 H)/H$ generates a subrepresentation containing a copy
of $\bV_{1^2}(g)$, as desired.
\end{proof}

Before moving on to deal with punctured and closed surfaces, we record a consequence
of the above proof.  Recall that for a group
$G$ acting on a vector space $M$, we denote by $M_G$ the $G$-coinvariants, i.e., the quotient 
$M/\SpanSet{$m-gm$}{$g \in G$, $m \in M$}$.  The conjugation action of $\Mod_g^1$ on its normal
subgroup $\Torelli_g^1[2]$ induces an action of $\Mod_g^1$ on $\HH_1(\Torelli_g^1[2];\Q)$.
Passing to the $\Torelli_g^1$-coinvariants, we get an action of
$\Mod_g^1/\Torelli_g^1 \cong \Sp_{2g}(\Z)$
on $\HH_1(\Torelli_g^1[2];\Q)_{\Torelli_g^1}$.

\begin{corollary}
\label{corollary:kg1coinv}
For $g \geq 6$, we have 
$\HH_1(\Torelli_g^1[2];\Q)_{\Torelli_g^1} \cong \bV_0(g)^{\oplus 2} \oplus \bV_{1^2}(g) \oplus \bV_{2^2}(g)$.
\end{corollary}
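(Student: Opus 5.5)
The argument reads the answer off the exact sequence \eqref{eqn:hainstep1seq} from the proof of Theorem \ref{maintheorem:hain} for $\Torelli_g^1$. That sequence identifies $\HH_1(\Torelli_g^1[2];\Q)_{\Torelli_g^1}$, $\Sp_{2g}(\Z)$-equivariantly, with the cokernel $C$ of
\[(\tau_1)_{\ast}\colon \HH_2(\Torelli_g^1;\Q) \rightarrow \HH_2(\wedge^3 H_{\Z};\Q) \cong \wedge^2 \wedge^3 H.\]
The target $\wedge^2 \wedge^3 H$ is a finite-dimensional algebraic representation. Hence the image of $(\tau_1)_{\ast}$ is a subrepresentation, and since everything is semisimple (the theorem of the highest weight, using Zariski density of $\Sp_{2g}(\Z)$), $C$ is isomorphic to a complement of that image. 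Step \ref{step:hain.2} of the proof above shows that the image is exactly
\[\bV_{1^2}(g)^{\oplus 2} \oplus \bV_{2,1^2}(g) \oplus \bV_{1^4}(g)^{\oplus 2} \oplus \bV_{2^2,1^2}(g) \oplus \bV_{1^6}(g).\]

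To get the cokernel, I would subtract these multiplicities from the decomposition \eqref{eqn:decomposew2w3h} of $\wedge^2 \wedge^3 H$, which is valid because $g \geq 6$ puts us in the stable range. What remains is $\bV_0(g)^{\oplus 2} \oplus \bV_{1^2}(g) \oplus \bV_{2^2}(g)$, as claimed.

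The only point needing care is that this uses the full equality in Step \ref{step:hain.2}, not merely the containment from Step \ref{step:hain.1}. Step \ref{step:hain.2} gives equality by combining Step \ref{step:hain.1}'s upper bound on the image with the abelian-cycle lower bound. Equivalently, Step \ref{step:hain.1} shows that $C$ contains $\bV_0(g)^{\oplus 2} \oplus \bV_{1^2}(g) \oplus \bV_{2^2}(g)$: the two trivial factors come from Lemma \ref{lemma:h2torellitrivial} and the rest from $\tau_2$ via Theorem \ref{theorem:imagetau2}. The abelian cycles then show that $C$ contains nothing more. So there is no real obstacle here, only bookkeeping of multiplicities.
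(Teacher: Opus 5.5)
Your proposal is correct and is essentially the paper's own argument. You identify $\HH_1(\Torelli_g^1[2];\Q)_{\Torelli_g^1}$ with the cokernel of $(\tau_1)_{\ast}$ via the exact sequence \eqref{eqn:hainstep1seq}, and then subtract the image computed in Step \ref{step:hain.2} from the stable decomposition \eqref{eqn:decomposew2w3h} of $\wedge^2 \wedge^3 H$. Your remarks on semisimplicity and on needing the full equality of Step \ref{step:hain.2}, rather than just the containment of Step \ref{step:hain.1}, are exactly the points the paper's short proof leaves implicit.
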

\begin{proof}
Let $H = \HH_1(\Sigma_g^1;\Q)$ and $H_{\Z} = \HH_1(\Sigma_g^1;\Z)$.
Recall the following exact sequence \eqref{eqn:hainstep1seq} that we constructed
in Step \ref{step:hain.1} of the proof above:
\[\begin{tikzcd}[column sep=small]
\HH_2(\Torelli_g^1;\Q) \arrow{r}{(\tau_1)_{\ast}} & \HH_2(\wedge^3 H_{\Z};\Q) \arrow{r} & \HH_1(\Torelli_g^1[2];\Q)_{\Torelli_g^1} \arrow{r} & 0.
\end{tikzcd}\]
As we discussed before Step \ref{step:hain.1}, the representation $\HH_2(\wedge^3 H_{\Z};\Q)$ is isomorphic to
\[\wedge^2 \wedge^3 H \cong \bV_0(g)^{\oplus 2} \oplus \bV_{1^2}(g)^{\oplus 3} \oplus \bV_{2^2}(g) \oplus \bV_{2,1^2}(g) \oplus \bV_{1^4}(g)^{\oplus 2} \oplus \bV_{2^2,1^2}(g) \oplus \bV_{1^6}(g).\]
We proved in Theorem \ref{maintheorem:hain} that the image of $(\tau_1)_{\ast}$ is
\[\bV_{1^2}(g)^{\oplus 2} \oplus \bV_{2,1^2}(g) \oplus \bV_{1^4}(g)^{\oplus 2} \oplus \bV_{2^2,1^2}(g) \oplus \bV_{1^6}(g).\]
Combining the above three math displays gives the corollary.
\end{proof}

\section{Calculation of image of cup product pairing on punctured surfaces}
\label{section:hainpuncture}

We now show how to deal with the Torelli group $\Torelli_{g,1}$ on a genus $g$ surface
$\Sigma_{g,1}$ with one puncture:

\newtheorem*{maintheorem:hainpunctured}{Theorem \ref{maintheorem:hain} (punctured surface case)}
\begin{maintheorem:hainpunctured}
Let $g \geq 6$.  The image of
the cup product pairing $\fc\colon \wedge^2 \HH^1(\Torelli_{g,1};\Q) \rightarrow \HH^2(\Torelli_{g,1};\Q)$
is isomorphic to the following representation of $\Sp_{2g}(\Z)$:
\[\bV_{1^2}(g)^{\oplus 2} \oplus \bV_{2,1^2}(g) \oplus \bV_{1^4}(g)^{\oplus 2} \oplus \bV_{2^2,1^2}(g) \oplus \bV_{1^6}(g).\]
\end{maintheorem:hainpunctured}
\begin{proof}
Note that the only difference between this and what we proved for surfaces with boundary is the presence
of one additional $\bV_0(g)$ factor.  
Let $H = \HH_1(\Sigma_g^1;\Q)$ and $H_{\Z} = \HH_1(\Sigma_g^1;\Z)$
and let $b = \partial \Sigma_g^1$.  There is a central extension
\begin{equation}
\label{eqn:boundarypunctureseq}
1 \longrightarrow \Z \longrightarrow \Mod_g^1 \longrightarrow \Mod_{g,1} \longrightarrow 1
\end{equation}
whose central $\Z$ term is generated by the Dehn twist $T_b$ (see \cite[Proposition 3.19]{FarbMargalitPrimer}).
Since the separating twist $T_b$ lies in $\Torelli_g^1$, the above restricts to a central
extension
\[1 \longrightarrow \Z \longrightarrow \Torelli_g^1 \longrightarrow \Torelli_{g,1} \longrightarrow 1.\]
In fact, $T_b$ lies in $\Torelli_g^1[2]$.  This implies that the first Johnson
$\tau_1\colon \Torelli_g^1 \rightarrow \wedge^3 H_{\Z}$ factors through a homomorphism
$\Torelli_{g,1} \rightarrow \wedge^3 H_{\Z}$.  We will also call this the first Johnson homomorphism
and denote it by $\tau_1$.  Since $\tau_1$ detects $\HH_1(\Torelli_g^1;\Q)$ (Theorem \ref{theorem:torelliabel})
and factors through $\Torelli_{g,1}$, it follows that $\tau_1$ also detects $\HH_1(\Torelli_{g,1};\Q)$,
i.e., it induces an isomorphism
\[(\tau_1)_{\ast}\colon \HH_1(\Torelli_{g,1};\Q) \stackrel{\cong}{\longrightarrow} \HH_1(\wedge^3 H_{\Z};\Q) = \wedge^3 H.\]
In light of this, the exact same proof we gave in Lemma \ref{lemma:cupjohnson} for $\Torelli_g^1$ shows
that the image of the cup product pairing $\wedge^2 \HH^1(\Torelli_{g,1};\Q) \rightarrow \HH^2(\Torelli_{g,1};\Q)$
is isomorphic to the image of the map
$(\tau_1)_{\ast}\colon \HH_2(\Torelli_{g,1};\Q) \rightarrow \HH_2(\wedge^3 H_{\Z};\Q)$.  Summarizing where
we are, our goal is now the following:
\begin{itemize}
\item[($\dagger$)] Prove that the image of $(\tau_1)_{\ast}\colon \HH_2(\Torelli_{g,1};\Q) \rightarrow \HH_2(\wedge^3 H_{\Z};\Q)$ equals
the image of $(\tau_1)_{\ast}\colon \HH_2(\Torelli_g^1;\Q) \rightarrow \HH_2(\wedge^3 H_{\Z};\Q)$ plus one additional $\bV_0(g)$ factor.
\end{itemize}
As notation, let $\Torelli_{g,1}[2]$ be the kernel of the map
$\tau_1\colon \Torelli_{g,1} \rightarrow \wedge^3 H_{\Z}$.  Just like in the proof of
Step \ref{step:hain.1} in \S \ref{section:hainboundary}, the five-term exact sequence in
group homology associated to the short exact sequence
\[1 \longrightarrow \Torelli_{g,1}[2] \longrightarrow \Torelli_{g,1} \stackrel{\tau_1}{\longrightarrow} \wedge^3 H_{\Z} \longrightarrow 1\]
can be analyzed to give an exact sequence
\[\begin{tikzcd}[column sep=small]
\HH_2(\Torelli_{g,1};\Q) \arrow{r}{(\tau_1)_{\ast}} & \HH_2(\wedge^3 H_{\Z};\Q) \arrow{r} & \HH_1(\Torelli_{g,1}[2];\Q)_{\Torelli_{g,1}} \arrow{r} & 0.
\end{tikzcd}\]
This fits into a commutative diagram
\[\begin{tikzcd}[column sep=small]
\HH_2(\Torelli_g^1;\Q) \arrow{d} \arrow{r}{(\tau_1)_{\ast}} & \HH_2(\wedge^3 H_{\Z};\Q) \arrow[equals]{d} \arrow{r} & \HH_1(\Torelli_g^1[2];\Q)_{\Torelli_g^1} \arrow{d} \arrow{r} & 0 \\
\HH_2(\Torelli_{g,1};\Q) \arrow{r}{(\tau_1)_{\ast}} & \HH_2(\wedge^3 H_{\Z};\Q) \arrow{r} & \HH_1(\Torelli_{g,1}[2];\Q)_{\Torelli_{g,1}} \arrow{r} & 0.
\end{tikzcd}\]
In light of this, to prove $(\dagger)$ it is enough to prove the following:

\begin{unnumberedclaim}
The $\Sp_{2g}(\Z)$ representation $\HH_1(\Torelli_g^1[2];\Q)_{\Torelli_g^1}$ equals
$\HH_1(\Torelli_{g,1}[2];\Q)_{\Torelli_{g,1}}$ plus one additional $\bV_0(g)$ factor.
\end{unnumberedclaim}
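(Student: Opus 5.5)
The plan is to compare the two normal subgroups directly. Write $K = \Torelli_g^1[2]$ and $K' = \Torelli_{g,1}[2]$. The central extension $1 \to \Z \to \Torelli_g^1 \to \Torelli_{g,1} \to 1$, generated by $T_b$, restricts to a central extension
\[1 \longrightarrow \Z \longrightarrow K \longrightarrow K' \longrightarrow 1.\]
This works because $T_b \in K$ and $K$ is the preimage of $K'$: the homomorphism $\tau_1$ on $\Torelli_{g,1}$ was defined by factoring $\tau_1$ through the quotient.

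Next I will take the five-term exact sequence of this central extension in rational homology. Since $\Z$ is central, $\HH_1(\Z;\Q)_{K} = \Q$, and the sequence reads
\[\HH_2(K;\Q) \to \HH_2(K';\Q) \to \Q \to \HH_1(K;\Q) \to \HH_1(K';\Q) \to 0.\]
Then I take $\Torelli_g^1$-coinvariants. These are right exact, and $\Torelli_g^1$ acts on the $\Q$ spanned by $T_b$ trivially, because $T_b$ is central in $\Mod_g^1$. This gives an exact sequence of $\Sp_{2g}(\Z)$-representations
\[\Q \longrightarrow \HH_1(K;\Q)_{\Torelli_g^1} \longrightarrow \HH_1(K';\Q)_{\Torelli_{g,1}} \longrightarrow 0.\]
Here the $\Torelli_g^1$-coinvariants of $\HH_1(K';\Q)$ are the $\Torelli_{g,1}$-coinvariants, since the action factors through the quotient. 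The $\Q$ on the left is the class of $T_b$, and it is a trivial representation. So $\HH_1(K';\Q)_{\Torelli_{g,1}}$ is the quotient of $\HH_1(K;\Q)_{\Torelli_g^1}$ by the span of the image of $[T_b]$, and it remains to show that this image is nonzero.

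The main point is showing that $[T_b] \neq 0$ in $\HH_1(\Torelli_g^1[2];\Q)_{\Torelli_g^1}$. I will detect it with a $\Torelli_g^1$-invariant homomorphism out of $K$. The natural first choice is $\tau_2$. By Lemma~\ref{lemma:tau2septwist} with $V = H$ (the curve $b$ cuts off all of $\Sigma_g^1$, so $h = g$), $\tau_2(T_b)$ is the image of $-\omega \Cdot \omega$ in $\Sym^2(\wedge^2 H)/\wedge^4 H$. In the proof of Theorem~\ref{theorem:imagetau2} we saw that $w_2 = \omega \Cdot \omega$ is a highest weight vector independent from $\iota(\omega \wedge \omega)$, so its image in the quotient is nonzero. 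Hence $\tau_2(T_b) \neq 0$, and $\tau_2$ factors through the $\Torelli_g^1$-coinvariants. If that independence claim looked shaky, I would fall back on Morita's core of the Casson invariant $m$, where $m(T_b) = 4g(g-1) \neq 0$ for $g \geq 2$; that map is even $\Mod_g^1$-invariant.

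Combining these steps, the kernel of $\HH_1(K;\Q)_{\Torelli_g^1} \to \HH_1(K';\Q)_{\Torelli_{g,1}}$ is exactly one copy of $\bV_0(g)$. Complete reducibility, using Corollary~\ref{corollary:kg1coinv} for finite-dimensionality, then gives the claimed splitting. I expect the only genuine obstacle to be the nonvanishing of $[T_b]$, and the $\tau_2$ computation handles it.
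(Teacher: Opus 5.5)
Your proposal is correct and matches the paper's proof essentially step for step. The common argument is to restrict the central extension to $1 \to \Z \to \Torelli_g^1[2] \to \Torelli_{g,1}[2] \to 1$, take the five-term sequence and use right exactness of coinvariants, and detect $[T_b]$ with $\tau_2$ via Lemma \ref{lemma:tau2septwist}. Your check that the image of $\omega \Cdot \omega$ in $\Sym^2(\wedge^2 H)/\wedge^4 H$ is nonzero, and your appeal to complete reducibility to split off the $\bV_0(g)$, supply details the paper leaves implicit.
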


In the central extension \eqref{eqn:boundarypunctureseq}, the kernel $\Z$ is generated
by the Dehn twist $T_b$ about the boundary component $b$.  Since this is a separating twist,
it lies in $\Torelli_g^1[2]$ and \eqref{eqn:boundarypunctureseq} restricts to a central extension
\[1 \longrightarrow \Z \longrightarrow \Torelli_g^1[2] \longrightarrow \Torelli_{g,1}[2] \longrightarrow 1.\]
Since this is a central extension, we see that the conjugation action of $\Torelli_{g,1}$ on $\Torelli_{g,1}[2]$
lifts to an action of $\Torelli_g^1$ on $\Torelli_{g,1}[2]$.  This satisfies
\[\HH_1(\Torelli_{g,1}[2];\Q)_{\Torelli_g^1} = \HH_1(\Torelli_{g,1}[2];\Q)_{\Torelli_{g,1}}.\]
We can therefore attempt to compare $\HH_1(\Torelli_g^1[2];\Q)_{\Torelli_g^1}$ to
$\HH_1(\Torelli_{g,1}[2];\Q)_{\Torelli_g^1}$, which will be easier since we are taking coinvariants
with respect to the same group.

Since the above is a central extension, the associated 
five-term exact sequence in group homology (see \cite[Corollary VII.6.4]{BrownCohomology}) contains
the segment
\[\Q \longrightarrow \HH_1(\Torelli_g^1[2];\Q) \longrightarrow \HH_1(\Torelli_{g,1}[2];\Q) \longrightarrow 0.\]
Taking coinvariants is right exact, so this gives an exact sequence
\[\Q \longrightarrow \HH_1(\Torelli_g^1[2];\Q)_{\Torelli_g^1} \longrightarrow \HH_1(\Torelli_{g,1}[2];\Q)_{\Torelli_g^1} \longrightarrow 0.\]
We claim that the map from $\Q \cong \bV_0(g)$ to $\HH_1(\Torelli_g^1[2];\Q)_{\Torelli_g^1}$ is injective.  Indeed, the second
Johnson homomorphism $\tau_2$
induces a map on $\HH_1(\Torelli_g^1[2];\Q)_{\Torelli_g^1}$, and Lemma \ref{lemma:tau2septwist}
implies that $\tau_2(T_b) \neq 0$.  Since $T_b$ generates the $\Z$ that became the $\Q$ above, the claim
follows.  We therefore have a short exact sequence
\[0 \longrightarrow \bV_0(g) \longrightarrow \HH_1(\Torelli_g^1[2];\Q)_{\Torelli_g^1} \longrightarrow \HH_1(\Torelli_{g,1}[2];\Q)_{\Torelli_g^1} \longrightarrow 0,\]
as desired.
\end{proof}

Exactly like at the end of \S \ref{section:hainboundary}, the following is an immediate consequence
of the above proof:

\begin{corollary}
\label{corollary:kgpunccoinv}
For $g \geq 6$, we have
$\HH_1(\Torelli_{g,1}[2];\Q)_{\Torelli_g^1} \cong \bV_0(g) \oplus \bV_{1^2}(g) \oplus \bV_{2^2}(g)$.
\end{corollary}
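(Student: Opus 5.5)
The plan is to read the corollary off from the short exact sequence built at the end of the proof of the punctured case, combined with Corollary \ref{corollary:kg1coinv}. That proof produced
\[0 \longrightarrow \bV_0(g) \longrightarrow \HH_1(\Torelli_g^1[2];\Q)_{\Torelli_g^1} \longrightarrow \HH_1(\Torelli_{g,1}[2];\Q)_{\Torelli_g^1} \longrightarrow 0.\]
The $\bV_0(g)$ here is spanned by the image of the boundary twist $T_b$. Injectivity of the first map was certified by $\tau_2(T_b) \neq 0$, using Lemma \ref{lemma:tau2septwist}.

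First I will confirm that this is an exact sequence of $\Sp_{2g}(\Z)$-representations. The conjugation action of $\Mod_g^1$ preserves the central extension $1 \to \Z \to \Torelli_g^1[2] \to \Torelli_{g,1}[2] \to 1$. The element $T_b$ is fixed by $\Mod_g^1$, so its span really is a trivial representation. Hence all three maps are $\Mod_g^1$-equivariant, and they descend to equivariant maps on $\Torelli_g^1$-coinvariants, which carry $\Sp_{2g}(\Z)$-actions.

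Second, I will apply Corollary \ref{corollary:kg1coinv}, which gives
\[\HH_1(\Torelli_g^1[2];\Q)_{\Torelli_g^1} \cong \bV_0(g)^{\oplus 2} \oplus \bV_{1^2}(g) \oplus \bV_{2^2}(g).\]
This is a finite-dimensional algebraic representation, being a quotient of $\wedge^2 \wedge^3 H$. The quotient of a semisimple algebraic representation by a subrepresentation is again algebraic, and its decomposition is obtained by deleting the factors of the subrepresentation. Removing one copy of $\bV_0(g)$ therefore yields $\bV_0(g) \oplus \bV_{1^2}(g) \oplus \bV_{2^2}(g)$ for $\HH_1(\Torelli_{g,1}[2];\Q)_{\Torelli_g^1}$.

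Finally, I will recall from the proof above that these $\Torelli_g^1$-coinvariants coincide with the $\Torelli_{g,1}$-coinvariants, although the statement as written only needs the $\Torelli_g^1$ version. There is no real obstacle here. The only point needing care is the equivariance and the semisimplicity used to split off $\bV_0(g)$, and both follow from Zariski density as in the earlier remarks.
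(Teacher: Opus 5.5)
Your proposal is correct and is essentially the paper's argument: the paper states the corollary as an immediate consequence of the short exact sequence $0 \to \bV_0(g) \to \HH_1(\Torelli_g^1[2];\Q)_{\Torelli_g^1} \to \HH_1(\Torelli_{g,1}[2];\Q)_{\Torelli_g^1} \to 0$ from the punctured-case proof, combined with Corollary \ref{corollary:kg1coinv}, just as you do. Your extra remarks fill in details the paper leaves implicit: equivariance, via centrality of $T_b$ in $\Mod_g^1$, and semisimplicity, via the middle term being a quotient of $\wedge^2 \wedge^3 H$.
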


\section{Calculation of image of cup product pairing on closed surfaces}
\label{section:hainclosed}

We close this part of the paper by showing how to deal with the Torelli group $\Torelli_g$ on a
closed genus $g$ surface $\Sigma_g$:

\newtheorem*{maintheorem:hainclosed}{Theorem \ref{maintheorem:hain} (closed surface case)}
\begin{maintheorem:hainclosed}
Let $g \geq 6$.  The image of
the cup product pairing $\fc\colon \wedge^2 \HH^1(\Torelli_{g};\Q) \rightarrow \HH^2(\Torelli_{g};\Q)$
is isomorphic to the following representation of $\Sp_{2g}(\Z)$:
\[\bV_{1^2}(g) \oplus \bV_{1^4}(g) \oplus \bV_{2^2,1^2}(g) \oplus \bV_{1^6}(g).\]
\end{maintheorem:hainclosed}
\begin{proof}
Let $H = \HH_1(\Sigma_g^1;\Q)$ and $H_{\Z} = \HH_1(\Sigma_g^1;\Z)$
and let $\pi = \pi_1(\Sigma_g,\ast)$.  The groups $\Mod_{g,1}$ and $\Mod_g$ are connected by a Birman exact sequence
of the form
\[1 \longrightarrow \pi \longrightarrow \Mod_{g,1} \longrightarrow \Mod_g \longrightarrow 1.\]
See \cite[\S 4.2]{FarbMargalitPrimer}.  Since $\HH_1(\Sigma_{g,1}) = \HH_1(\Sigma_g)$, the action of
$\Mod_{g,1}$ on $H_{\Z}$ factors through $\Mod_g$.  This implies that the kernel $\pi$ of the Birman
exact sequence acts trivially on $H_{\Z}$, so $\pi$ is contained in the Torelli group.  It follows
that the Birman exact sequence restricts to an exact sequence
\[1 \longrightarrow \pi \longrightarrow \Torelli_{g,1} \longrightarrow \Torelli_g \longrightarrow 1.\]
Let $\omega \in \wedge^2 H$ be the symplectic form.  Since $\omega$ restricts to a $\Z$-valued symplectic
form on $H_{\Z}$, we can actually regard $\omega$ as an element of $\wedge^2 H_{\Z}$.
Johnson \cite{JohnsonHomo} proved that the first Johnson homomorphism $\tau_1\colon \Torelli_{g,1} \longrightarrow \wedge^3 H_{\Z}$
constructed in the previous section fits into a commutative diagram
\[\begin{tikzcd}
\pi \arrow{r} \arrow{d}{\tau_1} & \Torelli_{g,1} \arrow{d}{\tau_1} \\
H_{\Z} \arrow{r}{\iota}         & \wedge^3 H_{\Z},
\end{tikzcd}\]
where $\iota\colon H_{\Z} \rightarrow \wedge^3 H_{\Z}$ takes $h \in H_{\Z}$ to $h \wedge \omega$.  Identifying
$H_{\Z}$ with its image in $\wedge^3 H_{\Z}$, the above implies that $\tau_1$ induces a homomorphism
$\tau_1\colon \Torelli_g \rightarrow (\wedge^3 H_{\Z})/H_{\Z}$ fitting into a commutative diagram
\[\begin{tikzcd}
1 \arrow{r} & \pi \arrow{d}{\tau_1} \arrow{r} & \Torelli_{g,1} \arrow{d}{\tau_1} \arrow{r} & \Torelli_g \arrow{d}{\tau_1} \arrow{r} & 1 \\
0 \arrow{r} & H_{\Z} \arrow{r}{\iota}         & \wedge^3 H_{\Z} \arrow{r}                  & (\wedge^3 H_{\Z})/H_{\Z} \arrow{r} & 0.
\end{tikzcd}\]
Theorem \ref{theorem:torelliabel} implies that $\tau_1\colon \Torelli_{g,1} \rightarrow \wedge^3 H_{\Z}$ induces an isomorphism\footnote{See
the proof in \S \ref{section:hainpuncture} for why this result about $\Torelli_{g,1}$ follows from Theorem \ref{theorem:torelliabel},
which concerns $\Torelli_g^1$.}
\[(\tau_1)_{\ast}\colon \HH_1(\Torelli_{g,1};\Q) \stackrel{\cong}{\longrightarrow} \HH_1(\wedge^3 H_{\Z};\Q) = \wedge^3 H,\]
so it follows that $\tau_1\colon \Torelli_g \rightarrow (\wedge^3 H_{\Z})/H_{\Z}$ also induces an isomorphism\footnote{This fact was first proved by Johnson in \cite{Johnson2, Johnson3}.}
\[(\tau_1)_{\ast}\colon \HH_1(\Torelli_{g};\Q) \stackrel{\cong}{\longrightarrow} \HH_1((\wedge^3 H_{\Z})/H_{\Z};\Q) = (\wedge^3 H)/H.\]
In light of this, the exact same proof we gave in Lemma \ref{lemma:cupjohnson} for $\Torelli_g^1$ shows
that the image of the cup product pairing $\wedge^2 \HH^1(\Torelli_{g};\Q) \rightarrow \HH^2(\Torelli_{g};\Q)$
is isomorphic to the image of the map
$(\tau_1)_{\ast}\colon \HH_2(\Torelli_{g};\Q) \rightarrow \HH_2((\wedge^3 H_{\Z})/H_{\Z};\Q)$.  Summarizing where
we are, our goal is now the following:
\begin{itemize}
\item[($\dagger$)] Prove that the image of $(\tau_1)_{\ast}\colon \HH_2(\Torelli_{g};\Q) \rightarrow \HH_2((\wedge^3 H_{\Z})/H_{\Z};\Q)$
is isomorphic to $\bV_{1^2}(g) \oplus \bV_{1^4}(g) \oplus \bV_{2^2,1^2}(g) \oplus \bV_{1^6}(g)$.
\end{itemize} 
As notation, let $\Torelli_{g}[2]$ be the kernel of the map
$\tau_1\colon \Torelli_{g} \rightarrow (\wedge^3 H_{\Z})/H_{\Z}$.  Just like in the proof of
Step \ref{step:hain.1} in \S \ref{section:hainboundary}, the five-term exact sequence in
group homology associated to the short exact sequence
\[1 \longrightarrow \Torelli_g[2] \longrightarrow \Torelli_g \stackrel{\tau_1}{\longrightarrow} (\wedge^3 H_{\Z})/H_{\Z} \longrightarrow 1\]
can be analyzed to give an exact sequence
\begin{equation}
\label{eqn:h2torelligseq}
\begin{tikzcd}[column sep=small]
\HH_2(\Torelli_{g};\Q) \arrow{r}{(\tau_1)_{\ast}} & \HH_2((\wedge^3 H_{\Z})/H_{\Z};\Q) \arrow{r} & \HH_1(\Torelli_{g}[2];\Q)_{\Torelli_{g}} \arrow{r} & 0.
\end{tikzcd}
\end{equation}
We have that the representation $\HH_2((\wedge^3 H_{\Z})/H_{\Z};\Q)$ is isomorphic to
\[\wedge^2 (\wedge^3 H)/H \cong \bV_0(g) \oplus \bV_{1^2}(g) \oplus \bV_{2^2}(g) \oplus \bV_{1^4}(g) \oplus \bV_{2^2,1^2}(g) \oplus \bV_{1^6}(g).\]
In light of this isomorphism and the exact sequence \eqref{eqn:h2torelligseq}, to prove $(\dagger)$ it is enough to perform
the following three steps:

\begin{step}{1}
The image of the map $(\tau_1)_{\ast}\colon \HH_2(\Torelli_g;\Q) \rightarrow \HH_2((\wedge^3 H_{\Z})/H_{\Z};\Q)$ contains
the subrepresentation $\bV_{1^2}(g) \oplus \bV_{1^4}(g) \oplus \bV_{2^2,1^2}(g) \oplus \bV_{1^6}(g)$
of $\HH_2((\wedge^3 H_{\Z})/H_{\Z};\Q) \cong \wedge^2 (\wedge^3 H)/H$.
\end{step}

We have a commutative diagram
\[\begin{tikzcd}
\HH_2(\Torelli_g^1;\Q) \arrow{r}{(\tau_1)_{\ast}} \arrow{d} & \HH_2((\wedge^3 H_{\Z});\Q) \arrow[two heads]{d} \arrow[equals]{r} & \wedge^2 \wedge^3 H \arrow[two heads]{d} \\
\HH_2(\Torelli_g;\Q)   \arrow{r}{(\tau_1)_{\ast}}           & \HH_2((\wedge^3 H_{\Z})/H_{\Z};\Q) \arrow[equals]{r} & \wedge^2 (\wedge^3 H)/H.
\end{tikzcd}\]
In \S \ref{section:hainboundary}, we proved that the image of the top row is the subrepresentation
\begin{align*}
&\bV_{1^2}(g)^{\oplus 2} \oplus \bV_{2,1^2}(g) \oplus \bV_{1^4}(g)^{\oplus 2} \oplus \bV_{2^2,1^2}(g) \oplus \bV_{1^6}(g) \\
\subset &\wedge^2 \wedge^3 H = \bV_{1^2}(g)^{\oplus 3} \oplus \bV_{2^2}(g) \oplus \bV_{2,1^2}(g) \oplus \bV_{1^4}(g)^{\oplus 2} \oplus \bV_{2^2,1^2}(g) \oplus \bV_{1^6}(g).
\end{align*}
This maps to the subrepresentation
\begin{align*}
&\bV_{1^2}(g) \oplus \bV_{1^4}(g) \oplus \bV_{2^2,1^2}(g) \oplus \bV_{1^6}(g) \\
\subset &\wedge^2 (\wedge^3 H)/H = \bV_0(g) \oplus \bV_{1^2}(g) \oplus \bV_{2^2}(g) \oplus \bV_{1^4}(g) \oplus \bV_{2^2,1^2}(g) \oplus \bV_{1^6}(g).
\end{align*}
The step follows.

\begin{step}{2}
The homology group $\HH_2(\Torelli_g;\Q)$ contains no trivial subrepresentation, so in particular the image
of $(\tau_1)_{\ast}\colon \HH_2(\Torelli_g;\Q) \rightarrow \HH_2((\wedge^3 H_{\Z})/H_{\Z};\Q)$ does not
contain the subrepresentation $\bV_0(g)$.
\end{step}

Dualizing, it is enough to prove that the $\Sp_{2g}(\Z)$-representation
$\HH^2(\Torelli_g;\Q)$ contains no trivial factors.\footnote{For this, recall that Theorem \ref{maintheorem:h2finite}
says that $\HH^2(\Torelli_g;\Q)$ is a finite-dimensional algebraic representation of $\Sp_{2g}(\Z)$.}
The proof is very similar to that of Lemma \ref{lemma:h2torellitrivial}, which proved that
$\HH^2(\Torelli_g^1;\Q)$ contains no trivial factors.  We recall the basic idea of that proof.  Consider the
Hochschild--Serre spectral sequence of the short exact sequence
\[1 \longrightarrow \Torelli_g \longrightarrow \Mod_g \longrightarrow \Sp_{2g}(\Z) \longrightarrow 1.\]
Harer \cite{HarerH2} proved that $\HH^2(\Mod_g;\Q) = \Q$.  Using the Borel stability theorem (Theorem \ref{theorem:borelstability}),
we can calculate the portion of the $E_2^{pq}$-page relevant to computing $\HH^2(\Mod_g;\Q)$.  The desired
result falls out of this calculation.  We refer the reader to Lemma \ref{lemma:h2torellitrivial} for the details.

\begin{step}{3}
$\HH_1(\Torelli_{g}[2];\Q)_{\Torelli_{g}}$ contains the subrepresentation $\bV_{2^2}(g)$.
\end{step}

Just like in \S \ref{section:hainpuncture}, it is convenient to lift the conjugation action of $\Torelli_g$ on $\Torelli_g[2]$
to an action of $\Torelli_{g,1}$ on $\Torelli_2[2]$ that factors through $\Torelli_g$.  This allows us to
rewrite our coinvariants as
\[\HH_1(\Torelli_{g}[2];\Q)_{\Torelli_{g}} = \HH_1(\Torelli_{g}[2];\Q)_{\Torelli_{g,1}}.\]
From the beginning of the proof, recall the commutative diagram
\[\begin{tikzcd}
1 \arrow{r} & \pi \arrow{d}{\tau_1} \arrow{r} & \Torelli_{g,1} \arrow{d}{\tau_1} \arrow{r} & \Torelli_g \arrow{d}{\tau_1} \arrow{r} & 1 \\
0 \arrow{r} & H_{\Z} \arrow{r}{\iota}         & \wedge^3 H_{\Z} \arrow{r}                  & (\wedge^3 H_{\Z})/H_{\Z} \arrow{r} & 0.
\end{tikzcd}\]
All the vertical maps are surjective.  Using the fact that $[\pi,\pi]$ is the kernel of the map $\pi \rightarrow H_{\Z}$,
we get a short exact sequence
\[1 \longrightarrow [\pi,\pi] \longrightarrow \Torelli_{g,1}[2] \longrightarrow \Torelli_g[2] \longrightarrow 1.\]
See, e.g., \cite[Theorem 4.1]{PutmanJohnson} for more details.  This induces an exact sequence
\[\HH_1([\pi,\pi];\Q) \longrightarrow \HH_1(\Torelli_{g,1}[2];\Q) \longrightarrow \HH_1(\Torelli_g[2];\Q) \longrightarrow 0.\]
Taking coinvariants is right exact, so we have an exact sequence
\[\HH_1([\pi,\pi];\Q)_{\Torelli_{g,1}} \longrightarrow \HH_1(\Torelli_{g,1}[2];\Q)_{\Torelli_{g,1}} \longrightarrow \HH_1(\Torelli_g[2];\Q)_{\Torelli_{g,1}} \longrightarrow 0.\]
Corollary \ref{corollary:kgpunccoinv} says that
\[\HH_1(\Torelli_{g,1}[2];\Q)_{\Torelli_g^1} \cong \bV_0(g) \oplus \bV_{1^2}(g) \oplus \bV_{2^2}(g).\]
To prove that $\HH_1(\Torelli_g[2];\Q)_{\Torelli_{g,1}}$ contains a copy of $\bV_{2^2}(g)$, it is therefore
enough to prove that $\HH_1([\pi,\pi];\Q)_{\Torelli_{g,1}}$ does not contain a copy of $\bV_{2^2}(g)$.
Recall that $\Torelli_{g,1}$ contains as a subgroup the kernel $\pi$ of the Birman exact sequence.  The action
of $\Torelli_{g,1}$ on $\pi$ restricts to the usual action of $\pi$ on itself by conjugation.  From this, we see
that\footnote{The isomorphism $[\pi,\pi]/[\pi,[\pi,\pi]] \otimes \Q \cong \wedge^2 H$ follows from the fact
that $[\pi,\pi]/[\pi,[\pi,\pi]] \otimes \Q$ is the second graded piece of the Lie algebra associated to the
lower central series of $\pi$, which is the free Lie algebra (see \S \ref{section:freelie}).  It is therefore
isomorphic to $\Lie_2(H) \cong \wedge^2 H$; see Example \ref{example:lie2}.}
\[\HH_1([\pi,\pi];\Q)_{\pi} = [\pi,\pi]/[\pi,[\pi,\pi]] \otimes \Q \cong \wedge^2 H.\]
Since $\Torelli_{g,1}$ acts trivially on $H$, this implies that
\[\HH_1([\pi,\pi];\Q)_{\Torelli_{g,1}} \cong \wedge^2 H \cong \bV_0(g) \oplus \bV_{1^2}(g).\]
In particular, $\HH_1([\pi,\pi];\Q)_{\Torelli_{g,1}}$ does not contain a copy of $\bV_{2^2}(g)$, as desired.
\end{proof}

\part{Cup products span}
\label{part:2}

In this second part of the paper, we prove that the second rational cohomology
group of the Torelli group is spanned by cup products of elements of $\HH^1$.
Most of this part is devoting to proving this for $\Torelli_g^1$, and then at the end
we show how to deal with $\Torelli_{g,1}$ and $\Torelli_g$.  Though we handle
the technical details differently, the heart of our proof follows work of
Kupers--Randal-Williams (\cite{KupersRandalWilliams};
see also its sequel \cite{RandalWilliamsII}). 

Recall that Theorem \ref{maintheorem:h2finite} says that $\HH^2(\Torelli_g^1;\Q)$
is a finite-dimensional algebraic representation of $\Sp_{2g}$ for $g \geq 6$.
In fact, the proof of Theorem \ref{maintheorem:h2finite} in \cite{MinahanPutmanH2Torelli}
gives a bit more than this.  We start in \S \ref{section:repstability} by combining
this extra information with tools from the theory of representation stability
to bound the degrees of irreducible factors of
of $\HH^2(\Torelli_g^1;\Q)$ for $g \gg G$.  The constant $G$ here is not effective.

In \S \ref{section:twistedcohomology}, we relate the irreducible factors of
$\HH^2(\Torelli_g^1;\Q)$ to the twisted cohomology of the mapping class
group $\Mod_g^1$.  This allows us to apply recent work of
Miller--Patzt--Petersen--Randal-Williams \cite{MPPRW} to show that we
can take $G$ to be $12$.

In \S \ref{section:cupspanboundary}, we use all of this together with computations
of Kawazumi \cite{KawazumiTwisted} of the twisted cohomology of $\Mod_g^1$ to prove
that cup products span $\HH^2(\Torelli_g^1;\Q)$ for $g \geq 12$.  Finally,
in \S \ref{section:cupspanpuncture} and \S \ref{section:cupspanclosed} we show
how to derive the corresponding results for $\Torelli_{g,1}$ and $\Torelli_g$.

\section{Representation stability and the degrees of the factors of \texorpdfstring{$\HH^2(\Torelli_g^1;\Q)$}{H2Torelli}}
\label{section:repstability}

Fix some $g \geq 6$.  Let $H = \HH_1(\Sigma_g^1;\Q)$.  The first step will be to use tools from
the theory of representation stability to bound the degrees of the irreducible factors
of $\HH^2(\Torelli_g^1;\Q)$.

\subsection{Degree of irreducible representation}

Recall that we defined the degree of an irreducible representation of the algebraic group $\Sp_{2g}$ in
\S \ref{section:sprep} as follows.  An irreducible representation is of the form
$\bV_{\sigma}(g)$ with $\sigma$ a partition with at most $g$ parts.  Writing
$\sigma = (k_1,\ldots,k_m)$, the degree of $\bV_{\sigma}(g)$ is
$d = k_1 + \cdots + k_m$.  This is also the minimal $d$ such that $\bV_{\sigma}(g)$
appears in $H^{\otimes d}$.

\subsection{Algebraicity}

Recall that we are assuming the following theorem, which for convenience we state for homology rather
than cohomology:

\newtheorem*{maintheorem:h2finite.2}{Theorem \ref{maintheorem:h2finite} (Minahan--Putman, \cite[Theorem B]{MinahanPutmanH2Torelli})}
\begin{maintheorem:h2finite.2}
The homology group $\HH_2(\Torelli_g^1;\Q)$ is finite-dimensional
for $g \geq 5$ and an algebraic representation of $\Sp_{2g}(\Z)$ for $g \geq 6$.
\end{maintheorem:h2finite.2}

\begin{remark}
As we noted in the introduction, \cite{MinahanPutmanH2Torelli} also proves similar results for $\Torelli_g$
and $\Torelli_{g,1}$.  In fact, it proves a result for $\Torelli_{g,p}^b$ in general, though it requires
care to properly define the Torelli group on a surface with multiple punctures and boundary components.
\end{remark}

\subsection{Cokernel of stabilization map}

We will need a consequence of the proof of Theorem \ref{maintheorem:h2finite}.  Embed $\Sigma_{g-1}^1$ into
$\Sigma_{g}^1$ as follows:\\
\Figure{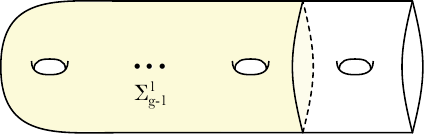}
By extending mapping classes on $\Sigma_{g-1}^1$ to $\Sigma_{g}^1$ by the identity, we get an inclusion
$\Mod_{g-1}^1 \hookrightarrow \Mod_{g}^1$.  This restricts to an inclusion 
$\Torelli_{g-1}^1 \hookrightarrow \Torelli_{g}^1$.  This inclusion induces a map on homology that we will write as
\[f_g\colon \HH_2(\Torelli_{g-1}^1;\Q) \rightarrow \HH_2(\Torelli_{g}^1;\Q).\]
We will call $f_g$ the {\em stabilization map}.
The domain of $f_g$ is a representation of $\Sp_{2(g-1)}(\Z)$ and the codomain of $f_g$ is a representation
of $\Sp_{2g}(\Z)$.  Moreover, $f_g$ is $\Sp_{2(g-1)}(\Z)$-equivariant.  The cokernel $\coker(f_g)$
is therefore a representation of $\Sp_{2(g-1)}(\Z)$.  We then have:

\begin{theorem}
\label{theorem:cokerneldegree}
Let $f_g\colon \HH_2(\Torelli_{g-1}^1;\Q) \rightarrow \HH_2(\Torelli_{g}^1;\Q)$ be the stabilization map.
The following hold for $g \geq 5$:
\begin{itemize}
\item[(i)] The cokernel $\coker(f_g)$ is a finite-dimensional algebraic representation of $\Sp_{2(g-1)}(\Z)$.
\item[(ii)] Each irreducible factor of $\coker(f_g)$ has degree at most $5$.
\end{itemize}
\end{theorem}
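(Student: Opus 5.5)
The plan is to re-run the spectral sequence argument from the proof of Theorem \ref{maintheorem:h2finite} in \cite{MinahanPutmanH2Torelli} and track the $\Sp_{2(g-1)}$-representations that can appear in $\coker(f_g)$, rather than treat the result as a black box.

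\textbf{Step 1: the complex.} Let $\Torelli_g^1$ act on a simplicial complex $X_g$ of the type used there. Concretely, its simplices are collections of disjoint nonseparating curves (or handles), each carrying a fixed homology class, spanning an isotropic, unimodularly extendable subspace of $H_{\Z}$. In the range $g \geq 5$ this complex is $2$-connected. Two features matter:
\begin{itemize}
\item[(a)] the stabilizer of a vertex is, up to harmless central or abelian pieces, a copy of $\Torelli_{g-1}^1$ embedded as in the definition of $f_g$;
\item[(b)] since Torelli acts trivially on homology, the $\Torelli_g^1$-orbits of $p$-simplices are indexed by $(p+1)$-tuples of integral vectors (plus bounded combinatorial data).
\end{itemize}

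\textbf{Step 2: the spectral sequence.} The equivariant homology spectral sequence
\[E^1_{pq} = \bigoplus_{\sigma \in X_g^{(p)}/\Torelli_g^1} \HH_q(\mathrm{Stab}(\sigma);\Q) \Longrightarrow \HH_{p+q}(\Torelli_g^1;\Q)\]
converges for $p+q \leq 2$ because $X_g$ is $2$-connected. The column $p=0$ contributes the images of vertex stabilizers, and by $\Sp_{2g}(\Z)$-conjugation these are all images of $f_g$. Since the vertex classes are all $\Sp_{2g}(\Z)$-translates, the image of $f_g$ accounts for $E^\infty_{0,2}$ after applying $\Sp_{2(g-1)}(\Z)$-equivariance appropriately. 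Hence $\coker(f_g)$ is a subquotient of a finite sum built from $E^2_{1,1}$ and $E^2_{2,0}$, together with the discrepancy between "image of one vertex stabilizer" and "image of all of them."

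I expect this discrepancy to be the main obstacle. To handle it, I would first replace $\coker(f_g)$ by the cokernel of the map from $\Sp_{2g}$-translates of the image, $E^1_{0,2} \to \HH_2$. Then I would show that this differs from $\coker(f_g)$ only by terms controlled by the differential $d^1\colon E^1_{1,2} \to E^1_{0,2}$, that is, by $H_2$ of edge stabilizers mapping into two vertex stabilizers. Equivalently, the $\Sp_{2(g-1)}$-restriction of $\Sp_{2g}$-modules generated by the image adds factors only via branching. Branching from $\Sp_{2g}$ to $\Sp_{2(g-1)}$ does not raise degree, so this step should not hurt the bound.

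\textbf{Step 3: the terms $E^2_{1,1}$ and $E^2_{2,0}$.} For $E^1_{1,1}$, the summands are $\HH_1$ of edge stabilizers, indexed by pairs of vectors. By Theorem \ref{theorem:torelliabel}, applied to the smaller Torelli group, each $\HH_1$ is a $\wedge^3$ of a smaller symplectic space, which has degree $\le 3$. Reorganizing the sum over orbits as an $\Sp$-module, and taking coinvariants under the remaining translations, turns the indexing pairs of vectors into at most two tensor factors of $H$. The result is a finite-dimensional algebraic module of degree $\le 3+2 = 5$. Similarly, $E^1_{2,0}$ is a sum of $\Q$'s indexed by triples of vectors, and after the same reorganization it has degree $\le 3$.

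\textbf{Step 4: conclusion.} Subquotients of finite-dimensional algebraic representations of $\Sp_{2(g-1)}$ are again finite-dimensional and algebraic, with all irreducible factors of degree $\le 5$. This gives (i) and (ii).
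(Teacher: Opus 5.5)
\textbf{There is a genuine gap, and it sits exactly in the step you flag as the main obstacle.}

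The paper does not reprove this statement. It cites \cite{MinahanPutmanH2Torelli}, where (i) is the main technical input to Theorem \ref{maintheorem:h2finite}; that proof writes $\coker(f_g)$ in terms of explicit representations, and one checks that all of them have degree at most $5$.

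Now to your argument. By your assumption (a), some vertex stabilizer contains the copy of $\Torelli_{g-1}^1$ defining $f_g$, and its $\Mod_g^1$-translates are again vertex stabilizers. So the image $F_0$ of $E^1_{0,2}$ in $\HH_2(\Torelli_g^1;\Q)$ contains the span of the $\Sp_{2g}(\Z)$-orbit of $\mathrm{Im}(f_g)$. By Boldsen--Dollerup \cite{BoldsenDollerup}, that span is all of $\HH_2(\Torelli_g^1;\Q)$ once $g \geq 7$. Three consequences follow:
\begin{itemize}
\item[(1)] $E^\infty_{1,1}$ and $E^\infty_{2,0}$ vanish, so your Step 3 bounds terms that contribute nothing.
\item[(2)] All of $\coker(f_g) = F_0/\mathrm{Im}(f_g)$ is the ``discrepancy'' you leave uncontrolled.
\item[(3)] Your treatment of that discrepancy is circular.
\end{itemize}
To speak of the $\Sp_{2(g-1)}$-restriction of the $\Sp_{2g}$-module generated by $\mathrm{Im}(f_g)$ and its branching, you must already know that this module, i.e.\ $\HH_2(\Torelli_g^1;\Q)$, is a finite-dimensional algebraic representation. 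A priori it is only a quotient of a module induced from an infinite-index subgroup, and \cite{MinahanPutmanH2Torelli} deduces its algebraicity from (i).

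Even granting Theorem \ref{maintheorem:h2finite}, branching only bounds the degrees in $\coker(f_g)$ by those of $\HH_2(\Torelli_g^1;\Q)$, which are unknown at this stage. Proposition \ref{proposition:bounddegrees} runs this implication in the opposite direction: it obtains the bound $6$ for $\HH_2(\Torelli_g^1;\Q)$ from (ii). Invoking $d^1\colon E^1_{1,2}\to E^1_{0,2}$ only restates the problem. $E^2_{0,2}$ is still a sum over infinitely many vertex orbits, and nothing in your outline shows that its image modulo the base summand is finite-dimensional, let alone algebraic of degree at most $5$.

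\textbf{The missing idea} is a mechanism that turns such sums, indexed by $\Torelli_g^1$-orbits of simplices and hence by integral homology vectors, into explicit finite-dimensional algebraic representations. As they stand, these sums look like $\Q[\text{vectors}]\otimes W$: permutation-type modules with no algebraic structure, and taking coinvariants does not change that. Your sentence about ``reorganizing the sum over orbits'' so that pairs of vectors become two tensor factors of $H$ asserts precisely what must be proved. What must be shown is that, in the target, the contribution indexed by a vector depends linearly on it, so that the map factors through something like $H\otimes W$.

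Supplying such relations geometrically, and identifying which representations arise, is the new content of \cite{MinahanPutmanH2Torelli}. The bound $5$ is read off from that explicit description; your count $3+2$ is a guess about its shape, not a derivation.

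Two less central points. First, (a) is plausible only for something like handles tethered to $\partial \Sigma_g^1$. It fails for nonseparating curves $\gamma$, whose $\Torelli_g^1$-stabilizers contain all bounding pair maps $T_\gamma T_\delta^{-1}$ with $\delta$ disjoint from and homologous to $\gamma$. Second, the $2$-connectivity of $X_g$ for $g\geq 5$ is asserted without justification.
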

\begin{proof}
Conclusion (i) is the main technical result that goes into the proof of Theorem \ref{maintheorem:h2finite}
in \cite{MinahanPutmanH2Torelli}.  The proof actually shows how to write $\coker(f_g)$ in terms
of a series of explicit representations.\footnote{It does not calculate $\coker(f_g)$ completely since for
instance some of these representations appear as domains of maps into $\coker(f_g)$, and these maps
might be $0$.  Of course, as the paper you are reading shows once Theorem \ref{maintheorem:h2finite} is
known we can calculate $\HH_2(\Torelli_g^1;\Q)$ for $g \geq 12$, and thus in particular work out $\coker(f_g)$
for $g \gg 0$.}  If you carefully go through the argument, you will see that all these explicit representations
have degree at most $5$.  The theorem follows.
\end{proof}

\begin{remark}
The paper \cite{MinahanPutmanH2Torelli} introduces a new approach for proving these
kinds of algebraicity results.  If this approach is used in other contexts, it will
likely also give some $d$ such that all the irreducible factors of the relevant
cokernels have degree at most $d$.  The representation stability argument we give
below will then prove that the irreducible factors of the
representations in question all eventually have degree at most $d+1$.
\end{remark}

\subsection{Representation stability}

We next prove the following.  Using the language of \cite{ChurchFarbRepStability},
it says that $\HH_2(\Torelli_g^1;\Q)$ satisfies Church--Farb's notion
of representation stability.  We remark that (iii) is due to Boldsen--Dollerup \cite{BoldsenDollerup}.
We include it here since it forms part of the representation stability package.

\begin{theorem}
\label{theorem:repstability}
There exists some $G \geq 1$ and partitions $\sigma_1,\ldots,\sigma_n$ with at most $G$ parts such 
that the following holds for $g \geq G$:
\begin{itemize}
\item[(i)] We have $\HH_2(\Torelli_g^1;\Q) \cong \bV_{\sigma_1}(g) \oplus \cdots \oplus \bV_{\sigma_n}(g)$.
\item[(ii)] The stabilization map $f_g\colon \HH_2(\Torelli_{g-1}^1;\Q) \rightarrow \HH_2(\Torelli_{g}^1;\Q)$
is injective.
\item[(iii)] The $\Sp_{2g}(\Z)$-orbit of the image of $f_g$ spans $\HH_2(\Torelli_{g}^1;\Q)$.
\end{itemize}
\end{theorem}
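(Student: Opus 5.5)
We will argue by a counting/monotonicity argument on multiplicities of irreducible factors, using Theorem \ref{maintheorem:h2finite} for algebraicity, Theorem \ref{theorem:cokerneldegree} for the degree bound on cokernels, and the Boldsen--Dollerup result for (iii). For $g \geq 6$ write $\HH_2(\Torelli_g^1;\Q) \cong \bigoplus_{\sigma} \bV_{\sigma}(g)^{\oplus m_\sigma(g)}$.

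First we bound degrees. Restrict $\bV_\sigma(g)$ to $\Sp_{2(g-1)}$: the branching rule shows that its restriction contains $\bV_\sigma(g-1)$ (when $\sigma$ has at most $g-1$ parts), and every factor of the restriction has degree at most $\deg \sigma$. By induction on $g$, suppose every factor of $\HH_2(\Torelli_{g-1}^1;\Q)$ has degree at most some $D$ with $D \geq 6$ (the base case $g=6$ gives some finite $D$ by finite-dimensionality). The image of $f_g$ then has all factors of degree $\leq D$ as an $\Sp_{2(g-1)}$-representation. Consider a factor $\bV_\sigma(g)$ of $\HH_2(\Torelli_g^1;\Q)$ with $\deg\sigma > \max(D,5)$. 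Its restriction contains $\bV_\sigma(g-1)$, which has degree $>D$ and so cannot lie in $\mathrm{Im}(f_g)$. It must therefore survive to $\coker(f_g)$, contradicting Theorem \ref{theorem:cokerneldegree}(ii). Hence degrees stay bounded by $D' = \max(D,5)$ for all $g$.

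Next we obtain multiplicity monotonicity and stabilization. The $\Sp_{2g}(\Z)$-orbit of $\mathrm{Im}(f_g)$ spans $\HH_2(\Torelli_g^1;\Q)$; this is (iii), and it follows from \cite{BoldsenDollerup} or from the finite generation/surjectivity argument there. Consequently each highest weight vector for $\bV_\sigma(g)$ can be chosen to involve only $a_1,\dots,a_{\ell(\sigma)}$. Comparing highest weight vectors of the $\Sp_{2(g-1)}$-subgroup (invariant under the $\bU$'s fixing $a_g,b_g$) gives an injection from the $\sigma$-isotypic highest weight space in degree $g$ back into degree $g-1$ through $f_g$. This yields $m_\sigma(g) \leq m_\sigma(g-1)$ once $g$ exceeds the (bounded) number of parts. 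Since there are only finitely many partitions of degree $\leq D'$ and all multiplicities are nonnegative integers, they are eventually constant for $g \geq G$, which gives (i). Note that once $g > D'$, every $\sigma$ of degree $\leq D'$ has fewer than $g$ parts, so the partitions $\sigma_1,\ldots,\sigma_n$ are meaningful uniformly.

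For (ii), injectivity, we again count. For $g \geq G+1$ the source and target have matching multiplicities in the stable sense. Meanwhile $\coker(f_g)$ consists only of branching pieces $\bV_\tau(g-1)$ of the restriction that do not come from $\bV_\tau(g)$ with $\tau = \sigma$. A dimension count using the branching multiplicities therefore forces $\dim \ker f_g = 0$. The main obstacle is rigorously relating kernel/cokernel to highest weight multiplicities under restriction: the pullback of highest weights along $f_g$ need not be injective a priori. If the monotonicity step fails, we would instead enlarge $G$ by invoking Noetherianity of the relevant $\mathrm{SI}$/representation-stability category in the style of Patzt \cite{PatztRepStability}, which yields finite generation and hence eventual injectivity with a non-effective $G$, consistent with the remark that $G$ is not effective.
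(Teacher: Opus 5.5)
\textbf{There is a genuine gap.} It lies in your second and third steps. Everything you use there is information about a single map $f_g$:
\begin{itemize}
\item $f_g$ is $\Sp_{2(g-1)}(\Z)$-equivariant and its image is fixed by the last $\Sp_2(\Z)$;
\item the $\Sp_{2g}(\Z)$-orbit of the image spans, which is (iii);
\item $\coker(f_g)$ has small degree.
\end{itemize}
These facts do not imply $m_\sigma(g)\le m_\sigma(g-1)$, nor do they imply injectivity.

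Write $\omega_{g-1}=\sum_{i<g}a_i\wedge b_i$ and $u_g=\omega_{g-1}-(g-1)\,a_g\wedge b_g$. Then $u_g$ spans the $\Sp_{2(g-1)}\times\Sp_2$-invariant line in $\bV_{1^2}(g)=\ker(\wedge^2H\to\Q)$.
\begin{itemize}
\item Take $V_{g-1}=\bV_0(g-1)$, $V_g=\bV_{1^2}(g)$ and $f_g(1)=u_g$. This satisfies all three properties, yet the multiplicity of $(1^2)$ goes up from $0$ to $1$. Highest weight vectors of $V_g$ need not lie in $\mathrm{Im}(f_g)$, so there is nothing to pull back along $f_g$. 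Also, ``involving only $a_1,\dots,a_{\ell(\sigma)}$'' has no meaning in the abstract representation $\HH_2(\Torelli_g^1;\Q)$.
\item Take $V_g=\bV_{1^2}(g)\oplus\bV_0(g)$ for all $g$, with $f_g$ killing $\bV_{1^2}(g-1)$ and sending $1\mapsto u_g+1$. Multiplicities are constant. The orbit of each image spans, since $u_g+1$ has nonzero components in two non-isomorphic irreducibles. Every cokernel has degree $\le 2$. Yet every $f_g$ fails to be injective, so no dimension count can give (ii).
\end{itemize}

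\textbf{The missing idea is the multi-step compatibility.} For all $m\ge n$, the subgroup $1\times\Sp_{2(m-n)}(\Z)$ acts trivially on the image of the composite $V_n\to V_m$. This holds because mapping classes supported on the complement of $\Sigma_n^1$ in $\Sigma_m^1$ commute with $\Torelli_n^1$ and realize this subgroup. It kills the second toy example: there the image of $V_{g-2}$ in $V_g$ is spanned by $u_g+1$, which is not fixed by the $\Sp_4(\Z)$ acting on $a_{g-1},b_{g-1},a_g,b_g$.

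This property is exactly what makes $\{V_g\}$ an $\SI(\Z)$-module \cite[Proposition 3.6]{PatztRepStability}. The paper's proof then runs as follows:
\begin{itemize}
\item finite-dimensionality (Theorem \ref{maintheorem:h2finite}) together with (iii) \cite{BoldsenDollerup} makes the module finitely generated;
\item algebraicity for $g\ge 6$ makes it rational;
\item \cite[Theorem B]{PatztRepStability} then yields (i)--(iii) at once.
\end{itemize}
Your fallback points in this direction, but it never establishes the $\SI(\Z)$-module structure. It also misplaces finite generation, which is an input to Patzt's theorem rather than a consequence of Noetherianity.

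\textbf{A smaller gap in your first step.} Your degree induction only handles $\sigma$ with at most $g-1$ parts. For $\sigma$ with exactly $g$ parts, $\bV_\sigma(g-1)$ does not exist. Branching then only gives $\deg\sigma\le\max(D,5)+k_g$, so the bound does not propagate as written. The paper derives the degree bound only afterwards, in Proposition \ref{proposition:bounddegrees}, using (i) to fix the partitions.
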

\begin{proof}
Set $V_g = \HH_2(\Torelli_g^1;\Q)$.  This is a representation of $\Sp_{2g}(\Z)$, and
$f_g\colon V_{g-1} \rightarrow V_{g}$ is $\Sp_{2(g-1)}(\Z)$-equivariant.  The sequence of
representations
\[V_1 \stackrel{f_2}{\longrightarrow} V_2 \stackrel{f_3}{\longrightarrow} V_3 \stackrel{f_4}{\longrightarrow} \cdots\]
has the following key property:
\begin{itemize}
\item For all $m \geq n \geq 1$, the subgroup $1 \times \Sp_{2(m-n)}(\Z)$ of $\Sp_{2m}(\Z)$ acts trivially
on the image of the composition
\[V_n \stackrel{f_{n+1}}{\longrightarrow} \cdots \stackrel{f_{m}}{\longrightarrow} V_m.\]
\end{itemize}
As was noted in \cite[Proposition 3.6]{PatztRepStability}, this is equivalent to the $V_g$ forming
an $\SI(\Z)$-module in the sense of Putman--Sam \cite{PutmanSam}.  We have the following two
key properties: 
\begin{itemize}
\item For $g \geq 5$, the vector space $V_g$ is finite-dimensional (Theorem \ref{maintheorem:h2finite}).
\item Conclusion (iii) holds for $g \geq 7$.  This was proved by Boldsen--Dollerup \cite{BoldsenDollerup}.
\end{itemize}
In the language of $\SI(\Z)$-modules, these two facts imply that the $V_g$ form a finitely generated
$\SI(\Z)$-module.  Since $V_g$ is also an algebraic representation of $\Sp_{2g}$ for
$g \geq 6$ (Theorem \ref{maintheorem:h2finite}), it is also a rational $\SI(\Z)$-module
in the sense of \cite{PatztRepStability}.  We remark that \cite{PatztRepStability} requires
all the $V_g$ to be algebraic, but for its proofs it is enough for them to be algebraic
for $g$ sufficiently large.  We can now apply \cite[Theorem B]{PatztRepStability}, which
gives our three conclusions.
\end{proof}

\subsection{Branching rule}

The last ingredient we need for the main result of this section is
the following theorem describing which irreducible representations
appear when we branch an irreducible representation of $\Sp_{2g}$ to
$\Sp_{2(g-1)}$:

\begin{theorem}[{\cite[Theorem 8.1.5]{GoodmanWallach}}]
\label{theorem:spbranch}
For some $g \geq 2$, let $\sigma$ be a partition with at most $g$ parts.
Write $\sigma = (k_1,\ldots,k_m)$.
The following are a complete list of the irreducible factors appearing in
the restriction $\Res^{\Sp_{2g}}_{\Sp_{2(g-1)}} \bV_{\sigma}(g)$:
\begin{itemize}
\item Irreducible representations of the form $\bV_{\sigma'}(g-1)$
such that $\sigma'$ is a partition with at most $g-1$ parts satisfying
the following condition.  Write $\sigma' = (k'_1,\ldots,k'_{m'})$. 
We then require that the following holds for all $i$:
\[k_{i+2} \leq k'_i \leq k_i.\]
Here our convention is that $k_j = 0$ for $j \geq m$ and $k'_j = 0$ for $j \geq m'$.
\end{itemize}
\end{theorem}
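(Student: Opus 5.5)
This is the symplectic branching rule, which is classical, so rather than derive it from scratch I would reduce it to interlacing statements for Levi subgroups.

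The plan is to restrict in two stages through an intermediate group:
\[
\Sp_{2(g-1)} \times \Sp_2 \;\supset\; \Sp_{2(g-1)} \times \GL_1 ,
\]
where $\GL_1 \subset \Sp_2 = \SL_2$ is the torus acting on $\Span{a_g,b_g}$. This matches the Zhelobenko/Goodman--Wallach approach, which realizes $\bV_\sigma(g)$ inside a space of polynomial functions on which a maximal unipotent subgroup acts. Concretely, the highest weight vectors for $\Sp_{2(g-1)}$ inside $\bV_{\sigma}(g)$ are the vectors killed by $\bU_1(\bk)\cap\Sp_{2(g-1)}$ and $\bU_2(\bk)\cap\Sp_{2(g-1)}$. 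So the first step is to describe the space of such vectors, together with its $\bT$-weights.

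The key steps are as follows.

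\begin{enumerate}
\item Consider the parabolic $P$ stabilizing the isotropic line $\Span{a_1}$. Its Levi factor is $\GL_1 \times \Sp_{2(g-1)}$ (acting on $\Span{a_1}$ and on $\Span{a_1,b_1}^{\perp}$). Restricting $\bV_\sigma(g)$ to this Levi gives a one-step interlacing: the $\Sp_{2(g-1)}$-constituents of the $\Sp_{2(g-1)}$-invariants under the unipotent radical are the $\mu$ with $k_{i+1}\le \mu_i \le k_i$. This is the analogue of the $\GL_n \downarrow \GL_{n-1}$ Gelfand--Tsetlin rule.

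\item Account for the rest of $\bV_\sigma(g)$. The remaining part is generated from these invariants by the opposite unipotent radical, which is a Heisenberg group whose abelianization is the standard representation of $\Sp_{2(g-1)}$ plus a central character. Applying a second interlacing then yields the doubly interlaced intermediate partitions: $\sigma'$ occurs exactly when there exists $\mu$ with
\[
k_i \ge \mu_i \ge k_{i+1} \quad\text{and}\quad \mu_i \ge k'_i \ge \mu_{i+1}.
\]

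\item Eliminate $\mu$. Such a $\mu$ exists if and only if $k_{i+2} \le k'_i \le k_i$ for all $i$. One direction is immediate: $k'_i \le \mu_i \le k_i$ and $k'_i \ge \mu_{i+1} \ge k_{i+2}$. For the converse, take $\mu_i = \max(k_{i+1}, k'_i)$ and check both inequalities directly.
\end{enumerate}

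The statement only asks which factors appear, not their multiplicities, and this elimination handles exactly that question.

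The main obstacle is step 2. A rigorous proof needs either Zhelobenko's explicit model or a character computation, namely Weyl's character formula combined with a determinantal identity for the $C_g \to C_{g-1}\times C_1$ restriction. As a fallback I would verify the rule through characters. Writing the $\Sp_{2g}$ character as a ratio of alternants in $x_j^{\pm1}$ and specializing $x_g$, one expands the symplectic Schur function in terms of symplectic Schur functions in $g-1$ variables. Nonnegativity and support of the coefficients then come from a Jacobi--Trudi-type expansion, which is exactly the one the doubly interlacing condition describes.

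Since the statement is cited from \cite[Theorem 8.1.5]{GoodmanWallach}, in the paper itself I would simply invoke that reference.
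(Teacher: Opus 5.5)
Citing \cite[Theorem 8.1.5]{GoodmanWallach} is exactly what the paper does; it gives no argument of its own. Your step 3 is correct. With $\mu_i=\max(k_{i+1},k'_i)$ one checks $k_{i+1}\le\mu_i\le k_i$ and $\mu_i\ge k'_i\ge\mu_{i+1}$, so the doubly interlacing (Zhelobenko) form of the rule is equivalent to the condition $k_{i+2}\le k'_i\le k_i$ stated here. The problem is your sketch of why the rule holds: it would not survive being written out, so it should not be presented as a proof.

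\textbf{Step 1 is false.}
\begin{itemize}
\item The unipotent radical $N$ of the stabilizer of $\Span{a_1}$ lies in $\bU$ and is normalized by the Levi $L=\GL_1\times\Sp_{2(g-1)}$. Hence $\bV_\sigma(g)^N$ is an $L$-representation.
\item Any $L$-highest weight vector in $\bV_\sigma(g)^N$ is fixed by $\bU\cap L$ and by $N$, hence by all of $\bU$. It is therefore the highest weight vector of $\bV_\sigma(g)$.
\item So $\bV_\sigma(g)^N$ is irreducible and restricts to the single factor $\bV_{(k_2,\ldots,k_g)}(g-1)$. That is only the case $\mu_i=k_{i+1}$ of your family, not all $\mu$ with $k_{i+1}\le\mu_i\le k_i$.
\item Worse, that family is not even a list of $\Sp_{2(g-1)}$-highest weights. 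Since $\mu_g$ ranges over $[0,k_g]$, $\mu$ can have $g$ nonzero parts, and such $\mu$ genuinely contribute. For example, with $g=2$ and $\sigma=(1,1)$, the factor $\bV_1(1)$ occurs twice in the restriction, once via $\mu=(1,0)$ and once via $\mu=(1,1)$.
\end{itemize}
The intermediate pattern is auxiliary bookkeeping, not a stage of a restriction through this Levi.

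\textbf{Step 2 therefore has to carry everything, and it has no mechanism.} $\bV_\sigma(g)$ is generated by $\bV_\sigma(g)^N$ under the opposite radical $N^-$, whose Lie algebra is $\text{std}\oplus\bk$ as an $\Sp_{2(g-1)}$-module. PBW then only shows that $\bV_\sigma(g)$ is an $L$-equivariant quotient of $\Sym^{\bullet}(\text{std}\oplus\bk)\otimes\bV_{(k_2,\ldots,k_g)}(g-1)$. That is an upper bound with infinitely many candidate constituents. Deciding which candidates survive is the entire content of the theorem, and no interlacing comes out of it without a further idea, such as Zhelobenko's explicit model or an honest character identity.

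\textbf{Your character fallback is the viable route, but as written it is only a pointer.} Setting $x_g=1$ directly in the Weyl quotient gives $0/0$, because both alternants vanish there. What actually has to be proved is the refined expansion
$sp_\sigma(x_1,\ldots,x_{g-1},y)=\sum y^{2|\mu|-|\sigma|-|\sigma'|}\,sp_{\sigma'}(x_1,\ldots,x_{g-1})$.
Here $sp_\sigma$ is the character of $\bV_\sigma(g)$, and the sum runs over doubly interlacing pairs $(\mu,\sigma')$. Setting $y=1$ then gives the rule. For the purposes of this paper the citation suffices, with your step 3 as the correct bridge from the doubly interlacing form.
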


We need the following immediate corollary of this:

\begin{corollary}
\label{corollary:spbranchcor}
For some $g \geq 2$, let $\bV_{\sigma}(g)$ be an irreducible representation of $\Sp_{2g}$
of degree $d \geq 1$.  Assume that $\sigma$ has at most $g-1$ parts.
Then $\Res^{\Sp_{2g}}_{\Sp_{2(g-1)}} \bV_{\sigma}(g)$ contains
at least two irreducible factors: $\bV_{\sigma}(g-1)$, and a factor $\bV_{\sigma'}(g-1)$ of degree
$d-1$.
\end{corollary}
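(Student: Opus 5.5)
The plan is to apply the branching rule of Theorem \ref{theorem:spbranch} directly, exhibiting explicit partitions $\sigma'$ that satisfy the interlacing inequalities $k_{i+2} \leq k'_i \leq k_i$. Write $\sigma = (k_1,\ldots,k_m)$ with $m \leq g-1$ and $k_1 + \cdots + k_m = d \geq 1$, so that $m \geq 1$.

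\textbf{The factor $\bV_{\sigma}(g-1)$.} Take $\sigma' = \sigma$. Because $m \leq g-1$, $\sigma'$ is a partition with at most $g-1$ parts. The upper inequality $k'_i = k_i \leq k_i$ is trivial. The lower inequality $k_{i+2} \leq k_i$ holds because $\sigma$ is weakly decreasing, with the convention $k_j = 0$ for $j > m$ covering the tail. Theorem \ref{theorem:spbranch} then gives $\bV_{\sigma}(g-1)$ as a factor of the restriction.

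\textbf{A factor of degree $d-1$.} Take $\sigma'$ to be $\sigma$ with its last part decreased by one: $k'_i = k_i$ for $i < m$, $k'_m = k_m - 1$, and $k'_j = 0$ for $j > m$. This is still weakly decreasing, so it is a partition, with $m$ or $m-1$ parts (at most $g-1$), and its degree is $d-1$. The upper inequality $k'_i \leq k_i$ is clear. For the lower inequality, $k_{i+2} \leq k'_i$ holds for $i < m$ since then $k'_i = k_i \geq k_{i+2}$. For $i = m$ it reads $k_{m+2} = 0 \leq k_m - 1$, which holds since $k_m \geq 1$. For $i > m$ both sides are $0$.

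\textbf{The two factors are distinct.} They have degrees $d$ and $d-1$, so they are two different irreducible factors, which gives the claim.

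The only real obstacle is bookkeeping with the indexing convention in Theorem \ref{theorem:spbranch}, namely making sure that zero entries beyond the length are handled consistently; the interlacing itself is immediate. I would also note that the hypothesis $m \leq g-1$ is exactly what allows $\sigma$ itself to index an $\Sp_{2(g-1)}$-representation.
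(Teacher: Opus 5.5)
Your proof is correct and is exactly the intended argument: the paper calls this an immediate corollary of the branching rule (Theorem \ref{theorem:spbranch}) and gives no written proof. You supply the omitted check that the interlacing inequalities hold both for $\sigma' = \sigma$ and for $\sigma$ with its last part lowered by one, reading the zero-padding convention correctly as $k_j = 0$ for $j > m$.
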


\subsection{Bounding the degrees}

We now come to the main result of this section:

\begin{proposition}
\label{proposition:bounddegrees}
There exists some $G \geq 1$ such that the following hold for $g \geq G$:
\begin{itemize}
\item The $\Sp_{2g}(\Z)$-representation $\HH^2(\Torelli_g^1;\Q)$ is a finite-dimensional
algebraic representation of $\Sp_{2g}$ all of whose irreducible factors have degree
at most $6$.
\end{itemize}
\end{proposition}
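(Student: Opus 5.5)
Take $G$ as in Theorem \ref{theorem:repstability}, enlarged if needed so that $G \geq 8$ and every $\sigma_j$ has at most $G-2$ parts. The plan is to show that every $\sigma_j$ has degree at most $6$ by playing the branching rule against the degree bound on cokernels in Theorem \ref{theorem:cokerneldegree}. Since everything is finite-dimensional and algebraic, $\HH^2(\Torelli_g^1;\Q)$ is the dual of $\HH_2(\Torelli_g^1;\Q)$. Each $\bV_\sigma(g)$ is self-dual, so the two have the same irreducible factors, and it suffices to work with homology.

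Fix $g \geq G+1$, so that Theorem \ref{theorem:repstability} applies at both $g-1$ and $g$. By conclusion (i), $V_{g-1} \cong \bigoplus_j \bV_{\sigma_j}(g-1)$ and $V_g \cong \bigoplus_j \bV_{\sigma_j}(g)$. By conclusion (ii), $f_g$ is injective, so as $\Sp_{2(g-1)}$-representations we have
\[\Res^{\Sp_{2g}}_{\Sp_{2(g-1)}} V_g \cong V_{g-1} \oplus \coker(f_g)\]
by complete reducibility. Suppose for contradiction that some $\sigma = \sigma_j$ has degree $d \geq 7$. Among all $\sigma_j$ of maximal degree $D \geq 7$, choose one, say $\tau$. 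By Corollary \ref{corollary:spbranchcor}, $\Res \bV_\tau(g)$ contains $\bV_\tau(g-1)$ together with some $\bV_{\tau'}(g-1)$ of degree $D-1$. The first summand can be matched with the copy of $\bV_\tau(g-1)$ in $V_{g-1}$. The second must then either lie in $\coker(f_g)$, which is impossible since $D-1 \geq 6 > 5$, or be matched by a copy of $\bV_{\tau'}(g-1)$ in $V_{g-1}$, meaning $\tau' = \sigma_k$ for some $k$.

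The main obstacle is to make this counting rigorous, because the matching of $\tau'$ could be absorbed by another $\sigma_k$. I plan to compare multiplicities of all degree-$(D-1)$ and degree-$D$ irreducibles at once. Let $n_e$ be the number of $\sigma_j$ (with multiplicity) of degree $e$. By Theorem \ref{theorem:spbranch}, restriction never increases degree, and a degree-$e$ irreducible restricts to exactly one degree-$e$ factor, namely itself: the interlacing with equality of total size forces $\sigma' = \sigma$, using that $\sigma$ has fewer than $g-1$ parts. By Corollary \ref{corollary:spbranchcor}, it also restricts to at least one degree-$(e-1)$ factor. Counting degree-$(D-1)$ factors on both sides of the displayed isomorphism then gives
\[n_{D-1} + n_D \leq n_{D-1} + (\text{number of degree-}(D-1)\text{ factors of }\coker(f_g)).\]
Since $D-1 \geq 6$, the cokernel contributes $0$, which forces $n_D \leq 0$ and contradicts the choice of $D$. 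Hence all $\sigma_j$ have degree at most $6$, and dualizing gives the proposition.

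The points that need care are two. First, the part-count hypotheses in Corollary \ref{corollary:spbranchcor} and in the "only itself in top degree" claim, which is why $G$ is enlarged at the start. Second, the fact that $\Res V_g$ splits as $V_{g-1} \oplus \coker$ as $\Sp_{2(g-1)}$-representations, which holds because $f_g$ is equivariant and injective and the representations are semisimple.
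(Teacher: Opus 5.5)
Your argument is correct and is essentially the paper's: you pass to homology, use representation stability to split $\Res V_g \cong V_{g-1}\oplus\coker(f_g)$, and use Corollary \ref{corollary:spbranchcor} to force a degree-$(d-1)$ factor into $\coker(f_g)$, which contradicts the degree-$5$ bound of Theorem \ref{theorem:cokerneldegree}. The only difference is bookkeeping: the paper cancels one copy of $\bV_{\sigma_i}(g-1)$ from each $\Res \bV_{\sigma_i}(g)$ directly, which already settles your ``absorption'' worry and makes the maximality of $D$ and the ``exactly one top-degree factor'' claim unnecessary, whereas you count degree-$(D-1)$ multiplicities.
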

\begin{proof}
Passing to duals preserves degrees, so it is enough to prove this for homology rather
than cohomology.  For $g \geq 1$, set $V_g = \HH_2(\Torelli_g^1;\Q)$, and for $g \geq 2$
let $f_g\colon V_{g-1} \rightarrow V_g$ be the stabilization map.
Combining Theorems \ref{theorem:cokerneldegree} and \ref{theorem:repstability}, we can find
$G \geq 1$ such that for $g \geq G$ the following all hold:  
\begin{itemize}
\item There exist irreducible representations $\bV_{\sigma_1}(g-1),\ldots,\bV_{\sigma_n}(g-1)$ of
$\Sp_{2{g-1}}$ such that $V_{g-1} = \bV_{\sigma_1}(g-1) \oplus \cdots \oplus \bV_{\sigma_n}(g-1)$
and $V_g = \bV_{\sigma_1}(g) \oplus \cdots \oplus \bV_{\sigma_n}(g)$.
\item The stabilization map $f_g\colon V_{g-1} \rightarrow V_g$ is injective.
\item The cokernel of $f_g$ is an algebraic representation of $\Sp_{2(g-1)}$ all of
whose irreducible factors have degree at most $5$.
\end{itemize}
Consider some $g \geq G$, and let the $\sigma_i$ be as in the first bullet point.
We claim that each $\sigma_i$ has degree at most $6$.  To see this, set
\[W = \Res^{\Sp_{2g}}_{\Sp_{2(g-1)}} V_g \cong \bigoplus_{i=1}^n \Res^{\Sp_{2g}}_{\Sp_{2(g-1)}} \bV_{\sigma_i}(g).\]
Corollary \ref{corollary:spbranchcor} say that the $\Sp_{2(g-1)}$-representation
$\bV_{\sigma_i}(g-1)$ appears as one of the irreducible factors of $\Res^{\Sp_{2g}}_{\Sp_2(g-1)} \bV_{\sigma_i}(g)$.
The representation $\coker(f_g)$ is therefore isomorphic to the one obtained from
$W$ by deleting copies of $\bV_{\sigma_1}(g-1),\ldots,\bV_{\sigma_n}(g-1)$.  Each of
the remaining irreducible factors has degree at most $5$.  The proof
concludes by noting that Corollary \ref{corollary:spbranchcor} also implies
that if $\bV_{\sigma_i}(g)$ has degree $d \geq 1$, then $\Res^{\Sp_{2g}}_{\Sp_{2(g-1)}} \bV_{\sigma_i}(g)$
contains an irreducible factor $\bV_{\sigma'}(g-1)$ of degree $d-1$.  Necessarily $d-1 \leq 5$, so
$d \leq 6$.
\end{proof}

\begin{remark}
In the next section, we will prove that Proposition \ref{proposition:bounddegrees} holds
for $G=12$.
\end{remark}

\section{Twisted cohomology of the mapping class group and uniform degree bounds}
\label{section:twistedcohomology}

In this section, we relate $\HH^2(\Torelli_g^1;\Q)$ to the cohomology of the mapping class
group with twisted coefficients.  Using recent breakthrough work of Miller--Patzt--Petersen--Randal-Williams \cite{MPPRW}
on stability phenomena for these twisted cohomology groups, we will be able to prove that
Proposition \ref{proposition:bounddegrees} holds for $G=12$.

\subsection{Borel stability theorem}

We will need the following special case of the classical Borel stability theorem \cite{BorelStability1, BorelStability2} on
the cohomology of arithmetic groups, which we already stated in \S \ref{section:h2notrivial} but whose statement
we recall for convenience  The explicit stable range in the following
is due to Tshishiku \cite{TshishikuBorel}:

\newtheorem*{theorem:borelstability}{Theorem \ref{theorem:borelstability} (Borel, \cite{BorelStability1, BorelStability2})}
\begin{theorem:borelstability}
For $g \geq 2$, the following hold:
\begin{itemize}
\item[(i)] If $\bV_{\sigma}(g)$ is a nontrivial irreducible representation of $\Sp_{2g}$,
then $\HH^k(\Sp_{2g}(\Z);\bV_{\sigma}(g)) = 0$ for $k \leq g-1$.
\item[(ii)] In degrees $k \leq g-1$, the cohomology ring $\HH^{\bullet}(\Sp_{2g}(\Z);\Q)$ is
isomorphic to a polynomial ring $\Q[c_2,c_6,c_{10},\ldots]$ with $\deg(c_{4i-2}) = 4i-2$ for
$i \geq 1$.
\end{itemize}
\end{theorem:borelstability}

\subsection{Twisted cohomology and the Torelli group}

We now relate the irreducible factors of $\HH^2(\Torelli_g^1;\Q)$ to the twisted
cohomology of $\Mod_g^1$:

\begin{lemma}
\label{lemma:h2torellitwisted}
For some $g \geq 12$, let $\bV_{\sigma}(g)$ be a nontrivial irreducible representation of
$\Sp_{2g}$.  Then the dimension of $\HH^2(\Mod_g^1;\bV_{\sigma}(g))$ equals the number
of copies of $\bV_{\sigma}(g)$ in $\HH^2(\Torelli_g^1;\Q)$.
\end{lemma}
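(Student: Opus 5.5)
We will run the Hochschild--Serre spectral sequence for
\[1 \longrightarrow \Torelli_g^1 \longrightarrow \Mod_g^1 \longrightarrow \Sp_{2g}(\Z) \longrightarrow 1\]
with coefficients in $\bV_{\sigma}(g)$, as in the proof of Lemma \ref{lemma:h2torellitrivial}. Since $\Torelli_g^1$ acts trivially on $\bV_{\sigma}(g)$, it takes the form
\[E_2^{pq} = \HH^p(\Sp_{2g}(\Z);\HH^q(\Torelli_g^1;\Q) \otimes \bV_{\sigma}(g)) \Rightarrow \HH^{p+q}(\Mod_g^1;\bV_{\sigma}(g)).\]
For $q \leq 2$ the coefficient module is a finite-dimensional algebraic representation for $g \geq 12$. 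For $q=0$ this is $\bV_{\sigma}(g)$. For $q=1$ it is $\wedge^3 H \otimes \bV_{\sigma}(g)$ by Theorem \ref{theorem:torelliabel}. For $q=2$ we use Theorem \ref{maintheorem:h2finite}. Hence each of these coefficient modules decomposes into irreducibles, and Theorem \ref{theorem:borelstability} computes $E_2^{pq}$ for $p \leq g-1$ by picking out trivial summands.

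The key steps, in order, are as follows.

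\textbf{Row $q=0$.} Since $\bV_{\sigma}(g)$ is nontrivial, $E_2^{p0}=0$ for $p \leq g-1$. This kills $E_2^{20}$ and $E_2^{30}$.

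\textbf{Row $q=1$.} Decompose $\wedge^3 H \otimes \bV_{\sigma}(g)$. By Theorem \ref{theorem:borelstability}, $E_2^{p1} \cong \HH^p(\Sp_{2g}(\Z);\Q) \otimes (\wedge^3 H \otimes \bV_{\sigma}(g))^{\Sp_{2g}}$, so $E_2^{11}=0$ and $E_2^{21}$ may be nonzero. Note that the invariants are nonzero only when $\bV_\sigma(g)$ is $\bV_1(g)$ or $\bV_{1^3}(g)$, using self-duality.

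\textbf{Row $q=2$.} Here $E_2^{02} = (\HH^2(\Torelli_g^1;\Q) \otimes \bV_{\sigma}(g))^{\Sp_{2g}} \cong \Hom_{\Sp_{2g}}(\bV_{\sigma}(g)^{\ast},\HH^2(\Torelli_g^1;\Q))$. Since $\Sp_{2g}$-representations are self-dual, its dimension is the multiplicity of $\bV_{\sigma}(g)$ in $\HH^2(\Torelli_g^1;\Q)$.

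It follows that, in total degree $2$, only $E_2^{02}$ survives on the $E_2$-page. The only possibly nonzero differential touching it is $d_2\colon E_2^{02} \to E_2^{21}$; the differential $d_3\colon E_3^{02} \to E_3^{30}$ lands in zero. So the lemma reduces to showing that $d_2\colon E_2^{02} \to E_2^{21}$ vanishes.

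\textbf{Main obstacle: the vanishing of $d_2$.} If $\bV_{\sigma}(g)$ is neither $\bV_1(g)$ nor $\bV_{1^3}(g)$, then $E_2^{21}=0$ and there is nothing to do. For $\bV_1(g)$ and $\bV_{1^3}(g)$ I would try to show $d_2=0$ via naturality, using the extension's splitting-like features. The first thing to try is comparing with the extension $1 \to \wedge^3 H_{\Z} \to \Mod_g^1/\Torelli_g^1[2] \to \Sp_{2g}(\Z) \to 1$ induced by $\tau_1$, together with a transfer or weight argument: the action of the center $-I$, or a scalar torus element, acting on degrees should separate $E_2^{02}$-classes from $E_2^{21}$-classes.

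Alternatively, I would argue through the known answer: Kawazumi's computation gives $\dim \HH^1(\Mod_g^1;\bV_{\sigma}(g))$ and $\dim \HH^3$ in these cases. The surviving $E_\infty^{21}$ must account for the twisted Morita--Mumford classes in $\HH^3$, which would force $d_2=0$ into $E_2^{21}$. If neither approach works cleanly, I would instead prove injectivity of $\HH^2(\Mod_g^1;\bV_\sigma(g)) \to E_2^{02}$ and surjectivity separately, using the edge map and the fact that cup products with $\HH^1(\Mod_g^1;\wedge^3 H)$ (the extended Johnson class) realize the relevant classes.
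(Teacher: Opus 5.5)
Your reduction is correct and matches the paper's. Borel's theorem kills $E_2^{p0}$ and $E_2^{11}$, $\dim E_2^{02}$ is the multiplicity of $\bV_{\sigma}(g)$, and everything comes down to showing that $d_2\colon E_2^{02}\to E_2^{21}\cong\Q$ vanishes when $\bV_{\sigma}(g)\in\{\bV_1(g),\bV_{1^3}(g)\}$. But that is the only nontrivial step, you do not prove it, and two of your three suggestions cannot work as stated.

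\begin{itemize}
\item \emph{Comparison with the $\wedge^3 H_{\Z}$ extension.} Naturality only controls $d_2$ on the image of $(\wedge^2\wedge^3 H\otimes\bV_{\sigma}(g))^{\Sp_{2g}}$ in $E_2^{02}$, i.e.\ on cup products. Since $\wedge^2\wedge^3 H$ has no $\bV_1(g)$ or $\bV_{1^3}(g)$ summand, this image is zero, and the unknown part of $E_2^{02}$ is untouched.
\item \emph{The element $-I$.} It is central, so it acts trivially on every $\HH^p(\Sp_{2g}(\Z);M)$ and cannot separate the source and target of $d_2$.
\item \emph{Edge-map surjectivity.} This route is circular. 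Surjectivity of $\HH^2(\Mod_g^1;\bV_{\sigma}(g))\to E_2^{02}$ is equivalent to $d_2=0$. Products with the extended Johnson class restrict on $\Torelli_g^1$ into the cup-product image, which has no $\bV_1(g)$ or $\bV_{1^3}(g)$ part. Indeed, Theorem~\ref{theorem:kawazumi} gives $\HH^2(\Mod_g^1;\bV_{\sigma}(g))=0$ here, since $k(P)\equiv d\pmod 2$ and $\HH^{\bullet}(\Mod_g^1;\Q)$ is evenly graded in the stable range. So ``surjectivity'' is literally the claim $E_2^{02}=0$.
\end{itemize}

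The Kawazumi route has the right ingredient, but a dimension count on $\HH^3(\Mod_g^1;\bV_{\sigma}(g))$ cannot isolate $E_\infty^{21}$. The term $E_2^{03}=(\HH^3(\Torelli_g^1;\Q)\otimes\bV_{\sigma}(g))^{\Sp_{2g}(\Z)}$ also contributes to $\HH^3$, and it is completely uncontrolled. What is missing is a multiplicative argument:
\begin{enumerate}
\item Since $E_2^{20}=0$, the generator of $E_2^{01}\cong\Q$ survives to a class $b\in\HH^1(\Mod_g^1;\bV_{\sigma}(g))$.
\item The spectral sequence is a module over $\HH^{\bullet}(\Sp_{2g}(\Z);\Q)$, and by Borel $E_2^{21}$ is spanned by $c_2\cdot[b]$. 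The class $c_2$ pulls back to a nonzero multiple of $\kappa_1$.
\item View $\HH^{\bullet}(\Mod_g^1;\bV_{\sigma}(g))$ as a summand of the free $\HH^{\bullet}(\Mod_g^1;\Q)$-module $\HH^{\bullet}(\Mod_g^1;H^{\otimes 3})$. Since $\HH^{\bullet}(\Mod_g^1;\Q)$ vanishes in degree one, $b$ is a nonzero combination of degree-one free generators, so $\kappa_1 b\neq 0$.
\item As $\kappa_1 b\in F^2\HH^3$ and $F^3\HH^3=E_\infty^{30}=0$, it is detected by a nonzero element of $E_\infty^{21}$. Hence $E_\infty^{21}=E_2^{21}$, nothing hits it, and $d_2=0$.
\end{enumerate}
This is the argument the paper sketches, following Kupers--Randal-Williams.
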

\begin{proof}
The restriction $g \geq 12$ implies that $\HH^2(\Torelli_g^1;\Q)$ is a finite-dimensional
algebraic representation of $\Sp_{2g}(\Z)$ (Theorem \ref{maintheorem:h2finite}).  It  
therefore decomposes as a direct sum of irreducible factors, so
the statement of the lemma makes sense.  Consider the Hochschild--Serre
spectral sequence with coefficients in $\bV_{\sigma}(g)$ associated
to the short exact sequence
\[1 \longrightarrow \Torelli_g^1 \longrightarrow \Mod_g^1 \longrightarrow \Sp_{2g}(\Z) \longrightarrow 1.\]
This spectral sequence takes the form
\[E_2^{pq} = \HH^p(\Sp_{2g}(\Z);\HH^q(\Torelli_g^1;\bV_{\sigma}(g))) \Rightarrow \HH^{p+q}(\Mod_g^1;\bV_{\sigma}(g)).\]
We are interested in $\HH^2$, so the relevant terms are those with $p+q \leq 2$.  To understand
potential differentials, we will also need to understand $E_2^{12}$ and $E_2^{03}$.
We divide the proof into two cases.

\begin{case}{1}
$\bV_{\sigma}(g) \notin \{\bV_1(g),\bV_{1^3}(g)\}$.
\end{case}

Since $\Torelli_g^1$ acts trivially on $\bV_{\sigma}(g)$, we have
\begin{equation}
\label{eqn:torellitwisted}
\HH^q(\Torelli_g^1;\bV_{\sigma}(g)) = \Hom(\HH^q(\Torelli_g^1;\Q),\bV_{\sigma}(g)).
\end{equation}
For $q=0$ this is just $\bV_{\sigma}(g)$, so by Theorem \ref{theorem:borelstability}
we have
\[E_2^{p0} = \HH^p(\Sp_{2g}(\Z);\bV_{\sigma}(g)) = 0 \quad \text{for $p \leq g-1$}.\]
For $q=1$, letting $H = \HH_1(\Sigma_g^1;\Q)$ we can apply Theorem \ref{theorem:torelliabel}
to see that \eqref{eqn:torellitwisted} equals
\[\Hom(\HH^1(\Torelli_g^1;\Q),\bV_{\sigma}(g)) = \Hom(\wedge^3 H,\bV_{\sigma}(g)).\]
This decomposes as a direct sum of irreducible representations of $\Sp_{2g}$.  Moreover,
since $\wedge^3 H = \bV_1(g) \oplus \bV_{1^3}(g)$ we can use our assumption
that $\bV_{\sigma}(g) \notin \{\bV_1(g),\bV_{1^3}(g)\}$ to see that it has
no trivial factors.  We can therefore apply Theorem \ref{theorem:borelstability} to see
that
\[E_2^{p1} = \HH^p(\Sp_{2g}(\Z);\Hom(\wedge^3 H,\bV_{\sigma}(g))) = 0 \quad \text{for $p \leq g-1$}.\]
Finally,
\[E_2^{02} = \HH^0(\Sp_{2g}(\Z);\Hom(\HH^2(\Torelli_g^1;\Q),\bV_{\sigma}(g))) \cong \Hom_{\Sp_{2g}}(\HH^2(\Torelli_g^1;\Q),\bV_{\sigma}(g)).\]
It follows that the dimension of $E_2^{02}$ equals the number of copies of 
$\bV_{\sigma}(g)$ in $\HH^2(\Torelli_g^1;\Q)$.
Summarizing, since $g \geq 12$ the $E_2$-page of our spectral sequence takes the form
\begin{center}
\begin{tblr}{|cccc}
$E_2^{02}$ &     &     & \\
$0$        & $0$ & $0$ & \\
$0$        & $0$ & $0$ & $0$\\
\hline
\end{tblr}
\end{center}
It follows that
\[\HH^{2}(\Mod_g^1;\bV_{\sigma}(g)) \cong E_{\infty}^{02} = E_2^{02}.\]
The dimension of this equals the number of copies of $\bV_{\sigma}(g)$ in $\HH^2(\Torelli_g^1;\Q)$, as desired.

\begin{case}{2}
$\bV_{\sigma}(g) \in \{\bV_1(g),\bV_{1^3}(g)\}$.
\end{case}

The only difference between this and the previous case is that now
$\Hom(\wedge^3 H,\bV_{\sigma}(g))$ has a $1$-dimensional trivial representation.  Applying
Theorem \ref{theorem:borelstability}, we therefore get that
\[E_2^{p1} = \HH^p(\Sp_{2g}(\Z);\Hom(\wedge^3 H,\bV_{\sigma}(g))) = 
\begin{cases}
\Q & \text{if $p=0,2$},\\
0  & \text{if $p=1$}.
\end{cases}\]
Our spectral sequence therefore takes the form
\begin{center}
\begin{tblr}{|cccc}
$E_2^{02}$ &     &      & \\
$\Q$       & $0$ & $\Q$ & \\
$0$        & $0$ & $0$  & $0$\\
\hline
\end{tblr}
\end{center}
Since the dimension of $E_2^{02}$ is the number of copies of $\bV_{\sigma}(g)$ in $\HH^2(\Torelli_g^1;\Q)$,
to prove the lemma we need to show that the differential
\[d\colon E_2^{02} \rightarrow E_2^{21} = \Q\]
is $0$.  We will sketch a proof of this below, but first we want to point
out that a much more general vanishing holds.
For this, we refer the reader to Kupers--Randal-Williams's argument
in the proof of \cite[Theorem 4.1]{KupersRandalWilliams}, which uses an elegant
criterion to certify that differentials in these kinds of spectral sequences vanish
(see \cite[Lemma 4.3]{KupersRandalWilliams}).\footnote{The proof of \cite[Theorem 4.1]{KupersRandalWilliams}
shows that the relevant differentials vanish with coefficients in tensor powers
$H^{\otimes d}$, not in $V_{\sigma}(g)$.  Since $V_{\sigma}(g)$ is a factor of
an appropriate $H^{\otimes d}$, this gives the desired result.  In fact, since
we only care about $\bV_{\sigma}(g) \in \{\bV_1(g),\bV_{1^3}(g)\}$ it is enough
to handle $H^{\otimes 3}$.}

Here is a sketch of Kupers--Randal-Williams's argument specialized to this
case.  What we need to show is that the $\Q$ in $E_2^{21}$ survives
to give a $\Q$ in $\HH^3(\Mod_g^1;V_{\sigma}(g))$.  Since $\bV_{\sigma}(g) \in \{\bV_1(g),\bV_{1^3}(g)\}$,
the irreducible representation $V_{\sigma}(g)$ is a factor of $H^{\otimes 3}$.  This
implies that $\HH^3(\Mod_g^1;V_{\sigma}(g))$ is naturally a direct
factor of $\HH^3(\Mod_g^1;H^{\otimes 3})$.  Let
\[M = \bigoplus_{k \geq 0} \HH^k(\Mod_g^1;H^{\otimes 3}).\]
The vector space $M$ is a graded module over the graded ring
$R=\HH^{\bullet}(\Mod_g^1;\Q)$.  Madsen--Weiss \cite{MadsenWeiss} proved that
$R$ is isomorphic to a polynomial ring $\Q[\kappa_1,\kappa_2,\ldots]$ with
$|\kappa_i| = 2i$ in a range of degrees that tends to infinity
as $g \mapsto \infty$.  

As we will discuss more detail
in \S \ref{section:cupspanboundary} below, Kawazumi \cite{KawazumiTwisted}
proved that in a range of degrees $M$ is a free module over $R$.
Our hypothesis $g \geq 12$ implies that every term we care about lies in this
stable range, so we can treat $M$ as a free module over $R$.
In the spectral sequence above, the $\Q$ in $E_2^{01}$ necessarily
survives to give a $\Q$ in
\[\HH^1(\Mod_g^1;V_{\sigma}(g)) \subset \HH^1(\Mod_g^1;H^{\otimes 3}) \subset M.\]
Let $b \in M$ generate this copy of $\Q$.  For degree reasons,
$b$ must be a scalar multiple of one of Kawazumi's free generators.  The $\Q$ in
\[\HH^3(\Mod_g^1;V_{\sigma}(g)) \subset \HH^3(\Mod_g^1;H^{\otimes 3}) \subset M\]
we are trying to show survives is spanned by\footnote{Really, the product
structure in this spectral sequence comes from an action of $\HH^{\bullet}(\Sp_{2g}(\Z);\Q)$,
but we can use $\kappa_1$ since $\kappa_1$ is a scalar multiple of the pullback of the generator
$c_2 \in \HH^2(\Sp_{2g}(\Z);\Q)$ from Theorem \ref{theorem:borelstability}.} $\kappa_1 b$, which is nonzero
since $b$ is a free generator.
\end{proof}

\subsection{Uniform twisted stability}

Just like we did in \S \ref{section:repstability}, embed $\Sigma_{g}^1$ into
$\Sigma_{g+1}^1$ as follows:\\
\Figure{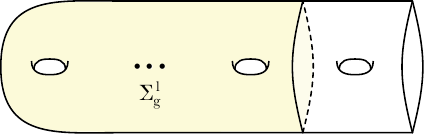}
By extending mapping classes on $\Sigma_{g}^1$ to $\Sigma_{g+1}^1$ by the identity, we get an inclusion
$\Mod_{g}^1 \hookrightarrow \Mod_{g+1}^1$.  This induces a stabilization map
$\HH_k(\Mod_g^1) \rightarrow \HH_k(\Mod_{g+1}^1)$.  A classical theorem of Harer \cite{HarerStability}
says that this is an isomorphism for $g \gg k$.  Later Ivanov \cite{IvanovStability} showed how
to incorporate twisted coefficients into this kind of stability result.  See \cite{RandalWilliamsWahl}
for a modern treatment of this and its many generalizations.

For twisted coefficients, all of these classical results gives stable ranges that depend
strongly on the coefficients.  Recently Miller--Patzt--Petersen--Randal-Williams \cite{MPPRW}
proved a theorem whose range is independent of the coefficients.  To set it up,
for some $g \geq 1$ let $\sigma$ be a partition with at most $g$ parts.
We then have the irreducible representation $\bV_{\sigma}(g)$ of $\Sp_{2g}$.  There
is a natural map $\bV_{\sigma}(g) \rightarrow \bV_{\sigma}(g+1)$ (cf.\ the branching
rule discussed in Theorem \ref{theorem:spbranch}).  This gives a stabilization map
$\HH_k(\Mod_g^1;\bV_{\sigma}(g)) \rightarrow \HH_k(\Mod_{g+1}^1;\bV_{\sigma}(g+1))$.

The paper \cite{MPPRW} proves the following result:\footnote{The statement of
\cite[Theorem 1.1]{MPPRW} gives a range in which the relative twisted homology
group $\HH_d(\Mod_{g+1}^1,\Mod_g^1;\bV_{\sigma}(g+1),\bV_{\sigma}(g))$ vanishes.
To derive stability for $\HH_k$ we need this relative twisted homology group
to vanish for $k \leq d+1$.  This explains the range given in our statement.}

\begin{theorem}[{Miller--Patzt--Petersen--Randal-Williams \cite[Theorem 1.1]{MPPRW}}]
\label{theorem:mpprw}
Let $g \geq 1$ and let $\sigma$ be a partition with $m \leq g$ parts.
Then the stabilization map $\HH_k(\Mod_g^1;\bV_{\sigma}(g)) \rightarrow \HH_k(\Mod_{g+1}^1;\bV_{\sigma}(g+1))$ is an
isomorphism for $g \geq 4k+4$.
\end{theorem}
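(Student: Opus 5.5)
The plan is to deduce this directly from the relative vanishing statement in \cite[Theorem 1.1]{MPPRW}, as indicated in the footnote preceding the statement. There is no new geometric input; the work is bookkeeping the range and matching conventions. Write $\HH_d^{\text{rel}} = \HH_d(\Mod_{g+1}^1,\Mod_g^1;\bV_{\sigma}(g+1),\bV_{\sigma}(g))$ for the relative twisted homology. This group is defined using the $\Mod_g^1$-equivariant map $\bV_{\sigma}(g) \rightarrow \bV_{\sigma}(g+1)$ together with the inclusion $\Mod_g^1 \hookrightarrow \Mod_{g+1}^1$ coming from our embedding $\Sigma_g^1 \hookrightarrow \Sigma_{g+1}^1$.

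First I would write down the long exact sequence of the pair:
\[\HH_{k+1}^{\text{rel}} \longrightarrow \HH_k(\Mod_g^1;\bV_{\sigma}(g)) \longrightarrow \HH_k(\Mod_{g+1}^1;\bV_{\sigma}(g+1)) \longrightarrow \HH_k^{\text{rel}}.\]
The middle arrow is the stabilization map. By exactness, it is an isomorphism as soon as both $\HH_k^{\text{rel}}$ and $\HH_{k+1}^{\text{rel}}$ vanish. Hence it suffices to know that $\HH_d^{\text{rel}} = 0$ for all $d \leq k+1$.

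Second, I would invoke \cite[Theorem 1.1]{MPPRW}. The feature that matters is that its vanishing range is uniform in $\sigma$, i.e.\ independent of the coefficient system. I expect it to give $\HH_d^{\text{rel}} = 0$ whenever $d$ is at most roughly $g/4$, say for $4d \leq g$ after translating their indexing to ours. Taking $d = k+1$, the hypothesis $g \geq 4k+4$ is exactly $4(k+1) \leq g$. Therefore all $d \leq k+1$ lie in the vanishing range, and the long exact sequence finishes the argument.

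The main obstacle is the translation of conventions rather than any mathematics. I would check three things.
\begin{enumerate}
\item The genus parameter in \cite{MPPRW} is the genus of the smaller surface $\Sigma_g^1$, or else adjust the inequality by one.
\item Their coefficient systems $\bV_{\sigma}$ (possibly stated for $H^{\otimes d}$ or for Schur functors of $H$) restrict to our irreducibles. Since $\bV_{\sigma}(g)$ is a natural direct summand of a Schur functor applied to $H$, compatibly with stabilization, the vanishing passes to summands.
\item Their stabilization map agrees with ours up to conjugation in $\Mod_{g+1}^1$, which induces an isomorphism on homology.
\end{enumerate}
If their range is stated with a slightly different offset, I would restate the theorem's bound accordingly. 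The constant $4k+4$ is exactly what produces the threshold $g \geq 12$ for $k=2$ used later in the paper.
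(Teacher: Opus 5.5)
Your proposal is correct and follows the same approach as the paper. The paper also just cites \cite[Theorem 1.1]{MPPRW} for the vanishing of the relative group $\HH_d(\Mod_{g+1}^1,\Mod_g^1;\bV_{\sigma}(g+1),\bV_{\sigma}(g))$, and its footnote uses the long exact sequence of the pair exactly as you do, requiring vanishing for $d \leq k+1$ (your indexing is the correct reading of the footnote's condition, which is misprinted as ``$k \leq d+1$''), so that $g \geq 4k+4 = 4(k+1)$ is the intended translation of their range with $g$ the genus of the smaller surface.
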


\subsection{Uniform degree bounds}

We now prove the main result of this section:

\begin{proposition}
\label{proposition:bounddegreesuniform}
For $g \geq 12$, the following holds:
\begin{itemize} 
\item The $\Sp_{2g}(\Z)$-representation $\HH^2(\Torelli_g^1;\Q)$ is a finite-dimensional
algebraic representation of $\Sp_{2g}$ all of whose irreducible factors have degree
at most $6$.
\end{itemize}
\end{proposition}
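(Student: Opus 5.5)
The plan is to transport the non-effective bound of Proposition \ref{proposition:bounddegrees} down to $g \geq 12$. The bridge is Lemma \ref{lemma:h2torellitwisted}, which converts multiplicities of irreducibles in $\HH^2(\Torelli_g^1;\Q)$ into dimensions of twisted cohomology groups of $\Mod_g^1$. The uniform stability of Theorem \ref{theorem:mpprw} then shows these dimensions do not depend on $g$ once $g \geq 12$.

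Fix $g \geq 12$. By Theorem \ref{maintheorem:h2finite}, $\HH^2(\Torelli_g^1;\Q)$ is a finite-dimensional algebraic representation, so it splits into irreducibles $\bV_{\sigma}(g)$. Trivial factors do not occur by Lemma \ref{lemma:h2torellitrivial}, so they have degree $0 \leq 6$ in any case. Suppose some nontrivial $\bV_{\sigma}(g)$ occurs, with $\sigma$ of degree $d$. We must show $d \leq 6$.

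By Lemma \ref{lemma:h2torellitwisted}, $\HH^2(\Mod_g^1;\bV_{\sigma}(g)) \neq 0$. Dualize via the universal coefficient theorem over $\Q$, using that $\bV_{\sigma}(g)$ is self-dual because every representation of $\Sp_{2g}$ is. This gives $\HH_2(\Mod_g^1;\bV_{\sigma}(g)) \neq 0$.

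Now apply Theorem \ref{theorem:mpprw} with $k=2$. The threshold is $4k+4 = 12$, and the partition $\sigma$ has at most $g$ parts. So the stabilization maps
\[\HH_2(\Mod_{g'}^1;\bV_{\sigma}(g')) \rightarrow \HH_2(\Mod_{g'+1}^1;\bV_{\sigma}(g'+1))\]
are isomorphisms for all $g' \geq g$. Hence $\HH_2(\Mod_{g'}^1;\bV_{\sigma}(g')) \neq 0$ for every $g' \geq g$. Dualizing back and applying Lemma \ref{lemma:h2torellitwisted} at genus $g'$ (valid since $g' \geq 12$), we find that $\bV_{\sigma}(g')$ occurs in $\HH^2(\Torelli_{g'}^1;\Q)$ for all $g' \geq g$. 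Take $g' \geq \max(g,G)$, where $G$ is the constant of Proposition \ref{proposition:bounddegrees}. That proposition then forces $d \leq 6$.

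The main obstacle is bookkeeping rather than new mathematics. One check is that the same partition $\sigma$ labels the irreducible at every genus, with stabilization compatible with the maps $\bV_{\sigma}(g) \to \bV_{\sigma}(g+1)$; this holds because $\sigma$ has at most $g \leq g'$ parts. The other is that the hypotheses of Lemma \ref{lemma:h2torellitwisted} and Theorem \ref{theorem:mpprw} hold at every genus used. The homology/cohomology dualization is harmless over $\Q$ because all the twisted groups are finite-dimensional in this range.
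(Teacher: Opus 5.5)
Your proposal is correct and follows essentially the same route as the paper: Lemma~\ref{lemma:h2torellitwisted} turns the multiplicity of $\bV_{\sigma}(g)$ into $\dim \HH^2(\Mod_g^1;\bV_{\sigma}(g))$, Theorem~\ref{theorem:mpprw} with $k=2$ carries this up to a genus beyond the constant $G$, and Proposition~\ref{proposition:bounddegrees} then forces the degree to be at most $6$. The paper phrases it directly as a vanishing statement rather than by contrapositive. Your explicit passage to homology, using self-duality of $\bV_{\sigma}(g)$, is there to match the homological form of Theorem~\ref{theorem:mpprw}, a step the paper passes over silently.
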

\begin{proof}
Theorem \ref{maintheorem:h2finite} implies that $\HH^2(\Torelli_g^1;\Q)$ is a finite-dimensional
algebraic representation of $\Sp_{2g}$ in this range.  Moreover,
Proposition \ref{proposition:bounddegrees} says that there is some $G \geq 1$
such that for $g \geq G$ each irreducible factor of $\HH^2(\Torelli_g^1;\Q)$ has
degree at most $6$.  We must verify that this propagates down to $g \geq 12$.

Consider some $12 \leq g \leq G$.  Let $\bV_{\sigma}(g)$ be an irreducible
representation of $\Sp_{2g}$ of degree greater than $6$.  Lemma \ref{lemma:h2torellitwisted}
implies that the dimension of $\HH^2(\Mod_g^1;\bV_{\sigma}(g))$ equals the number
of copies of $\bV_{\sigma}(g)$ in $\HH^2(\Torelli_g^1;\Q)$.  We therefore must
prove that $\HH^2(\Mod_g^1;\bV_{\sigma}(g)) = 0$.  Theorem \ref{theorem:mpprw} implies
that $\HH^2(\Mod_g^1;\bV_{\sigma}(g))$ is isomorphic to $\HH^2(\Mod_G^1;\bV_{\sigma}(G))$,
which vanishes since all irreducible factors of $\HH^2(\Torelli_G^1;\Q)$ have degree
at most $6$.  The proposition follows.
\end{proof}

\section{Cup products span on surfaces with boundary}
\label{section:cupspanboundary}

Recall that Theorem \ref{maintheorem:h2calc} says that for $g \geq 12$ the second rational cohomology
of the Torelli group is spanned by cup products of classes in $\HH^1$.  This section
proves this for $\Torelli_g^1$.

\subsection{Detecting the decomposition}

Let $H = \HH_1(\Sigma_g^1;\Q)$.  Our proof uses work of Kawazumi \cite{KawazumiTwisted}
describing $\HH^{\bullet}(\Mod_g^1;H^{\otimes d})$ in a stable range.  We
start by recasting our results from the previous section in these terms:

\begin{lemma}
\label{lemma:h2torellihd}
Fix $g \geq 12$ and $d \geq 1$.  Set $H = \HH_1(\Sigma_g^1;\Q)$.  Let $t \geq 0$
be the dimension of the trivial subrepresentation of $H^{\otimes d}$.  We then have
\[\dim_{\Q} \HH^2(\Mod_g^1;H^{\otimes d}) = t + \dim_{\Q} \Hom_{\Sp_{2g}}(\HH^2(\Torelli_g^1;\Q),H^{\otimes d}).\]
\end{lemma}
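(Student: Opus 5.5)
We decompose $H^{\otimes d}$ into irreducible $\Sp_{2g}$-representations, compute $\HH^2(\Mod_g^1;-)$ summand by summand using Theorem \ref{theorem:harerh2} and Lemma \ref{lemma:h2torellitwisted}, and then reassemble the answer. Write
\[H^{\otimes d} \cong \bV_0(g)^{\oplus t} \oplus \bigoplus_{\sigma \neq 0} \bV_{\sigma}(g)^{\oplus m_{\sigma}},\]
where $t$ is the dimension of the trivial subrepresentation and $m_{\sigma}$ is the multiplicity of $\bV_{\sigma}(g)$. Twisted cohomology is additive in the coefficients, so
\[\HH^2(\Mod_g^1;H^{\otimes d}) \cong \HH^2(\Mod_g^1;\Q)^{\oplus t} \oplus \bigoplus_{\sigma \neq 0} \HH^2(\Mod_g^1;\bV_{\sigma}(g))^{\oplus m_{\sigma}}.\]

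For the trivial summands, Theorem \ref{theorem:harerh2} gives $\HH^2(\Mod_g^1;\Q) = \Q$ since $g \geq 3$. These summands therefore contribute exactly $t$ to the dimension.

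For each nontrivial $\bV_{\sigma}(g)$, Lemma \ref{lemma:h2torellitwisted} applies because $g \geq 12$. It says that $\dim \HH^2(\Mod_g^1;\bV_{\sigma}(g))$ equals the multiplicity $n_{\sigma}$ of $\bV_{\sigma}(g)$ in $\HH^2(\Torelli_g^1;\Q)$. The nontrivial part therefore contributes $\sum_{\sigma \neq 0} m_{\sigma} n_{\sigma}$.

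We now compare this with $\Hom_{\Sp_{2g}}(\HH^2(\Torelli_g^1;\Q),H^{\otimes d})$. By Theorem \ref{maintheorem:h2finite}, $\HH^2(\Torelli_g^1;\Q)$ is a finite-dimensional algebraic representation, so it is semisimple. Schur's lemma then gives
\[\dim \Hom_{\Sp_{2g}}(\HH^2(\Torelli_g^1;\Q),H^{\otimes d}) = \sum_{\sigma} n_{\sigma} m_{\sigma}.\]
Lemma \ref{lemma:h2torellitrivial} shows that $\HH^2(\Torelli_g^1;\Q)$ has no trivial factors, so $n_0 = 0$. The sum is therefore exactly the nontrivial contribution computed above, and adding the $t$ from the trivial summands gives the claimed formula.

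There is no real obstacle: the substantive inputs are Lemma \ref{lemma:h2torellitwisted}, including its case where the coefficient is $\bV_1(g)$ or $\bV_{1^3}(g)$, and the vanishing of trivial factors from Lemma \ref{lemma:h2torellitrivial}. The only points to check are that both the cohomology groups and the $\Hom$ are additive in the coefficients, and that these results are being applied in the range $g \geq 12$.
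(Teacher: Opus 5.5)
Your proposal is correct and is essentially the paper's proof. Like the paper, you split $H^{\otimes d}$ into $\bV_0(g)^{\oplus t}$ plus a nontrivial part, handle the former with Theorem \ref{theorem:harerh2} and the latter with Lemma \ref{lemma:h2torellitwisted} and Schur's lemma, and then use Lemma \ref{lemma:h2torellitrivial} to add the trivial summands to the $\Hom$ at no cost. The only difference is cosmetic: you track the multiplicities $m_\sigma n_\sigma$ explicitly, while the paper lumps all nontrivial summands into a single $V$.
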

\begin{proof}
Write $H^{\otimes d} = \bV_{0}(g)^{\oplus t} \oplus V$ with $V$ a direct sum of nontrivial
irreducible representations of $\Sp_{2g}$.  We have
\[\HH^2(\Mod_g^1;H^{\otimes d}) = \HH^2(\Mod_g^1;\bV_0(g))^{\oplus t} \oplus \HH^2(\Mod_g^1;V).\]
Theorem \ref{theorem:harerh2} says that
\[\dim_{\Q} \HH^2(\Mod_g^1;\bV_0(g))^{\oplus t} = t,\]
and Lemma \ref{lemma:h2torellitwisted} together with Schur's Lemma says that
\[\dim_{\Q} \HH^2(\Mod_g^1;V) = \dim_{\Q} \Hom_{\Sp_{2g}}(\HH^2(\Torelli_g^1;\Q),V).\]
Since $\HH^2(\Torelli_g^1;\Q)$ has no trivial subrepresentations (Lemma \ref{lemma:h2torellitrivial}), this
is the same as
\[\dim_{\Q} \Hom_{\Sp_{2g}}(\HH^2(\Torelli_g^1;\Q),\bV_0(g)^{\oplus t} \oplus V) = \dim_{\Q} \Hom_{\Sp_{2g}}(\HH^2(\Torelli_g^1;\Q),H^{\otimes d}).\]
The lemma follows.
\end{proof}

\subsection{Weighted partitions}

We now turn to Kawazumi's description of $\HH^{\bullet}(\Mod_g^1;H^{\otimes d})$.  This
requires a preliminary definition.  A {\em weighted partition} of $\{1,\ldots,d\}$
consists of the following data:
\begin{itemize}
\item For some $n \geq 1$, a decomposition $\{1,\ldots,d\} = S_1 \sqcup \cdots \sqcup S_n$
where the $S_i$ are disjoint nonempty sets.
\item For each $1 \leq i \leq n$, a weight $w_i \geq 0$.  If $|S_i| = 1$, then
we require $w_i \geq 1$.
\end{itemize}
The ordering on the $S_i$ is not important, so we identify two weighted partitions
if they differ by a permutation of the $S_i$.  Let $\cP_d$ be the set of all weighted
partitions of $\{1,\ldots,d\}$.

\subsection{Twisted Morita--Mumford classes}

Consider some $P \in \cP_d$.  Define $k(P) \geq 0$ in the following way.
Assume that $P$ consists of the decomposition $\{1,\ldots,d\} = S_1 \sqcup \cdots \sqcup S_n$
and the weights $w_1,\ldots,w_n \geq 0$.  We then set\footnote{To see that $k(P) \geq 0$, note
that it can also be written as $k(P) = 2\sum_{i=1}^n (|S_i|/2+w_i-1)$.  Our condition on
the weights ensures that each term of this sum is nonnegative.}
\[k(P) = d + 2\sum_{i=1}^n (w_i-1) \geq 0.\]
This only depends on the weights, not the decomposition.  Kawazumi \cite[p.\ 388]{KawazumiTwisted}
defines an associated twisted Morita--Mumford class $m_P \in \HH^{k(P)}(\Mod_g^1;H^{\otimes d})$.
It is a cup product of classes $m_{S_i,w_i} \in \HH^{|S_i|+2(w_i-1)}(\Mod_g^1;H^{\otimes |S_i|})$,
where the cup product uses the product map
\[H^{\otimes |S_1|} \otimes \cdots \otimes H^{\otimes |S_n|} \stackrel{\cong}{\longrightarrow} H^{\otimes d}\]
coming from the decomposition $S_1 \sqcup \cdots \sqcup S_n = \{1,\ldots,d\}$.

\subsection{Kawazumi's work}

For a fixed $d \geq 0$, the collection of cohomology groups $M_d = \HH^{\bullet}(\Mod_g^1;H^{\otimes d})$
is a graded module over the cohomology ring $R = \HH^{\bullet}(\Mod_g^1;\Q)$.  Madsen--Weiss \cite{MadsenWeiss} proved that
$R$ is isomorphic to a polynomial ring $\Q[\kappa_1,\kappa_2,\ldots]$ with
$|\kappa_i| = 2i$ in a range of degrees that tends to infinity as $g \mapsto \infty$.  Kawazumi \cite{KawazumiTwisted}
proved that $M_d$ is a free $R$-module in a range of degrees.  

A version of his
theorem is as follows.  The stable range in it is not the one in Kawazumi's paper, but
it follows from Theorem \ref{theorem:mpprw} together with the fact that the decomposition
of $H^{\otimes d}$ into irreducible factors is stable once $g \geq d$ (see \S \ref{section:stablesp}).

\begin{theorem}[{Kawazumi, \cite[Theorem 1.B]{KawazumiTwisted}}]
\label{theorem:kawazumi}
Let $d,g \geq 0$ be such that $g \geq d$.  Then in degrees $k$ such that $g \geq 4k+4$,
we have that $\HH^{\bullet}(\Mod_g^1;H^{\otimes d})$ is a free graded
module over $\HH^{\bullet}(\Mod_g^1;\Q)$ with free basis 
the twisted Morita--Mumford classes $\Set{$m_P$}{$P \in \cP_d$}$.
\end{theorem}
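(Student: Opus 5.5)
The plan is to deduce the stated range from Kawazumi's original theorem by a comparison argument. Kawazumi's own stable range may depend on $d$, but Theorem \ref{theorem:mpprw} will move it down to the uniform range $g \geq 4k+4$. Fix $d$, a degree $k$, and $g$ with $g \geq d$ and $g \geq 4k+4$. Choose $G \geq g$ so large that both of the following hold at genus $G$ in all degrees $\leq k$:
\begin{itemize}
\item[(a)] the original statement of \cite[Theorem 1.B]{KawazumiTwisted} holds, i.e.\ $\HH^{\bullet}(\Mod_G^1;H(G)^{\otimes d})$ is free over $\HH^{\bullet}(\Mod_G^1;\Q)$ on the classes $m_P$;
\item[(b)] Madsen--Weiss holds, i.e.\ $\HH^{\bullet}(\Mod_G^1;\Q) \cong \Q[\kappa_1,\kappa_2,\ldots]$.
\end{itemize}
The goal is then to show that the iterated stabilization map from genus $G$ to genus $g$ is an isomorphism in degrees $\leq k$ which is compatible with all the structure in sight.

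First I would set up the comparison on coefficients. Since $g \geq d$, the decomposition $H(g)^{\otimes d} \cong \bigoplus_i \bV_{\sigma_i}(g)$ is stable (\S \ref{section:stablesp}), so $H(G)^{\otimes d} \cong \bigoplus_i \bV_{\sigma_i}(G)$ with the same partitions $\sigma_i$. Moreover, the natural map $H(g)^{\otimes d} \to H(G)^{\otimes d}$ carries each factor $\bV_{\sigma_i}(g)$ into the corresponding factor via the maps $\bV_{\sigma}(g) \to \bV_{\sigma}(G)$. Iterating Theorem \ref{theorem:mpprw} through all intermediate genera (each is $\geq g \geq 4k+4$) gives isomorphisms
\[\HH_j(\Mod_g^1;\bV_{\sigma_i}(g)) \cong \HH_j(\Mod_G^1;\bV_{\sigma_i}(G)) \quad \text{for all } j \leq k.\]
Dualizing over $\Q$ (these groups are finite-dimensional, and the $\bV_\sigma$ are self-dual) gives the corresponding isomorphisms in cohomology. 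Summing over $i$ yields
\[\HH^j(\Mod_G^1;H(G)^{\otimes d}) \cong \HH^j(\Mod_g^1;H(g)^{\otimes d}) \quad \text{for } j \leq k,\]
and this isomorphism is induced by restriction along $\Mod_g^1 \hookrightarrow \Mod_G^1$ together with the coefficient map. The case $d=0$ gives the same statement for the ring $R = \HH^{\bullet}(\Mod^1;\Q)$.

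Next I would check naturality. Restriction is compatible with cup products, so the comparison is a map of graded modules over the comparison map of rings. The classes $\kappa_i$ and the twisted Morita--Mumford classes $m_P$ are defined via the universal $\Sigma^1$-bundle (fiber integrals of powers of the Euler class and of the class in $\HH^1(-;H)$). Since the bundle over $B\Mod_g^1$ is obtained from the one over $B\Mod_G^1$ by restriction plus stabilization of the fiber, these classes pull back to the corresponding classes at genus $g$. Granting this, the degree-$j$ piece at genus $G$ has basis the monomials $\kappa^{\alpha} m_P$ of total degree $j$ (by (a) and (b)). The isomorphism carries this basis to the corresponding monomials at genus $g$, which therefore form a basis in every degree $j \leq k$. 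This is exactly freeness with basis $\{m_P\}$ in that range.

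The main obstacle is this naturality bookkeeping. It requires that the stabilization on coefficients used in Theorem \ref{theorem:mpprw} agrees with the map induced by $H(g) \hookrightarrow H(G)$ on the isotypic pieces of $H^{\otimes d}$, and that $m_P$ is stable under this map. I would handle the first point by observing that both maps are $\Sp_{2g}$-equivariant and nonzero on highest weight vectors. I would handle the second point directly from Kawazumi's fiber-integral definition of $m_{S_i,w_i}$.
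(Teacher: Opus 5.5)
Your strategy matches what the paper intends. Its only justification is that the uniform range follows from Theorem~\ref{theorem:mpprw} together with stability of the decomposition of $H^{\otimes d}$ for $g \geq d$, applied to Kawazumi's theorem at large genus. However, the step that lets you transport the specific basis $\{\kappa^{\alpha} m_P\}$ rests on a false claim.

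The natural map $H(g)^{\otimes d} \to H(G)^{\otimes d}$ does \emph{not} carry each factor $\bV_{\sigma_i}(g)$ into the corresponding factor $\bV_{\sigma_i}(G)$. Already for $d=2$, the trivial factor of $H(g)^{\otimes 2}$ is spanned by $\omega_g = \sum_{i \leq g} (a_i \otimes b_i - b_i \otimes a_i)$. Its image in $\wedge^2 H(G) = \bV_{1^2}(G) \oplus \bV_0(G)$ is not $\Sp_{2G}$-invariant, so it has a nonzero $\bV_{1^2}(G)$-component. The coefficient map that actually accompanies restriction in cohomology is the symplectic projection $H(G)^{\otimes d} \to H(g)^{\otimes d}$; this is the map under which the $m_P$ are natural (e.g.\ $\omega_G \mapsto \omega_g$). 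It fails in the other direction: $a_1 \wedge b_1 - a_G \wedge b_G \in \bV_{1^2}(G)$ maps to $a_1 \wedge b_1$, which has a nonzero trivial component.

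Hence the direct sum of the isomorphisms from Theorem~\ref{theorem:mpprw} is not the restriction map. Only the restriction map is known to send $\kappa^{\alpha} m_P$ to $\kappa^{\alpha} m_P$. Your proposed remedy (both maps are equivariant and nonzero on highest weight vectors) does not address this. It would not suffice even inside one isotypic piece: if $\bV_\sigma$ has multiplicity $m_\sigma > 1$ in $H^{\otimes d}$, an equivariant map between $\sigma$-isotypic pieces has the form $\phi_\sigma \otimes A_\sigma$, with $A_\sigma$ an arbitrary linear map of $m_\sigma$-dimensional multiplicity spaces.

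The gap can be closed as follows.
\begin{enumerate}
\item By the branching rule (Theorem~\ref{theorem:spbranch}, iterated, in its form with multiplicities), $\bV_\tau(g)$ occurs in $\Res^{\Sp_{2G}}_{\Sp_{2g}} \bV_\sigma(G)$ only if $\tau_i \leq \sigma_i$ for all $i$, and $\bV_\sigma(g)$ occurs there exactly once.
\item So the natural map preserves the equivariantly split filtration of $H^{\otimes d}$ by $|\sigma|$. The induced map on $\HH_j$ is therefore block triangular, with diagonal blocks $\HH_j(\phi_\sigma) \otimes A_\sigma$. Here $\phi_\sigma$ is a nonzero multiple of the iterated coefficient map of Theorem~\ref{theorem:mpprw}, and $A_\sigma$ is the induced map on multiplicity spaces.
\item It remains to show each $A_\sigma$ is invertible for $g \geq d$. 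Fix a level $|\sigma| = d-2r$. The $r$-fold contractions $H^{\otimes d} \to H^{\otimes (d-2r)}$ commute with the natural map and kill every factor with $|\sigma| > d-2r$. For $g \geq d$ they are also jointly injective on the $|\sigma| = d-2r$ part, by nondegeneracy of the relevant Brauer-type Gram matrix.
\item Combined with triangularity, this shows the diagonal block at level $d-2r$ is injective, so each $A_\sigma$ is bijective.
\end{enumerate}
Then restriction is an isomorphism in degrees $\leq k$, and your naturality argument for $\kappa_i$ and $m_P$ finishes the proof.

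Even without this repair, your argument gives something useful. Each summand of your abstract isomorphism is restriction followed by an equivariant coefficient map, so it commutes with cup product by the $\kappa_i$. Hence $\HH^{\bullet}(\Mod_g^1;H^{\otimes d})$ is free in the range, on generators in bijection with $\cP_d$ and sitting in degrees $k(P)$. That weaker statement is all the paper uses downstream (dimension counts and freeness). It is also all that the paper's one-sentence justification directly yields.
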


\begin{remark}
Before Kawazumi's work, Looijenga \cite{LooijengaTwisted} used Hodge theory to prove
a similar theorem for the mapping class group $\Mod_g$ of a closed surface.  The
proofs in \cite{LooijengaTwisted} and \cite{KawazumiTwisted} are quite different.
\end{remark}

\subsection{Proof of main theorem}

We close this section by proving Theorem \ref{maintheorem:h2calc} for surfaces with boundary:

\newtheorem*{maintheorem:h2calcboundary}{Theorem \ref{maintheorem:h2calc} (surface with boundary case)}
\begin{maintheorem:h2calcboundary}
Let $g \geq 12$.  The image of
the cup product pairing $\fc\colon \wedge^2 \HH^1(\Torelli_g^1;\Q) \rightarrow \HH^2(\Torelli_g^1;\Q)$
spans $\HH^2(\Torelli_g^1;\Q)$.
\end{maintheorem:h2calcboundary}
\begin{proof}
Proposition \ref{proposition:bounddegreesuniform} says that $\HH^2(\Torelli_g^1;\Q)$ is a
finite-dimensional algebraic representation of $\Sp_{2g}$ all of whose irreducible factors
have degree at most $6$.  Letting $H = \HH_1(\Sigma_g^1;\Q)$, this implies that all
irreducible factors of $\HH^2(\Torelli_g^1;\Q)$ appear in $H^{\otimes d}$ for
some $1 \leq d \leq 6$.  We proved in Theorem \ref{maintheorem:hain} that the image
of the cup product pairing is isomorphic to
\[\cV_{\cp} = \bV_{1^2}(g)^{\oplus 2} \oplus \bV_{2,1^2}(g) \oplus \bV_{1^4}(g)^{\oplus 2} \oplus \bV_{2^2,1^2}(g) \oplus \bV_{1^6}(g).\]
This appears as a direct summand of $\HH^2(\Torelli_g^1;\Q)$.  To prove that
this is all of $\HH^2(\Torelli_g^1;\Q)$, it is enough to check that it accounts
for all of $\Hom_{\Sp_{2g}}(\HH^2(\Torelli_g^1;\Q),H^{\otimes d})$ for $1 \leq d \leq 6$.
In other words, we must prove that
\[\dim_{\Q} \Hom_{\Sp_{2g}}(\HH^2(\Torelli_g^1;\Q),H^{\otimes d}) = \dim_{\Q} \Hom_{\Sp_{2g}}(\cV_{\cp},H^{\otimes d})\]
for $1 \leq d \leq 6$.  Let $t_d$ be the dimension of the trivial subrepresentation of $H^{\otimes d}$.
By Lemma \ref{lemma:h2torellihd}, we have
\[\dim_{\Q} \HH^2(\Mod_g^1;H^{\otimes d}) = t_d + \dim_{\Q} \Hom_{\Sp_{2g}}(\HH^2(\Torelli_g^1;\Q),H^{\otimes d}).\]
Combining all of this, we see that we must prove the following:

\begin{unnumberedclaim}
For $1 \leq d \leq 6$, we have
$\dim_{\Q} \HH^2(\Mod_g^1;H^{\otimes d}) = t_d + \dim_{\Q} \Hom_{\Sp_{2g}}(\cV_{\cp},H^{\otimes d})$.
\end{unnumberedclaim}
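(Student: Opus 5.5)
The plan is to compute both sides of the claim separately for each $d$: the left side from Kawazumi's theorem (Theorem \ref{theorem:kawazumi}), the right side from multiplicities in $H^{\otimes d}$. Since $g \geq 12 = 4\cdot 2+4$ and $g \geq 6 \geq d$, Theorem \ref{theorem:kawazumi} applies in degree $2$. So $\HH^2(\Mod_g^1;H^{\otimes d})$ has a basis consisting of two kinds of classes:
\begin{itemize}
\item[(a)] the products $\kappa_1 m_P$ with $k(P)=0$;
\item[(b)] the classes $m_P$ with $k(P)=2$.
\end{itemize}
There is nothing else, since $R = \HH^{\bullet}(\Mod_g^1;\Q)$ has $R^1=0$ and $R^2=\Q\kappa_1$.

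For the classes in (a), write $k(P)=2\sum_i(|S_i|/2+w_i-1)$. Each term is nonnegative, so $k(P)=0$ forces every block to have $|S_i|=2$ and $w_i=0$. These $P$ are exactly the perfect matchings of $\{1,\ldots,d\}$. There are $(d-1)!!$ of them for $d$ even and none for $d$ odd. By classical invariant theory for $\Sp_{2g}$ (the Brauer/Weyl description of invariants in $H^{\otimes d}$, valid since $g \geq d$), this number is exactly $t_d$. So the classes in (a) contribute $t_d$, and it remains to show
\[N_d := \#\{P \in \cP_d : k(P)=2\} = \dim \Hom_{\Sp_{2g}}(\cV_{\cp},H^{\otimes d}).\]

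Parity settles the odd case. Since $k(P)\equiv d \pmod 2$, we have $N_d=0$ for $d$ odd. Every irreducible factor of $\cV_{\cp}$ has even degree $2$, $4$ or $6$, and $H^{\otimes d}$ for $d$ odd contains only odd-degree factors (degrees drop in steps of two under contraction by $\omega$). So the right side also vanishes.

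For $d \in \{2,4,6\}$, the condition $k(P)=2$ means $\sum_i(|S_i|/2+w_i-1)=1$. So exactly one block contributes $1$ and every other block contributes $0$. A contribution of $0$ forces either $|S_i|=2,w_i=0$ or $|S_i|=1,w_i=1$. A contribution of $1$ means either $|S_i|=4,w_i=0$, or $|S_i|=2,w_i=1$, or $|S_i|=1,w_i=2$ (the last does not occur, since it would give $1/2+1$). I would recheck this enumeration carefully, since block sizes of half-integer contribution pair up oddly. For $d=2$ this gives $N_2=2$: the single block $\{1,2\}$ with $w=1$, and two singletons with $w=1$. This matches the two copies of $\bV_{1^2}(g)$ in $\cV_{\cp}$, since $H^{\otimes 2}=\bV_0\oplus\bV_{1^2}\oplus\bV_2$.

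For $d=4$ and $d=6$, I would count $N_d$ in the same way by summing over set partitions with the allowed block types. Then I would compute the multiplicities of $\bV_{1^2}$, $\bV_{2,1^2}$, $\bV_{1^4}$, $\bV_{2^2,1^2}$ and $\bV_{1^6}$ in $H^{\otimes 4}$ and $H^{\otimes 6}$ with LiE, using stability for $g\ge d$. Weighting these by the multiplicities $2,1,2,1,1$ in $\cV_{\cp}$ gives the right side, which I would compare with $N_d$. I expect the main obstacle to be the bookkeeping at $d=6$. There both numbers are large, the combinatorial count of weighted partitions must be done without slips, and it must be matched against a sizable LiE decomposition of $H^{\otimes 6}$ in which the degree-$2$ and degree-$4$ factors appear with high multiplicity.
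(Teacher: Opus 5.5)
Your route is the paper's. You use Theorem~\ref{theorem:kawazumi} to get the basis $\{\kappa_1 m_P : k(P)=0\} \cup \{m_P : k(P)=2\}$ of $\HH^2(\Mod_g^1;H^{\otimes d})$, count these classes, and compare the count with multiplicities in $H^{\otimes d}$. Two of your steps go slightly beyond the paper and are correct. You identify the $k(P)=0$ classes with perfect matchings, and hence with a basis of the invariants, so they account for exactly $t_d$; the paper only notes this in a footnote. Your parity argument for odd $d$ is also sound.

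However, your case analysis for $k(P)=2$ is wrong as stated, and following it would give the wrong count. Write $c_i = |S_i|/2 + w_i - 1$.
\begin{itemize}
\item A contribution $c_i = 0$ occurs only for $(|S_i|,w_i)=(2,0)$.
\item A singleton of weight $1$ contributes $1/2$, not $0$.
\item A block of size $3$ with weight $0$ also contributes $1/2$, and you omitted this type entirely.
\end{itemize}
So $k(P)=2$ means that all blocks are weight-$0$ pairs except for either one block of type $(2,1)$ or $(4,0)$, or two blocks each of type $(1,1)$ or $(3,0)$. Your own count $N_2=2$ already uses the second alternative (two weight-$1$ singletons), which contradicts the rule you wrote down. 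Applied literally at $d=4$, your rule gives $1+6+6=13$, and six of those partitions actually have $k(P)=4$. With the corrected classification, counting in the order (weighted pair, $4$-block, two singletons, singleton plus triple, two triples), you get
\[N_4 = 6+1+6+4 = 17, \qquad N_6 = 45+15+45+60+10 = 175.\]
These match the right-hand side. You can even check the right-hand side without LiE. For $g \geq d$, the multiplicity of $\bV_\lambda$ with $|\lambda| = d-2k$ in $H^{\otimes d}$ is $\binom{d}{2k}(2k-1)!!\,f^\lambda$, where $f^\lambda$ is the number of standard tableaux of shape $\lambda$. This gives $2\cdot 6+3+2\cdot 1=17$ for $d=4$ and $2\cdot 45+45+2\cdot 15+9+1=175$ for $d=6$.
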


Since $g \geq 12$, the decomposition of $H^{\otimes d}$ is stable for $1 \leq d \leq 6$ (see
\S \ref{section:stablesp}).  We can therefore compute the right hand side of
the desired identity using LiE \cite{LieProgram}:
\begin{center}
\begin{tblr}{c|c|c|c}
$d$ & $t_d$ & $\dim_{\Q} \Hom_{\Sp_{2g}}(\cV_{\cp},H^{\otimes d})$ & $t_d + \dim_{\Q} \Hom_{\Sp_{2g}}(\cV_{\cp},H^{\otimes d})$ \\
\hline
1 & 0  & 0   & 0  \\
2 & 1  & 2   & 3  \\
3 & 0  & 0   & 0  \\
4 & 3  & 17  & 20 \\
5 & 0  & 0   & 0  \\
6 & 15 & 175 & 190
\end{tblr}
\end{center}
We now focus on the left hand side $\dim_{\Q} \HH^2(\Mod_g^1;H^{\otimes d})$.
Set $M_d = \HH^{\bullet}(\Mod_g^1;H^{\otimes d})$ and $R = \HH^{\bullet}(\Mod_g^1;\Q)$.
By the Madsen--Weiss theorem \cite{MadsenWeiss}, in a range of degrees that 
includes $2$ we have that $R \cong \Q[\kappa_1,\kappa_2,\ldots]$ with $\kappa_i \in \HH^{2i}(\Mod_g^1;\Q)$.
Theorem \ref{theorem:kawazumi} says that in the range of degrees we care about, $M_d$
is a free $R$-module on the classes $\Set{$m_P$}{$P \in \cP_d$}$.  For $P \in \cP_d$, recall
that $m_P \in \HH^{k(m_P)}(\Mod_g^1;H^{\otimes d})$.  It follows that
the $\Q$-vector space $\HH^2(\Mod_g^1;H^{\otimes d})$ has a basis consisting
of the following elements:
\begin{itemize}
\item $m_P$ for $p \in \cP_d$ with $k(m_P) = 2$; and
\item $\kappa_1 m_P$ for $p \in \cP_d$ with $k(m_P) = 0$.
\end{itemize}
Using a computer, it is easy to enumerate the weighted partitions $P \in \cP_d$
with $k(m_P) \in \{0,2\}$ for $d \in \{1,\ldots,6\}$.  See \cite{PutmanCode} for the code.\footnote{This code
was written and tested entirely by hand.  No AI tools were used.}  The
results are as follows:
\begin{center}
\begin{tblr}{c|c|c|c}
$d$ & $|\Set{$P \in \cP_d$}{$k(m_P) = 0$}|$ & $|\Set{$P \in \cP_d$}{$k(m_P) = 2$}|$ & $\dim_{\Q} \HH^2(\Mod_g^1;H^{\otimes d})$ \\
\hline
1   & 0   & 0   & 0 \\
2   & 1   & 2   & 3 \\
3   & 0   & 0   & 0 \\
4   & 3   & 17  & 20 \\
5   & 0   & 0   & 0 \\
6   & 15  & 175 & 190 
\end{tblr}
\end{center}
Comparing the above tables, the right hand columns are the same.\footnote{In fact, the tables are the
same, which is no accident: $\Set{$P \in \cP_d$}{$k(m_P) = 0$}$ is in bijection with a basis for the
trivial subrepresentation of $H^{\otimes d}$.}  The claim follows.
\end{proof}

\section{Cup products span on punctured surfaces}
\label{section:cupspanpuncture}

We next prove Theorem \ref{maintheorem:h2calc} for punctured surfaces:

\newtheorem*{maintheorem:h2calcpunctured}{Theorem \ref{maintheorem:h2calc} (punctured surface case)}
\begin{maintheorem:h2calcpunctured}
Let $g \geq 12$.  The image of
the cup product pairing $\fc\colon \wedge^2 \HH^1(\Torelli_{g,1};\Q) \rightarrow \HH^2(\Torelli_{g,1};\Q)$
spans $\HH^2(\Torelli_{g,1};\Q)$.
\end{maintheorem:h2calcpunctured}
\begin{proof}
Since we already proved in \S \ref{section:cupspanboundary} that $\HH^2(\Torelli_g^1;\Q)$ is spanned by the image
of the cup product pairing, we know from Theorem \ref{maintheorem:hain} that
\[\HH^2(\Torelli_g^1;\Q) \cong \bV_{1^2}(g)^{\oplus 2} \oplus \bV_{2,1^2}(g) \oplus \bV_{1^4}(g)^{\oplus 2} \oplus \bV_{2^2,1^2}(g) \oplus \bV_{1^6}(g).\]
Theorem \ref{maintheorem:hain} also says that the image of the cup product pairing for
$\Torelli_{g,1}$ is isomorphic to
\[\bV_0(g) \oplus \bV_{1^2}(g)^{\oplus 2} \oplus \bV_{2,1^2}(g) \oplus \bV_{1^4}(g)^{\oplus 2}\oplus \bV_{2^2,1^2}(g) \oplus \bV_{1^6}(g).\]
The only difference between this and $\HH^2(\Torelli_g^1;\Q)$ is a single copy of the trivial
representation $\bV_0(g) \cong \Q$.  To prove that the image of the cup product pairing
for $\Torelli_{g,1}$ is everything, it is thus enough to prove the following claim (in which for
convenience we dualize and switch to homology):

\begin{unnumberedclaim}
We have $\dim_{\Q} \HH_2(\Torelli_{g,1};\Q) = 1 + \dim_{\Q} \HH_2(\Torelli_g^1;\Q)$.
\end{unnumberedclaim}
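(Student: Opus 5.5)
Use the central extension
\[1 \longrightarrow \Z \longrightarrow \Torelli_g^1 \longrightarrow \Torelli_{g,1} \longrightarrow 1\]
from \S\ref{section:hainpuncture}, whose kernel is generated by the boundary twist $T_b$, and analyze its Hochschild--Serre (Gysin-type) spectral sequence in rational homology. Since the extension is central, the $E^2$-page is $E^2_{pq} = \HH_p(\Torelli_{g,1};\HH_q(\Z;\Q))$ with trivial coefficients, and only the rows $q=0,1$ are nonzero. In low degrees this yields the Gysin exact sequence
\[\HH_3(\Torelli_{g,1};\Q) \stackrel{\cap e}{\longrightarrow} \HH_1(\Torelli_{g,1};\Q) \longrightarrow \HH_2(\Torelli_g^1;\Q) \longrightarrow \HH_2(\Torelli_{g,1};\Q) \stackrel{\cap e}{\longrightarrow} \HH_0(\Torelli_{g,1};\Q)=\Q \longrightarrow \HH_1(\Torelli_g^1;\Q) \longrightarrow \HH_1(\Torelli_{g,1};\Q) \longrightarrow 0.\]
Here $e \in \HH^2(\Torelli_{g,1};\Q)$ is the Euler class of the extension, and the maps $\HH_k(\Torelli_{g,1};\Q) \to \HH_{k-2}(\Torelli_{g,1};\Q)$ are cap product with $e$.

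The first step is to determine the right end of the sequence. The map $\Q \to \HH_1(\Torelli_g^1;\Q)$ sends $1$ to the class of $T_b$. Since $T_b \in \Torelli_g^1[2]$, we have $\tau_1(T_b)=0$, so by Theorem \ref{theorem:torelliabel} this class vanishes. Alternatively, $\HH_1(\Torelli_g^1;\Q) \cong \HH_1(\Torelli_{g,1};\Q) \cong \wedge^3 H$ by \S\ref{section:hainpuncture}, so the map must be zero by a dimension count. Hence $\cap e\colon \HH_2(\Torelli_{g,1};\Q) \to \Q$ is surjective. This says precisely that $e \neq 0$, which is consistent with Theorem \ref{maintheorem:hain} producing an extra $\bV_0(g)$ for $\Torelli_{g,1}$.

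The second step is to show that $\HH_2(\Torelli_g^1;\Q) \to \HH_2(\Torelli_{g,1};\Q)$ is injective. By exactness this is equivalent to the vanishing of the map $\HH_1(\Torelli_{g,1};\Q) \to \HH_2(\Torelli_g^1;\Q)$, which is equivalent to surjectivity of $\cap e\colon \HH_3(\Torelli_{g,1};\Q) \to \HH_1(\Torelli_{g,1};\Q)$. I expect this to be the main obstacle. The cleanest route avoids $\HH_3$ entirely by using representation theory. The map $\HH_1(\Torelli_{g,1};\Q) \to \HH_2(\Torelli_g^1;\Q)$ is $\Sp_{2g}(\Z)$-equivariant, because the extension is compatible with the conjugation action of $\Mod_g^1$ and $T_b$ is central in $\Mod_g^1$. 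Its source is $\wedge^3 H \cong \bV_1(g) \oplus \bV_{1^3}(g)$. Its target, by \S\ref{section:cupspanboundary} together with Theorem \ref{maintheorem:hain}, is (the dual of)
\[\bV_{1^2}(g)^{\oplus 2} \oplus \bV_{2,1^2}(g) \oplus \bV_{1^4}(g)^{\oplus 2} \oplus \bV_{2^2,1^2}(g) \oplus \bV_{1^6}(g).\]
These representations are self-dual, and none of the target's irreducible factors is $\bV_1(g)$ or $\bV_{1^3}(g)$. Schur's lemma therefore forces the map to be zero. The only care needed is to verify that the action on the Gysin sequence is indeed the expected one. Concretely, $\Mod_g^1$ acts on the extension and fixes the generator $T_b$, so the edge maps are equivariant; $\Torelli_{g,1}$ acts trivially on its own homology, so the action factors through $\Sp_{2g}(\Z)$.

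Combining the two steps gives a short exact sequence
\[0 \longrightarrow \HH_2(\Torelli_g^1;\Q) \longrightarrow \HH_2(\Torelli_{g,1};\Q) \longrightarrow \Q \longrightarrow 0.\]
Finite-dimensionality from Theorem \ref{maintheorem:h2finite} then gives $\dim_{\Q} \HH_2(\Torelli_{g,1};\Q) = 1 + \dim_{\Q} \HH_2(\Torelli_g^1;\Q)$, which is the claim.
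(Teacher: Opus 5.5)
Your proof is correct. Your first step, where exactness together with $\HH_1(\Torelli_g^1;\Q)\cong\HH_1(\Torelli_{g,1};\Q)\cong\wedge^3 H$ forces $\HH_2(\Torelli_{g,1};\Q)\to\Q$ to be surjective, is exactly what the paper does.

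You diverge in the second step, which shows that the $E^2_{11}\cong\wedge^3 H$ contribution dies. The paper argues geometrically. By Johnson, $\Torelli_g^1$ is generated by genus $1$ bounding pair maps $T_xT_y^{-1}$. For each such map, the abelian cycle $\{T_b,T_xT_y^{-1}\}$ factors through $\Torelli_g^1(T)\times\Span{T_xT_y^{-1}}$ for a subsurface $T\cong\Sigma_{g-2}^2$. There $T_b$ already vanishes in $\HH_1(\Torelli_g^1(T);\Q)$, by the Johnson/Putman result that squares of separating twists are commutators.

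You instead note that the Gysin map $\HH_1(\Torelli_{g,1};\Q)\to\HH_2(\Torelli_g^1;\Q)$ is $\Sp_{2g}(\Z)$-equivariant, because $\Mod_g^1$ fixes $T_b$ and inner automorphisms act trivially. You then kill the map by Schur's lemma, using the decomposition of $\HH_2(\Torelli_g^1;\Q)$, which has no $\bV_1(g)$ or $\bV_{1^3}(g)$ factors. This is not circular: the boundary case of Theorem \ref{maintheorem:h2calc} is proved first, and the paper itself invokes it to reduce to this claim.

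Your route is shorter and avoids both geometric inputs: Johnson's genus $1$ generating set and the vanishing of a boundary twist in the abelianization of a subsurface Torelli group. The paper's route does not rely on knowing $\HH_2(\Torelli_g^1;\Q)$ at all. So it proves the injectivity $\HH_2(\Torelli_g^1;\Q)\hookrightarrow\HH_2(\Torelli_{g,1};\Q)$ independently of the heavy machinery behind that computation (Kawazumi, Miller--Patzt--Petersen--Randal-Williams, Theorem \ref{maintheorem:h2finite}). It also yields the concrete fact that the abelian cycles $\{T_b,f\}$ vanish rationally.
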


Let $b = \partial \Sigma_g^1$.  As we noted in the proof of Theorem \ref{maintheorem:hain}
for punctured surfaces in \S \ref{section:hainpuncture}, there is a central extension
\[1 \longrightarrow \Z \longrightarrow \Torelli_g^1 \longrightarrow \Torelli_{g,1} \longrightarrow 1\]
whose central $\Z$ term is generated by the Dehn twist $T_b$ (see \cite[Proposition 3.19]{FarbMargalitPrimer}
for the corresponding exact sequence for the mapping class group).
The associated Hochschild--Serre spectral sequence takes the form
\[E^2_{pq} = \HH_p(\Torelli_{g,1};\HH_q(\Z;\Q)) \Rightarrow \HH_{p+q}(\Torelli_g^1;\Q).\]
Since our extension is central, the action of $\Torelli_{g,1}$ here is trivial and thus
\[E^2_{pq} = \begin{cases}
\HH_p(\Torelli_{g,1};\Q) & \text{for $q=0,1$},\\
0                        & \text{otherwise}.
\end{cases}\]
Let $H = \HH_1(\Sigma_{g,1};\Q)$.  As we noted in the proof of Theorem \ref{maintheorem:hain}
for punctured surfaces in \S \ref{section:hainpuncture}, we have
$\HH_1(\Torelli_{g,1};\Q) \cong \wedge^3 H$.
This isomorphism is induced by the first Johnson homomorphism.  Because of all of this,
the portion of our spectral sequence that can contribute to $\HH_2(\Torelli_{g}^1;\Q)$
looks like this:
\begin{center}
\begin{tblr}{|ccc}
$\Q$ & $\wedge^3 H$ & \\
$\Q$ & $\wedge^3 H$ & $\HH_2(\Torelli_{g,1};\Q)$\\ 
\hline
\end{tblr}
\end{center}
Theorem \ref{theorem:torelliabel} says that $\HH_1(\Torelli_g^1;\Q) \cong \wedge^3 H$,
so $E^2_{01} = \Q$ must be killed by the differential
\[\HH_2(\Torelli_{g,1};\Q) = E^2_{20} \rightarrow E^2_{01} = \Q.\]
In the claim below we will prove that $E^2_{11} = \wedge^3 H$ is also killed by a differential.
From this, we can conclude that
\[\HH_2(\Torelli_g^1;\Q) \cong \ker(\HH_2(\Torelli_{g,1};\Q) \rightarrow \Q),\]
so $\dim_{\Q} \HH_2(\Torelli_{g,1};\Q) = 1 + \dim_{\Q} \HH_2(\Torelli_g^1;\Q)$, as desired.

It remains to prove the following:

\begin{unnumberedclaim}
In the above spectral sequence, $E^2_{11} = \wedge^3 H$ is killed by a differential.
\end{unnumberedclaim}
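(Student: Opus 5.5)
The plan is to avoid computing the differential $d^2\colon E^2_{31} \rightarrow E^2_{11}$ directly. Instead I will use the known answer for $\HH_2(\Torelli_g^1;\Q)$ and an equivariance argument. In the homological Hochschild--Serre spectral sequence, $d^r$ has bidegree $(-r,r-1)$. All differentials leaving $E^r_{11}$ land in negative $p$, so they vanish, and only differentials coming into $E^r_{11}$ can kill it. Thus $E^\infty_{11}$ is a quotient of $E^2_{11} \cong \wedge^3 H$. Since $E^\infty_{11}$ is also a subquotient of $\HH_2(\Torelli_g^1;\Q)$ via the filtration on the abutment, it suffices to show that this quotient must vanish for representation-theoretic reasons.

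First I will check that the whole spectral sequence is a spectral sequence of $\Sp_{2g}(\Z)$-representations. The extension $1 \to \Z \to \Torelli_g^1 \to \Torelli_{g,1} \to 1$ is normalized by all of $\Mod_g^1$, acting by conjugation on each term. The twist $T_b$ is central in $\Mod_g^1$, so $\Mod_g^1$ acts trivially on the $\Z$ term. By naturality, $\Mod_g^1$ acts on every page of the spectral sequence and on the filtration of the abutment, compatibly with all differentials. Inner automorphisms coming from $\Torelli_g^1$ act trivially on homology, so the action factors through $\Sp_{2g}(\Z)$. On $E^2_{11} = \HH_1(\Torelli_{g,1};\Q) \otimes \HH_1(\Z;\Q)$ this gives the Johnson representation $\wedge^3 H \cong \bV_1(g) \oplus \bV_{1^3}(g)$, tensored with a trivial line. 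Hence $E^\infty_{11}$ is an $\Sp_{2g}(\Z)$-equivariant subquotient of $\HH_2(\Torelli_g^1;\Q)$, and it is also a quotient of $\bV_1(g) \oplus \bV_{1^3}(g)$.

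Next I will invoke what was already proved in \S\ref{section:cupspanboundary}: for $g \geq 12$,
\[\HH^2(\Torelli_g^1;\Q) \cong \bV_{1^2}(g)^{\oplus 2} \oplus \bV_{2,1^2}(g) \oplus \bV_{1^4}(g)^{\oplus 2} \oplus \bV_{2^2,1^2}(g) \oplus \bV_{1^6}(g).\]
Dualizing with self-duality, $\HH_2(\Torelli_g^1;\Q)$ is the same finite-dimensional algebraic representation. By complete reducibility, every subquotient of it is a direct sum of these irreducibles. None of them is $\bV_1(g)$ or $\bV_{1^3}(g)$, so $E^\infty_{11}$ must be zero. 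Equivalently, the incoming differentials, essentially $d^2\colon \HH_3(\Torelli_{g,1};\Q) = E^2_{30} \to E^2_{11}$, kill all of $E^2_{11}$, which is the claim.

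The main point needing care is the equivariance and algebraicity of $E^\infty_{11}$ as a subquotient. A subtlety is that $\Sp_{2g}(\Z)$-subquotients of an algebraic representation are automatically algebraic subquotients, by Zariski density, as in the remark in the introduction. This makes the decomposition argument legitimate. Everything else is bookkeeping with bidegrees.
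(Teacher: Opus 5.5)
Your argument is correct, but it takes a different route from the paper's.

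\textbf{The paper's route.} The paper argues geometrically. Johnson's theorem says genus-$1$ bounding pair maps $T_xT_y^{-1}$ generate $\Torelli_g^1$, so the classes $[T_xT_y^{-1}]\otimes[T_b]$ span $E^2_{11}$. Comparing with the spectral sequence of $\Span{T_b}\to\Span{T_b}\times\Span{T_xT_y^{-1}}\to\Span{T_xT_y^{-1}}$, it then suffices to show that the abelian cycle of $T_b$ and $T_xT_y^{-1}$ vanishes in $\HH_2(\Torelli_g^1;\Q)$. This is done by factoring through $\Torelli_g^1(T)\times\Span{T_xT_y^{-1}}$ and using Johnson's computation that $T_b^2$ dies in the abelianization of $\Torelli_g^1(T)$.

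\textbf{Your route.} You argue by representation theory:
\begin{enumerate}
\item $d^2\colon E^2_{30}\to E^2_{11}$ is the only differential that can touch $E^2_{11}$.
\item $E^\infty_{11}$ embeds in $\HH_2(\Torelli_g^1;\Q)$, since $E^\infty_{02}=0$.
\item The spectral sequence carries a compatible $\Mod_g^1$-action, using that $T_b$ is central in $\Mod_g^1$. Under this action $E^2_{11}$ is the Johnson representation $\wedge^3 H\cong\bV_1(g)\oplus\bV_{1^3}(g)$.
\item The boundary case of Theorem~\ref{maintheorem:h2calc}, proved in \S\ref{section:cupspanboundary} without reference to punctured surfaces, shows $\HH_2(\Torelli_g^1;\Q)$ has no $\bV_1(g)$ or $\bV_{1^3}(g)$ constituent. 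So the quotient $E^\infty_{11}$ must vanish.
\end{enumerate}
There is no circularity here.

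\textbf{Comparison.} Your version is shorter and needs no new geometric input. The cost is that the claim now depends on the full Part~\ref{part:2} computation for $\Torelli_g^1$: Theorem~\ref{maintheorem:h2finite}, Miller--Patzt--Petersen--Randal-Williams, Kawazumi, and the hypothesis $g\geq 12$. The paper's argument needs none of that. It shows directly that the abelian cycles pairing $T_b$ with elements of $\Torelli_g^1$ are null-homologous. Consequently the relation $\HH_2(\Torelli_g^1;\Q)\cong\ker\bigl(\HH_2(\Torelli_{g,1};\Q)\to\Q\bigr)$ holds in a range governed only by Johnson's classical results. It also explains why these classes die, not merely that they must.
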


Recall that $\HH_1(\Torelli_g^1;\Q) = \wedge^3 H$.  For $f \in \Torelli_g^1$, let
\[[f] \in (\Torelli_g^1)^{\text{ab}} \otimes \Q = \HH_1(\Torelli_g^1;\Q)\]
be the corresponding element.  Since $g \geq 3$, it follows from a theorem of
Johnson \cite{JohnsonFirst} that $\Torelli_g^1$ is generated by bounding
pair maps $T_x T_y^{-1}$ such that $x \cup y$ bounds a subsurface homeomorphic
to $\Sigma_1^2$:\\
\Figure{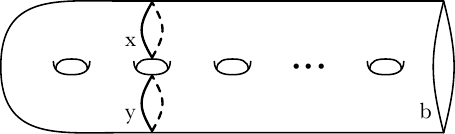}
These are called genus $1$ bounding pair maps.  Letting $T_x T_y^{-1}$
be a genus $1$ bounding pair map, it is enough to prove that
$[T_x T_y^{-1}] \in \wedge^3 H = E^2_{11}$ is killed by a differential.

Recall that $T_b$ is the Dehn twist about $b = \partial \Sigma_g^1$.  This generates
the kernel of the central extension
\[1 \longrightarrow \Z \longrightarrow \Torelli_g^1 \longrightarrow \Torelli_{g,1} \longrightarrow 1\]
whose Hochschild--Serre spectral sequence we are studying.  The subgroup
of $\Torelli_g^1$ generated by $T_b$ and $T_x T_y^{-1}$ is isomorphic to $\Z^2$.
We have a commutative diagram of central extensions
\[\begin{tikzcd}[row sep=small]
1 \arrow{r} & \Z \arrow{r} \arrow[equals]{d}         & \Z^2 \arrow{r} \arrow[equals]{d}                          & \Z \arrow{r} \arrow[equals]{d}             & 1\\  
1 \arrow{r} & \Span{T_b} \arrow{r} \arrow[equals]{d} & \Span{T_b} \times \Span{T_x T_y^{-1}} \arrow{r} \arrow{d} & \Span{T_x T_y^{-1}} \arrow{d} \arrow{r} & 1\\
1 \arrow{r} & \Z         \arrow{r}                   & \Torelli_g^1 \arrow{r}                                    & \Torelli_{g,1} \arrow{r} & 1
\end{tikzcd}\]
whose bottom right arrow is induced by the map taking $T_x T_y^{-1}$ to its image in $\Torelli_{g,1}$.
Consider the map of Hochschild--Serre spectral sequences from the spectral sequence of the middle extension to the spectral sequence of the bottom extension.  The map
on $E^2_{11}$-terms is of the form
\[\HH_1(\Span{T_x T_y^{-1}};\HH_1(\Span{T_b};\Q)) \rightarrow \HH_1(\Torelli_{g,1};\HH_1(\Z;\Q)) = \HH_1(\Torelli_{g,1};\Q) = \wedge^3 H.\]
Our goal is to prove that the image of this is killed by a differential.  We have
\[\HH_1(\Span{T_x T_y^{-1}};\HH_1(\Span{T_b};\Q)) = \HH_1(\Span{T_x T_y^{-1}};\Q) \otimes \HH_1(\Span{T_b};\Q) = \HH_2(\Span{T_b} \times \Span{T_x T_y^{-1}};\Q).\]
From this, we see that our goal is equivalent to showing that the image of the map
\begin{equation}
\label{eqn:killboundary}
\HH_2(\Span{T_b} \times \Span{T_x T_y^{-1}};\Q) \rightarrow \HH_2(\Torelli_g^1;\Q)
\end{equation}
induced by the inclusion $\Span{T_b} \times \Span{T_x T_y^{-1}} \hookrightarrow \Torelli_g^1$ is zero.
Let $T \cong \Sigma_{g-2}^2$ be the following subsurface:\\
\Figure{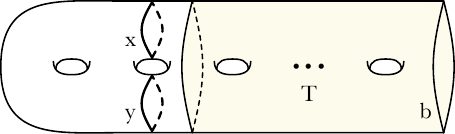}
Denote by $\Torelli_g^1(T)$ the subgroup of $\Torelli_g^1$ consisting of mapping classes supported on $T$.
We can factor our inclusion as
\[\Span{T_b} \times \Span{T_x T_y^{-1}} \hookrightarrow \Torelli_g^1(T) \times \Span{T_x T_y^{-1}} \rightarrow \Torelli_g^1.\]
Using this, our map \eqref{eqn:killboundary} factors as
\begin{align*}
&\HH_2(\Span{T_b} \times \Span{T_x T_y^{-1}};\Q) = \HH_1(\Span{T_b};\Q) \otimes \HH_1(\Span{T_x T_y^{-1}};\Q) \\
&\quad \rightarrow \HH_1(\Torelli_g^1(T);\Q) \otimes \HH_1(\Span{T_x T_y^{-1}};\Q) \hookrightarrow \HH_2(\Torelli_g^1(T) \times \Span{T_x T_y^{-1}};\Q) 
\rightarrow \HH_2(\Torelli_g^1;\Q).
\end{align*}
It is therefore enough to prove that the map $\HH_1(\Span{T_b};\Q) \rightarrow \HH_1(\Torelli_g^1(T);\Q)$ is the zero map,
i.e., that the separating twist $T_b$ vanishes in 
\[\HH_1(\Torelli_g^1(T);\Q) = \Torelli_g^1(T)^{\text{ab}} \otimes \Q.\]
In fact, $T_b^2$ vanishes in the abelianization of $\Torelli_g^1(T)$.
This calculation is essentially due to Johnson \cite{Johnson2}, though
in this level of generality it was first written out in \cite{PutmanJohnson}.
\end{proof}

\section{Cup products span on closed surfaces}
\label{section:cupspanclosed}

We close this paper by proving Theorem \ref{maintheorem:h2calc} for closed surfaces:

\newtheorem*{maintheorem:h2calcclosed}{Theorem \ref{maintheorem:h2calc} (closed surface case)}
\begin{maintheorem:h2calcclosed}
Let $g \geq 12$.  The image of
the cup product pairing $\fc\colon \wedge^2 \HH^1(\Torelli_{g};\Q) \rightarrow \HH^2(\Torelli_{g};\Q)$
spans $\HH^2(\Torelli_{g};\Q)$.
\end{maintheorem:h2calcclosed}
\begin{proof}
Set $H = \HH_1(\Sigma_g;\Q)$.
Since we already proved in \S \ref{section:cupspanpuncture} that $\HH^2(\Torelli_{g,1};\Q)$ 
is spanned by the image of the cup product pairing, we know from Theorem \ref{maintheorem:hain} that
\[\HH^2(\Torelli_{g,1};\Q) \cong \bV_0(g) \oplus \bV_{1^2}(g)^{\oplus 2} \oplus \bV_{2,1^2}(g) \oplus \bV_{1^4}(g)^{\oplus 2}\oplus \bV_{2^2,1^2}(g) \oplus \bV_{1^6}(g).\]
Theorem \ref{maintheorem:hain} also says that the image of the cup product pairing for
$\Torelli_{g}$ is isomorphic to
\[\bV_{1^2}(g) \oplus \bV_{1^4}(g) \oplus \bV_{2^2,1^2}(g) \oplus \bV_{1^6}(g)\]
The only difference between this and $\HH^2(\Torelli_{g,1};\Q)$ is that $\HH^2(\Torelli_{g,1};\Q)$
contains the following additional representations:
\[\bV_0(g) \oplus \bV_{1^2}(g) \oplus \bV_{2,1^2}(g) \oplus \bV_{1^4}(g) \cong \bV_0(g) \oplus \left(H \otimes (\wedge^3 H)/H\right).\]
Since $\bV_0(g)$ is $1$-dimensional, it is thus enough to prove that
\[\dim_{\Q} \HH^2(\Torelli_{g,1};\Q) = 1 + \dim_{\Q} \left(H \otimes (\wedge^3 H)/H\right) + \dim_{\Q} \HH^2(\Torelli_g^1;\Q).\]
In fact, since $\dim_{\Q} \HH^2(\Torelli_g^1;\Q)$ is at least as large as the image of the cup product
pairing we already know that the right hand side is greater than or equal to the left hand side,
so we only need to prove the reverse inequality.  For this, we will dualize and switch to homology.

\begin{unnumberedclaim}
We have $\dim_{\Q} \HH_2(\Torelli_{g,1};\Q) \leq 1 + \dim_{\Q} \left(H \otimes (\wedge^3 H)/H\right) + \dim_{\Q} \HH_2(\Torelli_g^1;\Q)$.
\end{unnumberedclaim}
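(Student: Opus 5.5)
The inequality as worded follows from what \S\ref{section:cupspanpuncture} already established, so the plan is to reduce to that and not to analyze the Birman exact sequence at all. In the proof of Theorem \ref{maintheorem:h2calc} for punctured surfaces we proved the unnumbered claim
\[\dim_{\Q} \HH_2(\Torelli_{g,1};\Q) = 1 + \dim_{\Q} \HH_2(\Torelli_g^1;\Q).\]
That argument used the Hochschild--Serre spectral sequence of the central extension $1 \to \Z \to \Torelli_g^1 \to \Torelli_{g,1} \to 1$ generated by $T_b$. Two facts made it work. First, $E^2_{01}=\Q$ is killed because $\HH_1(\Torelli_g^1;\Q) \cong \wedge^3 H \cong \HH_1(\Torelli_{g,1};\Q)$. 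Second, $E^2_{11} = \wedge^3 H$ is killed because $T_b$ vanishes rationally in $\HH_1(\Torelli_g^1(T);\Q)$ for $T \cong \Sigma_{g-2}^2$ disjoint from a genus $1$ bounding pair. That proof is valid for all $g \geq 12$, and in particular for the $g$ fixed here.

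Given this equality, the claim is immediate. The remaining term $\dim_{\Q}(H \otimes (\wedge^3 H)/H)$ is a nonnegative integer; indeed it is finite and positive. Hence
\[\dim_{\Q} \HH_2(\Torelli_{g,1};\Q) = 1 + \dim_{\Q} \HH_2(\Torelli_g^1;\Q) \leq 1 + \dim_{\Q}\left(H \otimes (\wedge^3 H)/H\right) + \dim_{\Q} \HH_2(\Torelli_g^1;\Q).\]
Finite-dimensionality of every term is guaranteed by Theorem \ref{maintheorem:h2finite}, so the comparison of dimensions makes sense.

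There is no real obstacle in this step. The only point to check is that the earlier equality was proved unconditionally for the $g$ in question, rather than relying on anything about closed surfaces that is still being established, and it was. I note that the comparison the surrounding argument actually needs for the closed case would have $\HH_2(\Torelli_g;\Q)$ in the last slot. That version would instead require analyzing the Birman exact sequence $1 \to \pi \to \Torelli_{g,1} \to \Torelli_g \to 1$. However, the statement as worded only asks for the weaker bound above, which the proof gives.
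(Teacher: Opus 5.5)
Your deduction is valid for the inequality exactly as printed. The unnumbered claim in \S\ref{section:cupspanpuncture} gives $\dim_{\Q}\HH_2(\Torelli_{g,1};\Q) = 1 + \dim_{\Q}\HH_2(\Torelli_g^1;\Q)$ for $g \geq 12$ with no input from the closed case, and adding the nonnegative number $\dim_{\Q}(H \otimes (\wedge^3 H)/H)$ finishes.

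The paper argues differently. It runs the Hochschild--Serre spectral sequence of the Birman exact sequence $1 \to \pi \to \Torelli_{g,1} \to \Torelli_g \to 1$. The terms of total degree $2$ are $E^2_{20} = \HH_2(\Torelli_g;\Q)$, $E^2_{11} = H \otimes ((\wedge^3 H)/H)$ and $E^2_{02} = \HH_2(\pi;\Q) = \Q$. Since each $E^{\infty}_{pq}$ is a subquotient of $E^2_{pq}$, this bounds $\dim_{\Q}\HH_2(\Torelli_{g,1};\Q)$ by $1 + \dim_{\Q}(H \otimes (\wedge^3 H)/H) + \dim_{\Q}\HH_2(\Torelli_g;\Q)$. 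So, as you suspected, the $\Torelli_g^1$ in the last slot is a typo for $\Torelli_g$. Your route gets the printed inequality for free but says nothing about $\Torelli_g$, since the middle term is simply discarded. The paper's route is the one aimed at the closed surface.

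One caution about your closing remark. Even with $\Torelli_g$ in the last slot, this \emph{upper} bound already follows from the fact that $\HH^2(\Torelli_g;\Q)$ contains the image of $\fc$. To conclude that this image is all of $\HH^2(\Torelli_g;\Q)$, one needs the reverse inequality. That means $E^2_{11}$ and $E^2_{02}$ must also survive to $E^{\infty}$, in addition to the vanishing of $d^2\colon E^2_{20} \to E^2_{01}$, which the paper does check.

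This can be supplied by mapping the Birman extension via $\tau_1$ to $0 \to H_{\Z} \to \wedge^3 H_{\Z} \to (\wedge^3 H_{\Z})/H_{\Z} \to 0$:
\begin{itemize}
\item the rational spectral sequence of this abelian extension degenerates at $E^2$;
\item the induced map is an isomorphism on $E^2_{11}$, by Johnson's computation of $\HH_1(\Torelli_g;\Q)$;
\item the induced map is injective on $E^2_{02}$, since the fundamental class of $\Sigma_g$ maps to $\pm\omega \neq 0$.
\end{itemize}
By naturality, no differential can hit $E^2_{11}$ or $E^2_{02}$ in the source. Differentials leaving them vanish for degree reasons, so both terms survive to $E^{\infty}$.
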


Let $\pi = \pi_1(\Sigma_g)$.  As we already noted in the proof of Theorem \ref{maintheorem:hain} for
closed surfaces in \S \ref{section:hainclosed}, the Birman exact sequence for
the mapping class group (see \cite[\S 4.2]{FarbMargalitPrimer}) restricts to a short
exact sequence
\[1 \longrightarrow \pi \longrightarrow \Torelli_{g,1} \longrightarrow \Torelli_g \longrightarrow 1.\]
The associated Hochschild--Serre spectral sequence takes the form
\[E^2_{pq} = \HH_p(\Torelli_g;\HH_q(\pi;\Q)) \Rightarrow \HH_{p+q}(\Torelli_{g,1};\Q).\]
Let $H = \HH_1(\pi;\Q) = \HH_1(\Sigma_g;\Q)$.  We also have $\HH_2(\pi;\Q) = \HH_2(\Sigma_g;\Q) = \Q$.
The group $\Torelli_g$ acts trivially on both of these, so
\begin{align*}
E^2_{01} &= \HH_0(\Torelli_g;\HH_1(\pi;\Q)) = H, \\
E^2_{02} &= \HH_0(\Torelli_g;\HH_2(\pi;\Q)) = \Q.
\end{align*}
We also noted during the proof of Theorem \ref{maintheorem:hain} for
closed surfaces in \S \ref{section:hainclosed} that the first Johnson
homomorphism descends to the following isomorphism, which was first proved by
Johnson in \cite{Johnson2, Johnson3}:
\[\HH_1(\Torelli_g;\Q) \cong (\wedge^3 H)/H.\]
We deduce that
\begin{align*}
E^2_{10} &= \HH_1(\Torelli_g;\Q) \cong (\wedge^3 H)/H,\\
E^2_{11} &= \HH_1(\Torelli_g;\HH_1(\pi;\Q)) = \HH_1(\pi;\Q) \otimes \HH_1(\Torelli_g;\Q) \cong H \otimes ((\wedge^3 H)/H).
\end{align*}
Summarizing all of this, the portion of this spectral sequence that can contribute to $\HH_2(\Torelli_{g,1};\Q)$
is
\begin{center}
\begin{tblr}{|ccc}
$\Q$ &                              & \\
$H$  & $H \otimes ((\wedge^3 H)/H)$ & \\
$\Q$ & $(\wedge^3 H)/H$             & $\HH_2(\Torelli_g;\Q)$\\
\hline
\end{tblr}
\end{center}
As we noted in \S \ref{section:hainpuncture}, Theorem \ref{theorem:torelliabel} implies that
\[\HH_1(\Torelli_{g,1};\Q) \cong \wedge^3 H \cong H \oplus ((\wedge^3 H)/H).\]
This implies that the differential
\[H = E^2_{01} \rightarrow E^2_{02} = \HH_2(\Torelli_g;\Q)\]
must vanish.  We conclude that all of $E^2_{02} = \HH_2(\Torelli_g;\Q)$ survives
to the $E^{\infty}$ page.  Potentially\footnote{Once this proof is complete, since
the inequality we are proving is an equality we will know that these differentials vanish too.} 
there might be nonzero differentials involving $E^2_{02}$ and $E^2_{11}$, but in any
case we conclude that
\[\dim_{\Q} \HH_2(\Torelli_{g,1};\Q) \leq 1 + \dim_{\Q} \left(H \otimes (\wedge^3 H)/H\right) + \dim_{\Q} \HH_2(\Torelli_g^1;\Q),\]
as desired.
\end{proof}

\end{document}